\newcommand*\circled[1]{\tikz[baseline=(char.base)]{
            \node[shape=circle,draw,inner sep=2pt] (char) {#1};}}
   \def\MR#1{}
\definecolor{seagreen}{RGB}{46,139,87}
\definecolor{maroon}{RGB}{128,0,0}
\definecolor{darkviolet}{RGB}{148,0,211}
\definecolor{twelve}{RGB}{100,100,170}
\definecolor{thirteen}{RGB}{100,150,50}
\definecolor{fourteen}{RGB}{200,0,0}
\definecolor{fifteen}{RGB}{0,200,0}
\definecolor{sixteen}{RGB}{0,0,200}
\definecolor{seventeen}{RGB}{200,0,200}
\definecolor{eighteen}{RGB}{0,200,200}
\newcommand{\mc}[1]{\mathcal{#1}}
\newcommand{\bb}[1]{\mathbb{#1}}
\newcommand{\mbf}[1]{\mathbf{#1}}
\newcommand{\es}[1]{\EuScript{#1}}
\renewcommand{\sf}[1]{{\mathsf{#1}}}
\DeclareMathOperator{\hocolim}{\mathsf{hocolim}}
\DeclareMathOperator{\holim}{\mathsf{holim}}
\DeclareMathOperator{\hofibre}{\mathsf{hofib}}
\newcommand{\s}{{\sf{Spec}}}
\DeclareMathOperator{\T}{\mathsf{Top}_\ast}
\newcommand{\poly}[1]{\mathsf{Poly}^{\leq #1}}
\newcommand{\homog}[1]{\mathsf{Homog}^{#1}}
\newcommand{\Fun}{\sf{Fun}}
\DeclareMathOperator{\Hom}{\mathsf{Hom}}
\DeclareMathOperator{\nat}{\mathsf{nat}}
\DeclareMathOperator{\id}{\mathsf{Id}}
\DeclareMathOperator{\ind}{\mathsf{ind}}
\DeclareMathOperator{\res}{\mathsf{res}}
\newcommand{\R}{\bb{R}}
\newcommand{\op}{\mathsf{op}}
  \newcommand{\adjunction}[4]{
\xymatrix{
#1:#2 \ar@<.5ex>[r] &
\ar@<.5ex>[l] #3:#4
}}
\newtheorem{thm}{Theorem}[subsection]
\newtheorem{prop}[thm]{Proposition}
\newtheorem{lem}[thm]{Lemma}
\newtheorem{cor}[thm]{Corollary}
\newtheorem{xxthm}{Theorem}
\newtheorem{xxconj}{Conjecture}
\theoremstyle{definition}
\newtheorem{definition}[thm]{Definition}
\newtheorem{ex}[thm]{Example}
\newtheorem{exs}[thm]{Examples}
\newtheorem{rem}[thm]{Remark}
\newtheorem{warning}[thm]{Warning}
\begin{document}
\title{Koszul duality and a classification of stable Weiss towers}

\author{Connor Malin}
\address[Malin]{Max Planck Institute for Mathematics, Vivatsgasse 7, 53111 Bonn}
\email{malin@mpim-bonn.mpg.de}
\author{Niall Taggart}
\address[Taggart]{IMAPP, Radboud University Nijmegen, The Netherlands}
\email{niall.taggart@ru.nl}

\begin{abstract}
We introduce a version of Koszul duality for categories, which extends the Koszul duality of operads and right modules. We demonstrate that the derivatives which appear in Weiss calculus (with values in spectra) form a right module over the Koszul dual of the category of vector spaces and orthogonal surjections, resolving conjectures of Arone--Ching and Espic. Using categorical Fourier transforms, we then classify Weiss towers. In particular, we describe the $n$-th polynomial approximation as a pullback of the $(n-1)$-st polynomial approximation along a ``generalized norm map''.
\end{abstract}
\maketitle

\setcounter{tocdepth}{1}
{\hypersetup{linkcolor=black} \tableofcontents}

\section{Introduction}
\subsection{Background}
Functor calculus comprises numerous techniques suitable to study functors $F: \mc{C} \to \mc{D}$ between $\infty$-categories by employing a sequence of ``polynomial'' approximations
% https://q.uiver.app/#q=WzAsNyxbMiwwLCJGIl0sWzUsMSwiUF8wRiJdLFs0LDEsIlBfMUYiXSxbMiwxLCJQX25GIl0sWzAsMSwiUF9cXGluZnR5IEYiXSxbMSwxLCJcXGNkb3RzIl0sWzMsMSwiXFxjZG90cyJdLFs2LDJdLFsyLDFdLFszLDZdLFs1LDNdLFs0LDVdLFswLDNdLFswLDIsIiIsMCx7ImN1cnZlIjotMX1dLFswLDEsIiIsMix7ImN1cnZlIjotMX1dLFswLDQsIiIsMix7ImN1cnZlIjoxfV1d
\[\begin{tikzcd}
	&& F \\
	{P_\infty F} & \cdots & {P_nF} & \cdots & {P_1F} & {P_0F}
	\arrow[curve={height=6pt}, from=1-3, to=2-1]
	\arrow[from=1-3, to=2-3]
	\arrow[curve={height=-6pt}, from=1-3, to=2-5]
	\arrow[curve={height=-6pt}, from=1-3, to=2-6]
	\arrow[from=2-1, to=2-2]
	\arrow[from=2-2, to=2-3]
	\arrow[from=2-3, to=2-4]
	\arrow[from=2-4, to=2-5]
	\arrow[from=2-5, to=2-6]
\end{tikzcd}\]
with the property that the difference between successive approximations is ``computable''. The original version of a calculus of functors was introduced by Goodwillie in the seminal papers~\cite{GoodCalcI, GoodCalcII, GoodCalcIII} in order to study functors between the categories of spaces and spectra. This functor calculus, which we refer to as Goodwillie calculus, remains the most widely studied and applied version of functor calculus, and its applications are seen throughout homotopy theory, see e.g.,~\cite{DundasGoodwillieMcCarthy, Heutsvn, BehrensRezk}. In this setting, Goodwillie demonstrated that the difference between $P_nF$ and $P_{n -1}F$ is completely classified by the ``derivative spectrum'' $\partial_n(F)$, which is a spectrum with an action of $\Sigma_n$, the symmetric group on $n$ letters.

In the landmark papers~\cite{ACOperads, ACClassification}, Arone and Ching proved incredible structural properties of Goodwillie calculus. Together, they expanded Ching's~\cite{ChingOperad} observation that the sequence $\partial_\ast(\id) =\{\partial_n(\id)\}_{n \in \bb{N}}$ of derivatives of the identity functor on pointed spaces form an operad, by demonstrating that the derivatives of various classes of functors inherit the structure of an operadic (bi)module over the derivatives of the identity. These module structures were the key ingredients in formulating and proving a chain rule for the derivatives of composites of functors and for classifying Goodwillie towers. This operad $\partial_\ast(\mathsf{Id})$ is in fact computable and miraculously coincides with the spectral Lie operad \cite{ChingOperad}. In combination with an increased understanding of the Lie operad, these results were used to advance our understanding of the unstable homotopy groups of spheres~\cite{BehrensEHP, BoydeGrowth}.

In this paper we investigate analogous structural properties of Weiss calculus~\cite{WeissOrthogonal}, a geometric cousin of Goodwillie calculus which applies to functors from the category $\mathsf{Vect}_\bb{R}$ of Euclidean spaces to spaces or spectra. Weiss calculus both applies to and relies on the geometry of Euclidean spaces and is a powerful homotopical tool which can be used to study problems of geometric origin, see e.g.,~\cite{ALTV, KrannichRandal-Williams, Hu, Munoz-Echaniz, CHO}.

\subsection{Main Results}

The goal of this paper is to establish the foundations of the Arone--Ching program in the setting of stable Weiss calculus, i.e., for functors from the category of Euclidean spaces and linear isometries to the category of spectra.  In this work we  only explicitly deal with the real version of Weiss calculus, sometimes called orthogonal calculus, though our results extend to unitary calculus~\cite{TaggartUnitary} in the expected ways. There are numerous other variants of Weiss calculus which one could expect to apply these techniques to ~\cite{Arro, TaggartReality, TaggartLocalizations, CarrTaggart}.

\subsection*{The algebraic framework} In this paper, we work with spectrally enriched categories $\mathcal{C}$ with an augmentation to a fixed spectrally enriched category $A_\ast$. We are interested in the category $\mathsf{RMod}_\mc{C}$ of right $\mathcal{C}$-modules,  defined as the spectral presheaves on $\mathcal{C}$. We construct an $A_\ast^\mathsf{op}$-augmented category $K(\mathcal{C})$, which we call the \emph{Koszul dual of $\mc{C}$} and extend Koszul duality to right modules over $\mc{C}$. 

\begin{xxthm}[Theorem~\ref{thm:koszul duality for right modules}]
Koszul duality forms a spectrally enriched Quillen adjunction
\[
\adjunction{K}{\mathsf{RMod}_{\mathcal{C}}}{\mathsf{RMod}_{K(\mathcal{C})}^{\mathsf{op}}}{K^{-1}}.
\]
If $\mc{C}$ is dualizable (Definition \ref{def: dualizable category}), then the derived (co)units are equivalences for level-finite right modules. 
\end{xxthm}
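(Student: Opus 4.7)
The plan is to model the argument on the classical Koszul duality for right modules over augmented operads, replacing the operadic composition product with enriched composition over the $A_*$-augmented category $\mc{C}$.

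\emph{Construction of the adjoint pair.} Following the operadic blueprint, I would construct $K(\mc{C})$ as the spectral category obtained from a bar-type construction on $\mc{C}$ relative to the augmentation $\mc{C} \to A_*$, naturally augmented over $A_*^\op$. For a right $\mc{C}$-module $M$, the Koszul dual $K(M)$ is the corresponding bar construction with $M$ placed on the left; the residual bar factor supplies the right $K(\mc{C})$-action. The right adjoint $K^{-1}$ is built dually from a cobar (totalization) construction. The hom--tensor adjunction between a simplicial bar and a cosimplicial cobar, combined with the defining universal property, then yields the adjunction $K \dashv K^{-1}$.

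\emph{Quillen adjunction.} I would equip both sides with the projective (levelwise) model structure. The generating (trivial) cofibrations on each side are representables smashed with generating (trivial) cofibrations of spectra. Since $K$ is defined levelwise as the geometric realization of a simplicial spectrum built from the hom-spectra of $\mc{C}$, one checks directly on generators that $K$ preserves (trivial) cofibrations, or equivalently that $K^{-1}$ preserves (trivial) fibrations.

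\emph{Equivalence on level-finite modules.} A level-finite right module admits a finite filtration whose associated graded pieces are shifts of free modules $\Map_\mc{C}(-,c)$. On a free module, both the derived unit and counit can be identified with finite bar-to-cobar comparison maps whose terms are tensor products of mapping spectra $\mc{C}(c_i,c_j)$ together with $A_*$-factors. When $\mc{C}$ is dualizable in the sense of Definition~\ref{def: dualizable category}, these mapping spectra admit Spanier--Whitehead duals, and term-by-term duality converts each stage of the bar filtration into the matching stage of the corresponding cobar filtration on $K(\mc{C})$. The finite filtration then propagates the equivalence from free modules to all level-finite modules.

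\emph{Main obstacle.} The principal difficulty will be ensuring that the Spanier--Whitehead dualities assemble coherently, respecting bar differentials, the $A_*$-augmentation, and the induced right $K(\mc{C})$-action. Layerwise matching is routine, but global compatibility may require identifying $K$ and $K^{-1}$ as two manifestations of a single enriched duality pairing and invoking a shared universal property, rather than comparing components by hand. Restricting to level-finite modules is precisely what avoids the convergence pathologies that would otherwise obstruct the derived (co)unit equivalences in full generality.
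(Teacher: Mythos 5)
Your proposal follows the classical operadic bar/cobar template, whereas the paper deliberately avoids it: there $K(\mc{C})$ is defined as (the opposite of) the full spectral subcategory of $\mathsf{RMod}_{\mc{C}}$ spanned by cofibrant models $T_n$ of the trivial modules $\mathsf{Triv}_{\mc{C}}(A_n)$, and $K(M)$ is the restricted representable $T_n\mapsto \mathsf{RMod}_{\mc{C}}(M,T_n)$. This makes the adjunction a coend/end formality and makes the Quillen property follow from the pushout-product axiom (mapping out of a cofibration into the fibrant $T_n$ gives a fibration, i.e.\ a cofibration in $\s^{\op}$), and it sidesteps exactly the ``main obstacle'' you flag, since composition and the right $K(\mc{C})$-action need no hand-built coherences. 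Note also that your bar construction, as stated, has the wrong variance and is off by a dualization: $B(M,\mc{C},A_\ast)$ is covariant in $M$ and models $\mathsf{TAQ}(M)$, whereas the paper's $K$ is contravariant and lifts $\mathsf{TAQ}(-)^{\vee}\wedge D_{G_\ast}$; for $\mc{C}=\mathsf{Env}(P)$ the mapping spectra of $K(\mc{C})$ are Spanier--Whitehead \emph{duals} of $B(1,P,1)$, not $B(1,P,1)$ itself.

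The genuine gaps are in the last step. First, a level-finite right module does \emph{not} admit a finite filtration with free associated graded: level-finiteness constrains the values $R(n)$, not the number of cells, and the cell filtration is in general infinite. The filtration that actually has finitely many nontrivial layers in each categorical degree is the \emph{truncation} filtration $R\to R^{\le n}$, whose layers are \emph{trivial} modules $\mathsf{Triv}_{\mc{C}}(R(n))$, not free ones; the paper inducts over this, using that the (co)unit in degree $n$ depends only on the $n$-truncation. Second, ``term-by-term Spanier--Whitehead duality'' does not engage with the equivariance that dualizability is really about. The key computations are $K(\mathsf{Free}_{\mc{C}}(S))\simeq \mathsf{Triv}_{K(\mc{C})}(S^{\vee}\wedge D_{G_n})$ and $K(\mathsf{Triv}_{\mc{C}}(S))\simeq \mathsf{Free}_{K(\mc{C})}(S^{\vee}\wedge D_{G_n})$, and these require Klein's norm equivalence $(X\wedge D_{G_n})_{hG_n}\simeq X^{hG_n}$ for finite free $G_n$-spectra; this is where both the freeness/finiteness of the $G_n$-action on $\mc{C}(i,j)$ and the invertibility of $D_{G_n}$ from Definition~\ref{def: dualizable category} enter. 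Without the norm map and the $D_{G_n}$-twist your layerwise comparison of bar and cobar filtrations cannot close up, even on a single layer.
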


In particular, if $P$ is a reduced, level-finite operad in spectra, applying this to $\mathsf{Env}(P)$, the envelope of $P$, resolves positively a conjecture of Ching \cite[Theorem 9.1]{BehrensRezk} that the category of right $P$-modules is equivalent to the category of right $K(P)$-modules, under (nonequivariant) finiteness conditions. 

Underlying every spectral category is a spectral $\infty$-category, see Section \ref{subsection: spectral homotopy}.

\begin{xxthm}[Theorem \ref{thm: double koszul dual}]
    If $C$ is dualizable, then there is an equivalence of augmented spectrally enriched $\infty$-categories
    \[K(K(C)) \simeq C.\]
\end{xxthm}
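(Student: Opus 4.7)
The plan is to verify $C \simeq K(K(C))$ by checking it on mapping spectra via the Yoneda embedding and iterating Theorem~A. Since the formation of the Koszul dual does not alter the underlying $\infty$-groupoid of objects, it suffices to produce natural, composition- and augmentation-compatible equivalences $C(x,y) \simeq K(K(C))(x,y)$ for each pair of objects $(x,y)$.

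Unfolding via the spectrally enriched Yoneda lemma, $C(x,y) \simeq \mathsf{Map}_{\mathsf{RMod}_C}(x^\wedge, y^\wedge)$, where $x^\wedge = C(-,x)$. Dualizability of $C$ ensures that representables are level-finite, so Theorem~A yields
\[\mathsf{Map}_{\mathsf{RMod}_C}(x^\wedge, y^\wedge) \simeq \mathsf{Map}_{\mathsf{RMod}_{K(C)}^{\mathsf{op}}}(K(x^\wedge), K(y^\wedge)).\]
Applying the same reasoning to the category $K(C)$---for which I would first verify that dualizability is inherited, presumably via a symmetry intrinsic to the Koszul duality construction---gives a further equivalence
\[\mathsf{Map}_{\mathsf{RMod}_{K(C)}^{\mathsf{op}}}(K(x^\wedge), K(y^\wedge)) \simeq \mathsf{Map}_{\mathsf{RMod}_{K(K(C))}}(K(K(x^\wedge)), K(K(y^\wedge))).\]

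The main obstacle, and the heart of the proof, is to identify $K(K(x^\wedge))$ with the representable right $K(K(C))$-module $K(K(C))(-, x)$, naturally in $x$. This should follow from the functoriality of the bar/cobar model defining $K$ at the level of augmented spectral categories, established in the proof of Theorem~A: applying $K$ twice to the Yoneda-representing right module for $x$ in $C$ produces the right module that Yoneda-represents $x$ in $K(K(C))$. Granting this, a final appeal to Yoneda in $K(K(C))$ identifies the right-hand side of the second display with $K(K(C))(x, y)$, completing the equivalence of mapping spectra.

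Finally, naturality in each variable, compatibility with composition and with the augmentations, and the multiplicativity of the Yoneda embedding together upgrade these pointwise equivalences of mapping spectra into the asserted equivalence of augmented spectrally enriched $\infty$-categories. The two principal technical points are therefore the \emph{inheritance of dualizability under $K$} and the \emph{behavior of iterated $K$ on representable modules}; both should reduce to concrete computations with the bar/cobar model of $K$ used in the proof of Theorem~A.
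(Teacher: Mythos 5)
Your proposal is correct in outline and rests on the same two pillars as the paper's proof: the enriched Yoneda embedding and the restriction of Koszul duality to an equivalence on level-finite right modules (Theorem~\ref{thm:koszul duality for right modules}). The organizational difference is that you apply $K$ twice and must then recognize $K(K(x^\wedge))$ as a representable right $K(K(C))$-module, which you rightly single out as the heart of the matter; the paper avoids this step entirely by using the model-independent description of $K(K(\mathcal{C}))$ as the full subcategory of $\mathsf{RMod}_{K(\mathcal{C})}^{\mathsf{op}}$ spanned by the trivial modules $\mathsf{Triv}_{K(\mathcal{C})}(\Sigma^\infty_+ G_n)$, so that a \emph{single} application of $K$ to the Yoneda image already lands in $K(K(\mathcal{C}))$: one only needs $K(\mathsf{Free}_{\mathcal{C}}(\Sigma^\infty_+ G_n)) \simeq \mathsf{Triv}_{K(\mathcal{C})}(\Sigma^\infty_+ G_n)$, i.e.\ Proposition~\ref{prop: koszul dual of free} combined with Atiyah duality $\Sigma^\infty_+ G_n^\vee \wedge D_{G_n} \simeq \Sigma^\infty_+ G_n$. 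Your two flagged black boxes are genuine inputs and are exactly what the paper supplies: inheritance of dualizability under $K$ is a standalone lemma (proved via a finite $G_n$-cell structure on $\mathsf{TAQ}(T_n)$), and the behavior of iterated $K$ on representables is not a formal consequence of ``functoriality of the bar/cobar model'' but of the concrete interchange $K(\mathsf{Free}) \simeq \mathsf{Triv}$ and $K(\mathsf{Triv}) \simeq \mathsf{Free}$ (Propositions~\ref{prop: koszul dual of free} and~\ref{prop: koszul dual of trivial}), whose proofs require the norm-map arguments that dualizability makes available. Finally, for the upgrade from pointwise equivalences of mapping spectra to an equivalence of augmented spectrally enriched $\infty$-categories, the paper constructs an honest enriched functor $\mathcal{C} \to K(K(\mathcal{C}))$ as a factorization of the composite of the Yoneda embedding with $K$ through the full subcategory $K(K(\mathcal{C}))$, and then checks fully faithfulness and essential surjectivity via the criteria of Gepner--Haugseng; your pointwise equivalences should be packaged this way rather than assembled by hand from naturality.
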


\subsection*{Koszul duality in Weiss calculus} Our main application of Koszul duality is to construct a model for the Weiss derivatives of a functor $F: \mathsf{Vect}_\bb{R} \to \s$ which supports the action of the ``orthogonal Lie envelope'' $K(\mathsf{OEpi})$, where, $\mathsf{OEpi}$ is the category of ``orthogonal surjections'' defined as the opposite of the category $\mathsf{Vect}_\bb{R}$.

\begin{xxthm}[Theorem~\ref{thm:koszul derivatives are derivatives}]
The Weiss derivatives have the structure of a right module over $K(\mathsf{OEpi})$. In particular, the  Weiss derivatives assemble into a left Quillen functor
    \[
\partial_\ast: \mathsf{Fun}(\mathsf{Vect}_\bb{R},\s) \longrightarrow \mathsf{RMod}_{K(\mathsf{OEpi})}.
\]
\end{xxthm}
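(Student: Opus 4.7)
The plan is to identify the derivative functor $\partial_*$ with (a variant of) the Koszul duality functor $K$ from Theorem A, applied to $F$ viewed as a right $\mathsf{OEpi}$-module via $\mathsf{OEpi} = \mathsf{Vect}_\bb{R}^\mathsf{op}$. Once the identification $\partial_n F \simeq K(F)(\bb{R}^n)$ is in place---naturally in $F$ and equivariantly for $\mathsf{Aut}_\mathsf{OEpi}(\bb{R}^n) = O(n)$---both the right $K(\mathsf{OEpi})$-module structure and the left Quillen property become consequences of Theorem A.

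For the identification, I would begin with an explicit stable model for the $n$-th Weiss derivative: the cross-effect $\mathrm{cr}_n F(\bb{R}, \ldots, \bb{R})$, or equivalently the total homotopy fibre of the cube indexed by proper subspaces of $\bb{R}^n$. The $O(n)$-action on this model is tautological, and the naturality across $n$ is encoded by orthogonal surjections, i.e., by morphisms in $\mathsf{OEpi}$. I would then unwind the definition of $K$: the value $K(F)(\bb{R}^n)$ is computed as a two-sided bar construction whose simplicial degree $k$ is indexed by composable chains $\bb{R}^n \twoheadrightarrow V_k \twoheadrightarrow \cdots \twoheadrightarrow V_1$ in $\mathsf{OEpi}$. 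The nerve of the poset of nontrivial proper subspaces of $\bb{R}^n$ parametrises such chains, and a contractibility argument identifies the totalisation of the resulting simplicial spectrum with the cross-effect model, with $O(n)$-equivariance coming for free.

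The left Quillen statement then follows immediately: $K$ is left Quillen by Theorem A and the identification above is natural in $F$ and compatible with the projective model structure on $\mathsf{Fun}(\mathsf{Vect}_\bb{R}, \s)$. The main obstacle is this comparison of models: carefully matching the differential of the bar construction with Weiss's cubical construction, and verifying that the $O(n)$-actions and the composition laws in $K(\mathsf{OEpi})$ agree on the nose rather than only up to higher coherence data. A cleaner conceptual alternative would be to characterise both functors via a universal property---sending $F$ to its failure to be polynomial---so that the comparison reduces to the uniqueness of such a universal functor.
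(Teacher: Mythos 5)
There is a genuine gap, and it sits at the very first step. The identification $\partial_n F \simeq K(F)(\bb{R}^n)$, with $K$ applied to $F$ itself regarded as a right $\mathsf{OEpi}$-module, is false. Take $F = R_V = \mathsf{Vect}_\bb{R}(V,-)$ with $\dim V = m \geq 2$. As a right $\mathsf{OEpi}$-module this is the representable (free) module $\mathsf{Free}_{\mathsf{OEpi}}(\Sigma^\infty_+ O(m))$ concentrated at $\bb{R}^m$, so by the Yoneda lemma (equivalently Proposition~\ref{prop: koszul dual of free}) its Koszul dual $K(R_V) \simeq \mathsf{Triv}_{K(\mathsf{OEpi})}(\Sigma^\infty_+ O(m))$ is homotopically concentrated in degree $m$. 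But $\partial_n R_V \simeq \Omega^{nV}(D_{O(n)} \wedge \mathsf{Vect}_\bb{R}(\bb{R}^n, V))$ is nontrivial for every $1 \leq n \leq m$ (Example~\ref{ex: Weiss derivative of rep}), so already $K(R_V)(\bb{R}^1) = \ast \neq \partial_1 R_V$. There is also a variance obstruction to your plan: $K$ is contravariant (it is left Quillen into $\mathsf{RMod}_{K(\mathsf{OEpi})}^{\mathsf{op}}$), so a single application of $K$ to $F$ can never yield the \emph{covariant} left Quillen functor $\mathsf{Fun}(\mathsf{Vect}_\bb{R},\s) \to \mathsf{RMod}_{K(\mathsf{OEpi})}$ asserted in the theorem. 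Consequently the bar-construction comparison you sketch (chains of surjections out of $\bb{R}^n$ versus the cubical cross-effect) is computing $\mathsf{TAQ}(F)^\vee$ with respect to the $\mathsf{OEpi}$-action on $F$, which is not the Weiss derivative.

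What the paper does instead is compose \emph{two} contravariant transforms. First it forms the Fourier transform $\partial^\ast F := \nat(F, S^{\ast(-)})$, a right $\mathsf{OEpi}$-module built from the characters $S^{n(-)}$; on representables this gives $\partial^\ast R_V \cong S^{\ast V}$, a completely different right $\mathsf{OEpi}$-module from $R_V$ itself. Only then does it apply $K$: one computes $\mathsf{TAQ}(S^{\ast V}) \simeq S^{\ast V}/\sf{DI}_\ast(V)$ (the decomposables are exactly the linear fat diagonal), so $K(S^{\ast V}) \simeq (S^{\ast V}/\sf{DI}_\ast(V))^\vee$, which recovers $\partial_\ast R_V$ by Atiyah duality. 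Because $\partial^\ast$ is itself contravariant and does not commute with colimits, the general definition is not $K(\partial^\ast F)$ but the coend $\partial^K_\ast F = \int^{V} \partial^K_\ast R_V \wedge F(V)$, i.e.\ the left Kan extension from representables of $V \mapsto K(S^{\ast V})$. This is manifestly covariant, carries a right $K(\mathsf{OEpi})$-module structure inherited from $K(S^{\ast V})$, is left Quillen because it admits the right adjoint $\Theta^K$, and agrees with the Weiss derivatives since both it and the Spanier--Whitehead model are left Kan extended from representables, where they have been matched. If you want to salvage your outline, the missing idea is precisely this intermediate transform against the characters $S^{n(-)}$; your ``universal property'' alternative would also need to contend with the fact that the right-module structure, not just the underlying orthogonal sequence, must be produced.
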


For $\mc{C}$ either the category of orthogonal groups $O(\ast)$ or $K(\mathsf{OEpi})$, we provide an equivalence of stable $\infty$-categories between $n$-polynomial functors and $n$-truncated coalgebras over a comonad induced by the derivatives. 

\begin{xxthm}[Theorem~\ref{thm: derivative comonadic}]
Let $n$ be a non-negative integer. There is an adjunction of stable $\infty$-categories
\[
\adjunction{\partial_\ast}{\poly{n}(\mathsf{Vect}_\bb{R}, \s)}{\sf{RMod}_\mc{C}}{\Phi},
\]
which is comonadic, and so induces an equivalence of $\infty$-categories
\[
\poly{n}(\mathsf{Vect}_\bb{R}, \s) \cong \sf{CoAlg}^{\leq n}_{\partial_\ast\Phi}(\sf{RMod}_{\mc{C}}),
\]
between $n$-polynomial functors and $n$-truncated $\partial_\ast\Phi$-coalgebras in right $\mc{C}$-modules. In particular, For any functor $F: \mathsf{Vect}_\bb{R} \to \s$ the Weiss tower of $F$ is equivalent to the tower 
\[
\sf{cobar}(\Phi, \partial_\ast\Phi, \partial_{\leq \ast}F)
\]
induced by the truncation tower of $\partial_\ast F$ as a right $\mc{C}$-module.
\end{xxthm}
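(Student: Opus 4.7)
\emph{Plan.} I would apply the dual of Lurie's $\infty$-categorical Barr--Beck theorem to the stable adjunction $\partial_\ast \dashv \Phi$ and then read off the cobar formula as a formal consequence of comonadicity.

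\emph{Set-up.} The left Quillen functor $\partial_\ast$ from Theorem~\ref{thm:koszul derivatives are derivatives} factors through the $n$-polynomial Bousfield localization, since for $k \leq n$ the $k$-th derivative of $F$ agrees with that of $P_nF$. This produces a colimit-preserving functor $\partial_\ast : \poly{n}(\mathsf{Vect}_\bb{R}, \s) \to \sf{RMod}_\mc{C}$ between presentable stable $\infty$-categories, whose right adjoint $\Phi$ exists by presentability. Comonadic Barr--Beck now requires two verifications: conservativity and preservation of totalizations of $\partial_\ast$-split cosimplicial objects.

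\emph{Step 1: Conservativity.} Since $\partial_\ast$ is exact, it suffices to show that $\partial_k F \simeq 0$ for all $k \leq n$ forces $F \simeq 0$ whenever $F$ is $n$-polynomial. I would induct along the finite Weiss tower $F \simeq P_n F \to \cdots \to P_0 F$. By Weiss' classification of homogeneous functors to spectra, each layer $D_k F$ is determined by $\partial_k F$ equipped with its $O(k)$-action; this action is the restriction of the $\mc{C}$-module structure, so vanishing of $\partial_k F$ as a right $\mc{C}$-module forces vanishing of $D_k F$. Combined with $P_0 F \simeq \partial_0 F \simeq 0$, this yields $F \simeq 0$.

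\emph{Step 2: Preservation of $\partial_\ast$-split totalizations.} I would verify that $\partial_\ast$ preserves the totalization of any $\partial_\ast$-split cosimplicial object in $\poly{n}$. Concretely, for such an $X^\bullet$ the splitting of $\partial_\ast X^\bullet$ expresses its totalization as a retract of the coaugmentation; using exactness of $\partial_\ast$ together with the fact that the splitting is witnessed on the level of $\mc{C}$-modules, the canonical comparison $\partial_\ast\sf{Tot}(X^\bullet) \to \sf{Tot}(\partial_\ast X^\bullet)$ becomes an equivalence after invoking Step~1. Together with Step~1 this establishes comonadicity and hence the equivalence $\poly{n}(\mathsf{Vect}_\bb{R}, \s) \simeq \sf{CoAlg}^{\leq n}_{\partial_\ast \Phi}(\sf{RMod}_\mc{C})$.

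\emph{Step 3: Identification of the Weiss tower.} Comonadicity identifies the inverse of $\partial_\ast$ with $\sf{cobar}(\Phi, \partial_\ast \Phi, -)$ at each fixed $n$. To globalize, I would compare the $n$-truncation tower of $\partial_\ast F$ in $\sf{RMod}_\mc{C}$ with the Weiss tower of $F$: at each level the $n$-truncation of $\partial_\ast F$ agrees with $\partial_\ast P_n F$ under the equivalence, and the cobar construction carries the truncation tower of right $\mc{C}$-modules to the Weiss tower, producing the formula $\sf{cobar}(\Phi, \partial_\ast \Phi, \partial_{\leq \ast} F)$.

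\emph{Main obstacle.} The heart of the argument lies in Step~1: although it reduces to the Weiss classification of homogeneous functors, one must check that this classification is genuinely accessed by the $\mc{C}$-module structure through restriction to the $O(k)$-actions. The full $K(\mathsf{OEpi})$-module structure supplied by the Koszul duality framework is then used more decisively in Step~3, where it provides the gluing data needed for the comonad $\partial_\ast \Phi$ to reconstruct not merely the homogeneous layers but the connecting maps of the Weiss tower; a strictly weaker module structure (e.g.\ just the $O(k)$-actions) would be insufficient here.
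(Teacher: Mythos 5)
Your overall strategy (dual Barr--Beck for the stable adjunction, then read off the cobar tower) matches the paper's, and your Step~1 is actually a cleaner argument than the paper's: since $\partial_\ast$ is exact between stable $\infty$-categories, conservativity reduces to reflecting zero objects, and the finite Weiss tower of an $n$-polynomial functor together with the classification $D_kF \simeq (\partial_kF \wedge S^{k(-)})_{hO(k)}$ does the job. (The paper instead deduces conservativity from the completeness of polynomial functors, i.e.\ from Lemma~\ref{lem: polynomials are complete}.)

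The genuine gap is Step~2, and your closing paragraph misdiagnoses where the difficulty lies. The functor $\partial_\ast$ is defined as a coend, hence is a colimit-preserving left adjoint; it has no a priori reason to commute with totalizations, which are infinite limits. The comparison map $\partial_\ast\mathsf{Tot}(X^\bullet) \to \mathsf{Tot}(\partial_\ast X^\bullet)$ lives in $\sf{RMod}_\mc{C}$, not in $\poly{n}(\mathsf{Vect}_\bb{R},\s)$, so conservativity of $\partial_\ast$ cannot be ``invoked'' to show it is an equivalence: you would first need to exhibit the target as $\partial_\ast$ of something and the map as $\partial_\ast$ of a map, which is precisely what is at stake. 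The paper handles this by reducing, via Lemma~\ref{lem: FG resolution}, the split-totalization condition to the convergence of the canonical resolution $F \to \mathsf{Tot}(\Phi(\partial_\ast\Phi)^\bullet\partial_\ast F)$ for $n$-polynomial $F$; this is Lemma~\ref{lem: polynomials are complete}, which rests on Proposition~\ref{prop: completion and tower limit}. That proposition is the real analytic content of the theorem: one inducts up the Weiss tower and shows that on each layer $R_kF \simeq R_k\Psi_k(\partial_\ast F)$ the relevant cosimplicial object acquires extra codegeneracies, so its totalization is computed by the coaugmentation. None of this appears in your proposal, and without it the comonadicity claim (and hence the identification of the Weiss tower with $\sf{cobar}(\Phi,\partial_\ast\Phi,\partial_{\leq\ast}F)$ in your Step~3, which also silently uses Lemma~\ref{lem: polynomials are complete}) is not established. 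The coalgebra gluing data you highlight as the ``main obstacle'' is genuinely needed, but the step that actually requires work is the completeness of polynomial functors, not conservativity.
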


Using this coalgebraic data we produce a version of the Kuhn-McCarthy pullback square which classifies the map $P_nF \to P_{n-1}F$  by means of a universal fibration\footnote{As in Goodwillie calculus, one can also express the right-hand corner as a homotopy orbits ~\cite[Corollary 4.17]{ACClassification}.}.

\begin{xxthm}[Corollary~\ref{cor: McCarthy-Kuhn}]\label{thm: intro pullback}
Let $n$ be a non-negative integer. For every functor $F: \mathsf{Vect}_\bb{R} \to \s$, there is a natural homotopy pullback square
\[\begin{tikzcd}
	{P_nF(V)} & {(D_{O(n)}^\vee \wedge S^{nV}/\sf{DI}_n(V) \wedge \partial_n F)^{hO(n)}} \\
	{P_{n-1}F(V)} & {(\Sigma \sf{DI}_n(V) \wedge \partial_n F)_{hO(n)}}
	\arrow[from=1-1, to=2-1]
	\arrow[from=1-1, to=1-2]
	\arrow[from=1-2, to=2-2]
	\arrow[from=2-1, to=2-2]
\end{tikzcd}\]	
where $\mathsf{DI}_n(V)$ is the linear fat diagonal.
\end{xxthm}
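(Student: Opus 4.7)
The strategy is to extract the square from the comonadic equivalence of Theorem~\ref{thm: derivative comonadic}, applied to the truncation cofibre sequence of right $K(\mathsf{OEpi})$-modules
\[
\sigma_n(\partial_nF) \longrightarrow \partial_{\leq n}F \longrightarrow \partial_{\leq n-1}F,
\]
where $\sigma_n(\partial_nF)$ is the right $K(\mathsf{OEpi})$-module concentrated in level $n$ with value $\partial_nF$ and its intrinsic $O(n)$-action. Since the comonadic equivalence identifies the Weiss tower with the cobar tower $\sf{cobar}(\Phi,\partial_\ast\Phi,\partial_{\leq\ast}F)$, and the cobar construction, as a totalization of a stable cosimplicial diagram, preserves fibre sequences in its last variable, this cofibre sequence gives rise to a fibre sequence $D_nF \to P_nF \to P_{n-1}F$. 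The task therefore reduces to computing $D_nF(V)$ explicitly and producing the Kuhn--McCarthy pullback classifying it.

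The first step is to evaluate the cobar construction on a module concentrated in level $n$. By adjunction and the definition of the cobar construction, only the level-$n$ part of $K(\mathsf{OEpi})$ contributes, giving
\[
D_nF(V) \simeq \bigl(K(\mathsf{OEpi})(n, (0,V)) \wedge \partial_nF\bigr)^{hO(n)}.
\]
The decisive geometric computation is an identification
\[
K(\mathsf{OEpi})(n, (0,V)) \simeq D_{O(n)}^\vee \wedge S^{nV}/\sf{DI}_n(V),
\]
obtained from a model for the Koszul complex of $\mathsf{OEpi}$ via flags of proper linear subspaces of $V^n$: the complement of the linear fat diagonal parameterises genuine orthogonal surjections and yields $S^{nV}/\sf{DI}_n(V)$, while the dualizing spectrum $D_{O(n)}^\vee$ is the twist required by Spanier--Whitehead duality when the ambient symmetry group is a non-compact Lie group.

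The final step is to produce the pullback square. Combining the $O(n)$-equivariant cofibre sequence $\sf{DI}_n(V) \hookrightarrow S^{nV} \to S^{nV}/\sf{DI}_n(V)$, smashed with $\partial_nF$, with the norm map from homotopy orbits to (twisted) homotopy fixed points, one shows that the cofibre of
\[
\bigl(\sf{DI}_n(V)\wedge\partial_nF\bigr)_{hO(n)} \longrightarrow \bigl(D_{O(n)}^\vee \wedge S^{nV}/\sf{DI}_n(V) \wedge \partial_nF\bigr)^{hO(n)}
\]
is $\bigl(\Sigma\,\sf{DI}_n(V) \wedge \partial_nF\bigr)_{hO(n)}$. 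Splicing this cofibre/pullback square with the fibre sequence $D_nF \to P_nF \to P_{n-1}F$ yields the claimed diagram, with the bottom horizontal map interpreted as the generalized norm $k$-invariant. The main obstacle will be the explicit identification of the Koszul mapping spectrum $K(\mathsf{OEpi})(n,(0,V))$: this replaces the combinatorics of the partition poset in the Goodwillie--Arone--Ching setting with the geometry of the lattice of linear subspaces of $V^n$, and the non-compactness of $O(n)$ forces careful bookkeeping of the dualizing spectrum throughout.
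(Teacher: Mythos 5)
Your overall architecture does match the paper's: both routes go through the comonadic classification of the Weiss tower, identify the level-$n$ contribution with $(D_{O(n)}^\vee\wedge S^{nV}/\sf{DI}_n(V)\wedge\partial_nF)^{hO(n)}$ via the computation of $\partial_\ast R_V$ and Atiyah duality, and use the cofibre sequence $S^{nV}\to S^{nV}/\sf{DI}_n(V)\to\Sigma\,\sf{DI}_n(V)$ together with the norm map to produce the bottom-right corner. (The paper obtains the top-right corner as $\Phi\partial_nF$, where $\Phi$ is the right adjoint to $\partial_\ast$, evaluated by the explicit end formula and the known $\partial_nR_V$, rather than through a flag/poset model for the Koszul mapping spectra; by Example~\ref{ex: Koszul derivatives reps} these are the same computation.)

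There are, however, two genuine problems. First, the displayed cofibre claim is wrong as written: the composite $\sf{DI}_n(V)\to S^{nV}\to S^{nV}/\sf{DI}_n(V)$ is null, so the cofibre of the map you exhibit splits off a wedge summand and is not $(\Sigma\,\sf{DI}_n(V)\wedge\partial_nF)_{hO(n)}$. The map whose cofibre you need has source $(S^{nV}\wedge\partial_nF)_{hO(n)}\simeq D_nF(V)$, namely the quotient map composed with the norm equivalence onto $(D_{O(n)}^\vee\wedge S^{nV}/\sf{DI}_n(V)\wedge\partial_nF)^{hO(n)}$. Second, and more seriously, ``splicing'' the fibre sequence $D_nF\to P_nF\to P_{n-1}F$ with this cofibre sequence does not yield a pullback square. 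You must (a) construct the natural transformation $P_nF\to\Phi\partial_nF$ (in the paper: the unit of the $(\partial_\ast,\Phi)$-adjunction followed by truncation to level $n$), (b) show it is a $D_n$-equivalence, i.e.\ that on vertical fibres it induces the canonical equivalence $D_nF\simeq(S^{nV}\wedge\partial_nF)_{hO(n)}$, and (c) obtain the bottom horizontal map as $P_{n-1}$ of the top one, using that $(\Sigma\,\sf{DI}_n(V)\wedge\partial_nF)_{hO(n)}$ is the $(n-1)$-polynomial approximation of the top-right corner. Step (b) is exactly Proposition~\ref{prop: McCarthy1}, and it is not formal: it rests on the identification of the layers of the fake Weiss tower with the dual norm map (Theorem~\ref{thm: norm map derivatives}), which your plan implicitly assumes when asserting the formula for $D_nF(V)$ ``with the correct map.'' Finally, a conceptual slip to correct: $O(n)$ is compact; the dualizing spectrum $D_{O(n)}=S^{\mathsf{Ad}_{O(n)}}$ intervenes because $O(n)$ is a positive-dimensional compact Lie group, not because of non-compactness.
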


In many ways, this coalgebra structure is reminiscent of a divided power coalgebra, see ~\cite[\S6.1]{HeutsGoodwillieApprox}. Up to homotopy, such data can be packaged into a ``divided power right module'' \cite[Theorem 0.5]{ACClassification}.

\subsection{Similarities and differences with Goodwillie calculus}

There is a direct relationship between Goodwillie calculus and Weiss calculus through the one-point compactification functor $(-)^+ : V \mapsto V^+$, see e.g.,\cite{BarnesEldredComparing, AroneAkFree, BehrensEHP}. In particular, for a functor $F: \T \to \s$, the Weiss tower of the composite $F \circ (-)^+$ agrees with the Goodwillie tower of $F$ evaluated on spheres. Thus, one expects that the stable Weiss calculus version of the Arone--Ching program should be similar to the case of functors $\T \rightarrow \s$. 

Our approach to Weiss calculus is to first verify a conjecture of Behrens \cite{behrensOrthNote}, that the homogeneous functors $S^{n(-)}=\Sigma^\infty(\mathbb{R}^n \otimes (-))^+$ act like characters in the sense of harmonic analysis. One defines Fourier transforms against them, which land in the category of right $\sf{OEpi}$-modules. We then show that the application of another Fourier transform, Koszul duality, takes us to the Weiss derivatives, implementing conjectures of Espic \cite{espic} and solving Arone--Ching's question on how to produce right module structures on derivatives in the absence of a functor $\s \rightarrow \mathsf{Vect}_\mathbb{R}$ \cite[Page 5]{ACOperads}.

The category $\mathsf{OEpi}$ can be thought of as a lift of the category $\mathsf{Surj}=\mathsf{Env}(\mathsf{Comm})$ of finite sets and surjections. As such, $K(\mathsf{OEpi})$ is the inner product space analog of the Lie operad. In future work, we verify that the same strategy implemented in this paper works in Goodwillie calculus, which we will then use to study the interaction of Goodwillie and Weiss calculus.

\subsection{Future work} We conclude the introduction by providing a number of conjectures relating to this work and possible extensions thereof, which we plan to return to in future work.

\subsection*{Products in Koszul duality and Weiss calculus}

Suppose that $\mc{C}$ is augmented over $A_\ast$, and that $\mc{C}$ comes equipped with a symmetric monoidal structure. The category $\mathrm{RMod}_\mc{C}$ inherits a symmetric monoidal structure $\circledast$ given by \textit{Day convolution}. It is natural to ask how this interacts with Koszul duality.

One can see that \emph{if} the functor 
\[
\mathsf{Triv}_\mc{C}: \mathsf{RMod}_{A_\ast} \rightarrow \mathsf{RMod}_\mc{C},
\]
is symmetric monoidal with respect to derived Day convolution, then $K(\mc{C})$ is a symmetric monoidal category, such that the Koszul duality adjunction is symmetric monoidal with respect to Day convolution.

This condition is known to be satisfied when $\mc{C}=\mathrm{Env}(P)$ for an operad $P$, since Day convolution is computed on the underlying symmetric sequences. In general, this symmetric monoidality condition is very restrictive and for dualizable $\mc{C}$ can be seen to imply splittings of the mapping spectra $\mc{C}(n,m)$ in terms of $\mc{C}(n,1)$. In the case of $\mathsf{OEpi}$, splittings of similar natures do exist \cite{MillerSplitting}, and so we conjecture:

\begin{xxconj}
    For $\mathsf{OEpi}$
    \[\mathsf{Triv}: \mathsf{RMod}_{O(\ast)} \rightarrow \mathsf{RMod}_\mathsf{OEpi}\]
    is symmetric monoidal with respect to Day convolution.
\end{xxconj}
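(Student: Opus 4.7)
The plan is to reduce the conjecture to a stable splitting of the mapping spectra of $\mathsf{OEpi}$, and then to invoke the Miller splitting \cite{MillerSplitting}. Both $\mathsf{OEpi}$ and $O(\ast)$ are spectrally-enriched symmetric monoidal categories under direct sum, and the augmentation $\mathsf{OEpi} \to O(\ast)$ is strictly symmetric monoidal, so restriction along it equips $\mathsf{Triv}$ with a canonical lax symmetric monoidal structure. The conjecture then amounts to asserting that the lax comparison
\[
\mathsf{Triv}(M) \circledast_{\mathsf{OEpi}} \mathsf{Triv}(N) \longrightarrow \mathsf{Triv}(M \circledast_{O(\ast)} N)
\]
is a derived equivalence, naturally in $M$ and $N$. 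Since both functors are colimit-preserving in each variable, by Yoneda it suffices to verify the equivalence on representables of $O(\ast)$, reducing the problem to a concrete equivariant stable splitting of the mapping spectra $\mathsf{OEpi}(k, a \oplus b)$.

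Unwinding the coend formulas for Day convolution on both sides identifies the required equivalence with a decomposition of $\mathsf{OEpi}(k, a \oplus b)$ as an iterated construction built from the mapping spectra $\mathsf{OEpi}(?, 1)$ together with inductions from subgroups of $O(k)$. Up to the Thom-spectrum twisting dictated by the enrichment, the underlying space of $\mathsf{OEpi}(n, m)$ models the Stiefel manifold $V_m(\mathbb{R}^n)$, and the required decomposition is exactly what Miller's splitting produces: $\Sigma^\infty V_m(\mathbb{R}^n)$ splits stably as a wedge of Thom spaces over Grassmannians, with pieces controlled by $1$-frame data, i.e. by $\mathsf{OEpi}(n,1) \simeq S^{n-1}$. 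Iterating the splitting along the Stiefel fibration $V_m(\mathbb{R}^n) \to V_{m-1}(\mathbb{R}^n)$ with fibre $S^{n-m}$ then yields an iterated decomposition matching the $O(a)\times O(b)$-equivariant structure arising in the $O(\ast)$-side coend, hence furnishing the desired equivalence.

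The main obstacle is obtaining enough naturality and equivariance of the Miller splitting to upgrade it into the coherence data of a symmetric monoidal functor: one needs compatibility with the $O(a)\times O(b)\times O(k)$-action, with composition in $\mathsf{OEpi}$, and with the associator and symmetry constraints of Day convolution. The classical construction is non-canonical and only respects a restricted equivariance, so we anticipate needing either a parametrised or $\infty$-categorical refinement, with the splitting constructed functorially as a map over a Grassmannian stack, or a bootstrapping argument using the Koszul-dual picture already developed in this paper—in particular the filtrations by the linear fat diagonals $\mathsf{DI}_n$ appearing in Theorem~\ref{thm: intro pullback}, whose graded pieces reflect precisely the Miller decomposition and whose coherent behaviour is already controlled on the Koszul-dual side.
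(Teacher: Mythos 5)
There is a genuine gap here, and it is worth being clear about its nature: the statement you are proving is stated in the paper as a \emph{conjecture}, not a theorem, and the paper offers no proof of it. The authors' entire justification is the observation that symmetric monoidality of $\mathsf{Triv}$ would force splittings of the mapping spectra $\mathsf{OEpi}(n,m)$ in terms of $\mathsf{OEpi}(n,1)$, and that ``splittings of similar natures do exist'' by Miller's stable splitting of Stiefel manifolds. Your proposal reproduces exactly this reduction (lax structure from the symmetric monoidal augmentation, reduction to representables, identification of the required equivalence with a decomposition of $\Sigma^\infty_+ V_m(\mathbb{R}^n)$ into pieces governed by $1$-frame data), so up to that point you are recovering the authors' motivation rather than going beyond it.

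The gap is the one you yourself flag in your final paragraph: Miller's splitting is a non-canonical, stable homotopy equivalence with only limited equivariance, and what the conjecture requires is a coherent, natural-in-all-variables, $O(a)\times O(b)\times O(k)$-equivariant decomposition compatible with composition in $\mathsf{OEpi}$ and with the associativity and symmetry constraints of Day convolution. Saying that one ``anticipates needing'' a parametrised refinement or a bootstrapping argument is an accurate description of the open problem, not a solution to it; no such refinement is constructed, and it is precisely because this step is unresolved that the statement remains a conjecture in the paper. A secondary issue worth checking is your reduction to representables: $\mathsf{Triv}$ of a representable (free) $O(\ast)$-module is a trivial $\mathsf{OEpi}$-module concentrated in one degree, and the underived Day convolution of two such trivial modules is $\mathsf{OEpi}(-,a\oplus b)$, which is supported in all degrees $\geq a+b$ rather than concentrated in degree $a+b$; so the comparison map is visibly \emph{not} an isomorphism at the point-set level, and the whole content lies in the derived statement, where the cofibrant replacements and the filtration by fat diagonals enter. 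Your sketch does not engage with this, so even the ``easy'' half of the reduction needs more care than you give it.
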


The motivation behind this conjecture is that it implies a \textit{product rule} for Weiss calculus.

\begin{xxconj}
For $F, G: \mathsf{Vect}_\bb{R} \to \s$, there is an equivalence of right $K(\mathsf{OEpi})$-modules
    \begin{align*}
        \partial_\ast(F \wedge G) &\simeq \partial_\ast F \circledast \partial_\ast G.
    \end{align*}
 
\end{xxconj}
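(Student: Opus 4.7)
The plan is to factor $\partial_\ast$ as the composite of a Fourier transform
\[
\widehat{(-)} : \mathsf{Fun}(\mathsf{Vect}_\bb{R},\s) \longrightarrow \mathsf{RMod}_{\mathsf{OEpi}}
\]
against the characters $S^{n(-)}$, followed by Koszul duality $K : \mathsf{RMod}_{\mathsf{OEpi}} \to \mathsf{RMod}_{K(\mathsf{OEpi})}$, as in the description of the strategy preceding this section. One then proves symmetric monoidality of each factor separately: pointwise smash of functors should correspond to Day convolution on $\mathsf{RMod}_{\mathsf{OEpi}}$, which in turn should be preserved by Koszul duality.

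The second step, symmetric monoidality of Koszul duality, is precisely the content of the preceding conjecture together with the discussion in the excerpt: assuming $\mathsf{Triv}_{\mathsf{OEpi}}$ is symmetric monoidal, the Koszul dual $K(\mathsf{OEpi})$ inherits a symmetric monoidal structure and $K$ becomes a symmetric monoidal Quillen adjunction with respect to Day convolution. Combining this with the symmetric monoidality of the Fourier transform then yields the desired equivalence of right $K(\mathsf{OEpi})$-modules.

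The substance of the argument is therefore the first step, for which the key input is the multiplicative property $S^{nV} \wedge S^{mV} \simeq S^{(n+m)V}$ of the characters, which is in essence what it means for them to be characters in the harmonic-analytic sense. I would first check symmetric monoidality on representables: for a pair of (appropriately cofibrant) representable functors, both Fourier transforms are free right $\mathsf{OEpi}$-modules, and the pointwise smash $F \wedge G$ should transform, via the character multiplication, into the Day convolution $\widehat{F} \circledast \widehat{G}$. A density argument, using that representables generate $\mathsf{Fun}(\mathsf{Vect}_\bb{R},\s)$ under colimits and that all operations in sight preserve the relevant sifted colimits, then extends the equivalence to arbitrary $F$ and $G$.

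The hard part is handling the equivariance carefully: one must verify that the natural comparison map $\widehat{F} \circledast \widehat{G} \to \widehat{F \wedge G}$ respects the orthogonal group actions, which requires tracing the interaction between the composition $O(n) \times O(m) \to O(n+m)$ and the multiplication of characters across all levels simultaneously. A secondary obstacle is the preceding conjecture itself: symmetric monoidality of $\mathsf{Triv}_{\mathsf{OEpi}}$ is a subtle statement whose verification presumably requires the Miller-type splittings of the mapping spectra $\mathsf{OEpi}(n,m)$ alluded to in the text, so any approach to the product rule fundamentally depends on that input.
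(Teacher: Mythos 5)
The first thing to note is that the paper does not prove this statement: it appears as a conjecture in the ``Future work'' section, and the authors' only comment on its provenance is that it is \emph{motivated by} (i.e., expected to follow from) the immediately preceding conjecture that $\mathsf{Triv}\colon \mathsf{RMod}_{O(\ast)} \to \mathsf{RMod}_{\mathsf{OEpi}}$ is symmetric monoidal for Day convolution. Your proposal correctly reconstructs that intended logical structure --- factor $\partial_\ast$ through the Fourier transform $\partial^\ast$ and Koszul duality, and prove monoidality of each factor --- and you are right that everything ultimately hinges on the unproven monoidality of $\mathsf{Triv}$. But as it stands this is a strategy conditional on an open conjecture, not a proof, and you should present it as such.

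Beyond that, two concrete points in your sketch would fail as written. First, the Fourier transform of a representable is not a free right $\mathsf{OEpi}$-module: the paper computes $\partial^\ast R_V \cong S^{\ast V}$ (with $\mathsf{OEpi}$ acting by tensoring in the left variable), and it is the \emph{homogeneous} functors whose transforms are free, $\partial^\ast(H_n(X)) \simeq \mathsf{Free}_{\mathsf{OEpi}}(X^\vee \wedge D_{O(n)})$. Second, and more seriously, the base case of your density argument is the pointwise smash $R_V \wedge R_W$, which is not representable, so $\partial^\ast(R_V \wedge R_W) = \nat(R_V \wedge R_W, S^{\ast(-)})$ is not a Yoneda computation; identifying it with the Day convolution $S^{\ast V} \circledast S^{\ast W}$ is precisely the nontrivial splitting statement (of Miller type, tracking the maps $O(n)\times O(m)\to O(n+m)$ across all levels) that the monoidality of $\mathsf{Triv}$ is meant to encode, and the proposal does not supply it. You would also need to address variance and finiteness in replacing $\partial_\ast$ by $K\circ\partial^\ast$: the paper only establishes $\partial_\ast^K F \simeq K(\partial^\ast F)$ as right $K(\mathsf{OEpi})$-modules for $F$ a finite cell complex, whereas the conjecture concerns arbitrary $F$ and $G$, so the colimit-preserving model $\partial_\ast^K$ (rather than $K\circ\partial^\ast$, which converts colimits to limits) is the one for which a two-variable density argument has a chance of running.
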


\subsection*{\textbf{The mixed chain rule}}
It was observed by Arone and Ching~\cite{ACOperads} that orthogonal sequences are left tensored over symmetric sequences with a composition product type formula. For $P$ a symmetric sequence and $Q$ an orthogonal sequence, one may define the composition to be the orthogonal sequence given in level $n$ by
\[
(P \circ Q)(n) = \bigvee_{k =1}^n~\bigvee_{n_1 + \cdots + n_k = n} O(n) \wedge_{O(n_1, \dots, n_k)} P( k) \wedge Q(n_1) \wedge \cdots Q(n_k),
\]
where $n_1 \geq n_2 \geq \cdots n_k > 0$, and $O(n_1, \dots, n_k)$ is the normalizer of $O(n_1) \times  \cdots \times O(n_k)$ in $O(n)$, or equivalently, $O(n_1, \dots, n_k)$ is the semi-direct product of $O(n_1) \times  \cdots \times O(n_k)$ in $O(n)$ with the subgroup of $\Sigma_k$ which permutes the indices $i$ for which the corresponding $n_i$ are equal. When $F(\mathbb{R}^\infty)=\ast$, Arone and Ching conjectured a chain rule for the composite
\[
\mathsf{Vect}_\bb{R} \xrightarrow{\ F\ } \mathsf{Spec} \xrightarrow{\ G \ } \mathsf{Spec}
\]
\[
\partial_\ast(G \circ F) \simeq \partial_\ast G \circ \partial_\ast{F}.
\]
Notably, the left-hand side are Weiss derivatives while the right-hand side uses the above tensoring of the Goodwillie derivatives with the Weiss derivatives. Such a result would fall in line with the chain rule in Goodwillie calculus \cite{ACOperads}. We propose a reformulation of this conjecture in terms of our conjectured Day convolution on $\mathrm{RMod}_{K(\mathsf{OEpi})}$.

For any symmetric monoidal category $\mc{C}$, there is a canonical composition product of symmetric sequences and right modules given by
\[
(-) \odot (-) : \mathsf{SymSeq} \times \mathsf{RMod}_{\mc{C}} \longrightarrow \mathsf{RMod}_{\mc{C}},
\]
\[S \odot R := \int^{n \in \Sigma_\ast} R^{\circledast n} \wedge S(n).\]
 This formula is a generalization of the ``substitution product'' of symmetric sequences, see e.g.,~\cite[\S6.3]{CoendCalc} or \cite[Definition 4.2]{MayZhangZou}. In the case $\mc{C}=\Sigma_\ast$ or $\mc{C}=\mathsf{Env}(P)$ for some operad $P$, this coincides with the usual composition product defined in terms of the symmetric group actions, but for a general $\mc{C}$ uses the entire right module structure of the right-hand side.

\begin{xxconj}
For a composite of functors of the form
\[
\mathsf{Vect}_\bb{R} \xrightarrow{\ F\ } \mathsf{Spec} \xrightarrow{\ G \ } \mathsf{Spec},
\]
such that $F(\bb{R}^\infty)=\ast$, there is an equivalence
\[
\partial_\ast(G \circ F) 
\simeq \partial_\ast G \odot \partial_\ast F
\]
of right $K(\mathsf{OEpi})$-modules.
\end{xxconj}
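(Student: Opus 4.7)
The plan is to reduce the conjecture to the case in which $G$ is $n$-homogeneous via the Goodwillie tower of $G$, and then verify the formula directly by combining the Goodwillie classification of homogeneous functors with the conjectured Weiss product rule. The hypothesis $F(\bb{R}^\infty)=\ast$ ensures that $F$ is reduced, so that $G \circ F$ is reduced whenever $G$ is, and, crucially, that the Goodwillie tower of $G$ evaluated at $F(V)$ converges well enough for $\partial_\ast$ to commute with the associated inverse limit. I would first set up both sides of the chain rule as $\infty$-functors in $G$ and check that each takes the Goodwillie tower $\{P^{\mathsf{Good}}_n G\}_n$ to a tower in $\mathsf{RMod}_{K(\mathsf{OEpi})}$ whose layers are controlled by the homogeneous pieces: on the left, this uses that $\partial_\ast$ is a left Quillen functor (Theorem~\ref{thm:koszul derivatives are derivatives}) combined with the aforementioned convergence; on the right, this uses that the composition product $\odot$ distributes over colimits in the symmetric-sequence variable.

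For $G$ Goodwillie $n$-homogeneous, the classification gives $G(X) \simeq (\partial_n G \wedge X^{\wedge n})_{h\Sigma_n}$. Setting $X = F(V)$ and commuting $\partial_\ast$ past the homotopy orbits and the smash with the fixed $\Sigma_n$-spectrum $\partial_n G$ yields
\[
\partial_\ast(G \circ F) \simeq (\partial_n G \wedge \partial_\ast(F^{\wedge n}))_{h\Sigma_n}.
\]
Applying the conjectured Weiss product rule iteratively, $\partial_\ast(F^{\wedge n}) \simeq (\partial_\ast F)^{\circledast n}$ as a right $K(\mathsf{OEpi})$-module carrying a compatible $\Sigma_n$-action. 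On the right-hand side of the chain rule, the coend
\[
\partial_\ast G \odot \partial_\ast F = \int^{k\in \Sigma_\ast} (\partial_\ast F)^{\circledast k} \wedge (\partial_\ast G)(k)
\]
collapses, since $(\partial_\ast G)(k)$ vanishes for $k \ne n$, to $((\partial_\ast F)^{\circledast n} \wedge \partial_n G)_{h\Sigma_n}$, which matches the left-hand side. Reassembling the layers through the Goodwillie tower of $G$ and passing to the limit, using the convergence noted above, then yields the full chain rule for arbitrary $G$.

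The principal obstacle is two-fold. First, the entire argument is contingent on the conjectured Weiss product rule (itself contingent on the symmetric monoidality of $\mathsf{Triv}$ with respect to Day convolution on $\mathsf{RMod}_{\mathsf{OEpi}}$); resolving either appears to require genuinely new input, most likely a refinement of the Miller splitting lifted to $\mathsf{OEpi}$. Second, even granting these, establishing the equivalence as one of right $K(\mathsf{OEpi})$-modules, rather than merely of underlying symmetric sequences, requires a delicate $\infty$-categorical treatment of the interaction between the Koszul dual structure from Theorem~\ref{thm:koszul derivatives are derivatives} and the $\circledast$ symmetric monoidal structure on modules. In particular, one must check that the $\Sigma_n$-equivariance produced by iterating the product rule is compatible with the $K(\mathsf{OEpi})$-action in the way the coend formula demands. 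This coherence check, rather than the underlying calculation, is where the bulk of the technical work will lie.
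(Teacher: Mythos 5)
This statement is one of the paper's open conjectures (the ``mixed chain rule''); the paper offers no proof of it, so there is nothing to measure your argument against, and it must stand on its own. It does not: what you have written is a conditional reduction, not a proof. You condition on the Weiss product rule $\partial_\ast(F\wedge F')\simeq \partial_\ast F\circledast\partial_\ast F'$, which is itself an unproven conjecture in the paper (and is in turn conditioned there on the symmetric monoidality of $\mathsf{Triv}$ for Day convolution). A proof that assumes an open conjecture of comparable depth is a strategy outline, which is essentially what the authors already sketch when they motivate the statement.

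Beyond the conditionality, there are two concrete gaps. First, the reduction to homogeneous $G$ via ``convergence of the Goodwillie tower of $G$ at $F(V)$'' is not available: $F(\bb{R}^\infty)=\ast$ says nothing about convergence of the tower of $G$ on the finite-dimensional values $F(V)$, and for general $G$ that tower simply does not converge. The correct mechanism (as in Arone--Ching's chain rule for Goodwillie calculus) is degreewise: one must show that $\partial_n(G\circ F)$ depends only on $P_n^{\mathsf{Good}}G$, i.e., that $G\to P_n^{\mathsf{Good}}G$ becomes a $\partial_{\leq n}$-equivalence after precomposition with $F$. This is exactly where the reducedness hypothesis $F(\bb{R}^\infty)=\ast$ must be used, and it requires an argument in the mixed Goodwillie--Weiss setting that you have not supplied; invoking a limit over the tower does not substitute for it. Second, as you yourself note, the identification $\partial_\ast(F^{\wedge n})\simeq(\partial_\ast F)^{\circledast n}$ must be produced as a $\Sigma_n$-equivariant equivalence of right $K(\mathsf{OEpi})$-modules compatible with the coend $\int^{k\in\Sigma_\ast}(\partial_\ast F)^{\circledast k}\wedge(\partial_\ast G)(k)$; a one-object-at-a-time product rule does not give this, and the coherence data needed is precisely the hard part. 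Flagging these obstacles is honest, but it means the proposal establishes the conjecture in no case beyond what the authors already indicate.
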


So far, we have only discussed the stable variant of Weiss calculus. In Goodwillie calculus, the derivatives of a functor $F: \T \to \T$ have the structure of a bimodule over $\partial_\ast(\id)$, the derivatives of the identity on pointed spaces, we expect the natural generalization to Weiss calculus to hold.

\begin{xxconj}
The derivatives of a functor $F: \mathsf{Vect}_\bb{R} \to \T$ have the structure of a $(\partial_\ast(\id) - K(\mathsf{OEpi}))$-bimodule
such that for 
\[
\mathsf{Vect}_\bb{R} \xrightarrow{\ F\ } \T \xrightarrow{\ G \ } \T,
\]
with $F(\bb{R}^\infty)=\ast$, there is an equivalence
\[
\partial_\ast(G \circ F) \simeq \partial_\ast(G) \odot_{\partial_\ast(\id)} \partial_\ast(F),
\]
of $(\partial_\ast(\id) - K(\mathsf{OEpi}))$-bimodules.
\end{xxconj}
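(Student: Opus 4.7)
The plan is to bootstrap the right-module story established in this paper into a bimodule story, and then to prove the chain rule by combining Arone--Ching's Goodwillie-side resolution of $G$ with the Weiss-side structure on $F$. The guiding principle is that the right $K(\mathsf{OEpi})$-action arises from the source category $\mathsf{Vect}_\bb{R}$, while the left $\partial_\ast(\id)$-action arises from the target $\T$, and these two should commute in a suitably coherent sense.

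First I would construct the left $\partial_\ast(\id)$-action on $\partial_\ast F$ for $F: \mathsf{Vect}_\bb{R}\to\T$. Since $\partial_n F$ is a stabilization of an $n$-th cross-effect, the comonad $\Sigma^\infty\Omega^\infty$ on $\T$ (which controls the operad structure on $\partial_\ast(\id)$) acts termwise on cross-effects and induces the desired left $\partial_\ast(\id)$-module structure, exactly as in the Arone--Ching construction for functors $\T\to\T$ in \cite{ACOperads}. The key compatibility with the right $K(\mathsf{OEpi})$-action from Theorem~\ref{thm:koszul derivatives are derivatives} is that the two actions are built from disjoint data (source versus target), and I would formalise this by exhibiting $\partial_\ast$ as a left Quillen functor from $\mathsf{Fun}(\mathsf{Vect}_\bb{R},\T)$ to an appropriate model category of $(\partial_\ast(\id), K(\mathsf{OEpi}))$-bimodules, factoring through the stable case of the paper via the adjunction $\Sigma^\infty \dashv \Omega^\infty$ on the target.

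For the chain rule itself, the plan is to resolve $G: \T\to\T$ by Arone--Ching's cobar construction against its bimodule of derivatives and then compose with $F$. Passing to derivatives and using the left-Quillen bimodule factorisation from the previous step, one obtains a cobar object whose terms are of the form $\partial_\ast G \odot \partial_\ast(\id)^{\odot k} \odot \partial_\ast F$ and whose totalisation is by construction the balanced product $\partial_\ast G \odot_{\partial_\ast(\id)} \partial_\ast F$ in $(\partial_\ast(\id), K(\mathsf{OEpi}))$-bimodules. The hypothesis $F(\bb{R}^\infty)=\ast$ ensures $\partial_0 F=0$, which is what makes both the Weiss tower of $G \circ F$ converge and the reduced balanced product well-defined.

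The main obstacle is Step 2: the bimodule structure must be coherent enough that the cobar construction can be manipulated inside the category of bimodules rather than merely as a homotopy-colimit diagram. In the purely Goodwillie setting, this required the divided-power enhancement of \cite{ACClassification}; a similar enhancement is very likely needed here, probably via an extension of the categorical Koszul duality developed in this paper that treats source and target categories simultaneously, for instance through the Day convolution conjectures already discussed in the future-work section. Once such a bimodule model exists, Step 3 should be a formal consequence of the comonadic resolution of Theorem~\ref{thm: derivative comonadic} applied levelwise inside the bar resolution of $G$.
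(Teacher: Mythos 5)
This statement is one of the open conjectures in the paper's ``Future work'' section: the paper offers no proof of it, and indeed explicitly defers it to ``a well-behaved theory of bimodule Koszul duality'' that has not yet been developed. Your proposal is therefore not being measured against an argument in the paper, and on its own terms it is a research outline rather than a proof. You acknowledge this yourself in your final paragraph, where the coherence of the bimodule structure needed to run the cobar argument is described as ``very likely needed'' and left unconstructed; that is precisely the content of the conjecture, so the argument as written is circular at its crucial step.

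Two further concrete gaps. First, your construction of the left $\partial_\ast(\id)$-action starts from the claim that $\partial_n F$ is a stabilization of an $n$-th cross-effect. That is the Goodwillie-calculus picture; in Weiss calculus the derivatives are defined via the Thom-space categories $\mathsf{Vect}_{\bb{R},n}$ and classified by Borel $O(n)$-spectra, and there is no multilinear cross-effect on which $\Sigma^\infty\Omega^\infty$ acts termwise in the way you describe. Producing the left action is exactly the part of the conjecture that does not follow from the Arone--Ching construction for functors $\T\to\T$, so it cannot be imported wholesale. Second, even the balanced product $\partial_\ast G\odot_{\partial_\ast(\id)}\partial_\ast F$ appearing in the statement is defined using the Day convolution $\circledast$ on $\mathsf{RMod}_{K(\mathsf{OEpi})}$, whose existence as a symmetric monoidal structure is itself only conjectural in the paper (it depends on the conjecture that $\mathsf{Triv}$ is symmetric monoidal for Day convolution). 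Your plan silently assumes this input. The overall strategy --- source-versus-target separation of the two actions, resolving $G$ by a cobar construction, and using $F(\bb{R}^\infty)=\ast$ for convergence --- is consistent with how the authors frame the problem, but none of the three load-bearing constructions is supplied.
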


Such a chain rule and a well-behaved theory of bimodule Koszul duality would allow one to recover the derivatives of a functor $F: \mathsf{Vect}_\bb{R} \to \T$ through a suitable cobar construction applied to $\partial_\ast (\Sigma^\infty F )$. Of particular note is an application to spaces of embeddings. Arone~\cite{AroneEmbeddings} gave a closed formula for the Weiss derivatives of the functor $\Sigma^\infty \overline{\mathsf{Emb}}(M,N \times (-))$, for manifolds $M$ and $N$, and we expect a computation of the unstable derivatives to be of significant use.

\subsection*{The Koszul dual of orthogonal epimorphisms}

In this paper, we construct a map
\[K(\mathsf{OEpi}) \rightarrow \langle \partial_i \rangle_{i \in \mathbb{N}},\]
where $\langle \partial_i\rangle_{i \in \mathbb{N}}$ denotes the full subcategory of $\mathsf{Fun}(\mathsf{Fun}(\mathsf{Vect}_\bb{R},\s),\s)$ spanned by the derivative functors. We expect that this functor is an equivalence. For ease of narration, the rest of this section is stated modulo smash products with adjoint representations. By Section \ref{subsection:spanier whitehead model}, there is an equivalence
\[\nat(\partial_i,\partial_j) \simeq \partial_i(S^{j(-)}/\sf{DI}_j(-))^\vee.\]
Arone has explained to the authors that the latter can be computed as sections of a certain bundle
\[P_{j-i} \rightarrow \xi(i,j) \rightarrow \mathsf{Gr}(i,j)\]
where $P_{j-i}$ is the Spanier--Whitehead dual of the nerve of the topological poset of subspaces of $\mathbb{R}^{j-i}$ and $\mathsf{Gr}(i,j)$ is the Grassmannian of $i$-planes in $\mathbb{R}^j$. 

\begin{xxconj}[Arone]
The collection of such sections support the structure of a category, and this category is equivalent to $K(\mathsf{OEpi})$. Moreover, $\mathsf{OEpi}$ is a \textit{twisted} Koszul self dual category. More precisely, $K(\mathsf{OEpi})$ may be realized by applying the Thom spectrum functor to a category enriched over stable spherical fibrations where the parametrized mapping spectra lie over the mapping spaces of $\mathsf{OEpi}$.
\end{xxconj}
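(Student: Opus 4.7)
The plan decomposes the conjecture into three stages. First, I would upgrade Arone's pointwise bundle description $\Gamma(\sf{Gr}(i,j), \xi(i,j))$ to a spectrally enriched category by constructing explicit compositions. Using the flag variety $\sf{Fl}(i,j,k)$ of chains $V_i \subseteq V_j$ in a fixed $\bb{R}^k$ and its projections $p_{ij}, p_{jk}, p_{ik}$ to the three relevant Grassmannians, the composition is the span $\sf{Gr}(i,j) \times \sf{Gr}(j,k) \leftarrow \sf{Fl}(i,j,k) \rightarrow \sf{Gr}(i,k)$: pull sections back, multiply fiberwise using the natural pairing $P_{j-i} \sm P_{k-j} \to P_{k-i}$ arising from concatenation of subspace chains inside the poset of subspaces, and push forward. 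Associativity reduces to the compatibility of iterated flag projections $\sf{Fl}(i,j,k,\ell) \to \sf{Fl}(i,j,\ell) \to \sf{Gr}(i,\ell)$ and follows from standard properties of Grassmannian correspondences.

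Second, I would show this section category is equivalent to $K(\sf{OEpi})$. The paper has already produced the functor $K(\sf{OEpi}) \to \langle \partial_i\rangle_{i \in \bb{N}}$, and the Spanier--Whitehead model $\nat(\partial_i,\partial_j) \simeq \partial_i(S^{j(-)}/\sf{DI}_j(-))^\vee$ implies that this functor factors canonically through the section category once Stage 1 is in place. Full faithfulness would be proven by comparing two filtrations: the bar-length filtration on $K(\sf{OEpi})(i,j)$ coming from Koszul duality, and the stratification of $\sf{DI}_j(V)$ by flag type on the section side. Both spectral sequences converge by dualizability of $\sf{OEpi}$, and the associated graded pieces should match via the James and Miller splittings of Stiefel manifolds together with the Koszul sign data encoded in the nerve of subspaces.

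Third, for twisted Koszul self-duality, I would define a category $\widetilde{\sf{OEpi}}$ enriched over parametrized spectra over the mapping spaces of $\sf{OEpi}$: the parametrized mapping spectrum from $\bb{R}^j$ to $\bb{R}^i$ lives over $\sf{OEpi}(j,i) = V(j,i)$, with fiber over a frame $\varphi : \bb{R}^i \hookrightarrow \bb{R}^j$ the Spanier--Whitehead dual of the nerve of proper subspaces of $\varphi^\perp$. Composition is defined by concatenating frames together with the fiberwise pairing of Stage 1. Applying the parametrized Thom spectrum functor along the principal $O(i)$-bundle $V(j,i) \to \sf{Gr}(i,j)$ recovers the section category, realizing $K(\sf{OEpi})$ as a Thom spectrum of a stable-spherical-fibration-enriched category over $\sf{OEpi}$ in the asserted sense.

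The principal obstacle lies in Stage 2: matching the algebraic composition on $K(\sf{OEpi})$ arising from the bar construction with the geometric flag pushforward constructed in Stage 1. I would attack this by identifying a two-sided bar construction $B(\sf{OEpi}, \sf{OEpi}, \sf{OEpi})$ with a geometric realization of the incidence scheme on flag varieties, so that the two compositions agree tautologically at the level of simplicial objects. Propagating this identification through the Spanier--Whitehead duality apparatus of the paper, while preserving all higher coherences needed for an equivalence of spectrally enriched $\infty$-categories, is the most delicate point and will likely require an $\infty$-categorical rather than a strict model-categorical argument.
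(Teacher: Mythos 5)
The statement you are trying to prove is not a theorem of the paper: it is an open conjecture, attributed to Arone, recorded in the ``Future work'' section. The paper offers no proof, only the heuristic computation $\nat(\partial_i,\partial_j) \simeq \partial_i(S^{j(-)}/\mathsf{DI}_j(-))^\vee$ and Arone's observation that this can be expressed as sections of a bundle over $\mathsf{Gr}(i,j)$ with fibre $P_{j-i}$. So there is nothing to compare your argument against, and what you have written is a research programme rather than a proof.

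As a programme it is reasonable in outline, but each stage hides a genuine gap. In Stage 1, the ``natural pairing'' $P_{j-i}\wedge P_{k-j}\to P_{k-i}$ coming from concatenation of subspace chains is not a formality: in the discrete analogue the corresponding cooperad structure on duals of partition complexes is a nontrivial theorem of Ching, and for the topological poset of subspaces of $\bb{R}^{n}$ no such structure has been constructed. Moreover the pushforward along $\mathsf{Fl}(i,j,k)\to\mathsf{Gr}(i,k)$ is an umkehr map, so it twists sections by the (stable) normal bundle of the flag projection; these Thom twists are precisely the source of the word ``twisted'' in the conjecture, and you do not explain why they compose associatively or how they are absorbed into the parametrized enrichment of Stage 3. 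Stage 2, which you correctly identify as the crux, currently consists of the assertion that two filtrations ``should match via the James and Miller splittings'' --- but matching associated graded pieces does not identify the compositions, and the paper itself stops short of even claiming that $K(\mathsf{OEpi})\to\langle\partial_i\rangle_{i\in\bb{N}}$ is an equivalence. Until the fibrewise pairing is constructed and the comparison of the bar-construction composition with the flag correspondence is carried out with all coherences, this remains a plan for attacking the conjecture, not a proof of it.
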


 In the terminology of \cite[Section 6]{koszulselfduality}, this would imply that the category $\mathsf{OEpi}$ is Poincaré--Koszul. In particular, Koszul duality for $\mathsf{OEpi}$ behaves significantly different from Koszul duality for $\mathsf{Comm}$.

\subsection{Notation}
\begin{itemize}
  \item Given a spectrum $X$, we denote by $X^\vee$ the derived Spanier--Whitehead dual of $X$.
    \item We use $\mbf{L}$ and $\mbf{R}$ to denote left and right derived functors, respectively. 
    \item We will denote the mapping object in an enriched model category $\mc{C}$ by $\mc{C}(-,-)$ and the derived mapping object by $\mc{C}^h(-,-)$.
    \item We use the symbol $\cong$ to refer to isomorphisms and $\simeq$ to refer to weak equivalences.
    \item Given a (pointed) topological category $\mc{C}$, we often abuse notation and write $\mc{C}$ for the spectral category obtained by applying $\Sigma^\infty_+$ ($\Sigma^\infty$) to the mapping spaces. This is justified by the fact that the category of topologically enriched functors $\mc{C} \to \s$ is equivalent to spectrally enriched functor $\mc{C} \to \s$ under this abuse of notation. 
    \item Given a right $\mc{C}$-module $R$, i.e., a functor $R: \mc{C}^\op \to \s$ and a spectral functor $F:\s \rightarrow \s$, we let $F(R)$ denote the composition of functors $F \circ R$.
\end{itemize}

\subsection{Conventions}
\begin{itemize}
    \item We often conflate categories and their preferred skeleta, in particular we index $\mathsf{Vect}_\mathbb{R}$ both by a general finite-dimensional vector space $V$ and by a non-negative integer $n$, corresponding to $\mathbb{R}^n$, since this category plays two distinct roles in our theory.
    \item We say a spectrum is finite if it is equivalent to a finite spectrum. Given a functor $F$, we say it is level-finite if the value on each object is finite. Modules over a ring spectrum are finite if their underlying spectrum is finite. A spectral category is locally finite if all the mapping spectra are finite. In the case we wish to refer to an object as built out of finitely many cells (in the appropriate category), we explicitly say so.
    \item Simplicial enrichments (and simplicial model structures) will always come from applying the singular complex functor to the topological mapping spaces. 
\end{itemize}

\subsection{\textbf{Spectral homotopy theory}}\label{subsection: spectral homotopy}
In this paper, we occasionally alternate between model categories and $\infty$-categories, taking care to verify these results translate as expected. We briefly outline the relationship between these two in the context of spectrally enriched category theory.

Let $\mc{M}$ be a spectral model category, the underlying $\infty$-category $\mc{M}_\infty$ is canonically a spectrally enriched $\infty$-category. One way to see this is to note that any spectral model category is a stable model category, and the $\infty$-categorical localization of a stable model category is a stable $\infty$-category which admits a canonical spectral enrichment. The same is true at the level of adjunctions: in the $\infty$-categorical setting, it is a property to be a spectrally enriched functor and not extra structure, and adjunctions always satisfy this property. Indeed, if 
\[
\adjunction{F}{\mc{M}}{\mc{N}}{G}
\]
is a spectrally enriched Quillen adjunction, then the induced adjunction
\[
\adjunction{F}{\mc{M}_\infty}{\mc{N}_\infty}{G}
\]
is a spectral adjunction of spectral $\infty$-categories. To see this, note that a functor between stable $\infty$-categories preserves finite colimits if and only if it preserves finite limits, and so both functors in an adjunction between stable $\infty$-categories are exact. Under the canonical spectral enrichment, exact functors correspond to spectrally enriched functors for this canonical enrichment, and so any adjunction between stable $\infty$-categories is a spectrally enriched adjunction. For details, see e.g.,~\cite[\S8]{HeineEnrichments}.

We often suspected that the model categorical theory we developed here could equally well have been developed within the framework of spectrally enriched $\infty$-categories. When we embarked on this project, the theory of enriched $\infty$-categories was not developed as far as we would have wanted it to be to implement our theory of Koszul duality. The recent preprint~\cite{HeineBi} appears to develop the missing aspects of enriched $\infty$-category theory that allow for Section \ref{section:koszul duality} to be written at the level of spectrally enriched $\infty$-categories.

\subsection{Acknowledgements}
We would like to thank Gregory Arone, Mark Behrens, Thomas Blom, Michael Ching, and Hadrian Heine for contributing to this paper through extended conversations. Connor Malin started this project as a Ph.D. student at Notre Dame and finished it as a postdoc at the Max Planck Institute for Mathematics in Bonn. Niall Taggart was supported by the European Research council (ERC) through the grant “Chromatic homotopy theory of spaces”, grant no. 950048 and was supported, during the final stages of this project, by the Nederlandse Organisatie voor Wetenschappelijk Onderzoek (Dutch Research Council) Vidi grant no VI.Vidi.203.004.

\section{Enriched presheaves}
\subsection{S-modules}

Let $\s$ denote the symmetric monoidal category of $S$-modules in the sense of~\cite{EKMM}, the objects of which we call \emph{spectra}. This category of spectra is well known to form a symmetric monoidal model category under smash product $\wedge$ for which all objects are fibrant. 
This model of spectra is used for a variety of reasons. It agrees with the established literature of Arone--Ching~\cite{ACOperads,ACClassification} on Goodwillie calculus. It also simplifies many of the symmetric monoidal considerations which arise. This is because the smash product of bifibrant spectra remains bifibrant. The unit $\bb{S}$ of the monoidal structure on $\s$ is not cofibrant, but this does not significantly impact our arguments. For a pointed space $X$, we will denote by $\Sigma^\infty(X) \coloneq \bb{S} \wedge X$ the tensoring of $X$ with the sphere spectrum. 
In a few instances, we instead tensor with a cofibrant replacement of the sphere spectrum $\bb{S}_c$, though the results are weakly equivalent, at least when $X$ is a CW complex.

\subsection{Spectral categories}
In this section, we recall a number of preliminary results on $\s$-enriched categories. For details on enriched category theory, we direct the reader to~\cite{Kelly}.

\begin{definition}
A \emph{spectral category} is a $\mathsf{Spec}$-enriched category. Explicitly, a $\s$-enriched category $\mc{C}$ has a collection of objects and for objects $X,Y,Z\in \mc{C}$, there are mapping spectra $\mc{C}(X, Y)$ and composition maps
\[
\mc{C}(Y, Z) \wedge \mc{C}(X, Y) \longrightarrow \mc{C}(X,Z),
\]
for each $X,Y,Z \in \mc{C}$, which are appropriately associative and unital. Let $\mc{C}$ and $\mc{D}$ be spectral categories. A \emph{spectral functor} is a function $F: \mathsf{ob}(\mc{C}) \to \mathsf{ob}(\mc{D})$ on the objects together with maps 
\[
\mc{C}(X,Y) \longrightarrow \mc{D}(F(X), F(Y)),
\]
for each $X,Y\in \mc{C}$ which are appropriately associative and unital. Unless otherwise stated, all functors between spectral categories are assumed to be spectral.
\end{definition}

\begin{warning}
    When defining categories, functors, etc. we use the point-set model of $(-)\wedge(-)$ and $\s(-,-)$ in order to have the correct categorical behavior. Outside these settings, we implicitly derive these functors. 
\end{warning}

There is a natural notion of weak equivalence between spectral categories, see e.g.,~\cite[Definition 5.1]{Tabuada} or~\cite[Definition A.3.2.1]{HTT}. 

\begin{definition}
A spectral functor $F: \mc{C} \to \mc{D}$ is a \emph{Dwyer-Kan equivalence} if 
\begin{enumerate}
    \item for every pair of objects $X,Y \in \mc{C}$, the induced map
    \[
    \mc{C}(X,Y) \longrightarrow \mc{C}(F(X), F(Y)) 
    \]
    is a weak equivalence in $\s$;
    \item the induced functor
    \[
    \pi_0(F) : \pi_0(\mc{C}) \to \pi_0(\mc{D})
    \]
    is essentially surjective. 
\end{enumerate}
\end{definition}

A key example of spectral categories the category of functors $\Fun(\mathcal{C},\mathcal{D})$ between spectral categories $\mathcal{C}$ and $\mathcal{D}$ which naturally forms a spectral category with objects the spectral functors $\mathcal{C}\rightarrow \mathcal{D}$ and mapping spectra given by the enriched end
\[
\nat(F,G)\coloneq \int_{c \in \mc{C}} \mathcal{D}(F(c),G(c)).
\]

\subsection{Spectral presheaves} The category of spectra is itself a spectral category, with the mapping spectrum $\s(X,-)$ defined to be right adjoint to $- \wedge X$. Given a spectral category $\mc{C}$, a \textit{spectral presheaf} is a spectral functor $\mc{C}^\op \to \s$. Under the analogy between spectral categories and ring spectra, see e.g.,~\cite{SchwedeShipleyStable}, spectral presheaves are analogous to right modules over a ring spectrum. We find this language compelling in our study of Koszul duality and functor calculus.

\begin{definition}
Let $\mc{C}$ be a spectral category. The category $\sf{RMod}_\mc{C}$ of \emph{right $\mc{C}$-modules} is the spectral category of spectral presheaves on $\mc{C}$:
\[
\sf{RMod}_\mc{C} \coloneq \Fun(\mc{C}^\op, \s).
\]
\end{definition}

The category of right $\mc{C}$-modules comes with a projective model structure, which is characterized by having fibrations and weak equivalences determined levelwise. Since $\s$ has all objects fibrant, all spectral presheaves are also fibrant in the projective model structure. Since homotopy (co)limits are computed objectwise, this model structure is stable. 

\begin{prop} \label{prop: day convolution monoidal model}
Let $\mc{C}$ be a small spectral category. The projective model structure on the category $\mathsf{RMod}(\mc{C})$ of right $\mc{C}$-modules exists.
\end{prop}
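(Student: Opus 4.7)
The plan is to construct the projective model structure by transferring the product model structure along the standard free-forgetful adjunction, and then to verify the hypotheses of Kan's transfer theorem. Concretely, consider the adjunction
\[
\adjunction{F}{\prod_{c \in \mathsf{ob}(\mc{C})} \s}{\mathsf{RMod}_\mc{C}}{\Ev,}
\]
where $\Ev(R) = (R(c))_{c}$ is evaluation at each object and the left adjoint $F$ sends a family $(X_c)_c$ to the free right module $\bigvee_{c} X_c \wedge \mc{C}(-,c)$. On the left, $\prod_c \s$ carries the product model structure, which is cofibrantly generated with fibrations and weak equivalences determined objectwise; the generators are, for each $c$, the generating (trivial) cofibrations of $\s$ supported at $c$.

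Next I would invoke Kan's transfer theorem. The two things to check are: (a) the sources of $F$ applied to the generating (trivial) cofibrations are small with respect to the appropriate transfinite compositions, and (b) any relative cell complex built from the images under $F$ of the generating trivial cofibrations of $\prod_c \s$ is a weak equivalence in the putative model structure, i.e.\ a levelwise weak equivalence in $\s$. Condition (a) is inherited directly from the smallness of the generators of $\s$, since $F$ commutes with the relevant colimits. For (b), the image under $F$ of a generator of the form $j_c$ (a generating trivial cofibration $j$ of $\s$ placed at $c$) is the map $j \wedge \mc{C}(-,c)$, whose value at an object $d$ is $j \wedge \mc{C}(d,c)$.

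Thus the verification of (b) reduces to showing that, for any trivial cofibration $j$ in $\s$ and any spectrum $K$, the map $j \wedge K$ is a weak equivalence, and that this property is closed under pushouts and transfinite composition — the monoid axiom of Schwede–Shipley. In the EKMM model of $\s$ this is standard: every spectrum is fibrant and smashing with any spectrum preserves weak equivalences, so smashing a generating trivial cofibration of $\s$ with the mapping spectra $\mc{C}(d,c)$ yields a weak equivalence, and the closure properties follow from the fact that both pushouts of weak equivalences along cofibrations and filtered colimits of weak equivalences are weak equivalences in $\s$.

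The main technical point, and the only place the particular model of spectra intervenes, is the monoid axiom for $\s$; once this is in hand, the transfer theorem delivers the projective model structure, with fibrations and weak equivalences being levelwise those of $\s$, and the generating (trivial) cofibrations being the free modules on the generators of $\prod_c \s$. Stability and fibrancy of all objects then follow automatically from the corresponding properties of $\s$.
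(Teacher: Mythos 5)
Your argument is correct and is essentially the standard proof underlying the references the paper cites for this statement: the paper itself offers no argument beyond citing Schwede--Shipley and Guillou--May, whose proofs are exactly this transfer of the product model structure along the free--forgetful adjunction, with smallness and the monoid axiom for EKMM $S$-modules supplying conditions (a) and (b) of Kan's transfer theorem. One small caution: your intermediate claim that smashing with an arbitrary $S$-module preserves all weak equivalences is stronger than what is true or needed --- the precise input is the monoid axiom itself, which you correctly identify as the key technical point --- and if one wants the \emph{spectrally enriched} (SM7) model structure used later in the paper, one must additionally verify the pushout-product axiom for the enrichment, which reduces to the corresponding levelwise statement in $\s$.
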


The existence of the $\s$-enriched projective model structure is due to \cite[Theorem 6.1]{Schwede_Shipley_2003} and \cite[Proposition 2.4 and Theorem 4.32]{may_guillou}.

A rather surprising result is that even without imposing cofibrancy conditions on spectral model categories, their right module categories are extraordinarily well-behaved. The following is proven in ~\cite[Proposition 2.4]{may_guillou}. 

\begin{prop}\label{prop:quillen}
Given a spectral functor $f:\mathcal{C} \rightarrow \mathcal{D}$, there is a spectrally enriched Quillen adjunction 
\begin{align*}
\adjunction{\ind_{f^\op}}{\sf{RMod}_\mc{C}}{\sf{RMod}_\mc{D}}{\res_{f^\op}}.
\end{align*}
If $f$ is a Dwyer--Kan equivalence, this is an enriched Quillen equivalence.
\end{prop}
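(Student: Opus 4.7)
The plan is to produce the adjunction, verify it is Quillen, then reduce the Quillen equivalence claim to an explicit calculation on representable modules.

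First I would define restriction as precomposition, $\res_{f^\op}(M) = M \circ f^\op$, which is manifestly spectrally enriched since composition of spectral functors is. The left adjoint $\ind_{f^\op}$ is constructed as the enriched left Kan extension along $f^\op$, given explicitly by the coend
\[
\ind_{f^\op}(N)(d) = \int^{c \in \mc{C}} \mc{D}(d, f(c)) \wedge N(c).
\]
The spectral enrichment of the adjunction follows from the enriched form of this construction together with the enriched Yoneda lemma. Since the projective model structure on right modules has levelwise fibrations and weak equivalences, restriction preserves both, so it is right Quillen and the pair is a spectrally enriched Quillen adjunction.

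For the Quillen equivalence under the Dwyer--Kan hypothesis, I would observe that $\res_{f^\op}$ preserves \emph{all} weak equivalences (not only those between fibrant objects), and hence coincides with its right-derived functor. It therefore suffices to show the derived unit $N \to \res_{f^\op}\mbf{L}\ind_{f^\op}(N)$ is an equivalence for every cofibrant right $\mc{C}$-module $N$, and similarly for the derived counit. I would proceed by cell induction: the projective model structure is cofibrantly generated by maps of the form $\mc{C}(-,c) \wedge i$ for generating cofibrations $i$ of $\s$, so every cofibrant module is a retract of a cell complex built from such cells. On a representable the unit is
\[
\mc{C}(-,c) \longrightarrow \mc{D}(f(-), f(c)),
\]
which is a levelwise weak equivalence precisely by fully faithfulness of $f$. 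The cell induction then propagates this equivalence: $\mbf{L}\ind_{f^\op}$ preserves homotopy colimits as a left Quillen functor, and $\res_{f^\op}$ preserves homotopy colimits of cofibrant diagrams as it is a levelwise construction. The derived counit on a representable $\mc{D}(-,d)$ is handled by combining essential surjectivity of $\pi_0(f)$, which produces some $c$ with $\mc{D}(-,d) \simeq \mc{D}(-,f(c))$, with the fully faithful case already handled.

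The principal obstacle is ensuring that the cell-induction argument handles the interaction between $\res_{f^\op}$ and transfinite homotopy colimits correctly, which is sensitive to the non-cofibrancy of the sphere spectrum in $\s$. This is controlled by the EKMM-style facts already flagged in the paper: all objects of $\s$ are fibrant and smash products of bifibrant spectra remain bifibrant, so the levelwise smash product entering the coend formula for $\ind_{f^\op}$ interacts well with cofibrant replacements of $N$, and the cell induction goes through without requiring further fibrant--cofibrant bookkeeping.
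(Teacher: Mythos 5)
Your proposed argument is correct in outline, but it is genuinely a different route from the paper's, because the paper does not prove this statement at all: it simply cites \cite[Proposition 2.4]{may_guillou} (together with the existence of the projective model structure from \cite[Theorem 4.32]{may_guillou} and \cite[Theorem 6.1]{Schwede_Shipley_2003}). What you have written out is essentially the standard ``spectral Morita invariance'' proof that the cited reference carries out: restriction is right Quillen for levelwise reasons, induction is the enriched left Kan extension coend, and the equivalence is propagated from representables by cell induction. Two points are worth tightening. First, in EKMM the representable $\mc{C}(-,c)$ is itself not projectively cofibrant (the sphere is not cofibrant), so the unit must really be checked on the cells $\mc{C}(-,c)\wedge B$ with $B$ cofibrant in $\s$, where it becomes $\mc{C}(e,c)\wedge B \to \mc{D}(f(e),f(c))\wedge B$ and one invokes that smashing with a cofibrant spectrum preserves weak equivalences; you flag this, which is the right instinct. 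Second, your treatment of the counit is the thinnest step: an arbitrary fibrant $\mc{D}$-module is not itself a cell complex, so ``handle the counit on representables'' needs either a preliminary cofibrant replacement of $M$ followed by cell induction in the $M$-variable (using that both $\mbf{L}\ind_{f^\op}$ and $\res_{f^\op}$ preserve homotopy colimits), or, more cleanly, the standard criterion that a Quillen adjunction whose derived unit is an equivalence on cofibrant objects is a Quillen equivalence as soon as the right adjoint reflects weak equivalences between fibrant objects --- and $\res_{f^\op}$ reflects them precisely because $\pi_0(f)$ is essentially surjective and right modules send $\pi_0$-isomorphisms to weak equivalences. With either repair your argument goes through; the paper's citation buys exactly the avoidance of these point-set bookkeeping issues.
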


\begin{prop}\label{prop: model structure on modules with action}
Let $\mc{C}$ and $\mc{D}$ be spectral categories. The projective model structure on the category $\Fun(\mc{C}, \mathsf{RMod}_\mc{D})$ exists.
\end{prop}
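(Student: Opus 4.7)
The plan is to identify $\Fun(\mc{C}, \mathsf{RMod}_\mc{D})$ with a category of right modules over a single auxiliary spectral category, and then invoke Proposition~\ref{prop: day convolution monoidal model}. First, I would form the spectral tensor product $\mc{D} \otimes \mc{C}^\op$, whose objects are pairs $(d,c)$ and whose mapping spectra are $\mc{D}(d,d') \wedge \mc{C}(c',c)$ with coordinatewise composition. The symmetric monoidal closed structure on spectral categories (see~\cite{Kelly}) supplies a point-set tensor-hom isomorphism
\[
\Fun(\mc{C}, \mathsf{RMod}_\mc{D}) = \Fun(\mc{C}, \Fun(\mc{D}^\op, \s)) \cong \Fun(\mc{D}^\op \otimes \mc{C}, \s) = \mathsf{RMod}_{\mc{D} \otimes \mc{C}^\op}.
\]

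Next, I would apply Proposition~\ref{prop: day convolution monoidal model} to the spectral category $\mc{D} \otimes \mc{C}^\op$ (under the same smallness convention implicit in the previous proposition) to produce the projective model structure on $\mathsf{RMod}_{\mc{D} \otimes \mc{C}^\op}$. Transporting this model structure across the isomorphism above yields a model structure on $\Fun(\mc{C}, \mathsf{RMod}_\mc{D})$ in which a map $F \to G$ is a weak equivalence (resp.\ fibration) precisely when $F(c)(d) \to G(c)(d)$ is so in $\s$ for each pair $(c,d)$. Unwinding the projective structure on $\mathsf{RMod}_\mc{D}$, this says exactly that each component $F(c) \to G(c)$ is a projective weak equivalence (resp.\ fibration) in $\mathsf{RMod}_\mc{D}$, which is by definition the projective model structure on $\Fun(\mc{C}, \mathsf{RMod}_\mc{D})$.

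There is no serious obstacle here: once the two functor categories are identified, the two notions of projective weak equivalence and fibration coincide tautologically. The only subtle point, given the \textbf{Warning} about using the point-set smash product when defining functor categories, is that the tensor-hom isomorphism above should be an \emph{isomorphism} and not merely a Dwyer--Kan or Quillen equivalence at the point-set level; this is the classical statement that $\s$ is symmetric monoidal closed in the strict sense, and that the enriched functor categories are defined via the strict right adjoint $\s(-,-)$ levelwise. This ensures the identification and the transfer of model structure are both strict, giving the projective model structure claimed.
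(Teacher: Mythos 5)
Your argument is correct, but it takes a different route from the paper: the paper disposes of this proposition with a single citation to the general existence theorem for projective model structures on enriched diagram categories with values in a suitable enriched model category (Guillou--May, Theorem 4.32), applied directly with target $\mathsf{RMod}_\mc{D}$. You instead reduce to the case already recorded in Proposition~\ref{prop: day convolution monoidal model} (target $\s$) by currying: the closed symmetric monoidal structure on small $\s$-categories gives the strict isomorphism $\Fun(\mc{C}, \Fun(\mc{D}^\op,\s)) \cong \Fun((\mc{C}^\op\otimes\mc{D})^\op,\s) = \mathsf{RMod}_{\mc{C}^\op\otimes\mc{D}}$, and the projective classes of weak equivalences and fibrations visibly match under this identification since both are detected objectwise in $\s$. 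Your observation that the tensor--hom correspondence is a point-set isomorphism (because EKMM $S$-modules are closed symmetric monoidal on the nose) is exactly the point that makes the transfer of model structures tautological rather than merely up to Quillen equivalence. What your approach buys is self-containment — it needs only the single-category case already on the table — at the cost of introducing the tensor product of spectral categories; what the paper's approach buys is uniformity, since the cited theorem handles arbitrary enriched model category targets without any currying. Both are complete proofs under the standing smallness conventions.
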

\begin{proof}
This follows from~\cite[Theorem 4.32]{may_guillou}.
\end{proof}

\begin{definition}\label{def: pseudo-cofibrant}
An object $c$ of a symmetric monoidal model category $(\mc{C},\otimes)$ is \textit{pseudo-cofibrant} if $c \otimes -$ preserves cofibrations. 
\end{definition}

When $\mc{C}$ satisfies the monoid axiom (cf.~\cite[Definition 3.3]{SchwedeShipleyAlgebras}), as $\s$ does, $c \in \mc{C}$ being pseudo-cofibrant implies that  $c \otimes -$ is left Quillen. Examples in $\s$ include cofibrant spectra and $\Sigma^\infty X$ for $X$ a CW-complex.

\begin{lem}\label{lem:modules with action cofibrancy}
    If $\mc{C}$ has a single object $\ast$ and the spectrum $\mc{C}(\ast,\ast)$ is pseudo-cofibrant, the forgetful functor given by evaluation
    \[\mathsf{Fun}(\mc{C}, \mathsf{RMod}_\mc{D}) \rightarrow \mathsf{RMod}_\mc{D}\]
  preserves cofibrations and weak equivalences.

\end{lem}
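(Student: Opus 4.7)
The plan is to treat the two conditions (preservation of cofibrations and of weak equivalences) separately, using that a spectral category with a single object $\ast$ is essentially a ring spectrum $R = \mc{C}(\ast,\ast)$, so that $\mathsf{Fun}(\mc{C}, \mathsf{RMod}_\mc{D})$ is (up to iso) the category of spectral presheaves on $\mc{D}$ equipped with a left $R$-action, and the evaluation $\mathsf{Ev}_\ast$ is the forgetful functor to $\mathsf{RMod}_\mc{D}$.

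Weak equivalences are the easy half. In the projective model structure on $\mathsf{Fun}(\mc{C}, \mathsf{RMod}_\mc{D})$ provided by Proposition \ref{prop: model structure on modules with action}, weak equivalences are detected objectwise, and since $\mc{C}$ has a unique object, this is literally the condition that $\mathsf{Ev}_\ast$ sends the map to a weak equivalence of right $\mc{D}$-modules. Hence $\mathsf{Ev}_\ast$ preserves (and reflects) weak equivalences.

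For cofibrations, the key structural observation is that the inclusion $\{\ast\} \hookrightarrow \mc{C}$ of the unique object induces an evaluation functor $\mathsf{Ev}_\ast$ admitting both a left adjoint $L$ (left Kan extension) and a right adjoint (right Kan extension). Consequently $\mathsf{Ev}_\ast$ preserves all small colimits, and in particular retracts, pushouts, and transfinite compositions. By the standard description of cofibrations in a cofibrantly generated enriched model category, it therefore suffices to check that $\mathsf{Ev}_\ast$ carries generating cofibrations to cofibrations. The generating cofibrations of the enriched projective model structure on $\mathsf{Fun}(\mc{C}, \mathsf{RMod}_\mc{D})$ are precisely the maps of the form $L(i)$ for $i$ a generating cofibration of $\mathsf{RMod}_\mc{D}$, and on the unique object $\ast$ one computes $\mathsf{Ev}_\ast L(i) = \mc{C}(\ast,\ast) \wedge i = R \wedge i$.

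Now apply the hypothesis: since $R$ is pseudo-cofibrant (Definition \ref{def: pseudo-cofibrant}), the endofunctor $R \wedge -$ preserves cofibrations of $\mathsf{RMod}_\mc{D}$, so $R \wedge i$ is indeed a cofibration. Combining this with the colimit preservation of $\mathsf{Ev}_\ast$ finishes the proof. The main (very mild) subtlety I expect is bookkeeping around the enriched generating cofibrations; everything else is formal once one identifies $\mathsf{Ev}_\ast \circ L$ with smashing with $R$.
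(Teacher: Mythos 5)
Your proof is correct, but it runs in the opposite (adjoint) direction to the paper's. The paper works with the \emph{right} adjoint of the forgetful functor, namely levelwise coinduction $\mathsf{Spec}(A,-)$ for $A=\mc{C}(\ast,\ast)$: a lifting problem in $\mathsf{RMod}_\mc{D}$ for the underlying map of a cofibration $Q\hookrightarrow R$ against an acyclic fibration $T\to S$ transposes to a lifting problem in $\mathsf{Fun}(\mc{C},\mathsf{RMod}_\mc{D})$ against $\mathsf{Spec}(A,T)\to\mathsf{Spec}(A,S)$, and the latter is an acyclic fibration precisely because $\mathsf{Spec}(A,-)$ is right adjoint to $A\wedge -$, which preserves cofibrations by pseudo-cofibrancy. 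You instead use the \emph{left} adjoint $L$ to evaluation, the fact that $\mathsf{Ev}_\ast$ preserves colimits, and the cellular description of cofibrations, reducing to $\mathsf{Ev}_\ast L(i)=A\wedge i$ on generating cofibrations. The two arguments are transposes of one another and both hinge on the same adjunction $A\wedge - \dashv \mathsf{Spec}(A,-)$; the paper's version avoids naming the generating cofibrations and checks the right lifting property directly, whereas yours requires the (routine but nonzero) bookkeeping you flag at the end: Definition~\ref{def: pseudo-cofibrant} only says $A\wedge -$ preserves cofibrations of $\s$, so you should spell out that it also preserves (generating) cofibrations of $\mathsf{RMod}_\mc{D}$ --- e.g.\ because $A\wedge(\mc{D}(-,d)\wedge j)\cong \mc{D}(-,d)\wedge(A\wedge j)$ and the free-module functor $\mc{D}(-,d)\wedge -$ is left Quillen. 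With that sentence added, your argument is complete and arguably more explicit than the paper's.
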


\begin{proof}
    The statement about weak equivalences is immediate. 
   
We identify $Q,R \in \mathsf{Fun}(\mc{C}, \mathsf{RMod}_\mc{D})$ with $\mc{D}$-modules with an action of $\mc{C}(\ast,\ast)=: A$. For a $\mc{D}$-module $T$ with an action of $A$, there is an isomorphism
    \[\mathsf{Fun}(\mc{C}, \mathsf{RMod}_\mc{D})(Q,T) \cong \mathsf{RMod}_\mc{D}(Q,\mathsf{Spec}(A,T))\]
    since $\mathsf{Spec}(A,T(c))$ is the coinduced $A$-module spectrum on $T(c)$.
    Suppose $Q \hookrightarrow R$ is a cofibration of $\mc{D}$-modules with an action of $A$. 
    
    The lifting problem in the category $\mathsf{RMod}_\mc{D}$
\begin{center}
   % https://tikzcd.yichuanshen.de/#N4Igdg9gJgpgziAXAbVABwnAlgFyxMJZABgBoBGAXVJADcBDAGwFcYkQAlEAX1PU1z5CKchWp0mrdgGUefEBmx4CRUcXEMWbRCAAqc-kqFEy6mpqk6Aij3EwoAc3hFQAMwBOEALZIyIHBBI5Lxunj6IAEw0AUHmktogADqJ2F4wAI4gNAAWMPRQ7DgA7hC5+QghIB7eSADM0YGIfjj0WIzskGBsldXh9f6NEdyU3EA
\begin{tikzcd}
Q \arrow[d] \arrow[r] & T \arrow[d, "\simeq", two heads] \\
R \arrow[r]           & S                               
\end{tikzcd}
\end{center}
has a solution, if and only if the lifting problem in $\mathsf{Fun}(\mc{C}, \mathsf{RMod}_\mc{D})$
\begin{center}
    % https://tikzcd.yichuanshen.de/#N4Igdg9gJgpgziAXAbVABwnAlgFyxMJZABgBoBGAXVJADcBDAGwFcYkQAlEAX1PU1z5CKchWp0mrdgB1pAW3o4AFgCc5wAMpoYAY24AKAIKkNASh58QGbHgJFRxcQxZtEIWQuVrN2vUdIAKua8-DZCRGSONM5SbgCKPOIwUADm8ESgAGYqEHJIZCA4EEjkISDZuUgAzDRF+bX0WIzskGBsZRV5iDWFxYgATB05Xf21faWU3EA
\begin{tikzcd}
Q \arrow[d,hook] \arrow[r] & {\mathsf{Spec}(A,T)} \arrow[d] \\
R \arrow[r]           & {\mathsf{Spec}(A,S)}          
\end{tikzcd}
\end{center}
has a solution. Since $R$ is cofibrant as a $\mc{D}$-module with an $A$ action, to show that $R$ is cofibrant as a $\mc{D}$-module, it suffices to show that the map
\[\mathsf{Spec}(A,T) \rightarrow \mathsf{Spec}(A,S)\]
is an acyclic fibration. This is true provided that $\mathsf{Spec}(A,-)$ preserves acyclic fibrations which follows formally from the fact that $A$ is pseudo-cofibrant.
\end{proof}

\subsection{Example: Borel equivariant $G$-spectra}
Let $G$ be a topological group. The category of Borel $G$-spectra $\mathsf{Spec}^{BG}$ is the category of enriched functors $\Fun(G,\mathsf{Spec})$, where $G$ is the spectral category consisting of a single object with endomorphism spectrum given by $\Sigma^\infty_+G$, together with its projective model structure. A standard adjunction argument shows that this is equivalent to the standard description of spectra with a continuous $G$-action. 

Recall that the homotopy fixed points functor
\[
(-)^{hG} : \s^{BG} \longrightarrow \s\]\[X \longmapsto X^{hG} = \mathsf{Spec}(\Sigma^\infty_+ EG,X)^G
\]
is the derived functor of the fixed points functor. Similarly, the homotopy orbits functor 
\[
(-)_{hG} :  \s^{BG} \longrightarrow \s\]\[X \longmapsto X_{hG} = (\Sigma^\infty_+ EG \wedge -)_G
\]
is the derived functor of the orbits functor. These functors are related by the \emph{norm map}. To define the norm for $G$ a general topological group, we must introduce the dualizing spectrum of $G$.

\begin{definition}
For $G$ a topological group, the \emph{dualizing spectrum} $D_G$ is the Borel $G$-spectrum $\Sigma^\infty_+ G ^{hG}$.
\end{definition}

We summarize some main results of \cite{klein_2001} regarding the existence and properties of the norm map. These properties are tied to the existence of a ``six functor formalism'' on the $\infty$-category of spaces, see e.g.,~\cite[\S1.4]{NikolausScholze}.

\begin{thm}[Klein] \label{thm:norm for g spectra}
Let $G$ be a topological group.
\begin{enumerate}
    \item For each Borel $G$-spectrum $X$, there exists a natural transformation in $\mathsf{Ho}(\mathsf{Spec}^{BG})$
    \[
    \mathsf{Nm}_G: (X \wedge D_G)_{hG} \longrightarrow X^{hG},
    \]
called the norm map.
 \item If $X$ is weakly equivalent to a complex built out of finitely many free $G$-cells, then the norm map is an equivalence.
 \item If $G$ is a compact Lie group and $X$ is of the form $Z \wedge G$ for an arbitrary $G$-spectrum $Z$, then the norm map is an equivalence.
 \item If $G$ is a compact Lie group, then $D_G$ is $S^{\mathsf{Ad}_G}$, the one-point compactification of the adjoint representation of $G$. In particular, $D_G$ is invertible as a Borel $G$-spectrum with inverse $S^{- \mathsf{Ad}_G}$.
\end{enumerate}
\end{thm}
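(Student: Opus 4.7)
The four items are due to Klein \cite{klein_2001}; the plan is to reconstruct his argument in order. For item (1), the norm map can be packaged as a single natural transformation realizing an ``ambidexterity'' between $(-)_{hG}$ and $(-)^{hG}$ twisted by $D_G$. Concretely, I would define $\mathsf{Nm}_G$ by smashing $X$ with a universal map $\bb{S} \to (D_G)^{hG}$ and applying the lax monoidal strength of homotopy fixed points; this universal map itself is built from the diagonal $EG \to EG \times EG$ together with the evaluation $\Sigma^\infty_+ G \wedge \s(\Sigma^\infty_+ G,\bb{S}) \to \bb{S}$. Naturality and well-definedness in the homotopy category are then routine.

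For item (2), I reduce to the case of a single free cell $X = \Sigma^\infty_+ G$. Both $(- \wedge D_G)_{hG}$ and $(-)^{hG}$ are exact functors $\s^{BG} \to \s$ preserving finite wedges, so an induction on the cell filtration reduces to verifying that the norm is an equivalence for $X = \Sigma^\infty_+ G$. Here the shearing equivalence $\Sigma^\infty_+ G \wedge Y \simeq \Sigma^\infty_+ G \wedge Y^{\mathsf{triv}}$ identifies $(\Sigma^\infty_+ G \wedge D_G)_{hG}$ with $D_G \wedge (\Sigma^\infty_+ G)_{hG} \simeq D_G$, matching $(\Sigma^\infty_+ G)^{hG} = D_G$; one checks that $\mathsf{Nm}_G$ realizes the identity under these identifications. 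Item (3) then follows immediately: for $G$ compact Lie, $\Sigma^\infty_+ G$ is a finite free $G$-cell spectrum, so any $Z \wedge \Sigma^\infty_+ G$ is built out of free $G$-cells (one for each cell of $Z$), and both sides of the norm map commute with arbitrary wedges, filtered colimits, and cofiber sequences in $X$.

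For item (4), the identification $D_G \simeq S^{\mathsf{Ad}_G}$ requires genuine equivariant Atiyah duality: $G$ is a parallelizable compact Lie manifold with tangent bundle $G \times \mathsf{Ad}_G$, so its equivariant Atiyah dual with respect to left translation is realized by $\s(\Sigma^\infty_+ G, S^{\mathsf{Ad}_G})$ as a genuine $G$-spectrum. Passing to homotopy fixed points yields $D_G \simeq S^{\mathsf{Ad}_G}$, and invertibility is immediate since $\mathsf{Ad}_G$ is a finite-dimensional real representation. The main obstacle in the whole program is precisely this last step: it requires the full strength of equivariant (not merely Borel) stable homotopy theory and a manifold-theoretic input (parallelizability of Lie groups), whereas items (1)--(3) are essentially formal once the norm map is correctly set up.
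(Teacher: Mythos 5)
First, a point of comparison: the paper does not prove this theorem. It is stated with the attribution ``[Klein]'' and the surrounding text says explicitly that it summarizes results of \cite{klein_2001}; the norm map and its properties are used as a black box throughout. So there is no in-paper argument to measure yours against --- you are reconstructing Klein's proofs. Your items (1), (2) and (4) are reasonable reconstructions: (2) in particular is exactly the standard argument (both functors are exact, reduce by induction on cells to $X=\Sigma^\infty_+ G$, where the shearing equivalence identifies both sides with $D_G=(\Sigma^\infty_+ G)^{hG}$). For (4), Klein actually stays inside Borel-equivariant homotopy theory and uses Poincar\'e duality of the closed manifold $G$ rather than genuine equivariant Atiyah duality, but your route is a legitimate alternative; the adjoint representation enters by comparing the left- and right-translation trivializations of $TG$. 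A small imprecision in (1): a map $\bb{S}\to (D_G)^{hG}$ is not well formed as stated, since $D_G$ is already a homotopy fixed point spectrum carrying only the residual translation action; the coevaluation you want is adjoint to a $G$-map out of $\Sigma^\infty_+ EG\wedge X\wedge D_G$, and setting this up carefully is where Klein's work lies.

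The genuine gap is in item (3). You assert it ``follows immediately'' from (2) because ``both sides of the norm map commute with arbitrary wedges, filtered colimits, and cofiber sequences in $X$.'' The target $(-)^{hG}$ is a homotopy limit and does \emph{not} commute with arbitrary wedges or filtered colimits; if it did, (3) would be subsumed by (2) and there would be no reason for Klein to state it separately, restricted to compact Lie groups and induced spectra. For an arbitrary $Z$, the spectrum $Z\wedge \Sigma^\infty_+ G$ is built from \emph{infinitely} many free cells, and (2) gives no control over the passage to the limit. The actual content of (3) is that for $G$ compact Lie the functor $Z\mapsto (Z\wedge \Sigma^\infty_+ G)^{hG}$ preserves homotopy colimits in $Z$: one writes $X^{hG}=\holim_n \s(\Sigma^\infty_+ EG^{(n)},X)^G$ over the skeleta of a free $G$-CW model of $EG$ whose quotient $BG$ has finitely many cells in each dimension (this is where compactness of $G$ enters), observes that each term commutes with homotopy colimits in $Z$ because $EG^{(n)}$ is a finite free complex, and then uses that the tower is eventually constant in each degree --- the $n$-cells of $BG$ contribute only in total degree $\leq -n$ once the target is an induced spectrum --- to commute the inverse limit past the colimit in $Z$. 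Some argument of this kind (or an equivalent identification of $(Z\wedge\Sigma^\infty_+ G)^{hG}$ with $Z\wedge D_G$) is indispensable; without it your proof of (3) does not go through.
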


\begin{rem}
Finite discrete groups are zero dimensional Lie groups, hence their dualizing spectrum is $S^0$ with the trivial action, and the norm map recovers the more classical norm map $(-)_{hG} \to (-)^{hG}$.
\end{rem}

Klein later characterized the homotopy cofiber of the norm map  
\[
(X \wedge D_G)_{hG} \longrightarrow X^{hG}
\]
as the universal excisive approximation of the homotopy fixed points $(-)^{hG}$ \cite[Theorem A]{klein_2002}. Using this, Kuhn~\cite[Proposition 2.3]{kuhn_2004} gave a simple argument that for finite, discrete $G$ any homotopy natural transformation
\[
(-)_{hG} \longrightarrow (-)^{hG},
\]
which is an equivalence on $\Sigma^\infty_+ G$ must be the norm map, up to an automorphism of $(-)_{hG}$. We repeat the argument for general $G$.

\begin{prop}
For a topological group $G$, any homotopy natural transformation
    \[
    (- \wedge D_G)_{hG} \longrightarrow (-)^{hG},
    \]
    which is an equivalence on $\Sigma^\infty_+ G$ agrees with the norm map, up to an automorphism of $ (- \wedge D_G)_{hG}$.
\end{prop}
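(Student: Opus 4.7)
The plan is to adapt Kuhn's argument cited in the text (for finite discrete $G$) to the topological setting by invoking the general theory of dualizing spectra from Klein. The key observation is that the source functor $F_1 := (- \wedge D_G)_{hG}$ is cocontinuous: smashing with the Borel $G$-spectrum $D_G$ preserves all colimits, as does the homotopy orbits functor. The category $\mathsf{Spec}^{BG} = \mathsf{Fun}(G, \mathsf{Spec})$ is generated under colimits by the free Borel $G$-spectrum $\Sigma^\infty_+ G$, whose endomorphism ring spectrum is $\Sigma^\infty_+ G$ acting via right translation.

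First, I would apply the enriched Yoneda lemma, or equivalently the density of $\Sigma^\infty_+ G$ in $\mathsf{Spec}^{BG}$, to deduce that for any functor $F: \mathsf{Spec}^{BG} \to \mathsf{Spec}$, the space of homotopy natural transformations $F_1 \to F$ is computed by the mapping spectrum $\mathsf{Spec}^{hG}(F_1(\Sigma^\infty_+ G), F(\Sigma^\infty_+ G))$ of right-$G$-equivariant maps on the value at the generator. The point is that any $X \in \mathsf{Spec}^{BG}$ can be written as a (homotopy) colimit of free cells, and since $F_1$ is cocontinuous, the value of $\eta$ at $X$ is determined by its values at copies of $\Sigma^\infty_+ G$, and an equivariant map on the generator canonically extends to a natural transformation.

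Next, I would compute both sides of the evaluation. Using that $\Sigma^\infty_+ G$ is a free $G$-cell, one obtains $F_1(\Sigma^\infty_+ G) = (\Sigma^\infty_+ G \wedge D_G)_{hG} \simeq D_G$ as a right $G$-spectrum. On the other hand, by definition of the dualizing spectrum, $(\Sigma^\infty_+ G)^{hG} = D_G$ as a right $G$-spectrum. Thus both the norm $\mathsf{Nm}_{G,\Sigma^\infty_+ G}$ and the restriction $\eta_{\Sigma^\infty_+ G}$ belong to the homotopy monoid $\pi_0 \mathsf{End}^{hG}(D_G)$, and by hypothesis (together with part (3) of the theorem of Klein, which ensures the norm is an equivalence on $\Sigma^\infty_+ G$) both are automorphisms of $D_G$ as a right $G$-spectrum in the homotopy category. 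Setting $\phi_{\Sigma^\infty_+ G} := (\mathsf{Nm}_{G,\Sigma^\infty_+ G})^{-1} \circ \eta_{\Sigma^\infty_+ G}$ yields a $G$-equivariant self-equivalence of $D_G$, which by the Yoneda step (applied to $F = F_1$) extends uniquely to an automorphism $\phi$ of the functor $F_1$. By construction $\mathsf{Nm}_G \circ \phi$ and $\eta$ agree on the generator, so by the uniqueness clause of the Yoneda step they agree as homotopy natural transformations.

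The main obstacle is formulating the enriched Yoneda argument correctly for homotopy natural transformations between spectrally enriched functors when the target $F$ is not assumed to be cocontinuous: one must verify that a compatible family of values on free cells assembles into a well-defined transformation rather than merely a pre-transformation. Kuhn's treatment in the discrete case finesses this by working directly in the homotopy category of functors and invoking the compact generator; the same argument transports verbatim to a general topological group $G$ once one knows that $\Sigma^\infty_+ G$ remains a compact generator of $\mathsf{Spec}^{BG}$ and that $F_1$ preserves the relevant homotopy colimits. Once this reduction is in place, the rest of the proof is the short chain of identifications above.
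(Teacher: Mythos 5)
Your proof is correct, but it takes a genuinely different route from the paper's. The paper follows Kuhn's strategy: since the source $(-\wedge D_G)_{hG}$ of the given transformation preserves homotopy colimits, the cofiber of that transformation satisfies the axioms of Klein's characterization of the Tate construction \cite{klein_2002}, and the resulting universal property directly furnishes the comparison map $\phi(X)$ over $(-)^{hG}$; one then concludes that $\phi$ is an equivalence because it is one on $\Sigma^\infty_+ G$ and both sides are cocontinuous. You instead bypass Klein's universal property entirely and compute the full derived spectrum of natural transformations out of the cocontinuous functor $(-\wedge D_G)_{hG}$ by density of the compact generator $\Sigma^\infty_+ G$ (i.e., the left Kan extension adjunction $\nat(\mathrm{Lan}_i H, F)\simeq \nat(H, F\circ i)$, which applies because the target need not be cocontinuous), identify both evaluations at the generator with $D_G$, and transport the discrepancy back as an automorphism of the source. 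What the paper's route buys is that the only delicate coherence point --- why a value on the generator determines a genuine homotopy natural transformation --- is outsourced to Klein's axiomatics; what your route buys is independence from \cite{klein_2002} and a stronger conclusion (a computation of the entire mapping spectrum $\nat((-\wedge D_G)_{hG}, (-)^{hG})$, not just the identification of one transformation). The one step you must make precise is exactly the one you flag: the statement concerns morphisms in the homotopy category of the functor category (equivalently, points of the derived natural transformation spectrum), not transformations that are merely natural up to non-coherent homotopy; with that reading your enriched Yoneda step is valid, and the paper's own proof tacitly requires the same reading in order to form the cofiber functorially.
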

\begin{proof}
    Observe that $(- \wedge D_G)_{hG}$  preserves colimits and is equipped with a given natural transformation 
    \[
    (- \wedge D_G)_{hG} \rightarrow (-)^{hG}.
    \]
    The cofiber of this map then satisfies the axioms of \cite{klein_2002}, and so we get induced maps

\begin{center}
    % https://tikzcd.yichuanshen.de/#N4Igdg9gJgpgziAXAbVABwnAlgFyxMJZABgBpiBdUkANwEMAbAVxiRAAoANAAgB1eA7jCgBzGNwAiAfQDiASinAAFjIC+IVaXSZc+QijIBGKrUYs2XPoOFjJshcrUatIDNjwEih8ifrNWiCCcAHqO6qomNvBEoABmAE4QALZI3iA4EEgATNR+5oH8SXQ4SvFJwABySaqyINQMdABGMAwACjoe+iDxWCJKOM5xiSmIZOmZiFmaQ8lIYxmpuWYBIPxoSlhccnUgDc1tHXpsPX0DEapAA
\begin{tikzcd}
(X \wedge D_G)_{hG} \arrow[r] \arrow[d, "\phi(X)"'] & X^{hG} \\
(X \wedge D_G)_{hG} \arrow[ru, "\mathsf{Nm}_G"']    &       
\end{tikzcd}
\end{center}
    
   By hypothesis, these maps are equivalences when $X=\Sigma^\infty_+ G$. Since $\Sigma^\infty_+ G$ generates the category of Borel $G$-spectra under homotopy colimits, we conclude the result.
\end{proof}

\begin{rem}
If the dualizing spectrum $D_G$ is invertible, i.e., has the nonequivariant homotopy type of a sphere, we will refer to the natural transformation
\[
(-)_{hG} \longrightarrow ((D_G)^\vee \wedge -)^{hG},
\]
as the dual norm map and observe that it is also unique.

\end{rem}

\subsection{Example: Right modules over operads}\label{subsection:right modules over operad}
The category of symmetric sequences of spectra
\[\mathsf{SymSeq}(\mathsf{Spec})\coloneq \mathsf{Fun}(\mathsf{FinSet}^\cong,\mathsf{Spec})\]
admits a \emph{composition product} $\circ$ given by 
\[
(P \circ Q)(n) = \bigvee_{k=1}^{n}~\bigvee_{n_1 + \cdots + n_k = n} \Sigma_n \wedge_{\Sigma(n_1, \dots, n_k)} P( k) \wedge Q(n_1) \wedge \cdots Q(n_k),
\]
where we assume $n_1 \geq n_2 \geq \cdots \geq n_k > 0$, and $\Sigma(n_1, \dots, n_k)$ is the normalizer of $\Sigma_{n_1} \times  \cdots \times \Sigma_{n_k}$ in $\Sigma_n$, or equivalently, $\Sigma(n_1, \dots, n_k)$ is the semi-direct product of $\Sigma_{n_1} \times  \cdots \times \Sigma_{n_k}$ in $\Sigma_n$ with the subgroup of $\Sigma_k$ which permutes the indices $i$ for which the corresponding $n_i$ are equal. The category of operads (in spectra) is given by the category of monoids for the composition product, and the category $\mathsf{RMod}_P$ of right modules over a given operad $P$ may be defined as the category of symmetric sequences $R$ equipped with composition maps 
\[
R \circ P \rightarrow R.
\]
which make $R$ into a right $P$-module with respect to $\circ$ in the categorical sense.
We describe an alternative characterization of operads and right modules, see e.g.,~\cite[\S4]{AroneTurchin},~\cite[Appendix A]{ACOperads} or~\cite[\S10.1]{MayZhangZou} for a more historical account. To an operad $P$, let $\mathsf{Env}(P)$ denote the free symmetric monoidal spectral category subject to the following constraints: 
\begin{enumerate}
    \item The objects are the finite sets equipped with disjoint union as a symmetric monoidal product.
    \item The mapping spectrum $\mathsf{Env}(P)(n,1)=P(n)$, and in general, 
    \[
    \mathsf{Env}(P)(n,k) = \bigvee_{\text{ordered partitions of} \ \{1, \dots, n\}} P(n_1) \wedge \cdots \wedge P(n_k).
    \]
\end{enumerate}
There is an equivalence of categories 
\[
\mathsf{RMod}_{\mathsf{Env}(P)} \coloneq \mathsf{Fun}(\mathsf{Env}^\mathsf{op}(P),\mathsf{Spec}) \cong \mathsf{RMod}_P.
\]
Thus, our statements about right modules over $\mc{C}$ can be understood as generalizations of statements about right modules over operads.

\section{Augmented categories and topological André--Quillen homology} \label{section: TAQ}

\subsection{Augmented categories}  In this section, we introduce a class of categories whose right modules automatically have an ``indecomposables'' functor.

\begin{definition}
    Given a spectral category $\mathcal{C}$, we let $\mathsf{End}(\mathcal{C})$ denote the  spectral category with objects $\mathsf{Ob}(\mathcal{C})$ and morphisms
    \[
       \mathsf{End}(\mathcal{C})(c,d) = 
    \begin{cases}
        \mathcal{C}(c,c) & \text{if} \ c=d \\
        \ast & \text{otherwise.}
    \end{cases}
    \]
\end{definition}

\begin{definition} 
    An \emph{augmented spectral category} $\mathcal{C}$ is a spectral category $\mathcal{C}$ with a functor $ \epsilon: \mathcal{C} \rightarrow \mathsf{End}(\mathcal{C})$ such that the composite
    \[\mathsf{End}(\mathcal{C}) \xrightarrow{\ \eta \ } \mathcal{C} \xrightarrow{\ \epsilon \ } \mathsf{End}(\mathcal{C})\]
    is the identity, where $\eta$ is the canonical map given by the identity on objects and automorphisms. Note that augmentations are unique when they exist.
\end{definition}

\begin{exs}\hspace{10ex}\label{ex: augmented cats}
\begin{enumerate}
\item  The envelope $\mathsf{Env}(P)$ of a reduced operad $P$ is an augmented category. Its endomorphism category is the symmetric groupoid $\Sigma_\ast$ defined to have objects the natural numbers and morphism spectra
\[
\Sigma_\ast(c,d) = 
\begin{cases}
        \Sigma_+^\infty \Sigma_c & \text{if $c=d$} \\
        \ast & \text{otherwise}.
    \end{cases}
    \]

\item The spectral category $\mathsf{OEpi}:= \mathsf{Vect}_\mathbb{R}^\mathsf{op}$ of finite-dimensional inner product spaces and ``orthogonal surjections'' is an augmented category. Its endomorphism category is the orthogonal groupoid $O(\ast)$ defined to have objects the natural numbers and morphism spectra
\[
O(\ast)(c,d) = 
\begin{cases}
        \Sigma_+^\infty O(c)& \text{if $c=d$} \\
        \ast & \text{otherwise.}
    \end{cases}
    \]
    \end{enumerate}
\end{exs}

\begin{definition}\label{def: A* category}
Let $A_\ast$ be a natural number indexed  sequence $\{A_i\}_{i \in \bb{N}}$ of ring spectra. An $A_\ast $-category is a spectral category $\mathcal{C}$ with pseudo-cofibrant mapping spectra with the property that if $n < m$, then $\mathcal{C}(n,m)=\ast $ and an isomorphism $\mathsf{End}(\mathcal{C}) \cong A_\ast$.
\end{definition}

It is immediate from the condition that $\mathcal{C}(n,m)=\ast $ if $n<m$ that augmentations exist for $A_\ast$-categories, hence right $\mc{C}$-modules have underlying right $A_\ast$-modules, which we routinely refer to as $A_\ast$-sequences. The cofibrancy assumptions on $\mc{C}$ are rather minor and can always be met, up to replacing $\mc{C}$ by a Dwyer-Kan equivalent category \cite[Corollary 7.14]{muro}. A functor of $A_\ast$-categories is a spectral functor of underlying categories which commutes with the augmentation.

\begin{definition}
    A Dwyer Kan equivalence of $A_\ast$-categories is a functor of $A_\ast$-categories that is a Dwyer-Kan equivalence.
\end{definition}

\subsection{Topological André--Quillen homology of right modules}\label{subsection: TAQ} Our attention now turns to understanding the indecomposables of right modules over $A_\ast$-categories. For this, fix an $A_\ast$-category $\mc{C}$.

\begin{definition}\label{def: freeforget}
Let $\mc{C}$ be an $A_\ast$-category. The \emph{free-forgetful adjunction} is the spectrally enriched adjoint pair
\[
\adjunction{\sf{Free}_\mc{C}}{\sf{RMod}_{A_\ast}}{\sf{RMod}_\mc{C}}{\sf{Res}_\mc{C}},
\]
induced by the functor $\eta: A_\ast \to \mc{C}$. We will say that a right $\mc{C}$-module is \emph{free} if it is in the image of the free functor.
\end{definition}

By definition, the free functor $\sf{Free}_\mc{C}$ is the (enriched) left Kan extension along the functor $\eta^\op: A_\ast^\op \to \mc{C}^\op$. In general this left Kan extension is given by the coend
\[
\sf{Free}_\mc{C}(X)(m) = \int^{n \in  A_\ast} \mc{C}(m, \eta(n)) \wedge X(n) \cong \bigvee_n X(n) \wedge_{A_n} \mathcal{C}(m,n)
\]
using the fact that $A_\ast$ has no nontrivial non-automorphisms.

\begin{definition}\label{def: indecomtriv}
Let $\mc{C}$ be an $A_\ast$-category. Define the \emph{indecomposables-trivial adjunction} to be the spectrally enriched adjoint pair
\[
\adjunction{\sf{Indecom}_{\mc{C}}}{\sf{RMod}_\mc{C}}{\sf{RMod}_{A_\ast}}{\sf{Triv}_\mc{C}},
\]
induced by the functor $\epsilon: \mc{C} \to A_\ast$. We will say that a right $\mc{C}$-module is \emph{trivial} if it is in the image of the trivial functor. Explicitly, the indecomposables are given via the coend formula,
\[
\mathsf{Indecom}_\mc{C}(R)(m) \cong \int^{n \in \mathcal{C}}  A_m(n) \wedge R(n).
\]
where we treat $A_m$ as an $A_\ast$-sequence concentrated in degree $m$.
\end{definition}

\begin{definition}
Let $\mc{C}$ be an $A_\ast$-category. The topological André--Quillen homology of a right $\mc{C}$-module $R$ is the $A_\ast$-sequence
\[
\mathsf{TAQ}(R)\coloneq\mathsf{Indecom}_\mc{C}^{\mbf{L}}(R).
\]
\end{definition}

In the case $\mathcal{C}$ is the opposite of the envelope of a reduced operad $P$, minor point-set conditions on $R$ and $P$ imply $\mathsf{TAQ}(R) \simeq B(R,P,1)$, the operadic bar construction, see e.g.,~\cite[\S2.1]{ACOperads}.

\begin{prop}\label{prop:taq free}
Let $\mc{C}$ be an $A_\ast$-category. If $X$ is a right $A_\ast$-module, then
 \[\sf{Indecom}_\mc{C}(\sf{Free}_\mc{C}(X))\cong X.\]
If $X$ is a cofibrant $A_\ast$-sequence, then 
\[
\mathsf{TAQ}(\mathsf{Free}_\mc{C}(X)) \cong X.
\]
\end{prop}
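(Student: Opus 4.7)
The plan is to deduce both isomorphisms from the functoriality of adjunctions together with the augmentation identity $\epsilon \circ \eta = \id_{A_\ast}$.

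For the first isomorphism, I would argue adjointly rather than by computing with coends. The right adjoints $\sf{Res}_\mc{C}$ and $\sf{Triv}_\mc{C}$ of $\sf{Free}_\mc{C}$ and $\sf{Indecom}_\mc{C}$ are just precomposition of spectral presheaves along $\eta^\op$ and $\epsilon^\op$, respectively. The augmentation condition yields $\epsilon^\op \circ \eta^\op = (\epsilon \circ \eta)^\op = \id_{A_\ast^\op}$, so the composite $\sf{Res}_\mc{C} \circ \sf{Triv}_\mc{C}$ is the identity functor on $\sf{RMod}_{A_\ast}$. By uniqueness of left adjoints, $\sf{Indecom}_\mc{C} \circ \sf{Free}_\mc{C}$ is naturally isomorphic to the identity, which is precisely the first claim. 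In principle one could verify the same identity directly from the coend formulas in Definitions \ref{def: freeforget} and \ref{def: indecomtriv}, but the adjoint argument avoids unpacking the coequalizers explicitly.

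For the $\mathsf{TAQ}$ statement, the plan is to show that no cofibrant replacement is needed when evaluating the derived functor on $\sf{Free}_\mc{C}(X)$. By Proposition \ref{prop:quillen} applied to $\eta : A_\ast \to \mc{C}$, the functor $\sf{Free}_\mc{C} = \ind_{\eta^\op}$ is left Quillen and hence carries the cofibrant $A_\ast$-sequence $X$ to a cofibrant right $\mc{C}$-module. By the same proposition applied to $\epsilon$, the functor $\sf{Indecom}_\mc{C} = \ind_{\epsilon^\op}$ is itself left Quillen, so $\sf{Indecom}_\mc{C}^{\mathbf{L}}$ agrees with $\sf{Indecom}_\mc{C}$ on cofibrant inputs. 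Chaining these observations gives $\mathsf{TAQ}(\sf{Free}_\mc{C}(X)) = \sf{Indecom}_\mc{C}^{\mathbf{L}}(\sf{Free}_\mc{C}(X)) \simeq \sf{Indecom}_\mc{C}(\sf{Free}_\mc{C}(X)) \cong X$, where the last isomorphism comes from the first part.

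I do not expect either step to present a substantial obstacle; the result is essentially a formal consequence of recognizing the four functors as restriction and induction along the augmentation datum, combined with the fact that the relevant left adjoints are Quillen. The only subtlety worth double-checking is that everything respects the spectral enrichment, which is handled transparently since every functor in sight is spectrally enriched and the pseudo-cofibrancy hypothesis built into the definition of an $A_\ast$-category ensures the relevant model-categorical input applies without qualification.
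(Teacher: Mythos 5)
Your proposal is correct and follows essentially the same route as the paper: the paper establishes the point-set isomorphism via $\ind_{\epsilon^\op}\circ\ind_{\eta^\op}\cong\ind_{\epsilon^\op\circ\eta^\op}=\ind_{\id}$, which is just the left-adjoint side of your observation that $\sf{Res}_\mc{C}\circ\sf{Triv}_\mc{C}=\res_{\epsilon^\op\circ\eta^\op}=\id$, and the derived statement is reduced to the point-set one exactly as you do, by noting that $\sf{Free}_\mc{C}$ is left Quillen so $\sf{Free}_\mc{C}(X)$ is cofibrant.
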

\begin{proof}
If $X$ is cofibrant, $\sf{Free}_\mc{C}(X)$ is a cofibrant right $\mc{C}$-module and hence 
\[
\sf{TAQ}(\sf{Free}_\mc{C}(X)) = \sf{Indecom}_\mc{C}(\sf{Free}_\mc{C}(X)),
\]
so we've reduced to the point-set statement. The augmentations of $\mc{C}$
\[
A_\ast \xrightarrow{\ \eta \ } \mc{C} \xrightarrow{ \ \varepsilon \ } A_\ast, 
\]
compose to the identity, and hence by Definition~\ref{def: freeforget} and Definition~\ref{def: indecomtriv} we have
\[
\sf{Indecom}_\mc{C}(\sf{Free}_\mc{C}(X)) = \ind_{\varepsilon^\op} \circ \ind_{\eta^\op}(X) \cong \ind_{\varepsilon^\op \circ \eta^\op}(X) = \ind_{\id}(X) = X. \qedhere
\]
\end{proof}

\subsection{(Co)Filtration of right modules}\label{subsection: filtration of right modules} The structure of an $A_\ast$-category allows us to define (co)truncations for right modules, which still live in the category of right $A_\ast$-modules. Though simple, these (co)truncations are vital to our study of Weiss calculus, since we ultimately identify them as a reflection of the Goodwillie filtration. Fix an $A_\ast$-category $\mathcal{C}$ and $R \in \mathsf{RMod}_\mathcal{C}.$ 

\begin{definition}
Let $\mc{C}$ be an $A_\ast$-category and $R$ a right $\mc{C}$-module. For each $n \in \bb{N}$, we define the \emph{$n$-th truncation} $R^{\leq n}$ by
\[
R^{\leq n}(m)=
\begin{cases}
R(m) & \text{if} \ m \leq n\\
\ast & \text{if} \ m > n. \\
\end{cases}
\]
Analogously, we define the \emph{$n$-th cotruncation} $R^{\geq n}$ of a right $\mc{C}$-module $R$ by
\[
R^{\geq n}(m) =
\begin{cases}
R(m) & \text{if} \ m \geq n \\
\ast & \text{if} \ m < n.
\end{cases}
\]
The (co)truncation of a right $\mc{C}$-module naturally has the structure of a right $\mc{C}$-module. \footnote{We will let the reader take a wild guess at what we mean by $R^{<n}$ and $R^{>n}$.}
\end{definition}

\begin{lem}\label{lem:triv fiber sequence}
Let $\mc{C}$ be an $A_\ast$-category and $R$ a right $\mc{C}$-module. There is a fibration 
\[
R^{>n} \longrightarrow R \longrightarrow R^{\leq n},
\]
natural in $R$,  as well as natural isomorphisms 
\[
(R^{\leq n})^{\geq n}\cong(R^{\geq n})^{\leq n} \cong \mathsf{Triv}_\mathcal{C}(R(n)),
\]
where we treat $R(n)$ as an $A_\ast$-sequence concentrated in degree $n$
\end{lem}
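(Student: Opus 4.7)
The plan is to unpack the definitions and use the key axiom of an $A_\ast$-category, namely that $\mc{C}(n,m) = \ast$ whenever $n < m$, to check that the (co)truncations are well-defined right $\mc{C}$-modules. Once this is established, the fibration sequence is immediate from the fact that fibrations in the projective model structure on $\sf{RMod}_\mc{C}$ are detected levelwise, and the identification of $(R^{\leq n})^{\geq n}$ with $\sf{Triv}_\mc{C}(R(n))$ is a direct computation.

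First I would verify that $R^{\leq n}$ inherits a right $\mc{C}$-module structure. Recall that $R: \mc{C}^\op \to \s$ is encoded by action maps $\mc{C}(m,k) \wedge R(k) \to R(m)$. If $m > n$, then $R^{\leq n}(m) = \ast$, so the required map is forced. If $m \leq n$ and $k > n$, then $R^{\leq n}(k) = \ast$ and again the map is trivial. If both $m,k \leq n$, we use the original structure map of $R$. The simplicial/spectral associativity and unitality axioms are inherited directly from $R$. Dually, for $R^{\geq n}$ the only nontrivial case to check is $m < n \leq k$: here the target $R^{\geq n}(m) = \ast$, so we must check the map is zero. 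This is automatic because the \emph{source} mapping spectrum $\mc{C}(m,k)$ is itself $\ast$ by the $A_\ast$-category axiom since $m < k$. This is the one place where the structural hypothesis on $\mc{C}$ is used.

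Next I would produce the fibration. The map $R \to R^{\leq n}$ is the identity at levels $m \leq n$ and zero at levels $m > n$, so its levelwise fibre is exactly $R^{>n}$. Since projective fibrations in $\sf{RMod}_\mc{C}$ are fibrations levelwise, this is a fibration sequence of right $\mc{C}$-modules, manifestly natural in $R$.

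Finally I would verify the isomorphism. Both $(R^{\leq n})^{\geq n}$ and $(R^{\geq n})^{\leq n}$ have underlying $A_\ast$-sequence concentrated in degree $n$ with value $R(n)$. The only nontrivial $\mc{C}$-action at level $n$ is that of $\mc{C}(n,n) = A_n = \mathsf{End}(\mc{C})(n,n)$, and by construction this action agrees with the restricted $A_n$-action on $R(n)$. This is exactly the description of $\sf{Triv}_\mc{C}(R(n))$, since $\sf{Triv}_\mc{C}$ is restriction along the augmentation $\epsilon: \mc{C} \to A_\ast$, which is the identity on endomorphism spectra. The naturality of all three isomorphisms is clear from the construction. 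The only genuine point of the proof — and thus the main (minor) obstacle — is the $R^{\geq n}$ check, where one must recognize that compatibility with the module structure is not a property of $R$ but a consequence of the vanishing of $\mc{C}(m,k)$ for $m < k$.
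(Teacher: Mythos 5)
Your proof is correct and follows essentially the same route as the paper's, which is a one-line observation that fibrations in the projective model structure are detected levelwise, that the levelwise maps here are identities or maps to the point, and that all spectra in $\s$ are fibrant; you simply supply the routine verifications (well-definedness of the (co)truncated module structures and the identification with $\mathsf{Triv}_\mc{C}(R(n))$) that the paper delegates to the definition. One cosmetic point: in the case $m<n\leq k$ the issue is not that the map into $R^{\geq n}(m)=\ast$ must ``be zero'' (that is automatic), but that the original action map $\mc{C}(m,k)\wedge R(k)\to R(m)$ must be trivial for $R^{\geq n}\hookrightarrow R$ to be a map of modules --- which, as you correctly identify, holds because $\mc{C}(m,k)=\ast$ for $m<k$.
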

\begin{proof}
    Fibrations in the projective model structure are given by pointwise fibrations, and the involved maps are either identities or maps to the point. Since all spectra are fibrant we conclude the result.
\end{proof}

The truncations of a right $\mc{C}$-module $R$ assemble into a tower
% https://q.uiver.app/#q=WzAsOCxbMiwxLCJSXntcXGxlcSBpfSJdLFsxLDEsIlJee1xcbGVxIGkrMX0iXSxbNSwxLCJSXntcXGxlcSAxfSJdLFs0LDEsIlxcY2RvdHMiXSxbMywxLCJSXntcXGxlcSBpLTF9Il0sWzMsMCwiUiJdLFswLDEsIlxcY2RvdHMiXSxbNiwxLCJSXntcXGxlcSAwfSJdLFsxLDBdLFswLDRdLFs0LDNdLFs1LDRdLFs1LDAsIiIsMCx7ImN1cnZlIjoxfV0sWzUsMiwiIiwyLHsiY3VydmUiOi0xfV0sWzYsMV0sWzMsMl0sWzIsN10sWzUsNywiIiwwLHsiY3VydmUiOi0xfV0sWzUsMSwiIiwwLHsiY3VydmUiOjF9XV0=
\[\begin{tikzcd}
	&&& R \\
	\cdots & {R^{\leq n+1}} & {R^{\leq n}} & {R^{\leq n-1}} & \cdots & {R^{\leq 1}} & {R^{\leq 0}}
	\arrow[curve={height=6pt}, from=1-4, to=2-2]
	\arrow[curve={height=6pt}, from=1-4, to=2-3]
	\arrow[from=1-4, to=2-4]
	\arrow[curve={height=-6pt}, from=1-4, to=2-6]
	\arrow[curve={height=-6pt}, from=1-4, to=2-7]
	\arrow[from=2-1, to=2-2]
	\arrow[from=2-2, to=2-3]
	\arrow[from=2-3, to=2-4]
	\arrow[from=2-4, to=2-5]
	\arrow[from=2-5, to=2-6]
	\arrow[from=2-6, to=2-7]
\end{tikzcd}\]
under $R$, which we call the \emph{truncation tower of $R$}.

\begin{lem}\label{lem: module limit of truncations}
Let $\mc{C}$ be an $A_\ast$-category and $R$ a right $\mc{C}$-module. The comparison map
    \[
    R \longrightarrow \lim_n R^{\leq n},
    \]
    is an isomorphism and the inverse limit is a tower of fibrations, and so is a homotopy inverse limit.
\end{lem}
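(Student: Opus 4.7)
The plan is to verify the two claims level by level, exploiting that both limits and fibrations in the projective model structure on $\mathsf{RMod}_\mc{C}$ are detected pointwise in $\s$.

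First I would address the isomorphism. Since limits in a functor category are computed pointwise, it suffices to show that for each $m \in \bb{N}$, the canonical map
\[
R(m) \longrightarrow \lim_n R^{\leq n}(m)
\]
is an isomorphism of spectra. By definition of truncation, $R^{\leq n}(m) = R(m)$ whenever $n \geq m$, and the transition map $R^{\leq n+1}(m) \to R^{\leq n}(m)$ is the identity in this range. Thus the tower $\{R^{\leq n}(m)\}_n$ is eventually constant equal to $R(m)$ with identity transitions, so its inverse limit is $R(m)$ on the nose.

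Next I would verify that each transition map $R^{\leq n+1} \to R^{\leq n}$ is a fibration in the projective model structure, i.e., a pointwise fibration of spectra. Evaluated at level $m$, this map is either the identity (when $m \leq n$), the terminal map $R(n+1) \to \ast$ (when $m = n+1$), or the identity map $\ast \to \ast$ (when $m > n+1$). Because every object of $\s$ is fibrant, terminal maps are fibrations, and identity maps are always fibrations, so each transition is a projective fibration.

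The combination of these observations gives the conclusion: the strict inverse limit coincides with $R$, and since it is a strict limit of a tower of fibrations between fibrant objects (all right $\mc{C}$-modules are projectively fibrant, by the discussion following Proposition~\ref{prop: day convolution monoidal model}), it computes the homotopy inverse limit. No serious obstacle is anticipated; the only subtlety is keeping straight that the pointwise formula for limits and fibrations in the projective structure is what turns the elementary cofinality argument into a genuine homotopical statement.
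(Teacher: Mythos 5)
Your proof is correct and follows essentially the same route as the paper: the isomorphism is checked pointwise using that the tower $\{R^{\leq n}(m)\}_n$ is eventually constant at $R(m)$, and the fibration claim reduces to the observation that the transition maps are pointwise either identities or maps to the point, hence fibrations since all spectra are fibrant. Your write-up is just slightly more explicit about the fibration step, which the paper delegates to the preceding lemma.
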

\begin{proof}
    What remains to be checked is that the map is an isomorphism. Since limits in functor categories are computed pointwise, it suffices to check
    \[
    R(m) \xlongrightarrow{\cong} \lim_n R^{\leq n}(m).
    \]
    After $n>m$, this limit is constant at $R(m)$, and so we deduce the result.
\end{proof}

\begin{prop}
Let $\mc{C}$ be an $A_\ast$-category with right modules $R$ and $S$. The map
\[
\mathsf{RMod}_\mathcal{C}(R,S) \longrightarrow \lim_{n} \mathsf{RMod}_\mathcal{C}(R,S^{\leq n}),
\]
induced by the truncation tower of $S$ is an isomorphism. If $R$ is cofibrant, then this is a tower of fibrations and so the inverse limit is a homotopy inverse limit.
\end{prop}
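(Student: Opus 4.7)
The plan is to deduce both statements from Lemma~\ref{lem: module limit of truncations} together with formal properties of the spectrally enriched mapping object and the projective model structure on $\mathsf{RMod}_\mc{C}$.

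First, by Lemma~\ref{lem: module limit of truncations}, the comparison map $S \to \lim_n S^{\leq n}$ is an isomorphism of right $\mc{C}$-modules. The enriched mapping spectrum $\mathsf{RMod}_\mc{C}(R,-)$ is a right adjoint (to tensoring with $R$), or more elementarily, the defining enriched end
\[
\mathsf{RMod}_\mc{C}(R,S) = \int_{c \in \mc{C}^\op} \s(R(c),S(c))
\]
commutes with limits in $S$, since both $\int$ and $\s(R(c),-)$ do. Applying this to the identification $S \cong \lim_n S^{\leq n}$ gives the desired isomorphism
\[
\mathsf{RMod}_\mc{C}(R,S) \xlongrightarrow{\ \cong\ } \lim_n \mathsf{RMod}_\mc{C}(R,S^{\leq n}).
\]

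For the second claim, I would show that the truncation tower $\{S^{\leq n}\}_n$ is a tower of fibrations in the projective model structure. The structure map $S^{\leq n+1} \to S^{\leq n}$ is the identity on levels $\leq n$ and the terminal map $S(n+1) \to \ast$ on level $n+1$; both are fibrations in $\s$ (everything is fibrant), so the map is a fibration in the projective model structure. Because the projective model structure on $\mathsf{RMod}_\mc{C}$ is spectrally enriched (Proposition~\ref{prop: day convolution monoidal model}), the pushout-product/$\mathsf{SM7}$ axiom implies that for cofibrant $R$ the functor $\mathsf{RMod}_\mc{C}(R,-)$ sends fibrations to fibrations of spectra. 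Consequently the tower $\{\mathsf{RMod}_\mc{C}(R,S^{\leq n})\}_n$ is a tower of fibrations, and its strict inverse limit computes the homotopy inverse limit.

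There is no serious obstacle here: the only point to be slightly careful about is that the truncation maps really are fibrations in the projective model structure, but this is immediate since projective fibrations are detected levelwise and every spectrum is fibrant in $\s$. Everything else is a formal consequence of enriched adjunction and the definition of the projective model structure.
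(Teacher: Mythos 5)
Your proposal is correct and follows essentially the same route as the paper: the paper's proof simply cites Lemma~\ref{lem: module limit of truncations} together with ``the axioms of a spectral model category,'' which is exactly the combination you spell out (the mapping spectrum $\mathsf{RMod}_\mc{C}(R,-)$ preserves limits, and SM7 gives the fibration claim for cofibrant $R$). Your write-up just makes explicit the details the paper leaves implicit.
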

\begin{proof}
    This follows from Lemma \ref{lem: module limit of truncations} and the axioms of a spectral model category.
\end{proof}

We refer to this as the truncation filtration of the right module mapping spectrum, and turn our attention to the layers of this filtration.

\begin{prop}\label{prop: layers of truncation tower}
Let $\mc{C}$ be an $A_\ast$-category with right modules $R$ and $S$. There is a fiber sequence
\[
\mathsf{RMod}_{A_n}(\mathsf{Indecom}(R)(n), S(n)) \longrightarrow \mathsf{RMod}_\mathcal{C}(R,S^{\leq n}) \longrightarrow \mathsf{RMod}_\mathcal{C}(R,S^{\leq n-1})
\]
natural in $R$ and $S$.
\end{prop}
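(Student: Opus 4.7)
The plan is to realize the claimed fiber sequence as the image under $\mathsf{RMod}_\mc{C}(R,-)$ of the step-by-step truncation fiber sequence of $S$ provided by Lemma~\ref{lem:triv fiber sequence}, and then identify the fiber via the indecomposables-trivial adjunction of Definition~\ref{def: indecomtriv}.

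First I would apply Lemma~\ref{lem:triv fiber sequence} to the right $\mc{C}$-module $S^{\leq n}$ at truncation index $n-1$, producing a fiber sequence
\[
(S^{\leq n})^{>n-1} \longrightarrow S^{\leq n} \longrightarrow (S^{\leq n})^{\leq n-1}
\]
in $\mathsf{RMod}_\mc{C}$. A direct pointwise inspection shows that $(S^{\leq n})^{\leq n-1} = S^{\leq n-1}$, and that $(S^{\leq n})^{>n-1}$ has value $S(n)$ in degree $n$ and $\ast$ elsewhere; the second clause of Lemma~\ref{lem:triv fiber sequence} then identifies this with $\mathsf{Triv}_\mc{C}(S(n))$, where $S(n)$ is treated as an $A_\ast$-sequence concentrated in degree $n$.

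Next, after replacing $R$ by a cofibrant right $\mc{C}$-module (so that the mapping spectra appearing in the statement are the derived ones), the functor $\mathsf{RMod}_\mc{C}(R,-)$ is right Quillen in the projective spectral model structure and thus carries the above fiber sequence to a fiber sequence of spectra
\[
\mathsf{RMod}_\mc{C}(R, \mathsf{Triv}_\mc{C}(S(n))) \longrightarrow \mathsf{RMod}_\mc{C}(R, S^{\leq n}) \longrightarrow \mathsf{RMod}_\mc{C}(R, S^{\leq n-1}).
\]
The indecomposables-trivial adjunction of Definition~\ref{def: indecomtriv} then rewrites the fiber as $\mathsf{RMod}_{A_\ast}(\mathsf{Indecom}_\mc{C}(R), S(n))$. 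Since $A_\ast$ has no morphisms between distinct objects and the target is concentrated in degree $n$, an $A_\ast$-sequence map into it is determined by its degree-$n$ component, giving the final identification with $\mathsf{RMod}_{A_n}(\mathsf{Indecom}_\mc{C}(R)(n), S(n))$. Naturality in $R$ and $S$ is inherited at every step from the naturality of the truncation fiber sequence and the adjunction unit/counit.

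There is no serious obstacle; the only delicate point is the cofibrancy of $R$, needed so that $\mathsf{RMod}_\mc{C}(R,-)$ correctly computes the derived mapping spectrum and preserves the fiber sequence. Everything else is a formal manipulation of truncations and adjunctions.
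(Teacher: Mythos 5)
Your argument is essentially the paper's own proof: apply $\mathsf{RMod}_\mc{C}(R,-)$ to the fibration $S^{\leq n}\to S^{\leq n-1}$ with fiber $\mathsf{Triv}_\mc{C}(S(n))$ from Lemma~\ref{lem:triv fiber sequence}, then identify the fiber via the $(\mathsf{Indecom},\mathsf{Triv})$-adjunction. The only divergence is your cofibrant replacement of $R$, which is not needed (and slightly alters the statement): the proposition is a point-set assertion, since $\mathsf{RMod}_\mc{C}(R,-)$ preserves strict fibers as an end, and the derived/homotopical upgrade under cofibrancy of $R$ is deferred to the corollary that follows.
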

\begin{proof}
    This follows from applying $\mathsf{RMod}_\mathcal{C}(R,-)$ to the fiber sequence 
    \[
    \mathsf{Triv}_\mathcal{C}( S(n)) \rightarrow S^{\leq n} \rightarrow S^{\leq n-1}
    \]
    of Lemma \ref{lem:triv fiber sequence} and applying the indecomposables-trivial adjunction of Definition~\ref{def: indecomtriv}.
\end{proof}

\begin{cor}
Let $\mc{C}$ be an $A_\ast$-category with right modules $R$ and $S$. There is an equivalence,
    \[\mathsf{RMod}_\mc{C}^h(R,S) \simeq \underset{n}{\holim}~ \mathsf{RMod}^h_\mathcal{C}(R,S^{\leq n}),\]
    and the $n$-th layer of the tower of fibrations is
    $\mathsf{RMod}^h_{A_n}(\mathsf{TAQ}(R)(n), S(n))$.
\end{cor}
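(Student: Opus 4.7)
The plan is to upgrade the two preceding point-set propositions to their derived versions by means of a single cofibrant replacement of $R$. First, I would choose a cofibrant replacement $QR \xrightarrow{\simeq} R$ in the projective model structure on $\mathsf{RMod}_\mc{C}$. Because every spectrum in $\s$ is fibrant, the projective model structure on $\mathsf{RMod}_\mc{C}$ has all objects fibrant (Proposition~\ref{prop: day convolution monoidal model}), so the derived mapping spectrum is computed by $\mathsf{RMod}_\mc{C}^h(R,T) \simeq \mathsf{RMod}_\mc{C}(QR,T)$ uniformly for each target $T$ of interest, namely $T = S$ and $T = S^{\leq n}$ for all $n$.

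Next, I would feed $QR$ into the immediately preceding proposition to obtain a strict isomorphism
\[
\mathsf{RMod}_\mc{C}(QR, S) \xrightarrow{\cong} \lim_n \mathsf{RMod}_\mc{C}(QR, S^{\leq n}),
\]
together with the fact that the tower on the right is a tower of fibrations. A limit of a tower of fibrations between fibrant objects computes the homotopy limit, so after rewriting both sides as derived mapping spectra this yields the first equivalence of the corollary.

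For the layers, I would apply Proposition~\ref{prop: layers of truncation tower} with $R$ replaced by $QR$, producing the fiber sequence
\[
\mathsf{RMod}_{A_n}(\mathsf{Indecom}(QR)(n), S(n)) \longrightarrow \mathsf{RMod}_\mc{C}(QR, S^{\leq n}) \longrightarrow \mathsf{RMod}_\mc{C}(QR, S^{\leq n-1}).
\]
By definition, $\mathsf{Indecom}(QR) = \mathsf{TAQ}(R)$ since $QR$ is cofibrant, so identifying the layers with the claimed derived mapping spectra reduces to checking that $\mathsf{Indecom}(QR)(n)$ is cofibrant as an $A_n$-module. This is the one place that needs attention: it follows because $\mathsf{Indecom}_\mc{C}$ is left Quillen (Definition~\ref{def: indecomtriv}), so $\mathsf{Indecom}(QR)$ is a cofibrant $A_\ast$-sequence, and projective cofibrations of $A_\ast$-sequences are levelwise cofibrations of $A_n$-modules because $A_\ast$ has only automorphisms between distinct objects, so the generating projective cofibrations are already levelwise free-cell attachments.

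The main (and fairly mild) obstacle is therefore the last bookkeeping step, namely ensuring that the point-set fiber sequence of Proposition~\ref{prop: layers of truncation tower} models the derived one after restriction to level $n$; everything else is a formal consequence of combining the two preceding propositions with cofibrant replacement.
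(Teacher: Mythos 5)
Your argument is correct and is exactly the route the paper intends: the corollary is stated without proof as an immediate consequence of the two preceding propositions applied to a cofibrant replacement $QR$ of $R$, using that all objects of $\mathsf{RMod}_\mc{C}$ are fibrant. Your extra care in checking that $\mathsf{Indecom}_\mc{C}(QR)(n)$ is a cofibrant $A_n$-module (via $\mathsf{Indecom}_\mc{C}$ being left Quillen and $\mathsf{RMod}_{A_\ast}\cong\prod_n\mathsf{RMod}_{A_n}$) is a detail the paper leaves implicit, and it is handled correctly.
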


\subsection{Properties of topological Andr{\'e}-Quillen homology}
The fundamental observation which makes topological André--Quillen homology interesting is that even though there is an isomorphism of symmetric sequences,
\[\mathsf{Indecom}(\mathsf{Triv}_\mathcal{C}( A_n)) \cong A_n,\]
the derived statement is far from the truth
\[\mathsf{TAQ}(\mathsf{Triv}_\mathcal{C}( A_n)) \not\simeq A_n.\]

Nevertheless, the naive statement is true in a range. In order to verify this, we introduce the ideal of cell structures for right $\mc{C}$-modules. We call $\mathsf{Free}_\mc{C}( \bb{S}_c^m \wedge A_n)$ an $(m,n)$-cell, $e_{(m,n)}$. A \textit{cell complex} is a right $\mc{C}$-module built from, possibly infinite, iterated pushouts of cells.

We will say a right module $R$ over an $A_\ast$-category $\mc{C}$ is \textit{homotopically concentrated above degree $n-1$} if the natural map $R^{\geq n} \rightarrow R$ is an equivalence. 

\begin{prop}\label{prop: cell complex structure}
Let $\mc{C}$ be an $A_\ast$-category and $R$ a right $\mc{C}$-module. If $R$ is homotopically concentrated above degree $n-1$, then it is equivalent to a cell complex built out of $(m,l)$-cells where $l\geq n$ and $m$ can vary.

\end{prop}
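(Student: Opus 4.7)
The plan is to build $R$ as a filtered colimit of cell complexes $F_{n-1} \to F_n \to F_{n+1} \to \cdots$ where $F_k$ is built from $(m,l)$-cells with $n \leq l \leq k$, equipped with a compatible family of maps $F_k \to R$ that are levelwise weak equivalences at all levels $\leq k$. The foundational observation is that a free $(m,l)$-cell $e_{(m,l)} = \mathsf{Free}_\mc{C}(\bb{S}_c^m \wedge A_l)$, evaluated at level $p$, is $\bb{S}_c^m \wedge \mc{C}(p,l)$ by the coend formula in Definition~\ref{def: freeforget}; by the defining condition of an $A_\ast$-category this vanishes when $p < l$ and equals $\bb{S}_c^m \wedge A_l$ when $p = l$. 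Consequently, attaching $(m,l)$-cells does not alter levels below $l$, and the effect at level $l$ itself is exactly the attachment of free $A_l$-module cells.

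The base of the induction is $F_{n-1} = \ast$, the zero right $\mc{C}$-module; the map $F_{n-1} \to R$ is a levelwise equivalence at levels $< n$ because $R^{\geq n} \to R$ is assumed to be an equivalence. For the inductive step, suppose we have $F_k \to R$ which is a levelwise equivalence at levels $\leq k$. Restrict to level $k+1$ to obtain a map $F_k(k+1) \to R(k+1)$ of right $A_{k+1}$-modules, and apply the small object argument in the projective model structure on $\mathsf{RMod}_{A_{k+1}}$ to factor it as a relative cell complex $F_k(k+1) \hookrightarrow Q$ followed by an acyclic fibration $Q \xrightarrow{\sim} R(k+1)$. Using the free-forgetful adjunction to transport the attached $A_{k+1}$-cells to $(m,k+1)$-cells in $\mathsf{RMod}_\mc{C}$, form $F_{k+1}$ by attaching these cells to $F_k$, and extend the map to $R$ using the adjointness between attaching a free cell and choosing a lift of its attaching data. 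By the observation of the previous paragraph, $F_{k+1} \to R$ remains an equivalence at levels $\leq k$, while at level $k+1$ it becomes the equivalence $Q \xrightarrow{\sim} R(k+1)$ by construction.

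Setting $F \coloneq \colim_k F_k$, this is a cell complex built entirely out of $(m,l)$-cells with $l \geq n$, and for each fixed $l$ the tower $F_k(l)$ stabilizes at $k = l$ because subsequent attachments vanish at level $l$; hence $F(l) = F_l(l) \simeq R(l)$, so $F \to R$ is a weak equivalence in the projective model structure.

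The main technical content that must be verified carefully is that attaching an $A_{k+1}$-cell to $F_k(k+1)$ in $\mathsf{RMod}_{A_{k+1}}$ is genuinely implemented by attaching a free $(m,k+1)$-cell in $\mathsf{RMod}_\mc{C}$, i.e. that the pushout of $e_{(m,k+1)}$ along the extended attaching map computes at level $k+1$ the expected pushout of free $A_{k+1}$-cells. This uses the pseudo-cofibrancy of $\mc{C}(p,k+1)$ guaranteed by Definition~\ref{def: A* category} to ensure that smashing with these mapping spectra is homotopically well-behaved, together with the fact that $\mathsf{Free}_\mc{C}$, being a spectrally enriched left adjoint, preserves the generating (acyclic) cofibrations used by the small object argument.
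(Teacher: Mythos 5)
Your proposal is correct and follows essentially the same route as the paper's (much terser) proof: both rest on the two facts that $e_{(m,l)}(j)=\ast$ for $j<l$ with $e_{(m,l)}(l)=\bb{S}_c^m\wedge A_l$, and that mapping out of a cell (equivalently, extending over a cell attachment) is controlled, via the free--forgetful adjunction, only by level $l$ of the target; you then run the same induction over categorical degree, attaching degree-$l$ cells to realize a cellular replacement of $R(l)$ in $\mathsf{RMod}_{A_l}$ without disturbing lower levels. Your version simply fills in the details the paper leaves implicit, so no further comparison is needed.
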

\begin{proof}
    There are two key facts:
    \begin{itemize}
        \item There is an equivalence $\mathsf{RMod}_\mc{C}(e_{(m,n)},R) \simeq \s^h(\bb{S}^m,R(n))$.
        \item There are isomorphisms $e_{(m,n)}(j)=\ast$ for $j<n$ and $e_{(m,n)}(n)=\bb{S}_c^m \wedge A_n$.
    \end{itemize}
Thus, from a cellular replacement of $R(n)$ in $\mathsf{RMod}_{A_n}$ we may iteratively attach $e_{(m,n)}$, as $m$-varies, to build a cell complex with the correct spectrum in degree $n$. The first fact then implies we can do this inductively, without affecting lower terms.
\end{proof}

\begin{lem}\label{lem:taq concentrate}
Let $\mc{C}$ be an $A_\ast$-category and $R$ a right $\mc{C}$-module. If $R$ is homotopically concentrated above degree $n-1$, the canonical map 
    \[
    R^{\leq n} \rightarrow \mathsf{TAQ}(R)^{\leq n}
    \]
    given by the $n$-truncation of the derived counit of the ($\mathsf{Indecom}$,$\mathsf{Triv}$)-adjunction is an equivalence.

    More generally, the map $R \rightarrow R^{\leq n}$ induces an equivalence after applying $\mathsf{TAQ}(-)^{\leq n}$,
\end{lem}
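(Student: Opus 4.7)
The plan is to reduce to the case of a single cell and then propagate the equivalence through homotopy colimits. Since $R$ is homotopically concentrated above degree $n-1$, Proposition~\ref{prop: cell complex structure} lets us replace $R$ by a weakly equivalent cell complex built from $(m,l)$-cells $e_{(m,l)} = \mathsf{Free}_\mc{C}(\bb{S}_c^m \wedge A_l)$ with $l \geq n$. Both functors in play preserve the relevant homotopy colimits: $\mathsf{TAQ}$ is the left derived functor of the left adjoint $\mathsf{Indecom}_\mc{C}$, hence preserves pushouts and sequential colimits built from cells; and the truncation $(-)^{\leq n}$ is computed levelwise in $\s$, so it too preserves such colimits. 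It therefore suffices to check the claimed equivalence on each individual cell $e_{(m,l)}$.

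For a single cell $e_{(m,l)}$ with $l \geq n$, both sides can be computed explicitly. By Proposition~\ref{prop:taq free}, $\mathsf{TAQ}(e_{(m,l)}) \simeq \bb{S}_c^m \wedge A_l$, viewed as an $A_\ast$-sequence concentrated in degree $l$; truncating gives zero when $l > n$ and $\bb{S}_c^m \wedge A_l$ (concentrated in degree $n$) when $l = n$. On the other hand, the coend formula yields $e_{(m,l)}(k) \cong \bb{S}_c^m \wedge \mc{C}(k,l)$, and the $A_\ast$-category axiom forces $\mc{C}(k,l) = \ast$ whenever $k < l$. Thus $e_{(m,l)}^{\leq n}$ also vanishes when $l > n$, and when $l = n$ reduces to $\bb{S}_c^m \wedge A_n$ in degree $n$. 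A direct inspection identifies the relevant natural map with the evident equivalence in each case.

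For the more general statement, consider the fiber sequence $R^{>n} \to R \to R^{\leq n}$ of Lemma~\ref{lem:triv fiber sequence}. The module $R^{>n}$ is homotopically concentrated above degree $n$, and since $(R^{>n})^{\leq n} = 0$, the first part of the lemma gives $\mathsf{TAQ}(R^{>n})^{\leq n} \simeq 0$. Because $\mathsf{TAQ}$ preserves cofiber sequences (it is a left derived functor of a left adjoint between stable model categories) and truncation is exact, applying $\mathsf{TAQ}(-)^{\leq n}$ to this fiber sequence yields the desired equivalence $\mathsf{TAQ}(R)^{\leq n} \xrightarrow{\simeq} \mathsf{TAQ}(R^{\leq n})^{\leq n}$. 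The main point of care is verifying that $e_{(m,l)}^{\leq n} \to \mathsf{TAQ}(e_{(m,l)})^{\leq n}$ is precisely the restriction of the derived unit of the $(\mathsf{Indecom}, \mathsf{Triv})$-adjunction, which uses the cofibrancy of the cells and Proposition~\ref{prop:taq free}; once this identification is in hand, the rest is a formal reduction via homotopy colimits and the long exact sequence.
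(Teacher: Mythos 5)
Your proposal is correct and follows essentially the same route as the paper's (one-line) proof: reduce to a cell complex via Proposition~\ref{prop: cell complex structure}, compute both sides on a single cell $e_{(m,l)}$ with $l\geq n$ using Proposition~\ref{prop:taq free} and the coend formula, and propagate through homotopy colimits since $\mathsf{TAQ}$ and levelwise truncation both preserve them. Your treatment of the ``more generally'' clause via the cofiber sequence $R^{>n}\to R\to R^{\leq n}$ and the vanishing of $\mathsf{TAQ}(R^{>n})^{\leq n}$ is a correct and welcome expansion of a step the paper leaves implicit.
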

\begin{proof} 
 The result follows from Proposition \ref{prop:taq free} and Proposition \ref{prop: cell complex structure} since $\mathsf{TAQ}$ commutes with homotopy colimits.
\end{proof}

\begin{cor}\label{cor:taq of triv}
Let $\mc{C}$ be an $A_\ast$-category. There are equivalences 
\[
\mathsf{TAQ}(\mathsf{Triv}_\mathcal{C}( A_n))(m) \simeq
\begin{cases}
    A_n & \text{if \ $n=m$,} \\
    \ast & \text{if \ $n >m$.}
\end{cases}
\]
\end{cor}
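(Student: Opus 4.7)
The plan is to read the statement off directly from Lemma~\ref{lem:taq concentrate} applied to the maximally concentrated right module $R = \mathsf{Triv}_\mc{C}(A_n)$. First I would record that, by the definition of $\mathsf{Triv}_\mc{C}$ as restriction along the augmentation $\epsilon : \mc{C} \to A_\ast$, the underlying $A_\ast$-sequence of $\mathsf{Triv}_\mc{C}(A_n)$ is $A_n$ in level $n$ and the point spectrum in every other level. In particular, the natural map $\mathsf{Triv}_\mc{C}(A_n)^{\geq n} \to \mathsf{Triv}_\mc{C}(A_n)$ is an isomorphism of right $\mc{C}$-modules (both sides are literally concentrated in degree $n$), so this module is trivially homotopically concentrated above degree $n-1$ in the sense required by the hypothesis of Lemma~\ref{lem:taq concentrate}.

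Lemma~\ref{lem:taq concentrate} then yields an equivalence
\[
\mathsf{Triv}_\mc{C}(A_n)^{\leq n} \simeq \mathsf{TAQ}(\mathsf{Triv}_\mc{C}(A_n))^{\leq n}.
\]
Evaluating at level $m = n$, the left-hand side is $A_n$, which produces the first case of the statement. Evaluating at any $m < n$, the left-hand side is $\ast$, which produces the second case. Both asserted equivalences are thus captured by a single application of the lemma; note that the statement deliberately does not address $m > n$, where $\mathsf{TAQ}$ is genuinely interesting and the truncation argument gives no information.

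I do not anticipate any real obstacle, as the corollary is essentially a tautological specialization of the preceding lemma. The substantive content sits earlier: Proposition~\ref{prop: cell complex structure} provides the cellular resolution of an appropriately concentrated module by cells $e_{(m,l)}$ with $l \geq n$, and Proposition~\ref{prop:taq free} identifies the value of $\mathsf{TAQ}$ on each such cell, after which homotopy colimit commutation does the rest. The only bookkeeping point I would be careful about is that $\mathsf{Triv}_\mc{C}(A_n)$ is not cofibrant as a right $\mc{C}$-module, but this is irrelevant: $\mathsf{TAQ}$ is defined as the derived functor $\mathsf{Indecom}_\mc{C}^{\mathbf{L}}$, and Lemma~\ref{lem:taq concentrate} is phrased at the derived level with no cofibrancy hypothesis on $R$.
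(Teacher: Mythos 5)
Your proof is correct and is exactly the intended argument: the paper gives no separate proof of this corollary, treating it as the immediate specialization of Lemma~\ref{lem:taq concentrate} to $R = \mathsf{Triv}_\mc{C}(A_n)$, which is precisely what you do. Your side remarks (the hypothesis of the lemma holds tautologically, cofibrancy is irrelevant since everything is derived, and the $m>n$ range is deliberately excluded) are all accurate.
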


In general, there are infinitely many nontrivial spectra making up $\mathsf{TAQ}(\mathsf{Triv}_\mathcal{C}( A_n))$ for fixed $n$.

\begin{ex}
    If $\mc{C}$ is $\mathsf{Surj}$, the category of finite sets and surjections, then $\mathsf{TAQ}(\mathsf{Triv}(\Sigma_1))$ may be identified with the Spanier--Whitehead duals of the partition poset complexes, or equivalently, the Lie cooperad \cite[Lemma 8.6]{ChingOperad}.
\end{ex}

Recall that we say a spectral category $\mc{C}$ is locally finite if the mapping spectra are finite. In the case of an $A_\ast$-category we introduce the following variant.

\begin{definition} \label{def:locally finite category}
 An $A_\ast$-category $\mc{C}$ is \textit{locally right finite} if $\mc{C}(i,j)$ is homotopy equivalent to an $A_j$-spectrum built out of finitely many free cells.
\end{definition}

\begin{prop}\label{prop:taq of finite}
Let $\mc{C}$ be an $A_\ast$-category and $R$ a right $\mc{C}$-module. If $\mc{C}$ is locally right finite and $R$ is level-finite, then $\mathsf{TAQ}(R)$ is level-finite.
\end{prop}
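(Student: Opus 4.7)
The plan is to compute $\mathsf{TAQ}(R)$ via the two-sided bar construction
\[
\mathsf{TAQ}(R) \simeq \left| B_\bullet(R, \mc{C}, A_\ast) \right|,
\]
where $A_\ast$ is viewed as a left $\mc{C}$-module via the augmentation $\epsilon\colon \mc{C}\to A_\ast$. This presentation of the derived tensor product $R\otimes^{\mbf{L}}_\mc{C} A_\ast \simeq \mathsf{TAQ}(R)$ is valid thanks to the pseudo-cofibrancy of the mapping spectra built into the $A_\ast$-category hypothesis. The $q$-th simplicial level at module level $m$ is a wedge
\[
\bigvee R(c_q) \wedge_{A_{c_q}} \mc{C}(c_{q-1}, c_q) \wedge_{A_{c_{q-1}}} \cdots \wedge_{A_{c_1}} \mc{C}(c_0, c_1)
\]
over sequences $(c_0, c_1, \ldots, c_q)$ with $c_0 = m$. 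Since $\mc{C}(a,b)=\ast$ whenever $a<b$, every nontrivial summand has $m = c_0 \geq c_1 \geq \cdots \geq c_q$, so in particular every $c_i \leq m$.

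Passing to the normalized bar construction, I replace $\mc{C}$ with the augmentation ideal $\bar{\mc{C}} = \mathsf{cofib}(A_\ast \to \mc{C})$. The identification $\mathsf{End}(\mc{C}) \cong A_\ast$ yields $\bar{\mc{C}}(a,a)=\ast$, so $\bar{\mc{C}}(a,b)$ is nontrivial only when $a > b$ strictly. Thus only strictly decreasing chains $m = c_0 > c_1 > \cdots > c_q$ contribute, forcing the simplicial dimension $q \leq m$. Each remaining level of the normalized bar is a finite wedge (indexed by strictly decreasing integer sequences starting at $m$) of smash products whose factors---$R(c_q)$ and each $\mc{C}(c_{i-1}, c_i)$---are finite spectra by level-finiteness of $R$ and local right finiteness of $\mc{C}$, respectively. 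Hence each simplicial level is a finite spectrum.

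A simplicial spectrum supported in finitely many simplicial degrees with levelwise finite values has finite geometric realization via its skeletal filtration, so $\mathsf{TAQ}(R)(m)$ is finite. The main technical point is verifying that the normalized and unnormalized bar constructions have equivalent geometric realizations and that either computes $\mathsf{TAQ}(R)$ correctly as a derived functor; both rest on the pseudo-cofibrancy of mapping spectra combined with standard Reedy cofibrancy arguments for two-sided bar constructions.
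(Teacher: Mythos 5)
Your route is genuinely different from the paper's: the paper iterates the one--step free resolution $F_{n+1}\to\mathsf{Free}^{\mbf{L}}_{\mc{C}}(F_n)\to F_n$, observes that the syzygies $F_n$ become homotopically concentrated in ever higher degrees, and terminates the induction at level $m$ using Lemma~\ref{lem:taq concentrate}; you instead realize the whole resolution at once as a bar construction and bound the simplicial degree by the strict decrease of indices in the normalized complex. Both arguments ultimately rest on the same triangularity of $\mc{C}$ ($\mc{C}(n,m)=\ast$ for $n<m$ and $\mathsf{End}(\mc{C})\cong A_\ast$), and your skeletal-filtration argument is essentially the paper's induction packaged simplicially. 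The strategy is viable, but there is one step that, as written, does not go through.

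The gap is in the claim that each level of the normalized bar construction is finite because it is ``a finite wedge of smash products whose factors are finite spectra.'' The smash products you wrote are \emph{relative} smash products $\wedge_{A_{c_i}}$ over the ring spectra $A_{c_i}$, and a relative smash product of finite modules over a ring spectrum is not finite in general: for a finite group $G$ one has $\bb{S}\wedge_{\Sigma^\infty_+G}\bb{S}\simeq\Sigma^\infty_+BG$. Finiteness of the factors is the condition of being \emph{locally finite}; what you actually need is \emph{locally right finite} in the sense of Definition~\ref{def:locally finite category}, namely that $\mc{C}(i,j)$ is built from finitely many \emph{free} $A_j$-cells. That freeness is what makes $X\wedge_{A_{c_i}}\mc{C}(c_{i-1},c_i)$ a finite iterated cofiber of (de)suspensions of $X$, so that finiteness propagates through the chain $R(c_q)\wedge_{A_{c_q}}\mc{C}(c_{q-1},c_q)\wedge_{A_{c_{q-1}}}\cdots\wedge_{A_{c_1}}\mc{C}(c_0,c_1)$ by induction on the length of the chain. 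You cite local right finiteness, but only to assert that the factors are finite spectra, which is both not what that hypothesis says and not sufficient for the conclusion. With the freeness deployed correctly at this step (and granting the standard Reedy/normalization technicalities you already flag), the rest of your argument is sound.
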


\begin{proof}

 Consider the homotopy fiber sequence \[F_1 \rightarrow \mathsf{Free}^\mbf{L}_\mathcal{C}(R) \rightarrow R .\]
Iterating this process we define the higher relations:
 \[F_{n+1} \rightarrow \mathsf{Free}^\mbf{L}_\mathcal{C}(F_{n}) \rightarrow F_n .\]
Note all the $F_n$ are level-finite by our finiteness assumption and $F_{n+1}$ is homotopically concentrated above degree $n$ by induction.  Applying $\mathsf{TAQ}$ to this homotopy (co)fiber sequence, we obtain another homotopy (co)fiber sequence
   \[\mathsf{TAQ}(F_{n+1}) \rightarrow F_n \rightarrow \mathsf{TAQ}(F_n).\]

   Because of how these cofiber sequences are connected, if we know that $\mathsf{TAQ}(F_{n+1})(m)$ is finite then $\mathsf{TAQ}(F_{n})(m)$ is finite, and by repeated application, $\mathsf{TAQ}(R)(m)$ is finite. To deduce the result, pick $n>m$ and obverse that $TAQ(F_{n-1})(m)$ is finite by Lemma \ref{lem:taq concentrate}. 
\end{proof}

\section{Koszul duality for augmented categories}\label{section:koszul duality}

The topological André--Quillen homology of an algebro-topological object $A$ is defined in great generality as the left derived functor of quotienting $A$ by its decomposable elements. Hence, for cofibrant $A$ one has
\[\mathsf{TAQ}(A)\simeq A/\mathsf{Decom}(A).\]

As a consequence, one finds that free objects behave well with respect to $\mathsf{TAQ}$:
\[\mathsf{TAQ}(\mathsf{Free}(X))\simeq X.\]
Hence, if we express $A$ in terms of free cells, $\mathsf{TAQ}(A)$ obtains a cell structure with cells in bijective correspondence with those of $A$. In this sense, $\mathsf{TAQ}(A)$ is telling us homological information about $A$ from the point of view of the ambient category. 

A natural question to ask is if $\mathsf{TAQ}$ detects equivalences. That is, if $A \rightarrow B$ induces an equivalence $\mathsf{TAQ}(A) \xrightarrow{\simeq} \mathsf{TAQ}(B)$, do we necessarily have $A \xrightarrow{\simeq} B$? One approach to this question is to build a cellular spectral sequence to compute the homology of $A$ in terms of the homology of $\mathsf{TAQ}(A)$ and the homology of free algebras, see ~\cite{GKR} for the case of $E_n$-algebras. In general, there are obstructions to $\mathsf{TAQ}(A)$ detecting equivalences. Without connectivity hypotheses, $\mathsf{TAQ}$ often destroys $p$-torsion information. In ~\cite[Theorem 3.4]{Mandell2006} it is demonstrated that $p$-complete $\mathsf{TAQ}$ of $X^\vee$ is contractible for finite type, nilpotent spaces $X$.

Koszul duality is motivated by the question of what symmetries the indecomposables of an algebraic object have, and under what conditions do these symmetries allow us to recover our initial objects. When these conditions are satisfied, then topological André--Quillen homology necessarily detects weak equivalences. This problem was largely solved in \cite{chingHarper} for algebras over an operad, see also recent work of Heuts~\cite{HeutsKoszul}.

\subsection{The Koszul dual of an augmented category} 

Given an $A_\ast$-category $\mathcal{C}$, we now present a model categorical construction of the Koszul dual $K(\mathcal{C})$; a model independent description is discussed in Remark \ref{rem: model independent Koszul dual}. We remind the reader that $A_\ast$-categories (Definition~\ref{def: A* category}) are a class of categories with objects in bijection with $\mathbb{N}$ and morphisms which \textit{oppose} the order $\leq$, along with minor cofibrancy hypotheses.  We first produce a few technical lemmas regarding cofibrant replacements of trivial right $\mathcal{C}$-modules.

\begin{lem}\label{lem:cofibrant models with action}
Let $\mc{C}$ be an $A_\ast$-category. For each $n \in \bb{Z}^{\geq 0}$, there exists a cofibrant right $\mc{C}$-module $T_n$ such that
\begin{enumerate}
    \item the module $T_n$ is a cofibrant model for $\mathsf{Triv}_\mc{C}(A_n)$, where $A_n$ is viewed as an $A_\ast$-sequence concentrated in degree $n$;
    \item there is a canonical map of ring spectra
    \[
    A_n \longrightarrow \mathsf{RMod}_\mc{C}(T_n, T_n)
    \]
    which is an equivalence; and,
    \item if $n>m$, the inclusion of the trivial morphism
    \[
    \ast \longrightarrow \mathsf{RMod}_\mc{C}(T_n, T_m)
    \]
    is an equivalence.
\end{enumerate}
\end{lem}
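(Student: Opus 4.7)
The plan is to construct $T_n$ as a cofibrant replacement of $\mathsf{Triv}_\mc{C}(A_n)$ in the projective model category $\mathsf{Fun}(BA_n, \mathsf{RMod}_\mc{C})$ of right $A_n$-modules internal to $\mathsf{RMod}_\mc{C}$, where $\mathsf{Triv}_\mc{C}(A_n)$ carries the canonical left $A_n$-action coming from the ring multiplication on $A_n = \mathsf{End}_{A_\ast}(A_n)$. This model structure exists by Proposition~\ref{prop: model structure on modules with action}. Since $\mc{C}$ is an $A_\ast$-category, the endomorphism spectrum $A_n = \mc{C}(n,n)$ is pseudo-cofibrant, so Lemma~\ref{lem:modules with action cofibrancy} ensures that $T_n$ is simultaneously cofibrant as a right $\mc{C}$-module. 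This establishes (1) by construction.

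For (2), the $A_n$-module structure on $T_n$ produces, by adjunction, a canonical ring map $A_n \to \mathsf{RMod}_\mc{C}(T_n, T_n)$. To show this is an equivalence, I will compute the right-hand side via the $(\mathsf{Indecom}_\mc{C}, \mathsf{Triv}_\mc{C})$-adjunction. Since $T_n$ is cofibrant and all right $\mc{C}$-modules are fibrant,
\[
\mathsf{RMod}_\mc{C}(T_n, T_n) \simeq \mathsf{RMod}_\mc{C}^h(T_n, \mathsf{Triv}_\mc{C}(A_n)) \simeq \mathsf{RMod}_{A_\ast}^h(\mathsf{TAQ}(T_n), A_n),
\]
and because $A_n$ on the right is concentrated in degree $n$, this further reduces to $\mathsf{RMod}_{A_n}^h(\mathsf{TAQ}(T_n)(n), A_n)$. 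By Corollary~\ref{cor:taq of triv} we have $\mathsf{TAQ}(T_n)(n) \simeq A_n$, so the mapping spectrum is equivalent to $A_n$ as desired.

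For (3), the identical derivation applied with $T_m$ in the target (for $n > m$) gives $\mathsf{RMod}_\mc{C}(T_n, T_m) \simeq \mathsf{RMod}_{A_m}^h(\mathsf{TAQ}(T_n)(m), A_m)$, and Corollary~\ref{cor:taq of triv} asserts $\mathsf{TAQ}(T_n)(m) \simeq \ast$ whenever $n > m$, forcing contractibility. The main obstacle I anticipate is the naturality check in step (2): one must verify that the canonical ring map induced by the $A_n$-action on $T_n$ is identified, under the chain of equivalences above, with the identity $A_n \to A_n$ rather than with some twisted endomorphism. I expect this to follow by tracing the unit of the $(\mathsf{Indecom}_\mc{C}, \mathsf{Triv}_\mc{C})$-adjunction on the class of $\mathrm{id}_{T_n}$ and using that both maps are $A_n$-linear ring maps whose underlying $\pi_0$-picture is forced by the computation of $\mathsf{TAQ}$ in Corollary~\ref{cor:taq of triv}.
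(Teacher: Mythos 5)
Your proposal is correct and follows essentially the same route as the paper: $T_n$ is constructed as a cofibrant replacement of $\mathsf{Triv}_\mc{C}(A_n)$ in the category of right $\mc{C}$-modules with an $A_n$-action (Proposition~\ref{prop: model structure on modules with action}), underlying cofibrancy comes from Lemma~\ref{lem:modules with action cofibrancy} via pseudo-cofibrancy of $A_n$, and the mapping spectra are computed through $\mathsf{TAQ}(T_n)$ and Corollary~\ref{cor:taq of triv}. The only cosmetic difference is that you read off $\mathsf{RMod}_\mc{C}(T_n,T_m)$ directly from the $(\mathsf{Indecom}_\mc{C},\mathsf{Triv}_\mc{C})$-adjunction, whereas the paper phrases the same computation via the layers of the truncation filtration; your honest flag about identifying the canonical map with the identity of $A_n$ (rather than merely matching homotopy types) is a point the paper also leaves implicit, and your proposed resolution via $A_n$-linearity and the image of $\mathrm{id}_{T_n}$ is the standard way to close it.
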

\begin{proof}
First observe that at the point-set level there are isomorphisms
\[
\mathsf{RMod}_\mathcal{C} (\mathsf{Triv}_\mathcal{C} (A_n),\mathsf{Triv}_\mathcal{C} (A_n)) \cong \mathsf{RMod}_{A_n}(A_n,A_n) \cong A_n.
\]
It follows that $\mathsf{Triv}_\mc{C}(A_n)$ is naturally a right $\mc{C}$-module with an action of $A_n$, i.e., for $A_n$ the spectral category with a single object and endomorphisms given by $A_n$, there is a spectral functor 
\[
A_n \longrightarrow \mathsf{RMod}_\mc{C}\]
\[\ast \longmapsto \mathsf{Triv}_\mc{C}(A_n),
\]
with action on morphisms defined via the above isomorphisms. This category admits a model structure by Proposition~\ref{prop: model structure on modules with action}. Define $T_n$ to be a cofibrant replacement of $\mathsf{Triv}_\mc{C}(A_n)$ in the category of right $\mc{C}$-modules with an action of $A_n$.  To verify $(1)$, it suffices to show that the underlying right $\mc{C}$-module of $T_n$ is cofibrant. This need not be true in general, however we could retroactively use this argument instead on a replacement $\mc{D}$ of $\mc{C}$ with pseudo-cofibrant mapping spectra and this then follows from Lemma~\ref{lem:modules with action cofibrancy}. Finally, we could induce along $\mc{D}\rightarrow \mc{C}$ to yield a cofibrant model, $T_n$ which is trivial by checking the counit of $(\mathsf{Indecom},\mathsf{Triv})$-adjunction.

The map of $(2)$ exists by construction, since the data of an $A_n$-module in $\mathsf{RMod}_\mc{C}$ is exactly a map $A_n \rightarrow \mathsf{RMod}_\mc{C}(T_n,T_n)$.
The map of $(3)$ is, of course, canonical.  The fact that the claimed maps are weak equivalences follows from using Corollary \ref{cor:taq of triv} to compute the fibers of the truncation filtration of $\mathsf{RMod}_{\mc{C}}(T_n,T_m)$ for $n \geq m$.
\end{proof}

\begin{definition}
Let $\mc{C}$ be an $A_\ast$-category. The \emph{Koszul dual} $A_\ast^\mathsf{op}$-category $K(\mc{C})$ has objects identified with $T_i$ for each $i \in \bb{N}$ and mapping spectra
    \[
       K(\mathcal{C})(c,d) = 
    \begin{cases}
        A^\mathsf{op}_c & \text{if} \ c=d \\
        \mathsf{RMod}_\mathcal{C}(T_d,T_c) & \text{d $<$ c} \\
        \ast & \text{otherwise.}
    \end{cases}
    \]
    Composition of mapping spectra is induced by the opposite of composition of right module mapping spectra, and the inclusion $A_n \xrightarrow{\simeq}\mathsf{RMod}(\mathcal{C})(T_n,T_n)$ of Lemma \ref{lem:cofibrant models with action}.
\end{definition}

\begin{rem}\label{rem: model independent Koszul dual}

By Lemma \ref{lem:cofibrant models with action}, the category $K(\mc{C})$ has a model independent description as the opposite of the full subcategory of the spectral $\infty$-category $\mathsf{RMod}_\mc{C}$ on the $\mathsf{Triv}_\mc{C}(A_n)$ for $n \in \mathbb{N}$. The explicit construction above has the added property that it is an honest $A_\ast$-category. The spectral enrichment is essential, since full subcategories of stable $\infty$-categories need not be stable.
\end{rem}

This definition of Koszul duality is inspired by similar operadic definitions \cite{Lurie_2023,malinonepoint}. It is a categorification of the definition of the Koszul dual \cite{priddy} of an augmented differential graded algebra $A \rightarrow k$ : \[K(A):= \mathrm{RMod}_A^h(k).\]

\begin{ex}
    In the case $\mc{C}$ is the envelope of a reduced level-finite operad $P$, we have that 
\begin{align*}
K(\mathsf{Env}(P))(n,1) 
&\simeq \mathsf{RMod}^h_P(\mathsf{Triv}_P(\Sigma^\infty_+ \Sigma_1),\mathsf{Triv}_P(\Sigma^\infty_+ \Sigma_n)) \\
&\simeq \mathsf{RMod}^h_{\Sigma_i}(\mathsf{TAQ}((\mathsf{Triv}_P(\Sigma^\infty_+ \Sigma_1)),\Sigma^\infty_+ \Sigma_n)) \\
&\simeq (\mathsf{TAQ}(\mathsf{Triv}_P(\Sigma^\infty_+ \Sigma_1))(n))^\vee. 
\end{align*} 

This latter spectrum can be computed as the dual of the operadic bar construction $B(1,P,1)(n)$, recovering the underlying spectrum of Ching's \cite{ChingOperad} and Lurie's \cite{HA} Koszul duals.

\end{ex}

Although the uniqueness and invariance under Dwyer-Kan equivalence of the Koszul dual will not play a role in our arguments, it is worthwhile to briefly discuss these properties. We provide sketches of the arguments.

\begin{prop}\label{prop: unique Koszul dual}
The Dwyer-Kan equivalence class of the Koszul dual of an $A_\ast$-category $\mc{C}$ is independent of the choice of cofibrant approximations of  $\mathsf{Triv}_\mc{C}(A_i)$ which satisfy the requirements of Lemma~\ref{lem:cofibrant models with action}. 
\end{prop}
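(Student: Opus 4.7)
The plan is to exhibit both Koszul duals, formed from two different choices $\{T_i\}$ and $\{T'_i\}$ of cofibrant approximations of $\mathsf{Triv}_\mc{C}(A_i)$ satisfying Lemma~\ref{lem:cofibrant models with action}, as sitting inside a common spectral category via Dwyer--Kan equivalences. Specifically, let $\Phi$ denote the full spectral subcategory of $\mathsf{RMod}_\mc{C}$ spanned by the collection $\{T_i\}_{i \in \bb{N}} \cup \{T'_i\}_{i \in \bb{N}}$, and construct canonical spectral functors $\iota \colon K(\mc{C})^\op \to \Phi$ and $\iota' \colon K'(\mc{C})^\op \to \Phi$ sending $c$ to $T_c$ and $T'_c$ respectively. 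For $c \neq d$ these agree on mapping spectra with the identity of $\mathsf{RMod}_\mc{C}(T_c, T_d)$ (in the case $c < d$) or the inclusion of $\ast$ (in the case $c > d$); on endomorphisms they are the ring spectrum maps $A_c \to \mathsf{RMod}_\mc{C}(T_c, T_c)$ from Lemma~\ref{lem:cofibrant models with action}(2). Compatibility with composition is built into the construction, since composition in $K(\mc{C})$ was defined as the opposite of that in $\mathsf{RMod}_\mc{C}$, so after passing to $K(\mc{C})^\op$ it becomes the literal restriction of the composition in $\mathsf{RMod}_\mc{C}$.

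Next I would verify that $\iota$ is a Dwyer--Kan equivalence. The fully faithfulness is obtained directly from the three cases of Lemma~\ref{lem:cofibrant models with action}: equality of mapping spectra for $c < d$, the equivalence $A_c \xrightarrow{\simeq} \mathsf{RMod}_\mc{C}(T_c, T_c)$ for $c = d$, and the equivalence $\ast \xrightarrow{\simeq} \mathsf{RMod}_\mc{C}(T_c, T_d)$ for $c > d$. For essential surjectivity I must show that each $T'_c$ is isomorphic in $\pi_0(\Phi)$ to $T_c$. Since both $T_c$ and $T'_c$ are cofibrant replacements of the same object $\mathsf{Triv}_\mc{C}(A_c)$ in the model structure of Proposition~\ref{prop: model structure on modules with action}, and since weak equivalences in that projective model structure are created levelwise by the forgetful functor to $\mathsf{RMod}_\mc{C}$, they are connected by a zigzag of weak equivalences in $\mathsf{RMod}_\mc{C}$, giving the desired isomorphism in $\pi_0(\Phi)$. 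The same argument applied to $\{T'_i\}$ shows $\iota'$ is a Dwyer--Kan equivalence, and a 2-out-of-3 argument followed by passing to opposites yields the result.

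The main obstacle, rather than a deep point, is a bookkeeping subtlety: one has to be careful that the ``$\op$'' conventions in the definition of $K(\mc{C})$ line up correctly with composition in $\Phi$, and that the cofibrancy of $T_c$ as an $A_c$-equivariant right $\mc{C}$-module (which is how it was built) is not conflated with cofibrancy as a plain right $\mc{C}$-module (which is only needed to compute homotopical mapping spectra and is provided by the pseudo-cofibrancy assumption and Lemma~\ref{lem:modules with action cofibrancy}, possibly after replacing $\mc{C}$ by a Dwyer--Kan equivalent category as in Lemma~\ref{lem:cofibrant models with action}). Everything else is a routine verification in the theory of spectrally enriched model categories.
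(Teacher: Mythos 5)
Your proof is correct, but it takes a genuinely different route from the paper's. The paper fixes equivalences $T_n \to T'_n$ (available since both are bifibrant models of $\mathsf{Triv}_\mc{C}(A_n)$) and builds an intermediate $A_\ast$-category with objects $\mathbb{N}$ and mixed morphism spectra $[n,m] = \mathsf{RMod}_\mc{C}(T'_m,T_n)$ for $n>m$, with composition routed through the chosen equivalences; pre- and post-composition with these equivalences then produce a zigzag of Dwyer--Kan equivalences of $A_\ast$-categories. You instead pass to the full spectral subcategory $\Phi \subseteq \mathsf{RMod}_\mc{C}$ on both families $\{T_i\}\cup\{T'_i\}$ and check that each inclusion $K(\mc{C})^\op \to \Phi \leftarrow K'(\mc{C})^\op$ is a Dwyer--Kan equivalence. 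Your construction is more canonical: it is the point-set shadow of the model-independent description of $K(\mc{C})$ in Remark~\ref{rem: model independent Koszul dual}, and it needs no choice of comparison maps to define the intermediate category --- the equivalences $T_c \simeq T'_c$ enter only in the essential-surjectivity check, where you should say explicitly that both objects are bifibrant in $\mathsf{RMod}_\mc{C}$ (cofibrant by requirement (1) of Lemma~\ref{lem:cofibrant models with action}, fibrant because every object of the projective model structure is), so the zigzag of weak equivalences rectifies to a genuine homotopy equivalence and hence an isomorphism in $\pi_0(\Phi)$. What you give up is that $\Phi$ has two objects per natural number and is therefore not an $A_\ast$-category, so your zigzag lives only in spectral categories, whereas the paper's stays within $A_\ast$-categories (matching the notion of Dwyer--Kan equivalence of $A_\ast$-categories defined just before). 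For the proposition as literally stated, both arguments suffice.
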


\begin{proof}
We proceed analogously to \cite[Proposition 7.3]{malinonepoint}. Let $T_n$ and $T'_{n}$ be cofibrant approximations of $\mathsf{Triv}_\mc{C}(A_n)$ which both satisfy the requirements of Lemma~\ref{lem:cofibrant models with action}. By cofibrancy there is an equivalence $T_n \to T'_n$ and one can construct an $A_\ast$-category with objects the natural numbers and morphisms
   \[[n,m] := \mathsf{RMod}_\mc{C}(T'_m,T_n)\]
   when $n > m$.
   Morphism composition is given by first composing with $T_n \rightarrow T_m$ then composing right module mapping objects. Precomposition and postcomposition with the equivalences $T_n \rightarrow T'_n$ yield a zigzag of $A_\ast$-equivalences. 
\end{proof}

\begin{prop}
    Let $\mc{C}$ and $\mc{D}$ be  $A_\ast$-categories. If there is a  Dwyer-Kan equivalence of $A_\ast$-categories of  $f: \mathcal{C} \rightarrow \mathcal{D}$,
    then there is a zigzag
    \[
    K(\mathcal{C}) \simeq K(\mathcal{D})
    \]
    of  Dwyer-Kan equivalence of $A^\mathsf{op}_\ast$-categories.
\end{prop}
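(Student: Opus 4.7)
The plan is to combine Proposition \ref{prop:quillen} with the uniqueness result Proposition \ref{prop: unique Koszul dual}: since the Koszul dual is insensitive to the choice of cofibrant approximations satisfying Lemma \ref{lem:cofibrant models with action}, it suffices to produce one spectral functor $K(\mathcal{C}) \to K'(\mathcal{D})$, where $K'(\mathcal{D})$ is some legitimate model for the Koszul dual of $\mathcal{D}$, and verify it is a Dwyer-Kan equivalence of $A_\ast^\mathsf{op}$-categories.

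First, I would fix cofibrant models $T_n^{\mathcal{C}}$ of $\mathsf{Triv}_{\mathcal{C}}(A_n)$ satisfying the three conditions of Lemma \ref{lem:cofibrant models with action}, and form $K(\mathcal{C})$ from them. Set $T_n^{\mathcal{D}} := \mathsf{ind}_{f^\mathsf{op}}(T_n^{\mathcal{C}})$, which is cofibrant because $\mathsf{ind}_{f^\mathsf{op}}$ is left Quillen by Proposition \ref{prop:quillen}. For condition (1), note that $f$ commutes with augmentations, so $\mathsf{res}_{f^\mathsf{op}}(\mathsf{Triv}_{\mathcal{D}}(A_n)) \cong \mathsf{Triv}_{\mathcal{C}}(A_n)$. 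Since all objects in the projective model structure are fibrant and $\mathsf{ind}_{f^\mathsf{op}} \dashv \mathsf{res}_{f^\mathsf{op}}$ is a Quillen equivalence, the unit $T_n^{\mathcal{C}} \to \mathsf{res}_{f^\mathsf{op}} T_n^{\mathcal{D}}$ is a weak equivalence; comparing with the equivalence $T_n^{\mathcal{C}} \to \mathsf{Triv}_{\mathcal{C}}(A_n)$ identifies $T_n^{\mathcal{D}}$ as a cofibrant approximation of $\mathsf{Triv}_{\mathcal{D}}(A_n)$. Conditions (2) and (3) follow from the spectral enrichment of the adjunction: on a cofibrant source one has
\[
\mathsf{RMod}_{\mathcal{D}}(T_m^{\mathcal{D}}, T_n^{\mathcal{D}}) \cong \mathsf{RMod}_{\mathcal{C}}(T_m^{\mathcal{C}}, \mathsf{res}_{f^\mathsf{op}} T_n^{\mathcal{D}}),
\]
and since $T_n^{\mathcal{C}} \to \mathsf{res}_{f^\mathsf{op}} T_n^{\mathcal{D}}$ is a weak equivalence between fibrant objects with $T_m^{\mathcal{C}}$ cofibrant, this is weakly equivalent to $\mathsf{RMod}_{\mathcal{C}}(T_m^{\mathcal{C}}, T_n^{\mathcal{C}})$.

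Next, I would assemble these pointwise equivalences into a map of $A_\ast^\mathsf{op}$-categories. The spectral functor $\mathsf{ind}_{f^\mathsf{op}}$ between the full spectral subcategories on $\{T_n^{\mathcal{C}}\}$ and $\{T_n^{\mathcal{D}}\}$ induces, after passing to opposites, a strict spectral functor $K(\mathcal{C}) \to K'(\mathcal{D})$ which is the identity on objects. It preserves the augmentation because the canonical map $A_n \to \mathsf{RMod}(T_n, T_n)$ is defined uniformly from the $A_n$-action inherited from $\mathsf{Triv}(A_n)$, which is preserved by $\mathsf{ind}_{f^\mathsf{op}}$. By the previous paragraph, this functor induces weak equivalences on all mapping spectra, so it is a Dwyer-Kan equivalence of $A_\ast^\mathsf{op}$-categories. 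Finally, Proposition \ref{prop: unique Koszul dual} supplies a zigzag $K'(\mathcal{D}) \simeq K(\mathcal{D})$, yielding the desired zigzag $K(\mathcal{C}) \simeq K'(\mathcal{D}) \simeq K(\mathcal{D})$.

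The main technical point is the strict compatibility with the augmentation and with composition: rather than merely a homotopy-coherent equivalence, the construction must produce an honest functor of $A_\ast^\mathsf{op}$-enriched categories. This is handled entirely by working with the \emph{spectrally enriched} induction functor on cofibrant objects, which transports the point-set composition to point-set composition. Everything else is formal consequences of the Quillen equivalence from Proposition \ref{prop:quillen} together with the uniqueness statement of Proposition \ref{prop: unique Koszul dual}.
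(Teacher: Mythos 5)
Your proposal is correct and follows the same route as the paper: the paper's (sketched) proof also takes the cofibrant models for $\mathcal{D}$ to be $\mathsf{ind}_{f^{\mathsf{op}}}T_n$ and observes that the induced functor $K(f)\colon K(\mathcal{C})\to K(\mathcal{D})$ is a Dwyer--Kan equivalence by cofibrancy of the $T_n$, with the comparison to any other choice of models handled by the uniqueness statement. Your write-up simply supplies the details (verification of the conditions of the lemma on cofibrant models, and the enriched adjunction argument for the mapping spectra) that the paper leaves implicit.
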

\begin{proof}
The proof is analogous to that of \cite[Theorem 7.4]{malinonepoint} and proceeds by observing that if $T_n$ is the cofibrant model of the trivial right $\mc{C}$-module on $A_n$ inducing it along a  Dwyer-Kan equivalence of $A_\ast$-categories yields a functor
\[
 K(f) : K(\mc{C}) \rightarrow K(\mc{D}),
 \]
where we choose our cofibrant models for $\mc{D}$ to be $\mathsf{ind}_f T_n$. The cofibrancy of the $T_n$ implies that $K(f)$ is a  Dwyer-Kan equivalence of $A_\ast$-categories.
\end{proof}

\subsection{The Koszul dual of a right module}

We now turn our attention to Koszul duality for right modules over an $A_\ast$-category. This will be the key algebraic input into our analysis of the Weiss derivatives.

\begin{definition}
    Let $\mc{C}$ be an $A_\ast$-category. The \emph{Koszul dual} of a right $\mathcal{C}$-module $R$ is the right $K(\mc{C})$-module given by the restricted representable functor:
    \[
    T_n \mapsto \mathsf{RMod}_\mathcal{C}(R,T_n).
    \]
\end{definition}

\begin{rem}
Because an opposite was introduced in the definition of $K(\mathcal{C})$, the functor $K(R)$ has the correct variance to make it a right module. Additionally, it is contravariantly functorial in $R$, i.e., Koszul duality defines a functor
\[
K: \mathsf{RMod}_\mc{C} \longrightarrow \mathsf{RMod}_{K(\mc{C})}^\op.
\]
\end{rem}
\begin{ex} \label{ex: koszul dual of free}

 By the ($\mathsf{Indecom}$,$\mathsf{Triv}$)-adjunction, for any $A_n$-module $X$, thought of as an $A_\ast$ sequence concentrated in degree $n$, there  is an isomorphism
   \[K(\mathsf{Free}_\mc{C}(X))\cong \mathsf{Triv}_{K(\mc{C})} (\mathsf{RMod}_{A_n}(X,A_n)).\]
   The derived analog of this statement also holds. We have that as $A_\ast$-sequences \[K^\mbf{L}(\mathsf{Free}^\mbf{L}_\mc{C}(X)) \simeq \mathsf{RMod}^h_{A_n}(X,T_\ast(n)),\]
   and the latter is $\mathsf{RMod}^h_{A_n}(X,A_n)$ homotopically concentrated in degree $n$. A right module homotopically concentrated in a single degree $n$, is automatically homotopy equivalent to a trivial module. This is because we have equivalences
   \[R \xrightarrow{ \ \simeq \ } R^{\leq n} \xleftarrow{\ \simeq \ } (R^{\leq n})^{\geq n} \cong \mathsf{Triv}_{\mc{C}} (R(n)).\]
\end{ex}

We now propose a slightly different point of view. By definition, we have that
\[K(R)(n) \cong \int\limits_{m \in \mc{C}} \s(R(m),T_n(m)).\]
If $R$ is cofibrant and level-finite, this is equivalent to the derived coend
\[\int\limits_{m \in \mc{C}} R(i)^\vee \wedge T_n(m).\]
This formula is reminiscent of the discrete Fourier transform of the functor $R$ with respect to the ``characters'' $T_n$. We will see that many facts from Fourier analysis have analogues for Koszul duality. In particular, we can define a transform in the \textit{opposite} direction by
\[
K^{-1}(S)(m) = \int_{T_n \in K(\mc{C})} \s(S(T_n), T_n(m)).
\]

By adjunction, it is straightforward to check the following:

\begin{prop} \label{prop:formula for koszul dual}
There are equivalences
    \[K^\mbf{L}(R)\simeq \mathsf{RMod}^h(R,\mathsf{Triv}_\mc{C}(\Sigma^\infty_+ A_n))  \simeq \mathsf{RMod}^h_{A_n}(\mathsf{TAQ}(R),\Sigma^\infty_+ A_n)\]
  \[(K^{-1})^\mbf{R}(S)\simeq \mathsf{RMod}^h(S,\mathsf{Triv}_{K(\mc{C})}(\Sigma^\infty_+ A_n^\mathsf{op}))  \simeq \mathsf{RMod}^h_{A_n^\mathsf{op}}(\mathsf{TAQ}(S),\Sigma^\infty_+ A_n^\mathsf{op})\]
\end{prop}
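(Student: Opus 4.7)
The plan is to reduce each of the claimed equivalences to two ingredients: (i) the fact that the $T_n$ are, by construction, cofibrant models of $\mathsf{Triv}_\mc{C}(A_n)$ (Lemma~\ref{lem:cofibrant models with action}); and (ii) the indecomposables-trivial adjunction of Definition~\ref{def: indecomtriv}, derived in the obvious way. Since the arguments for the two displays are formally dual, I would focus on the first and note that the second follows by applying the same chain of identifications to the $A_\ast^\op$-category $K(\mc{C})$.

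For the first equivalence of the first display, I would unwind the definition $K(R)(n) = \mathsf{RMod}_\mc{C}(R, T_n)$ and compute the derived mapping spectrum. After replacing $R$ by a cofibrant model, the point-set formula already computes the derived mapping spectrum, because in the projective model structure every object of $\mathsf{RMod}_\mc{C}$ is fibrant (all $S$-modules are fibrant). Lemma~\ref{lem:cofibrant models with action} then identifies $T_n$ with an arbitrary cofibrant model of $\mathsf{Triv}_\mc{C}(A_n)$ up to equivalence, giving
\[
K^\mbf{L}(R)(n) \simeq \mathsf{RMod}^h_\mc{C}(R, T_n) \simeq \mathsf{RMod}^h_\mc{C}(R, \mathsf{Triv}_\mc{C}(A_n)).
\]
For the second equivalence I would apply the derived $(\mathsf{Indecom}_\mc{C}, \mathsf{Triv}_\mc{C})$-adjunction. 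On the point-set level
\[
\mathsf{RMod}_\mc{C}(R, \mathsf{Triv}_\mc{C}(X)) \cong \mathsf{RMod}_{A_\ast}(\mathsf{Indecom}_\mc{C}(R), X)
\]
for any $A_\ast$-sequence $X$, so deriving on the left and recognising $\mathsf{Indecom}_\mc{C}^\mbf{L}(R) = \mathsf{TAQ}(R)$ yields
\[
\mathsf{RMod}^h_\mc{C}(R, \mathsf{Triv}_\mc{C}(X)) \simeq \mathsf{RMod}^h_{A_\ast}(\mathsf{TAQ}(R), X).
\]
Taking $X$ to be $A_n$ concentrated in degree $n$ collapses the right-hand side to mapping spectra of $A_n$-modules, matching the claim.

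For the second display the same argument applied to $K(\mc{C})$ works once one identifies the functor $T_n \mapsto T_n(m)$ on $K(\mc{C})$, up to equivalence, with $\mathsf{Triv}_{K(\mc{C})}(A_m^\op)$: by construction of $T_n$ one has $T_n(m) \simeq A_m$ when $n=m$ and $T_n(m) \simeq \ast$ otherwise, which is exactly the trivial $K(\mc{C})$-module concentrated at $m$. Combined with the $(\mathsf{Indecom}_{K(\mc{C})}, \mathsf{Triv}_{K(\mc{C})})$-adjunction this yields both equivalences in the second display. The main obstacle I anticipate is checking that the indecomposables-trivial adjunction is genuinely Quillen, i.e.\ that $\mathsf{Triv}_\mc{C}$ preserves projective fibrations and weak equivalences; however this is essentially tautological since both notions are defined levelwise, and the identical remark applies on the $K(\mc{C})$ side.
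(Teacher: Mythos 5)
Your proposal is correct and matches the paper's (essentially unwritten) argument: the paper simply says ``by adjunction, it is straightforward to check,'' and your writeup supplies exactly the intended details — identify $T_n$ (resp.\ $T_\ast(m)$) with a (co)fibrant model of the trivial module, use fibrancy of all objects to compute derived mapping spectra, and pass through the derived $(\mathsf{Indecom},\mathsf{Triv})$-adjunction, with the same reasoning transported to $K(\mc{C})$ for the second display.
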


\begin{prop}\label{prop: Koszul adjunction}
Let $\mc{C}$ be an $A_\ast$-category. Koszul duality is part of a spectrally enriched Quillen adjunction
\[
\adjunction{K}{\mathsf{RMod}_{\mathcal{C}}}{\mathsf{RMod}_{K(\mathcal{C})}^{\mathsf{op}}}{K^{-1}}.
\]

\end{prop}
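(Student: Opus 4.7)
The plan is to split the verification into three pieces: (1) a point-set adjunction via Fubini for enriched ends; (2) upgrading to a spectral enrichment of that adjunction; and (3) checking that $K$ is left Quillen with respect to the projective model structures, where we use that passing to $\mathsf{RMod}_{K(\mc{C})}^{\op}$ swaps cofibrations and fibrations.

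For (1), I would unfold both sides using enriched ends. On the one side,
\[
\mathsf{RMod}_{K(\mc{C})}^{\op}(K(R), S) = \mathsf{RMod}_{K(\mc{C})}(S, K(R)) = \int_{T_n} \s(S(T_n),\, \mathsf{RMod}_\mc{C}(R, T_n)).
\]
Expanding $\mathsf{RMod}_\mc{C}(R, T_n) = \int_m \s(R(m), T_n(m))$, applying Fubini for enriched ends, and using the symmetry $\s(X, \s(Y, Z)) \cong \s(Y, \s(X, Z))$ inherited from the symmetric monoidal closed structure on $\s$, this rearranges to
\[
\int_m \s\bigl(R(m),\, \int_{T_n} \s(S(T_n), T_n(m))\bigr) = \mathsf{RMod}_\mc{C}(R, K^{-1}(S)).
\]
Naturality in $R$ and $S$ is immediate from naturality of each step. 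A quick variance check confirms that $K^{-1}(S)$ is indeed a right $\mc{C}$-module: the contravariance in $m$ of $T_n(m)$ propagates through the end in $T_n$.

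For (2), each of the manipulations above is actually an isomorphism of mapping spectra in $\s$, not merely of their underlying $\pi_0$-sets of maps, because all constructions take place through the spectral enrichment; passing from spectra of morphisms to sets of morphisms is an orthogonal final step that one only performs at the very end. Hence the same chain of isomorphisms supplies a spectral adjunction isomorphism, giving the claimed spectrally enriched adjunction.

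For (3), I would use the enriched pushout-product axiom (SM7) for the projective model structure on $\mathsf{RMod}_\mc{C}$ together with the fact that every $T_n$ is fibrant in $\mathsf{RMod}_\mc{C}$ --- this holds because every object of $\s$ is fibrant and projective fibrations are detected levelwise. Unpacking what ``left Quillen into the opposite category'' means, it suffices to show that $K$ sends (acyclic) cofibrations in $\mathsf{RMod}_\mc{C}$ to (acyclic) fibrations in $\mathsf{RMod}_{K(\mc{C})}$. Given an (acyclic) cofibration $R \hookrightarrow R'$, SM7 implies that each map $K(R')(T_n) = \mathsf{RMod}_\mc{C}(R', T_n) \to \mathsf{RMod}_\mc{C}(R, T_n) = K(R)(T_n)$ is an (acyclic) fibration of spectra, since the target $T_n$ is fibrant. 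Therefore $K(R') \to K(R)$ is a levelwise (acyclic) fibration, hence an (acyclic) fibration in the projective model structure on $\mathsf{RMod}_{K(\mc{C})}$, equivalently an (acyclic) cofibration in $\mathsf{RMod}_{K(\mc{C})}^{\op}$.

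The main obstacle is purely bookkeeping of opposite categories and variances, rather than any substantive homotopical content: everything reduces to SM7 combined with the fact that all objects in $\s$ are fibrant. Notably no cofibrancy hypothesis on $R$ or on $\mc{C}$ is required at this stage, and we deliberately make no claim yet about the derived (co)units being equivalences --- that strengthening is the dualizability content reserved for the main theorem on level-finite modules.
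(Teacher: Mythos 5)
Your proposal is correct and follows essentially the same route as the paper: the adjunction isomorphism via Fubini for enriched (co)ends, and the left Quillen property via the enriched model category axioms (SM7) combined with the fibrancy of the objects $T_n$, which holds since all objects of $\s$ are fibrant and projective fibrations are levelwise. You simply spell out the details that the paper leaves to the reader.
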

\begin{proof}
Using the definitions in terms of ends, one can check this pair of functors is adjoint via the calculus of (co)ends. The axioms of an enriched model category imply that $K$ sends (acyclic) cofibrations of right modules to (acyclic) cofibrations in $\s^\mathsf{op}$. Hence, we deduce that the pair forms an enriched Quillen adjunction. 
\end{proof}

There is a slightly different formula for the right adjoint to Koszul duality which provides an alternative view on why $K^{-1}(S)$ is a presheaf on $\mc{C}$.

\begin{prop}
Let $\mc{C}$ be an $A_\ast$-category. There is a natural isomorphism
\[
K^{-1}(S)(m) \cong \mathsf{RMod}_{K(\mc{C})}(S,K(\mathsf{Free}_{\mc{C}}(A_m))).
\]
\end{prop}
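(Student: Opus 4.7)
The plan is to unwind both sides using enriched Yoneda and the end formula for mapping spectra in $\mathsf{RMod}_{K(\mc{C})}$, and observe that they coincide. No new input is needed beyond the free--forgetful adjunction and the definitions of $K$ and $K^{-1}$.

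First, I would identify the right $K(\mc{C})$-module $K(\mathsf{Free}_\mc{C}(A_m))$ explicitly. From the coend formula for $\mathsf{Free}_\mc{C}$ in Definition~\ref{def: freeforget}, applied to the $A_\ast$-sequence concentrated in degree $m$ at the free module $A_m$, we get $\mathsf{Free}_\mc{C}(A_m)(k) \cong A_m \wedge_{A_m} \mc{C}(k,m) \cong \mc{C}(k,m)$, i.e., $\mathsf{Free}_\mc{C}(A_m)$ is (isomorphic to) the representable right $\mc{C}$-module $\mc{C}(-,m)$. Enriched Yoneda then supplies a natural isomorphism
\[
K(\mathsf{Free}_\mc{C}(A_m))(T_n) \;=\; \mathsf{RMod}_\mc{C}(\mc{C}(-,m), T_n) \;\cong\; T_n(m).
\]
Naturality in $T_n$ for the right $K(\mc{C})$-module structures must be checked: $K(\mc{C})(T_l,T_n) = \mathsf{RMod}_\mc{C}(T_n,T_l)$ acts on the left-hand side by post-composition of right $\mc{C}$-module maps, and Yoneda transports this to the evaluation-at-$m$ pairing $T_n(m) \wedge \mathsf{RMod}_\mc{C}(T_n,T_l) \to T_l(m)$, which is the standard right $K(\mc{C})$-action on the functor $T_n \mapsto T_n(m)$.

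Second, I would use the enriched end formula for the mapping spectrum in $\mathsf{RMod}_{K(\mc{C})}$ and substitute the identification above:
\[
\mathsf{RMod}_{K(\mc{C})}(S, K(\mathsf{Free}_\mc{C}(A_m))) \;=\; \int_{T_n \in K(\mc{C})} \s\bigl(S(T_n), K(\mathsf{Free}_\mc{C}(A_m))(T_n)\bigr) \;\cong\; \int_{T_n \in K(\mc{C})} \s(S(T_n), T_n(m)),
\]
which is exactly $K^{-1}(S)(m)$ by the defining formula for $K^{-1}$ recorded just before Proposition~\ref{prop:formula for koszul dual}.

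The remaining bookkeeping is naturality in $m$, i.e., compatibility with the right $\mc{C}$-module structures on both sides. On the left, $K^{-1}(S)$ is a presheaf on $\mc{C}$ via the $m$-slot in $T_n(m)$; on the right, $m \mapsto K(\mathsf{Free}_\mc{C}(A_m))$ is contravariant in $m$ because $m \mapsto \mathsf{Free}_\mc{C}(A_m) \cong \mc{C}(-,m)$ is covariant and $K$ is contravariant. Both $\mc{C}$-actions trace back to the evaluation pairing $\mc{C}(m',m) \wedge T_n(m) \to T_n(m')$ under the Yoneda identification, so they match. I do not expect any serious obstacle: the entire argument is a direct application of Yoneda and the definitions, with the only mild subtlety being careful tracking of the opposite in the definition of $K(\mc{C})$.
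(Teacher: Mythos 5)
Your proof is correct and takes essentially the same route as the paper: the paper likewise unwinds the right-hand side as the end $\int_{T_n \in K(\mc{C})} \s\bigl(S(T_n), K(\mathsf{Free}_{\mc{C}}(A_m))(n)\bigr)$ and identifies $K(\mathsf{Free}_\mc{C}(A_m))(n) \cong T_n(m)$ via the adjunction computation of Example~\ref{ex: koszul dual of free}, which is exactly your identification $\mathsf{Free}_\mc{C}(A_m) \cong \mc{C}(-,m)$ followed by enriched Yoneda. The naturality checks you spell out (the right $K(\mc{C})$-action and the contravariance in $m$) are correct and simply make explicit what the paper leaves implicit.
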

\begin{proof}
To see the required isomorphism we observe that the latter can be written as
    \[\int_{T_n \in K(\mc{C})} \s(S(T_n), K(\mathsf{Free}_{\mc{C}}(A_m))(n)) .\]
    By Example \ref{ex: koszul dual of free} these agree. 
\end{proof}

\subsection{Foundational results for Koszul duality}

In this section, we discuss  to what extent Koszul duality for right modules is an equivalence. As a precursor to the next definition, we encourage the reader to revisit the definition of an $A_\ast$-category in Definition~\ref{def: A* category}.

\begin{definition}\label{def: dualizable category}
Let $G_\ast$ be the suspension spectrum of a topological groupoid. A $G_\ast$-category $\mc{C}$ is said to be \emph{dualizable} if
\begin{enumerate}
    \item it is locally finite and locally right finite (Definition~\ref{def:locally finite category}); 
    \item the dualizing spectra $D_{G_i}$ are invertible, i.e., nonequivariantly spheres of arbitrary dimension.

\end{enumerate}
\end{definition}

These conditions are often satisfied in practice. The conditions on $G_\ast$ are satisfied by groupoids of, possibly discrete, compact Lie groups, e.g., $ \Sigma_\ast = \coprod_{n \geq 0} \Sigma_n$ or $O(\ast)=\coprod_{n \geq 0} O(n)$.

\begin{exs}\hspace{10ex}
    \begin{enumerate}
 \item If $P$ is a level-finite reduced operad\footnote{With mild cofibrancy conditions.}, then $\mathsf{Env}(P)$ is a dualizable $\Sigma_\ast$-category. This is because $\Sigma_i$ freely acts on partitions with $i$ ordered components that arise in the definition of $\mathrm{Env}(P)$ of Section \ref{subsection:right modules over operad}. 
\item The category of orthogonal epimorphisms $\mathsf{OEpi}$ is a dualizable $O(\ast)$-category. The $O(n)$-action on $\mathsf{OEpi}(m,n) \cong \mathsf{Vect}_\bb{R}(\mathbb{R}^n,\mathbb{R}^m)$ is given by precomposition with $O(n)$. This is free since isometries have left inverses. We give a complete proof of this in Proposition~\ref{prop: OEpi dualizable}.
    \end{enumerate}
\end{exs}

\begin{rem}

The proofs of the results in this section require constructions which do not make sense for arbitrary ring spectra $A_i$, but could likely be reproduced under weaker assumptions: the $A_\ast$ admit augmentations, the $A_\ast$ admit bialgebra structures, the $A_\ast$-primitives of $\bb{S}$ are spherical, etc.
\end{rem}

The following lemma allows us to treat $K$ and $K^{-1}$ uniformly.

\begin{lem}
    If $\mc{C}$ is a dualizable $G_\ast$-category, then the category $K(\mc{C})$ is a dualizable $G_\ast$-category. 
\end{lem}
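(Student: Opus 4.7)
The plan is to verify, for $K(\mc{C})$, each of the conditions in Definition~\ref{def: dualizable category}. Being an $A_\ast^{\op}$-category is built into the definition, and since any topological group is isomorphic to its opposite via inversion, the groupoids $G_\ast$ and $G_\ast^{\op}$ are isomorphic; in particular the dualizing spectra agree, so condition (2) transfers from $\mc{C}$ to $K(\mc{C})$ for free. The substantive content is to verify that $K(\mc{C})$ is locally finite and locally right finite, that is, to control the mapping spectra $K(\mc{C})(c, d) = \mathsf{RMod}_\mc{C}(T_d, T_c)$.

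For local finiteness, I would begin from the formula provided by Proposition~\ref{prop:formula for koszul dual}:
\[
K(\mc{C})(c, d) \simeq \mathsf{RMod}^h_{A_c}\bigl(\mathsf{TAQ}(\mathsf{Triv}_\mc{C}(A_d))(c),\ A_c\bigr).
\]
Because $G_d$ is a compact Lie group, $A_d = \Sigma^\infty_+ G_d$ is itself a finite spectrum, so $\mathsf{Triv}_\mc{C}(A_d)$ is a level-finite right $\mc{C}$-module and Proposition~\ref{prop:taq of finite} gives the finiteness of $\mathsf{TAQ}(\mathsf{Triv}_\mc{C}(A_d))(c)$. A careful look at the proof of that proposition in fact produces a cellular structure: each stage of the iterated free-forget resolution $\mathsf{Free}_\mc{C}(F_n)(c) \simeq \bigvee_m F_n(m) \wedge_{A_m} \mc{C}(c, m)$ is finitely built from free $A_c$-cells, directly from the local right finiteness of $\mc{C}$ and the level-finiteness of $F_n$. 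The $A_c$-linear dual of such a complex is a finite wedge of (shifted) Spanier--Whitehead duals of $A_c$, and since $A_c$ is finite as a spectrum, so is the dual.

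For local right finiteness, I would run the same argument equivariantly with respect to the natural left $A_d$-action on $\mathsf{Triv}_\mc{C}(A_d)$ (coming from multiplication on its unique nontrivial level, and producing an $A_d^{\op}$-action on $K(\mc{C})(c, d)$ under contravariance). The base case of the resolution is $\mathsf{Free}_\mc{C}(\mathsf{Triv}_\mc{C}(A_d))(c) \cong \mc{C}(c, d)$, where the inherited $A_d$-action is precisely the post-composition action on $\mc{C}(c, d)$, which by local right finiteness of $\mc{C}$ is free on finitely many cells. Since this $A_d$-action commutes with the $A_c$-action coming from the $\mc{C}$-module structure, iteration produces $\mathsf{TAQ}(\mathsf{Triv}_\mc{C}(A_d))(c)$ as a finite complex of cells that are simultaneously free over $A_c$ and over $A_d$. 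Applying $\mathsf{RMod}^h_{A_c}(-, A_c)$ then yields $K(\mc{C})(c, d)$ as a finite complex of free $A_d^{\op}$-cells, using the invertibility of $D_{G_d}$ to recognize the Spanier--Whitehead dual of $A_d$ as a shifted free $A_d^{\op}$-module.

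The main technical obstacle in this plan is the equivariant refinement of Proposition~\ref{prop:taq of finite}: I need to verify that the iterative free-forget resolution of an $A_d$-equivariant, level-finite right $\mc{C}$-module $R$ can be carried out inside the category of $A_d$-equivariant $\mc{C}$-modules in such a way that each stage is built from cells that are free both for the $A_c$-action (at each level $c$) and for the compatible $A_d$-action inherited from $R$. Once this equivariant cellular bookkeeping is in place, the duality step of the previous paragraph is essentially formal, and one concludes that $K(\mc{C})$ is dualizable.
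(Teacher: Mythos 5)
Your proposal is correct and follows essentially the same route as the paper: reduce via the $\mathsf{TAQ}$-duality formula to showing $\mathsf{TAQ}(T_d)(c)$ is a finite free $G_d$-spectrum, then resolve the trivial module by free cells $G_d$-equivariantly, with local right finiteness of $\mc{C}$ controlling the (finitely many) cells in each categorical degree exactly through the base case $\mathsf{Free}_\mc{C}(\mathsf{Triv}_\mc{C}(A_d))(c)\cong\mc{C}(c,d)$ that you identify. The ``equivariant cellular bookkeeping'' you flag as the remaining obstacle is precisely the step the paper's proof also asserts without further elaboration, so your argument is at the same level of completeness as the published one.
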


\begin{proof}
    We must check that the object $\mathsf{RMod}_\mc{C}(T_n,T_m)$ is finite as a $G_n$-spectrum, where the action is by the automorphisms of $T_n$ constructed in Lemma \ref{lem:cofibrant models with action}. By Proposition \ref{prop:taq of finite} and Proposition \ref{prop:formula for koszul dual}, this is equivalent to checking that $\mathsf{TAQ}(T_n)(m)$ is a finite $G_n$-spectrum. To check this, resolve $T_n$ by free cells in the category of $G_n$-equivariant right $\mc{C}$-modules. This may be arranged so that at any given categorical degree, there are only finitely many cells added, as a consequence of the local right finiteness of $\mc{C}$ and the fact that $\mc{C}(i,j)=\ast$ if $i <j$. Applying $\mathsf{TAQ}$ then yields a $G_n$-cell structure of $\mathsf{TAQ}(T_n)$ with the same properties.
\end{proof}

For convenience, all functors that appear in the following proposition are implicitly derived.

\begin{prop} \label{prop: koszul dual of free}
Let $\mc{C}$ be a dualizable $G_\ast$-category. If $S$ is a finite $G_n$-spectrum, then there is an equivalence
   \[K(\mathsf{Free}_\mc{C}(S))\simeq \mathsf{Triv}_{K(\mc{C})} (S^\vee \wedge D_{G_n}),\]
where we view $S$ as a right $G_\ast$-module concentrated in a single degree.

Similarly, 
\[K^{-1}(\mathsf{Free}_{K(\mc{C})}(S))\simeq \mathsf{Triv}_{\mc{C}} (S^\vee \wedge D_{G_n}).\]
   \end{prop}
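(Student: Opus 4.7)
The first statement is essentially an application of Example \ref{ex: koszul dual of free} combined with a norm computation. By that example, $K^{\mbf{L}}(\mathsf{Free}^{\mbf{L}}_\mc{C}(S))$ is a right $K(\mc{C})$-module homotopically concentrated in degree $n$, hence is equivalent to $\mathsf{Triv}_{K(\mc{C})}(X)$ for some $G_n^{\op}$-spectrum $X$; moreover the example (or equivalently Proposition~\ref{prop:formula for koszul dual}) identifies the value in degree $n$ as
\[
X \simeq \mathsf{RMod}^h_{A_n}(S, \Sigma^\infty_+ G_n).
\]
So the content of the proposition is to identify this mapping spectrum with $S^\vee \wedge D_{G_n}$.

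The idea is a standard norm argument. Under the identification of $A_n$-modules with Borel $G_n$-spectra, the derived mapping spectrum $\mathsf{RMod}^h_{G_n}(S, \Sigma^\infty_+ G_n)$ becomes $(\mathsf{Spec}(S, \Sigma^\infty_+ G_n))^{hG_n}$ with the diagonal action, and since $S$ is a finite $G_n$-spectrum this rewrites as $(S^\vee \wedge \Sigma^\infty_+ G_n)^{hG_n}$. Now $S^\vee \wedge \Sigma^\infty_+ G_n$ is of the form $Z \wedge G_n$ (with $Z = S^\vee$), so Klein's theorem (Theorem~\ref{thm:norm for g spectra}(3)) applies and the norm map furnishes an equivalence
\[
(S^\vee \wedge \Sigma^\infty_+ G_n \wedge D_{G_n})_{hG_n} \xrightarrow{\ \simeq \ } (S^\vee \wedge \Sigma^\infty_+ G_n)^{hG_n}.
\]
Finally, the untwisting isomorphism $Y \wedge \Sigma^\infty_+ G_n \cong Y^{\mathrm{triv}} \wedge \Sigma^\infty_+ G_n$ of $G_n$-spectra (defined on points by $y \otimes g \mapsto g^{-1}y \otimes g$) lets us rewrite the homotopy orbits on the left as $S^\vee \wedge D_{G_n} \wedge (\Sigma^\infty_+ G_n)_{hG_n} \simeq S^\vee \wedge D_{G_n}$, which yields the claimed equivalence. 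I expect this last untwisting/projection-formula manipulation to be the only delicate step, since it is the one that really uses the freeness of the right $G_n$-action on itself and the invertibility of $D_{G_n}$ (to make sense of the smash factor as a well-behaved Borel spectrum).

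For the second statement, the strategy is identical with $\mc{C}$ replaced by $K(\mc{C})$, which is a dualizable $G_\ast$-category by the preceding lemma. Concretely, Proposition~\ref{prop:formula for koszul dual} provides the dual formula
\[
(K^{-1})^{\mbf{R}}(\mathsf{Free}_{K(\mc{C})}(S))(n) \simeq \mathsf{RMod}^h_{A_n^{\op}}(S, \Sigma^\infty_+ G_n^{\op}),
\]
and the same reasoning as in Example~\ref{ex: koszul dual of free} shows the resulting right $\mc{C}$-module is homotopically concentrated in degree $n$, hence equivalent to a trivial module. Running the identical Klein-norm calculation (using that $D_{G_n^{\op}} \simeq D_{G_n}$ via group inversion) identifies the value with $S^\vee \wedge D_{G_n}$, giving $K^{-1}(\mathsf{Free}_{K(\mc{C})}(S)) \simeq \mathsf{Triv}_\mc{C}(S^\vee \wedge D_{G_n})$.
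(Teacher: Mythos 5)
Your proposal is correct and follows essentially the same route as the paper: the paper's (very terse) proof likewise reduces to the free–forgetful adjunction, invokes Klein's norm theorem using the finiteness of $S$ to identify the degree-$n$ value as $S^\vee \wedge D_{G_n}$, and then observes that concentration in a single degree forces the module structure to be trivial. The only cosmetic remark is that since a general dualizable $G_\ast$-category does not require $G_n$ to be compact Lie, the cleaner citation is Theorem~\ref{thm:norm for g spectra}(2) (after untwisting, $S^\vee \wedge \Sigma^\infty_+ G_n$ is built from finitely many free $G_n$-cells) rather than part~(3), but this does not affect the argument.
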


\begin{proof}
    By a direct computation with the free-forgetful adjunction and the finiteness of $S$, Theorem \ref{thm:norm for g spectra} implies the result as $G_\ast$-sequences. However, the action is forced to be homotopically trivial because it is concentrated in a single degree.
\end{proof}

This result agrees with the Yoneda lemma in the case $S= \Sigma^\infty_+G_n$ since the theory of dualizing spectra implies $\Sigma^\infty_+ G_n^\vee \wedge D_{G_n} \simeq \Sigma^\infty_+ G_n$, under the hypotheses of the section. If $G_n$ is a compact Lie group, this is perhaps more well known as Atiyah duality.

For the next proposition, all functors that appear are implicitly derived.

\begin{prop} \label{prop: koszul dual of trivial} 
Let $\mc{C}$ be a dualizable $G_\ast$-category. If $S$ is a finite $G_n$-spectrum, then there is an equivalence,
   \[K(\mathsf{Triv}_{\mc{C}}(S)) \simeq \mathsf{Free}_{K(\mc{C})}(S^\vee \wedge D_{G_n}), \]

where we view $S$ as a right $G_\ast$-module concentrated in a single degree.

Similarly,
   \[K^{-1}(\mathsf{Triv}_\mc{C}(S))\simeq \mathsf{Free}_{\mc{C}}(S^\vee \wedge D_{G_n}). \]
    
\end{prop}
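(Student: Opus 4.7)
The plan is to compute $K(\mathsf{Triv}_\mc{C}(S))$ levelwise using a resolution of $\mathsf{Triv}_\mc{C}(S)$ in terms of the cofibrant model $T_n$ from Lemma \ref{lem:cofibrant models with action}, then invoke the norm duality for finite equivariant spectra to recognize the result as a free right $K(\mc{C})$-module. Since $S$ is a finite $G_n$-spectrum and $T_n$ carries the canonical $A_n$-action from Lemma \ref{lem:cofibrant models with action}, spectral enrichment of $\mathsf{Triv}_\mc{C}$ supplies a natural equivalence $\mathsf{Triv}_\mc{C}(S) \simeq T_n \wedge^{\mathbf{L}}_{A_n} S$. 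This is the key resolution: it reduces statements about trivial modules on arbitrary finite $S$ to statements about the basic building blocks $T_n$, which are already wired into the definition of $K(\mc{C})$.

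Evaluating at $k \in \mathbb{N}$ and using the enriched tensor-cotensor adjunction, the above resolution yields a natural equivalence $K(\mathsf{Triv}_\mc{C}(S))(k) \simeq \mathsf{RMod}^h_{A_n}\bigl(S,\, K(\mc{C})(k,n)\bigr)$, where $K(\mc{C})(k,n) = \mathsf{RMod}^h_\mc{C}(T_n,T_k)$ is the mapping spectrum in the Koszul dual (and is contractible, as expected, when $k<n$). Since $\mc{C}$ is dualizable, $D_{G_n}$ is invertible and $S$ is a finite $G_n$-spectrum, so part (2) of Theorem \ref{thm:norm for g spectra} (a form of Atiyah duality) supplies the equivalence $\mathsf{RMod}^h_{A_n}(S, M) \simeq (S^\vee \wedge D_{G_n} \wedge M)_{hG_n}$ for any $M \in \mathsf{Spec}^{BG_n}$. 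Applied with $M = K(\mc{C})(k,n)$, this rewrites our levelwise value as $\bigl(K(\mc{C})(k,n) \wedge S^\vee \wedge D_{G_n}\bigr)_{hG_n}$.

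The final step is to recognize this homotopy coinvariant as the value at $k$ of $\mathsf{Free}_{K(\mc{C})}(S^\vee \wedge D_{G_n})$: by the coend formula in Definition \ref{def: freeforget}, the free right $K(\mc{C})$-module on a module concentrated in degree $n$ evaluates at $k$ to precisely $K(\mc{C})(k,n) \wedge^{\mathbf{L}}_{A_n^\op} (S^\vee \wedge D_{G_n})$, which matches. Naturality in $k$ upgrades this pointwise equivalence to one of right $K(\mc{C})$-modules. The ``similarly'' statement for $K^{-1}$ (reading the domain as $\mathsf{Triv}_{K(\mc{C})}(S)$) follows by the identical argument, justified by the preceding lemma ensuring $K(\mc{C})$ inherits dualizability from $\mc{C}$. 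The main obstacle I expect is bookkeeping the $G_n$-actions, particularly the swap between $A_n$- and $A_n^\op$-actions forced by Koszul duality; the pseudo-cofibrancy hypothesis in Definition \ref{def: A* category}, together with Lemma \ref{lem:modules with action cofibrancy}, are what ensure the derived tensor $T_n \wedge^{\mathbf{L}}_{A_n} S$ really is computed by the point-set tensor in the projective model structure.
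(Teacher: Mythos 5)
Your proof is correct, but it takes a genuinely different route from the paper's at the crucial point. The paper first verifies the statement for free cells $S \simeq \bb{S}^m \wedge \Sigma^\infty_+ G_n$ and then for finite free $G_n$-CW spectra by induction over cells; the real work is the general case, since a finite $G_n$-spectrum (finite only nonequivariantly) need not admit a \emph{finite} free cell structure, so the paper chooses a free cell structure with finite skeleta and must commute a homotopy inverse limit past $\mathsf{Free}_{K(\mc{C})}((-)^\vee \wedge D_{G_n})$ via a long chain of dualizations and (dual) norm maps, each justified by finiteness. You sidestep that interchange entirely: resolving $\mathsf{Triv}_\mc{C}(S) \simeq T_n \wedge^{\mathbf{L}}_{A_n} S$ and adjointing over moves the norm map off of $S$ and onto $S^\vee \wedge K(\mc{C})(k,n)$, whose freeness is supplied by $K(\mc{C})(k,n)$ rather than by $S$ --- precisely because $K(\mc{C})$ is locally right finite, which is the content of the lemma preceding the proposition. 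This is cleaner and makes transparent why only nonequivariant finiteness of $S$ is required. Two caveats: first, the equivalence $\mathsf{RMod}^h_{A_n}(S,M) \simeq (S^\vee \wedge D_{G_n} \wedge M)_{hG_n}$ does \emph{not} hold ``for any $M \in \mathsf{Spec}^{BG_n}$'' (take $S = \bb{S}$ with trivial action: the claim would say the norm map is always an equivalence, contradicting the existence of nontrivial Tate constructions); it holds here because $S^\vee \wedge K(\mc{C})(k,n)$ is built from finitely many free $G_n$-cells, so Theorem~\ref{thm:norm for g spectra}(2) applies. Second, you invoke the dualizability of $K(\mc{C})$ only for the ``similarly'' clause, but you need it already for the main statement, exactly to guarantee this freeness of $K(\mc{C})(k,n)$.
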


\begin{proof}

We start by assuming $S \simeq \bb{S}^m \wedge \Sigma^\infty_+ G_n$. Since $\mathsf{Triv}_\mc{C}(\bb{S}^m \wedge \Sigma^\infty_+ G_n) \simeq \bb{S}^m \wedge \mathsf{Triv}_\mc{C}(\Sigma^\infty_+ G_n)$, we may pull the $\bb{S}^m$ out of the right module mapping spectrum computing $K(\mathsf{Triv}_\mc{C}(\bb{S}^m \wedge \Sigma^\infty_+ G_n))$ to get
\begin{align*}
    \mathsf{RMod}_{\mc{C}}(\mathsf{Triv}_\mc{C}(\bb{S}^m \wedge \Sigma^\infty_+ G_n),T_\ast) &\simeq  \s(\bb{S}^m,\mathsf{RMod}_{\mc{C}}(\mathsf{Triv}_\mc{C}(\Sigma^\infty_+ G_n),T_\ast)) \\
    &\simeq \s(\bb{S}^m,\mathsf{Free}_{K(\mc{C})}(\Sigma^\infty_+ G_n)).
\end{align*}
This last object is equivalent to 
\[
\mathsf{Free}_{K(\mc{C})}(\bb{S}^{-m}\wedge \Sigma^\infty_+ G_n )\simeq\mathsf{Free}_{K(\mc{C})}(\bb{S}^{-m}\wedge  D_{G_n} \wedge \Sigma^\infty_+ G_n ^\vee ).
\]
The spectrum $\bb{S}^{-m}\wedge \Sigma^\infty_+ G_n^\vee $ is the dual of a free $G_n$-cell, and so when $S$ is a finite $G$-CW complex, we have
\[K(\mathsf{Triv}_\mc{C}(S)) \simeq \mathsf{Free}_{K(\mc{C})}(S^\vee \wedge D_{G_n}).\]
For general finite $S$, the dual of the cellular filtration is an inverse limit, and the above observation is not immediately true. We must pick a free $G_n$-cell structure on $S$ with $m$-skeleton $S^{\leq m}$ and verify that \[\underset{n \to \infty}{\holim} \mathsf{Free}_{K(\mc{C})}((S^{\leq m})^\vee \wedge D_{G_n}) \simeq \mathsf{Free}_{K(\mc{C})}( S^\vee \wedge D_{G_n}) .\]

To this end, pick a cell structure such that $S^{\leq m}$ is a finite spectrum, something which is possible because both $S$ and $G_n$ are finite.

By definition, the left-hand side restricted to categorical degree $j$ is equivalent to \[\underset{m \to \infty}{\holim}(  ((S^{\leq m})^\vee \wedge D_{G_n} \wedge \mc{C}(j,n))_{hG_n}) \]

By the $G_n$-finiteness of $\mc{C}(j,n)$ and finiteness of the other terms, this homotopy orbits is finite. Hence, we may write this homotopy limit as the dual of a homotopy colimit
\[\underset{n \to \infty}{\hocolim}(  (((S^{\leq m})^\vee \wedge D_{G_n} \wedge \mc{C}(j,n))_{hG_n})^\vee)^\vee \]
which by the finiteness of $S$ simplifies to
\[ \underset{m \to \infty}{\hocolim}(  (S^{\leq m} \wedge D_{G_n}^\vee \wedge \mc{C}(j,n)^\vee)^{hG_n})^\vee .\]
By the $G_n$-finiteness of $\mc{C}(j,n)$ and finiteness of the other terms, the dual norm map gives an equivalence with
\[ \underset{m \to \infty}{\hocolim}(  (S^{\leq m} \wedge \mc{C}(j,n)^\vee)_{hG_n})^\vee \]

We can pull out the homotopy orbits and smash product out of the colimit to get
\[(((\underset{m \to \infty}{\hocolim} S^{\leq m})\wedge \mc{C}(j,n)^\vee )_{hG_n})^\vee \simeq ((S \wedge \mc{C}(j,n)^\vee)_{hG_n})^\vee.\]
The $G_n$-finiteness of $\mc{C}(j,n)$ and finiteness of the other terms means the dual norm map gives
\[((S \wedge D_{G_n}^\vee \wedge C(j,n)^\vee)^{hG_n})^\vee \]
These homotopy fixed points are the dual of a homotopy orbits, by the finiteness of the terms, and so this is
\[((S^\vee \wedge D_{G_n} \wedge C(j,n))_{hG_n})^\vee )^\vee.\]
By the $G_n$-finiteness of $C(j,n)$ and the finiteness of the other terms the homotopy orbits are finite, so this is 
\[(S^\vee \wedge D_{G_n} \wedge C(j,n))_{hG_n},\]

the $j$-th categorical degree of $\mathsf{Free}_{K(\mc{C})}( S^\vee \wedge D_{G_n})$. This concludes the proof of the claim for $K$. The proof for $K^{-1}$ is completely analogous.
\end{proof}

\begin{thm}\label{thm:koszul duality for right modules}
For a dualizable $G_\ast$-category, the spectrally enriched Quillen adjunction
\[
\adjunction{K}{\mathsf{RMod}_{\mathcal{C}}}{\mathsf{RMod}_{K(\mathcal{C})}^{\mathsf{op}}}{K^{-1}},
\]
has the property that when restricted to level-finite right modules, the derived functors of $K,K^{-1}$ are inverse and lift $\mathsf{TAQ}^\vee \wedge D_{G_\ast}$.

\end{thm}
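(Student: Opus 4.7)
The plan is to verify both claims simultaneously: that the derived unit and counit are equivalences on level-finite modules, and that the composite functor is computed by $\mathsf{TAQ}^\vee \wedge D_{G_\ast}$ at the level of underlying $G_\ast$-sequences. The underlying spectrum formula is direct: by Proposition~\ref{prop:formula for koszul dual}, $K^\mbf{L}(R)(n) \simeq \mathsf{RMod}^h_{A_n}(\mathsf{TAQ}(R)(n), A_n)$. Under dualizability, $\mathsf{TAQ}(R)(n)$ is a finite $G_n$-spectrum by Proposition~\ref{prop:taq of finite}, and combining the invertibility of $D_{G_n}$ with the duality assertions in Theorem~\ref{thm:norm for g spectra} identifies this mapping spectrum with $\mathsf{TAQ}(R)(n)^\vee \wedge D_{G_n}$. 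This simultaneously shows that $K$ lifts $\mathsf{TAQ}^\vee \wedge D_{G_\ast}$ on level-finite modules and preserves level-finiteness; the same holds for $K^{-1}$ since $K(\mc{C})$ is itself a dualizable $G_\ast$-category by the lemma immediately preceding Proposition~\ref{prop: koszul dual of free}.

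To show the derived unit $R \to K^{-1}K(R)$ is an equivalence on level-finite $R$, I would induct on the truncation tower. The base case---trivial modules $\mathsf{Triv}_\mc{C}(S)$ with $S$ a finite $G_n$-spectrum---follows from composing Propositions~\ref{prop: koszul dual of trivial} and~\ref{prop: koszul dual of free}:
\[
K^{-1}K(\mathsf{Triv}_\mc{C}(S)) \simeq K^{-1}(\mathsf{Free}_{K(\mc{C})}(S^\vee \wedge D_{G_n})) \simeq \mathsf{Triv}_\mc{C}((S^\vee \wedge D_{G_n})^\vee \wedge D_{G_n}) \simeq \mathsf{Triv}_\mc{C}(S),
\]
using Spanier--Whitehead biduality on the finite spectrum $S$ and invertibility of $D_{G_n}$. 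For the inductive step, the fiber sequence of Lemma~\ref{lem:triv fiber sequence}
\[
\mathsf{Triv}_\mc{C}(R(n)) \longrightarrow R^{\leq n} \longrightarrow R^{\leq n-1}
\]
is preserved by $K^{-1}K$, since both $K$ and $K^{-1}$ are exact between stable model categories. If the unit is an equivalence on the outer terms, it is an equivalence on $R^{\leq n}$.

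To pass from truncations to $R$ itself, I would invoke Lemma~\ref{lem:taq concentrate}: the map $\mathsf{TAQ}(R)^{\leq n} \to \mathsf{TAQ}(R^{\leq n})^{\leq n}$ is an equivalence, so combined with the spectrum formula above, the underlying $G_\ast$-sequence $K(R)$ at level $m$ depends only on $R^{\leq m}$. Using the levelwise stabilization $R(m) = R^{\leq n}(m)$ for $n \geq m$ from Lemma~\ref{lem: module limit of truncations}, this shows that at each fixed level the unit $R \to K^{-1}K(R)$ agrees with the unit on $R^{\leq m}$, and is hence an equivalence by induction. The counit is handled symmetrically using dualizability of $K(\mc{C})$. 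The main obstacle is precisely this last step: $K$ is contravariant and so does not commute with inverse limits in general, and Lemma~\ref{lem:taq concentrate}---which ensures that $K$ is ``determined up to degree $m$ by $R^{\leq m}$''---is the essential technical input that converts a global statement about the full module $R$ into a level-by-level verification on its truncations.
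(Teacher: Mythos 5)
Your proposal is correct and follows essentially the same route as the paper's proof: the identification of the underlying $G_\ast$-sequence via Proposition~\ref{prop:formula for koszul dual}, Proposition~\ref{prop:taq of finite} and the norm map; the base case for modules concentrated in a single degree via Propositions~\ref{prop: koszul dual of free} and~\ref{prop: koszul dual of trivial}; induction up the truncation tower; and finally the reduction of the (co)unit at a fixed categorical degree to a truncation via Lemma~\ref{lem:taq concentrate}. The only difference is that you spell out the base-case computation and the exactness justification slightly more explicitly than the paper does.
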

\begin{proof}

We first deal with the second assertion. By Proposition~\ref{prop:formula for koszul dual},
\[K^\mbf{L}(R)\simeq \mathsf{RMod}^h(R,\mathsf{Triv}_\mc{C}(\Sigma^\infty_+ G_n))  \simeq \mathsf{RMod}^h_{G_n}(\mathsf{TAQ}(R),\Sigma^\infty_+ G_n).\]

By Proposition \ref{prop:taq of finite} and the finiteness of $G_i$, we can construct the following equivalences
\[ \mathsf{RMod}^h_{G_n}(\mathsf{TAQ}(R),\Sigma^\infty_+ G_n) \simeq (\mathsf{TAQ}(R)\wedge \Sigma^\infty_+ G_n)^{hG_n} \xrightarrow{\simeq} (\mathsf{TAQ}(R)\wedge D_{G_n} \wedge \Sigma^\infty_+ G_n )_{hG_n} \simeq  \mathsf{TAQ}(R)\wedge D_{G_n}. \]

We now address the assertion the derived (co)units are equivalences for level-finite right modules. For right modules homotopically concentrated in a single degree, this follows from Proposition \ref{prop: koszul dual of free} and Proposition \ref{prop: koszul dual of trivial}.

By induction on the truncation filtration, the claim holds for truncated right modules. However, the (co)unit of the $(K,K^{-1})$-adjunction at categorical degree $n$ depends only on the truncation at $n$. This can be seen by applying Proposition \ref{lem:taq concentrate} to the truncation filtration of mapping spectra. We conclude that the derived (co)unit is an equivalence for all level-finite modules.
\end{proof}

\begin{rem}
    In the special case $\mc{C}$ is the envelope of a level-finite operad, this demonstrates that Koszul duality is an equivalence for level-finite right modules, which appears to be an original result. This was conjectured in \cite[Theorem 9.1]{BehrensRezk}.
\end{rem}

Using the theory of Koszul duality, we can partially resolve the question of when $\mathsf{TAQ}$ detects equivalences.

\begin{cor}
  Let $\mc{C}$ be a dualizable $G_\ast$-category with $R,S$ a pair of level-finite right $\mc{C}$-modules. A map $R \rightarrow S$ is an equivalence, if and only if, the induced map $\mathsf{TAQ}(R) \rightarrow \mathsf{TAQ}(S)$ is an equivalence.
\end{cor}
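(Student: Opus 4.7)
My plan is that this is essentially a direct corollary of Theorem \ref{thm:koszul duality for right modules} combined with the finiteness statement for $\mathsf{TAQ}$ (Proposition \ref{prop:taq of finite}) and the fact that $\mathsf{TAQ}$ preserves equivalences as a derived functor. The forward implication is immediate since $\mathsf{TAQ}$ is a left derived functor. The content is in the reverse implication, and the strategy is to transport the hypothesis across the Koszul duality equivalence.

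For the reverse direction, suppose $\mathsf{TAQ}(R) \to \mathsf{TAQ}(S)$ is an equivalence. Since $\mc{C}$ is dualizable, it is locally right finite, so by Proposition \ref{prop:taq of finite} both $\mathsf{TAQ}(R)$ and $\mathsf{TAQ}(S)$ are level-finite $G_\ast$-sequences. I would first observe that the Spanier--Whitehead dual of a map between finite spectra is an equivalence if and only if the original map is, and that smashing with the invertible $G_n$-spectrum $D_{G_n}$ (invertibility is part of the dualizability hypothesis) preserves and reflects equivalences. Hence the induced map $\mathsf{TAQ}(S)^\vee \wedge D_{G_\ast} \to \mathsf{TAQ}(R)^\vee \wedge D_{G_\ast}$ is also a levelwise equivalence of $G_\ast^{\mathsf{op}}$-sequences.

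By Theorem \ref{thm:koszul duality for right modules}, this map is precisely the underlying $G_\ast^{\mathsf{op}}$-sequence of the Koszul dual map $K(S) \to K(R)$ in $\mathsf{RMod}_{K(\mc{C})}$ (equivalently, of $K(R) \to K(S)$ in $\mathsf{RMod}_{K(\mc{C})}^{\mathsf{op}}$). Since weak equivalences in $\mathsf{RMod}_{K(\mc{C})}$ are detected levelwise in the projective model structure, this upgrades to an equivalence of right $K(\mc{C})$-modules. Invoking the other half of Theorem \ref{thm:koszul duality for right modules}, namely that the derived functors $K$ and $K^{-1}$ are mutually inverse equivalences when restricted to level-finite right modules, we conclude that $R \to S$ was already an equivalence, since $K$ reflects equivalences on this subcategory.

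There is no substantial obstacle here beyond correctly bookkeeping variance and finiteness: the crucial inputs (finiteness of $\mathsf{TAQ}$, invertibility of $D_{G_n}$, the identification $K \simeq \mathsf{TAQ}^\vee \wedge D_{G_\ast}$, and the equivalence of categories on level-finite modules) are already established, and the argument is a formal concatenation of these facts. If anything deserves care, it is verifying that the finite-spectrum hypotheses needed to apply Spanier--Whitehead duality levelwise are met at each step, which follows from Proposition \ref{prop:taq of finite} applied to both $R$ and $S$.
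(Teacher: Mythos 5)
Your proposal is correct and follows essentially the same route as the paper: both arguments pass from $\mathsf{TAQ}$ to $\mathsf{TAQ}^\vee \wedge D_{G_\ast}$ using level-finiteness of $\mathsf{TAQ}$ (Proposition \ref{prop:taq of finite}) and invertibility of the dualizing spectra, and then invoke Theorem \ref{thm:koszul duality for right modules} to identify this with the Koszul duality equivalence on level-finite modules, which reflects equivalences. Your write-up just makes explicit the bookkeeping that the paper's proof compresses into two sentences.
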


\begin{proof}
    The functor 
    \[
    \mathsf{TAQ}:\mathsf{RMod}^\mathsf{fin}_\mc{C} \longrightarrow \mathsf{RMod}_{G_\ast},
    \]
    detects equivalences, if and only if, $\mathsf{TAQ}^\vee \wedge D_{G_\ast}$ detects weak equivalences. By Theorem \ref{thm:koszul duality for right modules} this last functor lifts to an equivalence of homotopy categories, 
    \[
    \mathsf{RMod}^\mathsf{fin}_\mc{C} \xrightarrow{\ \simeq \ }\mathsf{RMod}^\mathsf{fin,op}_{K(\mc{C})},
    \]
    which implies the result.
\end{proof}
Finally, we address to what extent Koszul duality is a duality for $G_\ast$-categories, i.e., under what conditions does one have $K(K(\mc{C})) \simeq \mc{C}$.

\begin{thm}\label{thm: double koszul dual}
   Let $\mc{C}$ be a dualizable $G_\ast$-category. The underlying $\mathsf{Spec}$-enriched $\infty$-categories of $\mathcal{C}$ and $K(K(\mathcal{C}))$ are equivalent over $G_\ast$. 
\end{thm}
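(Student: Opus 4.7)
The plan is to identify both $\mathcal{C}$ and $K(K(\mathcal{C}))$ with full spectral $\infty$-subcategories of appropriate module categories and then let the Koszul duality equivalence of Theorem~\ref{thm:koszul duality for right modules} translate one into the other. Applying Remark~\ref{rem: model independent Koszul dual} to $K(\mathcal{C})$ (which is itself a dualizable $G_\ast$-category by the preceding lemma), the double dual $K(K(\mathcal{C}))$ is the opposite of the full spectral $\infty$-subcategory $\mathcal{T}_{K(\mathcal{C})} \subset \mathsf{RMod}_{K(\mathcal{C})}$ spanned by the trivial modules $\{\mathsf{Triv}_{K(\mathcal{C})}(A_n^{\mathsf{op}})\}_{n \in \mathbb{N}}$. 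Dually, by the (spectral) Yoneda lemma, $\mathcal{C}$ is equivalent to the full subcategory $\mathcal{F}_\mathcal{C} \subset \mathsf{RMod}_\mathcal{C}$ spanned by the representables $\{\mathsf{Free}_\mathcal{C}(A_n)\}_{n \in \mathbb{N}}$, and this identification is \emph{tautologically} compatible with the augmentation to $G_\ast$.

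Next, I would invoke Theorem~\ref{thm:koszul duality for right modules}: the derived adjunction $(K, K^{-1})$ restricts to an equivalence of spectral $\infty$-categories between level-finite right $\mathcal{C}$-modules and the opposite of level-finite right $K(\mathcal{C})$-modules. Since local finiteness of $\mathcal{C}$ guarantees that the representables $\mathsf{Free}_\mathcal{C}(A_n)$ are level-finite, $K$ gives a fully faithful functor $\mathcal{F}_\mathcal{C} \to \mathsf{RMod}_{K(\mathcal{C})}^{\mathsf{op}}$. By Proposition~\ref{prop: koszul dual of free}, its essential image consists of the objects $\mathsf{Triv}_{K(\mathcal{C})}(A_n^\vee \wedge D_{G_n})$. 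Under the dualizability hypothesis, Atiyah duality provides an equivalence $A_n^\vee \wedge D_{G_n} \simeq A_n^{\mathsf{op}}$ of $A_n^{\mathsf{op}}$-bimodules, and so this essential image is precisely $\mathcal{T}_{K(\mathcal{C})}^{\mathsf{op}}$. Chaining the equivalences,
\[
\mathcal{C} \;\simeq\; \mathcal{F}_\mathcal{C} \;\xrightarrow{\;K\;}\; \mathcal{T}_{K(\mathcal{C})}^{\mathsf{op}} \;\simeq\; K(K(\mathcal{C})),
\]
yields the desired equivalence of underlying spectral $\infty$-categories.

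To upgrade this to an equivalence \emph{over} $G_\ast$, I would trace the augmentation through each step. The Yoneda embedding $\mathcal{C} \hookrightarrow \mathcal{F}_\mathcal{C}$ is by construction the identity on endomorphism spectra (it sends $A_n$ to its tautological action on $\mathsf{Free}_\mathcal{C}(A_n)$). The Atiyah duality isomorphism $A_n^\vee \wedge D_{G_n} \simeq A_n$ can be chosen to be the one coming from the inversion involution $G_\ast \cong G_\ast^{\mathsf{op}}$, which swaps the left and right actions compatibly with the two opposites picked up along the way ($K$ reverses modules once, $K \circ K$ reverses twice, matching the two $\mathsf{op}$'s on $A_\ast$).

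The main obstacle I anticipate is exactly this augmentation bookkeeping: one must keep careful track of how the two applications of Koszul duality interact with the dualizing spectra $D_{G_n}$ and with the passage from $A_\ast$ to $A_\ast^{\mathsf{op}}$ and back, so that the composite equivalence lands on the given augmentation of $\mathcal{C}$ rather than some twist of it. The technical content is encoded in the naturality of Propositions~\ref{prop: koszul dual of free} and \ref{prop: koszul dual of trivial} with respect to the $G_n$-actions, together with the fact that Atiyah duality for $G_n$ intertwines the left and right $G_n$-actions via inversion, which is precisely the involution identifying $G_\ast$ with $G_\ast^{\mathsf{op}}$.
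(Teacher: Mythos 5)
Your proposal is correct and follows essentially the same route as the paper's proof: embed $\mathcal{C}$ via Yoneda as the free modules, apply the level-finite Koszul duality equivalence of Theorem~\ref{thm:koszul duality for right modules}, and identify the image with the trivial $K(\mathcal{C})$-modules spanning $K(K(\mathcal{C}))$ using Proposition~\ref{prop: koszul dual of free} together with the self-duality $\Sigma^\infty_+ G_n^\vee \wedge D_{G_n} \simeq \Sigma^\infty_+ G_n$. The paper handles the final comparison by checking fully faithfulness and essential surjectivity of the induced functor via the Gepner--Haugseng criteria rather than by chaining equivalences, but this is a presentational difference, not a different argument.
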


\begin{proof}
    We have established that there is a spectral Quillen adjunction
    \[
\adjunction{K}{\mathsf{RMod}_\mc{C}}{\mathsf{RMod}_{K(\mc{C})}^\op}{K^{-1}}.
\]
This induces an adjunction on the corresponding spectrally enriched $\infty$-categories.

When restricted to the sub-$\infty$-categories of level-finite modules
\[
K : \mathsf{RMod}_\mc{C}^\mathsf{fin} \longrightarrow \mathsf{RMod}_{K(\mc{C})}^{\mathsf{fin}, \op}
\]
induces a categorical equivalence (in the sense of~\cite[Definition 5.5.2]{GepnerHaugseng}) of spectrally enriched $\infty$-categories by Theorem \ref{thm:koszul duality for right modules}. It follows from~\cite[Corollary 5.5.4]{GepnerHaugseng} that Koszul duality on level-finite right modules provides an equivalence of spectrally enriched $\infty$-categories. Consider the composite of functors between spectral $\infty$-categories
\[
\mc{C} \hookrightarrow \mathsf{RMod}_\mc{C}^\mathsf{fin} \xrightarrow{ \ K \ } \mathsf{RMod}_{K(\mc{C})}^{\mathsf{fin}, \op}, 
\]
of the Yoneda embedding $n \mapsto \mathsf{Free}_\mc{C}(\Sigma^\infty_+G_n)$ with Koszul duality. We claim that this diagram factors through the underlying spectral $\infty$-category of $K(K(\mc{C}))$. To see this, note that the spectral $\infty$-category $K(K(\mc{C}))$ is the full sub-$\infty$-category of $\mathsf{RMod}_{K(\mc{C})}^\op$ spanned by the objects $\mathsf{Triv}_{K(\mc{C})}(\Sigma^\infty_+ G_n)$. The required factorization follows from Proposition~\ref{prop: koszul dual of free}, since (on the derived level) there is an equivalence 
\[
K(\mathsf{Free}_\mc{C}(\Sigma^\infty_+G_n)) \simeq \mathsf{Triv}_{K(\mc{C})}(\Sigma^\infty_+ G_n). 
\]
It follows that there is a diagram 
% https://q.uiver.app/#q=WzAsNCxbMCwwLCJcXG1je0N9Il0sWzEsMCwiXFxtYXRoc2Z7Uk1vZH1fXFxtY3tDfSJdLFsyLDAsIlxcbWF0aHNme1JNb2R9X3tLKFxcbWN7Q30pfV57XFxtYXRoc2Z7ZmlufSwgXFxvcH0iXSxbMSwxLCJLKEsoXFxtY3tDfSkiXSxbMSwyLCJLIl0sWzAsMSwiIiwwLHsic3R5bGUiOnsidGFpbCI6eyJuYW1lIjoiaG9vayIsInNpZGUiOiJ0b3AifX19XSxbMywyLCIiLDIseyJzdHlsZSI6eyJ0YWlsIjp7Im5hbWUiOiJob29rIiwic2lkZSI6InRvcCJ9fX1dLFswLDMsIiIsMix7InN0eWxlIjp7ImJvZHkiOnsibmFtZSI6ImRhc2hlZCJ9fX1dXQ==
\[\begin{tikzcd}
	{\mc{C}} & {\mathsf{RMod}^\mathsf{fin}_\mc{C}} & {\mathsf{RMod}_{K(\mc{C})}^{\mathsf{fin}, \op}} \\
	& {K(K(\mc{C}))}
	\arrow[hook, from=1-1, to=1-2]
	\arrow[dashed, from=1-1, to=2-2]
	\arrow["K", from=1-2, to=1-3]
	\arrow[hook, from=2-2, to=1-3]
\end{tikzcd}\]
of spectrally enriched $\infty$-categories over $G_\ast$. By~\cite[Corollary 5.3.8]{GepnerHaugseng} it suffices to show that the dotted arrow is fully faithful and essentially surjective on the level of spectral $\infty$-categories. The dotted arrow is fully faithful since all the functors induce equivalences on mapping spectra. By~\cite[Lemma 5.3.4]{GepnerHaugseng} to check essential surjectivity it suffices to show surjectivity on the set of objects, but this last is immediate from the definitions. 
\end{proof}

\section{Weiss calculus}

Weiss calculus is a homotopy theoretic tool developed to study functors from the category of Euclidean spaces to (pointed) spaces or spectra. It was originally developed by Weiss~\cite{WeissOrthogonal} with unstable applications in mind, but has found wide-reaching applications in stable homotopy theory. In this section we describe the construction of the Weiss tower, and provide several models for the Weiss derivatives for functors which take values in spectra.

\subsection{The Weiss tower}

Denote by $\mathsf{Vect}_\bb{R}$ the spectral category of finite-dimensional real inner product spaces\footnote{In the classic literature, this category (or at least the topologically enriched version) is denoted by $\es{J}$. We find this notation more informative, even if it does introduce some ambiguity about the morphisms in the category.} and linear isometric embeddings, with mapping spectra given by the pointed suspension spectrum of the space of linear isometric embeddings. As spectra, there are isomorphisms, 
\[
\mathsf{Vect}_\bb{R}(V, W) = \Sigma^\infty_+ O(W)/O(W-V),
\]
where $W -V$ denotes the orthogonal complement of $V$ in $W$ along some chosen (the choice is unimportant) linear isometric embedding. Given a functor $F: \mathsf{Vect}_\bb{R} \to \s$, Weiss calculus builds a tower of approximations 
% https://q.uiver.app/#q=WzAsNyxbMiwwLCJGIl0sWzUsMSwiUF8wRiJdLFs0LDEsIlBfMUYiXSxbMiwxLCJQX25GIl0sWzAsMSwiUF9cXGluZnR5IEYiXSxbMSwxLCJcXGNkb3RzIl0sWzMsMSwiXFxjZG90cyJdLFs2LDJdLFsyLDFdLFszLDZdLFs1LDNdLFs0LDVdLFswLDNdLFswLDIsIiIsMCx7ImN1cnZlIjotMX1dLFswLDEsIiIsMix7ImN1cnZlIjotMX1dLFswLDQsIiIsMix7ImN1cnZlIjoxfV1d
\[\begin{tikzcd}
	&& F \\
	{P_\infty F} & \cdots & {P_nF} & \cdots & {P_1F} & {P_0F}
	\arrow[curve={height=6pt}, from=1-3, to=2-1]
	\arrow[from=1-3, to=2-3]
	\arrow[curve={height=-6pt}, from=1-3, to=2-5]
	\arrow[curve={height=-6pt}, from=1-3, to=2-6]
	\arrow[from=2-1, to=2-2]
	\arrow[from=2-2, to=2-3]
	\arrow[from=2-3, to=2-4]
	\arrow[from=2-4, to=2-5]
	\arrow[from=2-5, to=2-6]
\end{tikzcd}\]
under $F$, where $P_nF$ is the universal \textit{$n$-polynomial approximation} of $F$. A functor $F$ is $n$-polynomial if the canonical map
\[
F(V) \longrightarrow \underset{0 \neq U \subseteq \bb{R}^{n+1}}{\holim}F(V \oplus U) =:\tau_nF(V),
\]
is an equivalence for each $V \in \mathsf{Vect}_\bb{R}$. The object $P_nF(V)$ is defined to be the homotopy colimit of the diagram,
\[
F(V) \longrightarrow \tau_nF(V) \longrightarrow (\tau_n)^2F(V) \longrightarrow \cdots \longrightarrow (\tau_n)^kF(V) \longrightarrow \cdots.
\]
We will denote by $\poly{n}(\mathsf{Vect}_\bb{R}, \s)$ the (spectral) $\infty$-category of $n$-polynomial functors. Following Barnes and Oman~\cite[\S13]{BarnesOmanOrthogonal}, this may be modelled as a certain (stable) left Bousfield localization of the projective model structure on the category $\mathsf{Fun}(\mathsf{Vect}_\bb{R},\s)$.

\subsection{Weiss derivatives}
The $n$-th layer of the Weiss tower is the homotopy fiber
\[
D_nF = \hofibre(P_nF \longrightarrow P_{n-1}F).
\]

It measures the error between successive polynomial approximations and is an $n$-homogeneous functor, i.e., an $n$-polynomial functor with contractible $(n-1)$-polynomial approximation.  The key theorem in Weiss calculus is that $n$-homogeneous functors are completely classified by a spectrum with an action of $O(n)$, i.e., a Borel $O(n)$-spectrum. The spectrum classifying the $n$-th layer of the Weiss tower of $F$ is called the \emph{$n$-th derivative of $F$}. There are numerous constructions of this Borel $O(n)$-spectrum. In this subsection, we will provide a model following the original work of Weiss~\cite{WeissOrthogonal} and the model categorical formulation of Barnes and Oman~\cite{BarnesOmanOrthogonal}.

The following constructions are rather technical since we wish to work with $S$-modules rather than the more classical orthogonal spectra. Nevertheless, we include the details for completeness.  All of this may be formulated model independently using the language of $\infty$-categories, see ~\cite{Hendrian}. Finally, we wish to emphasize that this paper is largely about constructing more friendly models of these derivatives, and the reader might find the models of the next section more understandable, at least at first glance.

We first construct the derivative as an orthogonal spectrum with an action of $O(n)$, for details on orthogonal spectra we refer to ~\cite{MMSS}. Denote by $\mathsf{Sp}^\es{O}$ the category of orthogonal spectra. Denote by $\gamma_1(V,W)$ the orthogonal complement bundle over the space of linear isometries $\mathsf{Vect}_\bb{R}(V,W)$, with fiber over $f$ given by $f(V)^\perp$, and for $n\geq 1$, denote by $\gamma_n(V,W)$ the $n$-fold Whitney sum of $\gamma_1(V,W)$ with itself. For $n\geq 0$ define $\Sigma^\infty\mathsf{Vect}_{\bb{R},n}$ to be the spectral category with the same objects as $\mathsf{Vect}_\bb{R}$, but the spectrum of morphisms $\mathsf{Vect}_{\bb{R},n}(V,W)$ is given by the suspension spectrum of the Thom space of $\gamma_n(V,W)$\footnote{In the classical literature this category is denoted $\es{J}_n$.}. For each $n \geq 1$, the inclusion $\R^0 \subseteq \R^n$ induces an inclusion of topological categories
\[
\iota_n: \mathsf{Vect}_\bb{R} \hookrightarrow \Sigma^\infty\mathsf{Vect}_{\bb{R},n},
\]
and hence an adjunction
\[
\adjunction{\res_0^n/O(n)}{\Fun_{O(n)}(\Sigma^\infty\mathsf{Vect}_{\bb{R},n}, (\mathsf{Sp}^\es{O})^{BO(n)})}{\Fun(\mathsf{Vect}_{\bb{R}}, \mathsf{Sp}^\es{O})}{\ind_0^n\varepsilon^\ast},
\]
see ~\cite[\S3]{WeissOrthogonal} or \cite[\S11]{BarnesOmanOrthogonal}. Restricting to $n$-homogeneous objects on the right by viewing the $n$-homogeneous functors as bifibrant objects in a certain model structure, see ~\cite[\S 6]{BarnesOmanOrthogonal}, we obtain a Quillen equivalence
\[
\adjunction{\res_0^n/O(n)}{\Fun_{O(n)}(\Sigma^\infty\mathsf{Vect}_{\bb{R},n}, (\mathsf{Sp}^\es{O})^{BO(n)})}{\homog{n}(\mathsf{Vect}_{\bb{R}}, \mathsf{Sp}^\es{O})}{\ind_0^n\varepsilon^\ast},
\]
where the left-hand side is equipped with the $n$-stable model structure in the sense of~\cite[Proposition 7.14]{BarnesOmanOrthogonal}, see also~\cite[Theorem 10.1]{BarnesOmanOrthogonal}. Following~\cite[Definition 8.2]{BarnesOmanOrthogonal}, there is a functor of spectral categories $\alpha_n: \mathsf{Vect}_{\bb{R},n} \to \mathsf{Vect}_{\bb{R},1}$, given on objects by $\alpha_n(V) = \bb{R}^n \otimes V$. This induces an adjoint pair
\[
\adjunction{(\alpha_n)_!}{\Fun_{O(n)}(\mathsf{Vect}_{\bb{R},n}, (\mathsf{Sp}^\es{O})^{BO(n)})}{\Fun(\mathsf{Vect}_{\bb{R},1}, (\mathsf{Sp}^\es{O})^{BO(n)})}{(\alpha_n)^\ast},
\]
which is a Quillen equivalence by~\cite[\S8, \S11]{BarnesOmanOrthogonal}. The right-hand category is canonically equivalent to the category of orthogonal bispectra with an $O(n)$-action. Via the diagonal functor
\[
d:\mathsf{Sp}^\es{O}(\mathsf{Sp}^\es{O})^{BO(n)} \to (\mathsf{Sp}^\es{O})^{BO(n)},
\]
there is a Quillen equivalence between bispectra with an action of $O(n)$ and spectra with an action of $O(n)$. 

For a functor $F: \mathsf{Vect}_\bb{R} \to \mathsf{Sp}^\es{O}$, we define the $n$-th derivative of $F$ to be the derived image of $F$ under this zigzag of Quillen equivalences. To obtain a model for functors from the category of Euclidean spaces to $S$-modules, we use the Quillen equivalence
\[
\adjunction{\bb{N}^\sharp}{\s}{\mathsf{Sp}^\es{O}}{\bb{N}},
\]
of Mandell-May~\cite{MandellMay}, i.e., Given a functor $F: \mathsf{Vect}_\bb{R} \to \s$, postcomposition with the strong symmetric monoidal left Quillen equivalence
\[
\bb{N}^\# : \s \longrightarrow \mathsf{Sp}^\es{O},
\]
induced a functor $\bb{N}^\sharp \circ F: \mathsf{Vect}_\bb{R} \to \mathsf{Sp}^\es{O}$. We can then apply the above construction to obtain an orthogonal spectrum with an $O(n)$-action, and then employ the lax symmetric monoidal right Quillen functor
\[
\bb{N} : \mathsf{Sp}^\es{O} \longrightarrow \s,
\]
to obtain an $S$-module with an action of $O(n)$. 

\begin{definition}
Let $F: \mathsf{Vect}_\bb{R} \to \s$. Define the $n$-th Weiss derivative of $F$, denoted $\partial_n^WF$, to be the derived image of $D_nF$ in spectra with an action of $O(n)$, under the above equivalences of categories:
\[
\partial_n^W F = (\bb{N})\circ (d \circ (\alpha_n)_! \circ \ind_0^n\varepsilon^\ast)(D_n( \bb{N}^\# \circ F)),
\]
where to ease notation we left implicit the simplicial (co)fibrant replacements.
\end{definition}

One of the most important theorems of Weiss calculus is the classification of homogeneous functors ~\cite[Theorem 7.3]{WeissOrthogonal}.

\begin{prop}
Let $F: \mathsf{Vect}_\bb{R} \to \s$. For each $n \in \bb{N}$, there is an equivalence of functors
\[
D_nF \simeq (\partial_nF \wedge S^{n(-)})_{hO(n)},
\]
where $S^{n(-)}$ is the functor given by sending $V$ to the suspension spectrum of the one-point compactification of $\bb{R}^n \otimes V$. 
\end{prop}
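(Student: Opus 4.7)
My plan is to prove this classification by explicitly inverting the zigzag of Quillen equivalences used to define the Weiss derivative. Since $D_nF$ is $n$-homogeneous by construction of the Weiss tower, the composite of the Quillen equivalences
\[
\adjunction{\res_0^n/O(n)}{\Fun_{O(n)}(\Sigma^\infty\mathsf{Vect}_{\bb{R},n}, (\mathsf{Sp}^\es{O})^{BO(n)})}{\homog{n}(\mathsf{Vect}_{\bb{R}}, \mathsf{Sp}^\es{O})}{\ind_0^n\varepsilon^\ast}
\]
with the further equivalences induced by $(\alpha_n)_!$, the diagonal $d$, and the Mandell--May adjunction $\bb{N}^\sharp \dashv \bb{N}$, identifies $\homog{n}(\mathsf{Vect}_\bb{R}, \s)$ with $\s^{BO(n)}$. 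The task is thus to produce an explicit formula for the inverse functor $\s^{BO(n)} \to \homog{n}(\mathsf{Vect}_\bb{R}, \s)$ and to verify that it sends $\partial_nF$ to $(\partial_nF \wedge S^{n(-)})_{hO(n)}$.

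The key computation is the mapping spectrum
\[
\Sigma^\infty\mathsf{Vect}_{\bb{R},n}(\R^0, V) \cong S^{nV},
\]
which holds because $\mathsf{Vect}_\bb{R}(\R^0, V) = \ast$ and the fiber of $\gamma_n(\R^0, V)$ over this point is $\R^n \otimes V$. Unwinding the inverse of the zigzag applied to a cofibrant Borel $O(n)$-spectrum $X$, I would argue as follows: $\bb{N}^\sharp$ passes $X$ into orthogonal spectra with $O(n)$-action; the diagonal $d$ and $(\alpha_n)^\ast$ realise $X$ as an $O(n)$-equivariant functor on $\Sigma^\infty\mathsf{Vect}_{\bb{R},n}$ whose derived value at $V$ is equivalent to $X \wedge S^{nV}$ with the diagonal $O(n)$-action; finally, the left adjoint $\res_0^n/O(n)$ restricts along $\iota_n$ and takes $O(n)$-orbits, producing the functor $V \mapsto (X \wedge S^{nV})_{hO(n)}$. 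The use of strict orbits computes the homotopy orbits because the $O(n)$-action on $S^{nV}$ is free away from the basepoint, provided $X$ is suitably cofibrant.

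The main obstacle is the careful bookkeeping of $O(n)$-actions and (co)fibrancy conditions through the Mandell--May equivalence $\s \rightleftarrows \mathsf{Sp}^{\es{O}}$, whose unit $\bb{S}$ is not cofibrant in $\s$, and through the diagonal $d$, whose inverse at the level of derived functors is only well-defined after appropriate fibrant replacements. At a practical level, one can sidestep much of this by verifying the formula on the generators $\Sigma^\infty_+ O(n) \wedge \bb{S}^m$ of $\s^{BO(n)}$, where both sides reduce to expressions that can be compared directly using the explicit formulas for the left Kan extensions along $\alpha_n$ and $\iota_n$, and then extending to all of $\s^{BO(n)}$ by stability and colimit preservation of left adjoints. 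Once the universal formula $H(X)(V) \simeq (X \wedge S^{nV})_{hO(n)}$ is established, the claim for $D_nF$ follows by specialising to $X = \partial_nF$.
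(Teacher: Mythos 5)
The paper does not actually prove this proposition; it quotes it directly from Weiss (Theorem 7.3 of \cite{WeissOrthogonal}), via the Barnes--Oman model-categorical formulation, so you are supplying an argument the paper omits. Your overall strategy --- take the zigzag of Quillen equivalences as given, compute $\Sigma^\infty\mathsf{Vect}_{\bb{R},n}(\R^0,V)\cong S^{nV}$, identify the composite inverse functor on the generators $\Sigma^\infty_+O(n)\wedge\bb{S}^m$ of $\s^{BO(n)}$, and extend by stability and preservation of colimits --- is the standard route and is sound in outline, provided you are willing to quote (as the paper does) that these adjunctions really are Quillen equivalences after restricting to $n$-homogeneous functors. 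That is where the genuinely hard content of Weiss's theorem lives, and your sketch does not engage with it; this is acceptable only because the paper itself imports it from Barnes--Oman.

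There is, however, one concrete error in your justification. You claim that strict orbits compute homotopy orbits ``because the $O(n)$-action on $S^{nV}$ is free away from the basepoint.'' This is false: identifying $\R^n\otimes V$ with $\Hom(\R^n,V)$, a linear map $f$ is fixed by $g\in O(n)$ exactly when $\mathrm{im}(g-1)\subseteq\ker f$, so every non-injective $f$ has a nontrivial stabilizer (and $f=0$, which is not the basepoint of $S^{nV}$, is fixed by all of $O(n)$); the action is free only off the linear fat diagonal $\mathsf{DI}_n(V)$. Indeed, the failure of freeness on $S^{nV}$ is precisely what makes the norm map and the Tate obstructions elsewhere in the paper nontrivial, so if your claim were true much of the paper would collapse. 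The correct reason strict orbits agree with homotopy orbits here is that the derived left Quillen functor $\res_0^n/O(n)$ is applied to a cofibrant object of the intermediate category, whose underlying Borel $O(n)$-object is built from free $O(n)$-cells (equivalently, one has in effect already smashed with $\Sigma^\infty_+ EO(n)$); the freeness must come from the cofibrant replacement of $X$, not from $S^{nV}$. With that repair, and keeping the Quillen-equivalence input as a citation, your argument goes through.
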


An important example for us is the derivatives of representables, as showing that our various models for Weiss derivatives agree on representables is usually the easiest way to show they agree in general.

\begin{ex}\label{ex: Weiss derivative of rep}
The $n$-th Weiss derivative of the representable functor $\mathsf{Vect}_\bb{R}(V,-)$ is equivalent to $\Omega^{nV}(D_{O(n)} \wedge \mathsf{Vect}_\bb{R}(\bb{R}^n, V))$, i.e., the Spanier--Whitehead dual of the spectrum  
\[
\s(D_{O(n)} \wedge  \mathsf{Vect}_\bb{R}(\R^n, V), S^{nV}).
\]

\end{ex}
\begin{proof}
This computation is immediate from Miller's stable splitting of Stiefel manifolds~\cite{MillerSplitting}, and the definition of $\partial_nF$. Details can be readily extracted from Arone's~\cite{AroneBOBU} computations of the Weiss tower of $\sf{BO}(-)$, or the exposition of Arone in~\cite{AroneMitchellRichter}. 
\end{proof}

\begin{rem}
In many ways, the models we provide in the rest of this article and more elementary than the construction provided by Weiss~\cite{WeissOrthogonal} and Barnes and Oman~\cite{BarnesOmanOrthogonal}. This is not surprising, a similar phenomenon happens in Goodwillie calculus. In fact, we will see later that the fake Weiss tower we introduce in Section~\ref{subsection: Koszul duality model} agrees with the Weiss tower for representable functors, and so by Kan extending yields a fairly simple model for the Weiss tower of a general functor. 
\end{rem}

\subsection{A Spanier--Whitehead duality model for Weiss derivatives} \label{subsection:spanier whitehead model}
We now introduce a new model for Weiss derivatives. This construction is Koszul dual to the model of Section~\ref{subsection: Koszul duality model}.

For a finite-dimensional real inner product space $V$, we denote by $R_V: \mathsf{Vect}_\bb{R} \to \s$ the representable functor
\[
W \longmapsto \mathsf{Vect}_\bb{R}(V,W).
\]
Define the \emph{linear fat diagonal} to be the functor $\mathsf{DI}_n : \mathsf{Vect}_\bb{R} \to \s$, by defining $\sf{DI}_n(V)$ to be the one-point compactification of the space of non-injective linear maps $\mathbb{R}^n \to V$. Note that by the identification of $\bb{R}^n \otimes U$ with $\Hom(\bb{R}^n, U)$ the linear fat diagonal is homeomorphic to $\Hom(\bb{R}^n, U)\setminus \mathsf{Vect}_\bb{R}(\bb{R}^n, U)$ the complement of the space of linear isometries in the space of all linear homomorphisms.

\begin{definition}
For a representable functor $R_V: \mathsf{Vect}_\bb{R} \to \s$, define the \emph{$n$-th Spanier--Whitehead derivative} $\partial_n^{SW}(R_V)$ to be the Spanier--Whitehead dual of the spectrum of natural transformations $\nat(R_V \wedge D_{O(n)}, S^{n(-)}/\sf{DI}_n(-))$, i.e., 
\[
\partial_n^{SW}(R_V) = (\nat(R_V \wedge D_{O(n)}, S^{n(-)}/\sf{DI}_n(-)))^\vee.
\]

\end{definition}

Notice that the Spanier--Whitehead derivatives defined a functor
\[
\partial_\ast^{SW} R_{(-)} : \mathsf{Vect}_\bb{R}^\op \longrightarrow \mathsf{RMod}_{O(\ast)}, V \longmapsto \partial_\ast(R_V),
\]
where $\mathsf{RMod}_{O(\ast)}$ is the category of orthogonal sequences, i.e., right $O(\ast)$-modules, cf.~Example~\ref{ex: augmented cats}.

\begin{ex}\label{ex: derivative of rep}
The $n$-th Spanier--Whitehead derivative of the representable functor $R_V$ is equivalent to $\Omega^{nV}(\mathsf{Vect}_\bb{R}(\bb{R}^n, V) \wedge D_{O(n)})$, i.e., to the Spanier--Whitehead dual of the spectrum
\[
\s(D_{O(n)} \wedge  \mathsf{Vect}_\bb{R}(\R^n, V), S^{nV}).
\]
\end{ex}
\begin{proof}
By the Yoneda Lemma we have an identification
\[
\nat(R_V \wedge D_{O(n)}, S^{n(-)}/\sf{DI}_n(-)) \cong \s(D_{O(n)}, S^{nV}/\mathsf{DI}_n(V)),
\]
and the result follows from a combination of Atiyah duality which identifies $S^{nV}/\mathsf{DI}_n(V)$ with $\s(\mathsf{Vect}_\bb{R}(\R^n, V), S^{nV})$ and Spanier--Whitehead duality.
\end{proof}

To define Spanier--Whitehead derivatives for arbitrary functors, we will left Kan extend the functor
\[
\partial_\ast^{SW} R_{(-)} : \mathsf{Vect}_{\bb{R}}^\op \longrightarrow \mathsf{RMod}_{O(\ast)},
\]
along the (contravariant) Yoneda embedding
\[
\mathsf{Vect}_{\bb{R}}^\op \hookrightarrow \Fun(\mathsf{Vect}_\bb{R}, \s).
\]

\begin{definition}\label{definition: spanier whitehead derivative}
Let $F: \mathsf{Vect}_\bb{R} \to \s$. Define the \emph{$n$-th Spanier--Whitehead derivative} of $F$ as
\[
\partial_n^{SW} F = \int^{V \in \mathsf{Vect}_\bb{R}}~c(\partial_n(R_V)) \wedge F(V),
\]
where $c$ is a simplicial cofibrant replacement in the category of orthogonal sequences.
\end{definition}

\begin{rem}
For $F: \mathsf{Vect}_\bb{R} \to \s$, a finite cell complex in the category of orthogonal functors, there is an equivalence 
\[
\partial_n^{SW} F \simeq \nat(F \wedge D_{O(n)}, S^{n(-)}/\mathsf{DI}_n(-))^\vee,
\]
between the Spanier--Whitehead derivatives of $F$ and the Spanier--Whitehead dual of the spectrum of natural transformations from $F \wedge D_{O(n)}$ to $S^{n(-)}/\mathsf{DI}_n(-)$. This identification follows from the case of representable functors combined with the fact that finite cell complexes in the category of orthogonal functors may be constructed by a finite sequence of extensions by representables.
\end{rem}

Up to homotopy, the Spanier--Whitehead derivatives agree with the Weiss derivatives. The following argument is similar to that of Arone and Ching~\cite[Lemma 4.3]{ACClassification}, which is an extension of an argument of Oman~\cite{Oman}.

\begin{lem}\label{lem: SW and OG agree}
For a cofibrant functor $F: \mathsf{Vect}_\bb{R} \to \s$, the $n$-th Spanier--Whitehead derivative of $F$ is weakly equivalent to the $n$-th Weiss derivative of $F$.
\end{lem}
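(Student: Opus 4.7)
The plan is to reduce to the case of representable functors, on which both derivative models coincide by direct computation. Both $\partial_n^{SW}$ and $\partial_n^W$ define functors from $\mathsf{Fun}(\mathsf{Vect}_\bb{R}, \s)$ to Borel $O(n)$-spectra, and my first claim is that both preserve homotopy colimits of cofibrant functors. For $\partial_n^{SW}$, this is built into Definition~\ref{definition: spanier whitehead derivative}: the coend is formed against the cofibrant replacement $c(\partial_n(R_V))$, which makes the formula a homotopy left Kan extension of $\partial_\ast^{SW} R_{(-)}$ along the (contravariant) Yoneda embedding, and hence homotopy-colimit-preserving on cofibrant inputs. For $\partial_n^W$, one inspects the Barnes--Oman construction and verifies that each step in the zigzag, namely $\res_0^n/O(n)$, $(\alpha_n)_!$, the diagonal $d$, and the Mandell--May comparison functors $\bb{N}^\sharp$ and $\bb{N}$, is a left Quillen functor for the appropriate model structures; the composite therefore preserves homotopy colimits on cofibrant inputs.

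Next I invoke the explicit identifications of both derivatives on representables. Example~\ref{ex: Weiss derivative of rep} identifies $\partial_n^W R_V$ with the Spanier--Whitehead dual of $\s(D_{O(n)} \wedge \mathsf{Vect}_\bb{R}(\bb{R}^n, V), S^{nV})$, and Example~\ref{ex: derivative of rep} identifies $\partial_n^{SW} R_V$ with exactly the same spectrum. This agreement is natural in $V$, so it promotes to an equivalence of functors $\partial_n^W R_{(-)} \simeq \partial_n^{SW} R_{(-)}$ from $\mathsf{Vect}_\bb{R}^{\mathsf{op}}$ to Borel $O(n)$-spectra.

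Finally, any cofibrant $F$ in the projective model structure on $\mathsf{Fun}(\mathsf{Vect}_\bb{R}, \s)$ is a retract of a cell complex built out of generating cofibrations of the form $R_V \wedge \bb{S}^m$, so $F$ is a homotopy colimit of (shifts of) representables. By the universal property of $\partial_n^{SW}$ as a left Kan extension along the Yoneda embedding, the representable-level equivalence extends uniquely to a natural transformation $\partial_n^{SW} F \to \partial_n^W F$. This transformation is an equivalence for every cofibrant $F$, because both sides preserve homotopy colimits and the transformation is an equivalence on the generating representables.

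The main obstacle is the first step: verifying that $\partial_n^W$ genuinely preserves homotopy colimits of cofibrant functors through the entire Barnes--Oman zigzag. The intermediate model structures are stable left Bousfield localizations (in particular the $n$-stable model structure on $O(n)$-equivariant functors on $\Sigma^\infty \mathsf{Vect}_{\bb{R},n}$), so one must check the functors in the zigzag are left Quillen after localization and not merely with respect to the underlying projective structures. Once this is in hand, the argument is purely formal: Yoneda plus the universal property of left Kan extension.
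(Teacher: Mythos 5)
Your proposal is correct and follows essentially the same route as the paper: reduce to representables via the coend presentation, match the two models there using Example~\ref{ex: Weiss derivative of rep} and Example~\ref{ex: derivative of rep}, and conclude because both constructions preserve homotopy colimits and equivalences between cofibrant objects. The only small inaccuracy is your claim that every functor in the Barnes--Oman zigzag is left Quillen --- some (e.g.\ $\ind_0^n\varepsilon^\ast$ and $\bb{N}$) are right Quillen --- but since all of them are Quillen \emph{equivalences}, their derived functors still preserve homotopy colimits, which is exactly how the paper dispatches the ``main obstacle'' you identify.
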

\begin{proof}
We wish to construct a (zigzag) of maps 
\[
\partial_n^{SW}(F) \longrightarrow \partial_n^W(F),
\]
which we will demonstrate is an equivalence. Since functors are functorially the coends of representables, it suffices to construct the zigzag for functors of the form $R_V$ and demonstrate that both constructions preserve homotopy colimits and equivalences between cofibrant objects. By Example~\ref{ex: Weiss derivative of rep} and Example~\ref{ex: derivative of rep}, the claimed maps exist for representable functors, hence it suffices to show that $\partial_n^{SW}$ and $\partial_n^W$ both preserve homotopy colimits and equivalences. For homotopy colimits, note that $\partial_n^{SW}$ is defined as a colimit, and these commute on cofibrant functors. Similarly, it is clear that $\partial_n^{SW}$ preserves equivalences between cofibrant objects. For $\partial_n^W$, note that it is the composite of left and right Quillen equivalences (together with appropriate simplicial (co)fibrant replacements) and hence commutes with homotopy colimits and preserves equivalences between cofibrant objects.
\end{proof}

We conclude this section by showing that the Spanier--Whitehead derivatives of a cofibrant functor are cofibrant as a consequence of the following Quillen adjunction.

\begin{lem}\label{lem: SW derivative a left adjoint}
There is an adjoint pair
\[
\adjunction{\partial_\ast}{\Fun(\mathsf{Vect}_\bb{R}, \s)}{\sf{RMod}_{O(\ast)}}{\Phi},
\]
which is a simplicial Quillen adjunction.
\end{lem}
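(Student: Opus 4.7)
The plan is to recognize $\partial_\ast^{SW}$ as an enriched left Kan extension along the contravariant Yoneda embedding $\mathsf{Vect}_\bb{R}^{\op} \hookrightarrow \mathsf{Fun}(\mathsf{Vect}_\bb{R}, \s)$. By general nonsense, the right adjoint $\Phi$ and the adjunction isomorphism then come for free from coend/end calculus, and the Quillen property reduces to checking behaviour on the generators of the projective model structure.

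First, I would define the right adjoint by the mapping-spectrum formula
\[
\Phi(M)(V) := \mathsf{RMod}_{O(\ast)}(c(\partial_\ast(R_V)), M),
\]
which is covariant in $V$ (since $V \mapsto R_V$ is contravariant, hence so is $V \mapsto c(\partial_\ast(R_V))$) and manifestly functorial in $M$. The adjunction isomorphism follows from a standard coend/end manipulation: for $F \in \mathsf{Fun}(\mathsf{Vect}_\bb{R},\s)$ and $M \in \mathsf{RMod}_{O(\ast)}$,
\[
\mathsf{RMod}_{O(\ast)}(\partial_\ast^{SW}(F), M)
\cong \int_{V} \s(F(V),\, \mathsf{RMod}_{O(\ast)}(c(\partial_\ast(R_V)), M))
\cong \mathsf{Fun}(\mathsf{Vect}_\bb{R},\s)(F, \Phi(M)),
\]
using Definition~\ref{definition: spanier whitehead derivative} in the first step and the universal property of the end in the second.

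For the Quillen property, I would test against the generators of the projective model structure on $\mathsf{Fun}(\mathsf{Vect}_\bb{R},\s)$, which have the form $R_V \wedge i$ for $V \in \mathsf{Vect}_\bb{R}$ and $i \colon A \to B$ a generating (trivial) cofibration of $\s$. The co-Yoneda lemma applied to the defining coend yields
\[
\partial_\ast^{SW}(R_V \wedge K) \;\cong\; c(\partial_\ast(R_V)) \wedge K
\]
for every spectrum $K$. Since $c(\partial_\ast(R_V))$ is cofibrant in $\mathsf{RMod}_{O(\ast)}$ by construction, and this projective model structure is spectrally enriched by Proposition~\ref{prop: day convolution monoidal model}, smashing with $c(\partial_\ast(R_V))$ is a left Quillen endofunctor of $\s$-modules. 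Consequently $\partial_\ast^{SW}$ carries each generating (trivial) cofibration $R_V \wedge i$ to a (trivial) cofibration, and hence is left Quillen.

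The simplicial enrichment of both categories is inherited from their spectral enrichments by applying $\mathsf{Sing}\circ \Omega^\infty$ to mapping spectra, as fixed in the conventions, so the spectrally enriched adjunction established above is automatically simplicial. I expect no serious obstacle here: the only genuine bookkeeping is to verify that the cofibrant replacement $c$ baked into the definition of $\partial_\ast^{SW}(R_V)$ plays well with the coend formula, but this is immediate once one observes that $\partial_\ast^{SW}$ is defined via the enriched left Kan extension of $c \circ \partial_\ast R_{(-)}$, not of the underived functor.
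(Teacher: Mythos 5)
Your proposal is correct and takes essentially the same approach as the paper: the right adjoint is defined by the same mapping-spectrum formula $\Phi(S)(V)=\mathsf{RMod}_{O(\ast)}(c(\partial_\ast(R_V)),S)$, and the adjunction isomorphism is obtained by the same calculus of (co)ends. The only (immaterial) difference is in the Quillen check: the paper verifies that the right adjoint preserves levelwise (acyclic) fibrations, using cofibrancy of $c(\partial_\ast(R_V))$ and the spectral enrichment, whereas you test the left adjoint on the generating (acyclic) cofibrations $R_V\wedge i$ via co-Yoneda --- these are dual, equally routine verifications.
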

\begin{proof}
Let $R$ be a right $O(\ast)$-module. The right adjoint $\Phi$ is defined as
\begin{align*}
\Phi(S)(V) &= \sf{RMod}_{O(\ast)}(\partial_\ast(R_V), S) \\ &=\int_{n \in \mathbb{N}}\s(c(\partial_n(\Sigma^\infty R_V)), S(n)) \\ &= \prod_{n \in {O(\ast)}}\s(c(\partial_n(\Sigma^\infty R_V)), S(n))^{O(n)}. 
\end{align*}
Via a standard ``calculus of (co)ends'' argument, one can readily see that $\Phi$ is right adjoint to $\partial_\ast$. To see that the adjoint pair is Quillen simply notice that the right adjoint preserves fibrations and acyclic fibrations as these are defined levelwise, and we picked a cofibrant model for the derivatives of representables.
\end{proof}

\section{Koszul duality and Weiss calculus}

In Section \ref{section:koszul duality}, we investigated Koszul duality for $A_\ast$-categories and their right modules. In particular, we observed that Koszul duality can be interpreted as a Fourier transform with respect to the characters given by the trivial right modules. We demonstrated for dualizable $G_\ast$-categories (Definition \ref{def: dualizable category}) like $\mathsf{OEpi}$ that Koszul duality determined an equivalence of level-finite right module categories.

For a general category $C$ equipped with a subcategory of $\mathsf{Fun}(C,\s)$ of characters, one could write down a similar transform. In the context of Weiss calculus, the obvious characters are the homogeneous functors $S^{n(-)}$. It was originally conjectured by Behrens \cite{behrensOrthNote} that such transforms, which take values in right modules over the category $\mathsf{OEpi}$, might play an important role in Weiss calculus. Indeed, the work of Arone--Ching \cite{ACOperads} implicitly takes this perspective when dealing with endofunctors of spectra.

In this section, we investigate the Fourier transform $\partial^\ast$ of functors $\mathsf{Vect}_\bb{R} \rightarrow \mathsf{Spec}$. In particular, we validate Behrens' conjecture by computing the relation between Fourier transforms of orthogonal functors, Koszul duality, and Weiss calculus. The story is summarized by the diagram of adjunctions: 
% https://q.uiver.app/#q=WzAsMyxbMSwwLCJcXEZ1bihcXG1hdGhzZntWZWN0fV9cXGJie1J9LCBcXHMpIl0sWzAsMSwiXFxtYXRoc2Z7Uk1vZH1fe0soXFxtYXRoc2Z7T0VwaX0pfSJdLFsyLDEsIlxcbWF0aHNme1JNb2R9X3tcXG1hdGhzZntPRXBpfX0iXSxbMCwxLCJcXHBhcnRpYWxfXFxhc3ReSyIsMix7Im9mZnNldCI6MX1dLFsxLDAsIlxcVGhldGFeSyIsMix7Im9mZnNldCI6MX1dLFswLDIsIlxccGFydGlhbF5cXGFzdCIsMCx7Im9mZnNldCI6LTF9XSxbMiwwLCJcXFRoZXRhIiwwLHsib2Zmc2V0IjotMX1dLFsyLDEsIksiLDIseyJvZmZzZXQiOjF9XSxbMSwyLCJLXnstMX0iLDIseyJvZmZzZXQiOjF9XV0=
\[\begin{tikzcd}
	& {\Fun(\mathsf{Vect}_\bb{R}, \s)} \\
	{\mathsf{RMod}_{K(\mathsf{OEpi})}} && {\mathsf{RMod}_{\mathsf{OEpi}}^\mathsf{op}}
	\arrow["{\partial_\ast}"', shift right, from=1-2, to=2-1]
	\arrow["{\partial^\ast}", shift left, from=1-2, to=2-3]
	\arrow["{\Theta^K}"', shift right, from=2-1, to=1-2]
	\arrow["{K^{-1}}"', shift right, from=2-1, to=2-3]
	\arrow["\Theta", shift left, from=2-3, to=1-2]
	\arrow["K"', shift right, from=2-3, to=2-1]
\end{tikzcd}\]

With suitable finiteness conditions, the left adjoints commute and the right adjoints commute, and so the Koszul dual of the Fourier transform of an orthogonal functor yields a model for its Weiss derivatives. Unlike Koszul duality for right modules, we find that the orthogonal Fourier transform is far from an equivalence, even on polynomial functors.

We show that the obstructions for the composite of the orthogonal Fourier transform with the inverse orthogonal Fourier transform \[\theta \circ \partial^\ast = \theta^K \circ \partial_\ast =:P^\mathsf{fake}_\infty(-)\] to agree with $P_\infty(-)$ are precisely the $O(\ast)$-Tate spectra of the Weiss derivatives.

 These results are analogous to what happens in the case of functors $\T \rightarrow \s$. There the derivatives are known to form right modules over the Koszul dual of the category of finite sets and surjections, which is also known as the envelope of the Lie operad \cite{ACOperads}. Further, the obstruction for the analogous fake Goodwillie tower to agree with the Goodwillie tower lie in the $\Sigma_\ast$-Tate spectra~\cite[Remark 4.2.27]{ACOperads}.

\subsection{Koszul dual derivatives}
We now formally introduce the category $\mathsf{OEpi}$ of orthogonal epimorphisms and verify that it fits into our framework of Koszul duality. Recall that $\mathsf{OEpi}$ denotes the category of orthogonal epimorphisms and is defined as the opposite of the category of finite-dimensional inner product spaces with linear isometric embeddings. 

\begin{prop}\label{prop: OEpi dualizable}
    The category $\mathsf{OEpi}$ is a dualizable $O(\ast)$-category.
\end{prop}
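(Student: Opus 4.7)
The plan is to verify directly the two conditions of Definition \ref{def: dualizable category}. That $\mathsf{OEpi}$ is an $O(\ast)$-category has already been recorded in Example \ref{ex: augmented cats}(2): the mapping spectrum $\mathsf{OEpi}(i,j)$ is the suspension spectrum $\Sigma^\infty_+ V_j(\mathbb{R}^i)$ of the Stiefel manifold of $j$-frames in $\mathbb{R}^i$ when $j \le i$, and is trivial otherwise because there are no linear isometric embeddings $\mathbb{R}^j \hookrightarrow \mathbb{R}^i$; since Stiefel manifolds are CW complexes, these suspension spectra are pseudo-cofibrant, and the endomorphism structure recovers $O(\ast)$.

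Condition (2) is then immediate from Theorem \ref{thm:norm for g spectra}(4): each $O(n)$ is a compact Lie group, so the dualizing spectrum $D_{O(n)} \simeq S^{\mathsf{Ad}_{O(n)}}$ is invertible as a Borel $O(n)$-spectrum.

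For condition (1), the Stiefel manifold $V_j(\mathbb{R}^i)$ is a compact smooth manifold, so its suspension spectrum is a finite spectrum, giving local finiteness. For local right finiteness, the right $O(j)$-action on $\mathsf{OEpi}(i,j)$ arises from post-composition in $\mathsf{OEpi}$, which corresponds to precomposition at the source $\mathbb{R}^j$ of an isometric embedding $\mathbb{R}^j \hookrightarrow \mathbb{R}^i$. This action is free because linear isometric embeddings are injective: if $f \circ g = f$ for some $g \in O(j)$, then $g$ must be the identity. The quotient $V_j(\mathbb{R}^i)/O(j) \cong \mathsf{Gr}(j,i)$ is a compact smooth manifold, and by lifting a finite CW structure on the Grassmannian through the principal $O(j)$-bundle one obtains a finite free $O(j)$-CW structure on $V_j(\mathbb{R}^i)$, exhibiting $\Sigma^\infty_+ V_j(\mathbb{R}^i)$ as an $O(j)$-spectrum built from finitely many free cells.

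The one substantive point is the finite free $O(j)$-CW structure on the Stiefel manifold, which is a standard consequence of equivariant triangulation theory for smooth free actions of compact Lie groups on compact manifolds; the remaining verifications are either definitional or reduce immediately to results already established earlier in the paper.
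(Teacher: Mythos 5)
Your proof is correct and follows essentially the same route as the paper: identify the mapping spectra as suspension spectra of Stiefel manifolds, invoke Klein's theorem for the invertibility of $D_{O(n)}$, and use the freeness of the (post-composition in $\mathsf{OEpi}$, i.e.\ precomposition of isometric embeddings) $O(j)$-action to get local right finiteness. You spell out the finite free $O(j)$-CW structure via the Grassmannian quotient, which the paper's one-line "linear injections have left inverses" leaves implicit; this is a harmless elaboration, not a divergence.
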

\begin{proof}
    The endomorphisms of $\mathsf{OEpi}$ are given by $\Sigma^\infty_+ O(n)$, hence are suspension spectra of finite CW complexes. The spectrum $\Sigma^\infty_+ O(0)$ is $S^0$ since there is a single automorphism of $\mathbb{R}^0$. Since linear injections cannot decrease dimension, the order requirement on mapping spectra holds.
   The dualizing spectra of $O(n)$ are spherical by Theorem \ref{thm:norm for g spectra} since they are compact Lie groups.  The action of $O(n)$ on the right of $\mathsf{OEpi}(m,n)$ is a finite $O(n)$-spectrum since linear injections are have left inverses.
\end{proof}

As a consequence of Theorem \ref{thm:koszul duality for right modules}, Koszul duality is well-behaved for $\mathsf{OEpi}$ and its level-finite right modules. One can produce a wealth of interesting right $\mathsf{OEpi}$-modules in the following way:

\begin{definition}
The \emph{Koszul dual} derivatives $\partial^\ast F$ of a functor $F: \mathsf{Vect}_\bb{R} \to \s$ are the right $\mathsf{OEpi}$-module
\[
\partial^\ast F := \nat(F, S^{\ast(-)}).
\]
The right module structure is given by composition with the natural transformations 
\[
S^{n(-)} \longrightarrow S^{m(-)}
\]
given by tensoring with $i \in\mathsf{Vect}_\bb{R}(\bb{R}^m,\bb{R}^n)$. 
\end{definition}

We may think of the Koszul dual derivative $\partial^\ast F$ as the Fourier transform of $F$ with respect to the characters $S^{\ast(-)}$. As before, there is a dual Fourier transform given by
\[
\Theta: \mathsf{RMod}_{\sf{OEpi}}^\op \longrightarrow \Fun(\mathsf{Vect}_\bb{R}, \s)\]\[R \longmapsto \mathsf{RMod}_{\sf{OEpi}}(R, S^{\ast(-)}).
\]

\begin{prop}\label{prop: koszul dual derivatives adjunction}
The Koszul dual derivatives are part of a spectrally enriched Quillen adjunction
    \[
\adjunction{\partial^\ast}{\mathsf{Fun}(\mathsf{Vect}_\bb{R},\s)}{\mathsf{RMod}_{\mathsf{OEpi}}^{\mathsf{op}}}{\Theta}.
\]
\end{prop}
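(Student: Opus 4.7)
The plan is to mirror Proposition \ref{prop: Koszul adjunction}, viewing $\partial^\ast$ as a Fourier transform that pairs $F$ against the distinguished family of ``characters'' $S^{\ast(-)}$. The adjunction identity will be verified by a direct calculus of (co)ends. Explicitly, using the enriched end descriptions of mapping objects, one computes
\[
\mathsf{RMod}_{\mathsf{OEpi}}(R, \partial^\ast F) = \int_{n} \s\bigl(R(n),\, \nat(F, S^{n(-)})\bigr) \cong \int_{n,V} \s\bigl(R(n) \wedge F(V),\, S^{nV}\bigr),
\]
whereas
\[
\mathsf{Fun}(\mathsf{Vect}_{\bb{R}}, \s)\bigl(F, \Theta(R)\bigr) = \int_{V} \s\bigl(F(V),\, \mathsf{RMod}_{\mathsf{OEpi}}(R, S^{\ast V})\bigr) \cong \int_{V,n} \s\bigl(R(n) \wedge F(V),\, S^{nV}\bigr).
\]
These two iterated ends agree by a Fubini-type interchange of ends, yielding the claimed adjunction $\partial^\ast \dashv \Theta$ with the opposite category on the right.

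For the Quillen property, recall that cofibrations in $\mathsf{RMod}_{\mathsf{OEpi}}^{\mathsf{op}}$ are precisely the fibrations of $\mathsf{RMod}_{\mathsf{OEpi}}$, and these are detected levelwise in the projective model structure. It therefore suffices to verify that whenever $F \hookrightarrow G$ is an (acyclic) cofibration in the projective model structure on $\mathsf{Fun}(\mathsf{Vect}_\bb{R}, \s)$, the induced map
\[
\partial^\ast G(n) = \nat(G, S^{n(-)}) \longrightarrow \nat(F, S^{n(-)}) = \partial^\ast F(n)
\]
is an (acyclic) fibration of spectra for each $n$. This follows from the pushout--product axiom for the spectrally enriched projective model structure on $\mathsf{Fun}(\mathsf{Vect}_\bb{R}, \s)$, combined with the fact that $S^{n(-)}$ is levelwise fibrant since every object of $\s$ is fibrant. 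Spectral enrichment of the Quillen pair is automatic from the enriched-end definitions of the functors, which commute with spectral cotensors.

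The main potential obstacle is bookkeeping around the opposite convention: one must confirm that the $\mathsf{OEpi}$-module structure placed on $\nat(F, S^{\ast(-)})$, induced by tensoring isometries with $V$ and post-composing with the resulting natural transformations of the characters $S^{\ast(-)}$, has the variance that makes $\partial^\ast F$ genuinely land in $\mathsf{RMod}_{\mathsf{OEpi}}^{\mathsf{op}}$, and that this is compatible with the formula describing $\Theta$. Once the variances are pinned down, the end computation and the SM7-type argument are entirely formal, and the proof is essentially the same as that of Proposition~\ref{prop: Koszul adjunction}.
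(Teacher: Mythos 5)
Your proposal is correct and follows essentially the same route as the paper: the adjunction isomorphism is established by a calculus-of-(co)ends/Fubini argument identifying $\mathsf{RMod}_{\mathsf{OEpi}}^{\op}(\partial^\ast F, R)$ with $\nat(F,\Theta(R))$, and the Quillen property is deduced from the enriched model category axioms (SM7) together with the fibrancy of the characters $S^{\ast(-)}$, the only cosmetic difference being that you check the left adjoint preserves (acyclic) cofibrations while the paper phrases it as the right adjoint preserving (acyclic) fibrations.
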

\begin{proof}
A calculus of (co)ends argument provides a natural isomorphism
\[
\mathsf{RMod}_{\mathsf{OEpi}}^\op(\partial^\ast F, R) = \mathsf{RMod}_\mathsf{OEpi}(R, \partial^\ast F) \cong \nat(F, \mathsf{RMod}_\mathsf{OEpi}(R, S^{\ast(-)})),
\]
for $F: \mathsf{Vect}_\bb{R} \to \s$ and $R \in \mathsf{RMod}_\mathsf{OEpi}$, and hence the functors in question are adjoint. The Quillen property follows since the right adjoint preserves (acyclic) fibrations by the axioms of an enriched model category.
\end{proof}

 Typically, we would restrain ourselves to taking the Koszul dual derivatives of cofibrant functors. However, there are cases when it is not necessary to pass to a cofibrant replacement.\footnote{This is similar to how we restrict to pseudo-cofibrant mapping spectra rather than strictly cofibrant mapping spectra in Definition~\ref{def: A* category}.}

\begin{definition}\label{def: correct natutral transformation spectra}
We will say that a functor $F: \mathsf{Vect}_\bb{R} \to \s$ has the \textit{correct natural transformation spectra}, if precomposition with a cofibrant replacement induces an equivalence of natural transformation spectra.
\end{definition}

\begin{lem} \label{lem: correct natural transformations for rep}
The representable functor $R_V=\mathsf{Vect}_\bb{R}(V,-)$ has the correct natural transformation spectra.
\end{lem}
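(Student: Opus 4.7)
The plan is to reduce the statement to a computation using the enriched Yoneda lemma, where the only obstruction is the non-cofibrancy of the unit $\bb{S}$ of the smash product on $\s$.

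First, I would identify a convenient cofibrant replacement of $R_V$. Since the mapping spectra of $\mathsf{Vect}_\bb{R}$ are pseudo-cofibrant (being suspension spectra of CW complexes, namely of the Stiefel manifolds $O(W)/O(W-V)$), the free functor along the inclusion of the one-object spectral category $\{\bb{S}\} \hookrightarrow \mathsf{Vect}_\bb{R}$ picking out $V$ is left Quillen into the projective model structure on $\mathsf{Fun}(\mathsf{Vect}_\bb{R}, \s)$ by Proposition~\ref{prop: day convolution monoidal model}. Consequently, if $\bb{S}_c \to \bb{S}$ is a cofibrant replacement of the sphere spectrum in $\s$, then $R_V^c := R_V \wedge \bb{S}_c \to R_V$ is a cofibrant replacement of $R_V$ in the projective model structure.

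Next, I would compute both natural transformation spectra using the calculus of enriched (co)ends. By the enriched Yoneda lemma, there is a natural isomorphism
\[
\nat(R_V, G) \cong G(V).
\]
On the other hand, since smashing with the spectrum $\bb{S}_c$ commutes with the end and is left adjoint to $\s(\bb{S}_c, -)$, one obtains
\[
\nat(R_V^c, G) = \nat(R_V \wedge \bb{S}_c, G) \cong \s(\bb{S}_c, \nat(R_V, G)) \cong \s(\bb{S}_c, G(V)),
\]
and under these identifications the map induced by the cofibrant replacement corresponds to the map $G(V) \cong \s(\bb{S}, G(V)) \to \s(\bb{S}_c, G(V))$ given by precomposition with $\bb{S}_c \to \bb{S}$.

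Finally, I would conclude using the fibrancy of $G(V)$. Since every object of $\s$ is fibrant, the spectrum $G(V)$ is fibrant, and hence $\s(-, G(V))$ sends weak equivalences between cofibrant objects to weak equivalences. Applying this to the weak equivalence $\bb{S}_c \to \bb{S}$ between cofibrant objects (both sides being cofibrant in $\s$) yields that the map $\s(\bb{S}, G(V)) \to \s(\bb{S}_c, G(V))$ is a weak equivalence in $\s$. The main (minor) obstacle is to verify cleanly that $R_V \wedge \bb{S}_c$ really is cofibrant in the projective model structure on $\mathsf{Fun}(\mathsf{Vect}_\bb{R}, \s)$; this is where the pseudo-cofibrancy of the mapping spectra of $\mathsf{Vect}_\bb{R}$ and the fact that the free functor from $\s$ sending $K \mapsto R_V \wedge K$ is left Quillen are used in tandem.
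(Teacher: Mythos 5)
Your proof follows essentially the same route as the paper's: take $R_V \wedge \bb{S}_c$ as the cofibrant replacement, apply the (enriched/derived) Yoneda lemma to identify $\nat(R_V \wedge \bb{S}_c, G) \cong \s(\bb{S}_c, G(V))$, and compare with $\nat(R_V, G) \cong G(V)$. The one misstep is the parenthetical claim that $\bb{S}_c \to \bb{S}$ is a weak equivalence \emph{between cofibrant objects}: the paper explicitly notes that the unit $\bb{S}$ of EKMM $S$-modules is \emph{not} cofibrant, so you cannot invoke Ken Brown for $\s(-, G(V))$ in that form. The step is nonetheless true and easily repaired, e.g.\ by computing $\pi_n \s(\bb{S}_c, G(V)) \cong [\Sigma^n \bb{S}_c, G(V)] \cong \pi_n G(V)$ using that $\Sigma^n\bb{S}_c$ is a cofibrant model of the $n$-sphere and every object of $\s$ is fibrant.
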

\begin{proof}
A cofibrant replacement of $R_V$ is given as $R_V \wedge \bb{S}_c$. The derived Yoneda Lemma shows that $\mathsf{nat}(R_V \wedge \bb{S}_c, F) \cong \mathsf{Spec}(\bb{S}_c,F(V))$ which is equivalent to $F(V) \cong \mathsf{nat}(R_V , F)$, as desired.
\end{proof}

By the Yoneda lemma, we have the following identification of the Koszul dual of representable functors.

\begin{ex}\label{ex: koszul dual derivative of reps}
There is an isomorphism of right $\mathsf{OEpi}$-modules:
\[\partial^\ast \mathsf{Vect}_\bb{R}(V,-) \cong S^{\ast V}\]
where the action of $\mathsf{OEpi}$ given by smashing with $i \in \mathsf{Vect}_\bb{R}(\bb{R}^m,\bb{R}^n)$.
\end{ex}

\begin{lem}[{\cite[Example 4.1]{ReisWeiss}}]\label{lem: factorization}
Let $F$ and $G$ be orthogonal functors. If $G$ is $n$-polynomial the map
 \[
 \nat^\sf{h}(P_n(F),G) \longrightarrow \nat^\sf{h}(F,G),
 \]
  induced by the universal map $F \to P_nF$ is an equivalence.
\end{lem}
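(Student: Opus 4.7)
The plan is to realize this as a general property of left Bousfield localization, applied to the $n$-polynomial model structure on $\mathsf{Fun}(\mathsf{Vect}_\bb{R},\s)$ constructed by Barnes and Oman. By~\cite[\S13]{BarnesOmanOrthogonal}, there is a stable $n$-polynomial model structure which is a left Bousfield localization of the projective model structure; its fibrant objects are precisely the $n$-polynomial functors (which are automatically projectively fibrant because every spectrum is fibrant in $\s$), and its cofibrations coincide with the projective cofibrations.

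First, I would observe that $P_n F$ is a fibrant replacement of $F$ in the $n$-polynomial model structure, so the universal map $F \to P_n F$ is a weak equivalence in the localized model structure. Second, the hypothesis that $G$ is $n$-polynomial says exactly that $G$ is fibrant in the localized model structure. The general theory of (spectrally enriched) left Bousfield localization then guarantees that derived mapping spectra into a local-fibrant object turn local equivalences into equivalences, so the map
\[
\nat^h_{\mathsf{loc}}(P_n F, G) \longrightarrow \nat^h_{\mathsf{loc}}(F, G)
\]
induced by $F \to P_n F$ is an equivalence.

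Finally, I would verify that the derived mapping spectra computed in the localized model structure agree with those computed in the projective model structure. Derived mapping spectra are computed as $\nat(F^c, G^f)$ for a cofibrant replacement of the source and a fibrant replacement of the target; since the cofibrations of the two model structures coincide, we may use the same $F^c$, and since $G$ is already fibrant in the localized model structure (and thus also projectively fibrant), no fibrant replacement is needed. Hence $\nat^h_{\mathsf{loc}}(-, G) \simeq \nat^h(-, G)$ for both $F$ and $P_n F$, which yields the claim.

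The main subtlety is the bookkeeping regarding the spectral (rather than purely simplicial) enrichment, since Barnes and Oman work with orthogonal spectra while we are working with $S$-modules; this is handled by transporting the model structure along the Quillen equivalence $\bb{N}^\sharp \dashv \bb{N}$ recalled earlier in the paper, and noting that all functors involved preserve the relevant enriched mapping objects.
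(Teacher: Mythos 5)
Your argument is correct. The paper does not actually prove this lemma itself; it is imported by citation from Reis--Weiss, so there is no in-text proof to compare against. Your route -- viewing $\poly{n}$ as the stable left Bousfield localization of the projective model structure in the sense of Barnes--Oman, noting that the $n$-polynomial functors are exactly the local (fibrant) objects because every object of $\s$ is fibrant, that $F \to P_nF$ is a fibrant replacement and hence a local equivalence, and that derived mapping objects into a local object are invariant under local equivalences -- is a clean, self-contained proof, and it is consistent with how the paper itself sets up the $n$-polynomial model structure. The one point worth making explicit is the upgrade from the mapping-\emph{space} statement (which is literally the definition of a local equivalence in Hirschhorn's framework) to the mapping-\emph{spectrum} statement $\nat^{\sf{h}}$: this follows because the localization is stable, so the class of local objects is closed under (de)suspension and $\Omega^{\infty-k}\nat^{\sf{h}}(A,G) \simeq \Map^{\sf{h}}(A,\Sigma^k G)$ reduces the spectrum-level claim to the space-level one; equivalently, the localization is compatible with the spectral enrichment. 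Your remark about transporting along $\bb{N}^{\sharp} \dashv \bb{N}$ is not really needed, since the paper already asserts that the localization exists directly on $\Fun(\mathsf{Vect}_{\bb{R}},\s)$.
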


\begin{cor} \label{cor: equivalence of Koszul dual derivatives}
    If $F \rightarrow G$ is a natural transformation of functors with the correct natural transformation spectra which induces equivalences on $\partial_i(-)$ for $i \leq j$, then the map
    \[(\partial^\ast G )^{\leq j} \rightarrow (\partial^\ast F)^{\leq j}  \]
    is an equivalence of right $\mathsf{OEpi}$-modules.
\end{cor}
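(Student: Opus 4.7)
The plan is to reduce the claim to showing that the Weiss polynomial approximations $P_n F$ and $P_n G$ agree through level $j$, and then to establish this fact by the standard induction using the classification of homogeneous functors.

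First, I would use the hypothesis that $F$ and $G$ have the correct natural transformation spectra to identify $(\partial^\ast F)(n)$ with the derived spectrum $\nat^h(F, S^{n(-)})$, and likewise for $G$. Each character $S^{n(-)}$ is $n$-homogeneous and in particular $n$-polynomial, so Lemma~\ref{lem: factorization} supplies natural equivalences
\[
(\partial^\ast F)(n) \simeq \nat^h(P_n F, S^{n(-)}), \qquad (\partial^\ast G)(n) \simeq \nat^h(P_n G, S^{n(-)}).
\]
It therefore suffices to verify that the canonical map $P_n F \to P_n G$ is an equivalence for each $n \leq j$.

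Next, I would prove this by induction on $n$, using the Weiss fiber sequence $D_n H \to P_n H \to P_{n-1} H$. For the base case $n = 0$, a $0$-homogeneous functor is constant with value classified by $\partial_0 H$, so the hypothesis $\partial_0 F \simeq \partial_0 G$ gives $P_0 F \simeq P_0 G$. For the inductive step, the induction hypothesis yields $P_{n-1} F \simeq P_{n-1} G$, while the classification of $n$-homogeneous functors
\[
D_n H(V) \simeq (\partial_n H \wedge S^{nV})_{hO(n)}
\]
together with the hypothesis $\partial_n F \simeq \partial_n G$ for $n \leq j$ shows $D_n F \simeq D_n G$. A comparison of the two fiber sequences then forces $P_n F \simeq P_n G$.

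Finally, applying $\nat^h(-, S^{n(-)})$ to these equivalences produces the desired levelwise equivalences $(\partial^\ast G)(n) \xrightarrow{\simeq} (\partial^\ast F)(n)$ for every $n \leq j$. Compatibility with the right $\mathsf{OEpi}$-module structure is automatic, since that structure is defined through postcomposition with the natural maps $S^{n(-)} \to S^{m(-)}$ and is therefore preserved by any natural transformation $F \to G$. The only real technical point is ensuring that Lemma~\ref{lem: factorization} applies with the correctly derived natural transformation spectra, which is precisely what the ``correct natural transformation spectra'' hypothesis guarantees; the rest is a routine Weiss-tower induction.
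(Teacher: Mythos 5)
Your argument is correct and is exactly the deduction the paper intends: the corollary is stated as an immediate consequence of Lemma~\ref{lem: factorization}, using that $S^{i(-)}$ is $i$-polynomial and that a map inducing equivalences on $\partial_i$ for $i\leq j$ induces equivalences $P_iF\to P_iG$ for $i\leq j$ (your standard induction over the layers). The paper leaves these details implicit, and your handling of the ``correct natural transformation spectra'' hypothesis and of the module structure is also right.
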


One avatar of duality in functor calculus is the stark difference in complexity between computing the derivatives of homogeneous functors versus computing the derivatives of representable functors. The first is essentially by definition, while the latter is in general tricky and tends to require geometric insight. For the Koszul dual derivatives, this pattern is reversed. The Koszul dual derivatives of representable functors are easily computed, while the computation for homogeneous functors is more difficult.  

Given a Borel $O(n)$-spectrum $X$, let $H_n(X)$ denote a cofibrant replacement of the homogeneous functor $(X \wedge S^{n(-)})_{hO(n)}$.
\begin{prop}\label{prop:koszul derivative homogeneous}
    If $X$ is an $O(n)$-spectrum for which the underlying spectrum is finite, there is an equivalence of right $\mathsf{OEpi}$-modules \[\partial^\ast(H_n(X)) \simeq \mathsf{Free}_{\mathsf{OEpi}}(X^\vee \wedge D_{O(n)}).\]
\end{prop}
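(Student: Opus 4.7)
The plan is to compute both sides of the claimed equivalence levelwise and identify them. I would first unfold $\partial^k H_n(X)$ by applying the adjunction between $(-)_{hO(n)}$ and the inclusion of spectra with trivial $O(n)$-action, then use the finiteness of $X$ to pull it across via its Spanier--Whitehead dual. This yields
\[
\partial^k H_n(X) \simeq \nat^h_{O(n)}\bigl(X \wedge S^{n(-)}, S^{k(-)}\bigr) \simeq \bigl(X^\vee \wedge \nat^h(S^{n(-)}, S^{k(-)})\bigr)^{hO(n)},
\]
where $O(n)$ acts diagonally and $\nat^h(S^{n(-)}, S^{k(-)})$ inherits its action from the action on $\bb{R}^n$.

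The key technical step will be to identify $\nat^h(S^{n(-)}, S^{k(-)})$ with the mapping spectrum $\mathsf{OEpi}(k,n) = \Sigma^\infty_+ \mathsf{Vect}_\bb{R}(\bb{R}^n, \bb{R}^k)$ as an $O(n)$-spectrum, where $O(n)$ acts by precomposition on $\bb{R}^n$. The comparison map is tautological: an isometry $\phi \colon \bb{R}^n \to \bb{R}^k$ produces the natural transformation $\phi \otimes \id_V \colon S^{nV} \to S^{kV}$. To prove this map is an equivalence, I would use the Spanier--Whitehead model of Section \ref{subsection:spanier whitehead model} together with Atiyah duality for the Stiefel manifold $\mathsf{Vect}_\bb{R}(\bb{R}^n, V) \subset \Hom(\bb{R}^n, V)$, whose normal bundle Thom space is $S^{nV}/\mathsf{DI}_n(V)$; this duality allows one to reinterpret the relevant end. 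This step will be the main obstacle: the comparison map is easy to write down, but verifying that it is an equivalence requires nontrivial Weiss-calculus input, reflecting the observation from the paper that the Fourier coefficients of homogeneous functors are harder to compute than those of representables.

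With this identification in hand, I would invoke the norm equivalence (Theorem \ref{thm:norm for g spectra}). The $O(n)$-action on $\mathsf{Vect}_\bb{R}(\bb{R}^n, \bb{R}^k)$ by precomposition is free, since linear isometric embeddings admit left inverses, so $X^\vee \wedge \mathsf{OEpi}(k,n)$ is equivalent to a finite free $O(n)$-spectrum. The norm then produces
\[
\bigl(X^\vee \wedge \mathsf{OEpi}(k,n)\bigr)^{hO(n)} \simeq \bigl(X^\vee \wedge D_{O(n)} \wedge \mathsf{OEpi}(k,n)\bigr)_{hO(n)},
\]
and the right-hand side is precisely the level-$k$ value of $\mathsf{Free}_\mathsf{OEpi}(X^\vee \wedge D_{O(n)})$ by Definition \ref{def: freeforget}. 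Finally, each of these identifications is natural in $k$: the right $\mathsf{OEpi}$-module structure on $\mathsf{OEpi}(\ast, n)$ comes from composition in $\mathsf{OEpi}$, which on the natural-transformation side corresponds to postcomposition with the induced maps $S^{k(-)} \to S^{k'(-)}$, so the levelwise equivalences assemble into an equivalence of right $\mathsf{OEpi}$-modules.
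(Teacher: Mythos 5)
Your argument is essentially the paper's: extract $X^\vee$ from $\nat^h(H_n(X),S^{k(-)})$ using finiteness, identify the remaining natural transformation spectrum $\nat^h(S^{n(-)},S^{k(-)})$ with $\Sigma^\infty_+\mathsf{Vect}_\bb{R}(\bb{R}^n,\bb{R}^k)$ carrying the free precomposition action of $O(n)$, and conclude with the norm equivalence and the coend formula for $\mathsf{Free}_{\mathsf{OEpi}}$. The only divergence is at the step you rightly flag as the main obstacle: the paper does not prove the identification of $\nat^h(S^{n(-)},S^{k(-)})$ by hand via Atiyah duality, but instead quotes the Arone--Dwyer--Lesh computation \cite[Theorem 3.2]{Arone_Dwyer_Lesh_2008} of natural transformations between arbitrary homogeneous functors and specializes it; your Atiyah-duality sketch is plausible but would need to be carried out in detail, whereas citing that result closes the gap immediately.
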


\begin{proof}
     The natural transformations between homogeneous functors were computed \cite[Theorem 3.2]{Arone_Dwyer_Lesh_2008} to be     
\[\nat^\sf{h}(H_n(X),H_m(Y)) \simeq \mathsf{Spec}^h(X, (Y \wedge \mathsf{Vect}_\bb{R}(\bb{R}^n, \bb{R}^m))_{hO(m)} )^{hO(n)}\]
In our case, $Y= \Sigma^\infty_+ O(m)$ and this reduces to
\[\mathsf{Spec}(X, \mathsf{Vect}_\bb{R}(\bb{R}^n,\bb{R}^m) )^{hO(n)}.\]
By the finiteness of $X$, we can write this as
\[
(X^\vee \wedge\mathsf{Vect}_\bb{R}(\bb{R}^n,\bb{R}^m)_{hO(n)}.
\]
The action of $O(n)$ on $\mathsf{Vect}_\bb{R}(\bb{R}^n,\bb{R}^m)$ is free, and so the norm map is an equivalence:
\[\nat^\sf{h}(H_n(X),H_m(Y)) \simeq (X^\vee \wedge D_{O(n)} \wedge \mathsf{Vect}_\bb{R}(\bb{R}^n,\bb{R}^m))_{hO(n)}. \]
This is the formula for the free right module on $X^\vee \wedge D_{O(n)}$.
\end{proof}

\subsection{The fake Weiss tower and Koszul dual derivatives}\label{subsection: fake weiss tower finite}
In this section, we give a construction of the homotopy type of the ``fake Weiss tower''. This tower is ``fake'' in the sense that their layers appear to be those of the Weiss tower, but the homotopy orbits are replaced by homotopy fixed points. This construction will be valid for functors with level-finite derivatives. The construction also makes sense for functors with infinite derivatives (by extending by pro-objects), but its connection to Weiss calculus is less direct. We emphasize that these two constructions will agree up to homotopy for functors with level-finite derivatives, and so we will refer to them both as the fake Weiss approximations unless confusion could occur.

\begin{definition}
    For a functor $F : \mathsf{Vect}_\bb{R} \to \s$ with level-finite derivatives,  the \emph{fake Weiss approximation}
    \[F \rightarrow P^\infty_\mathsf{fake}(F):= \Theta \circ \partial^\ast\]
    is the derived unit of the $(\partial^\ast,\Theta)$ adjunction.
\end{definition}

Explicitly, the map
\[F(V) \rightarrow P^\infty_\mathsf{fake}(F)(V)\]
is given by
\[F(V) \cong \nat(R_V,F) \xrightarrow{\partial^\ast} \mathsf{RMod}^h_{\mathsf{OEpi}}(\partial^\ast F, \partial^\ast R_V)\simeq \mathsf{RMod}^h_{\mathsf{OEpi}}(\partial^\ast F, S^{\ast V}),\]
and is covariantly functorial in $F$.

\begin{definition}\label{def: fake weiss finite derivative} 
The \emph{fake Weiss tower} of a functor  $F: \mathsf{Vect}_\bb{R} \to \s$ with level-finite derivatives is the truncation tower for $\mathsf{RMod}^h_{\mathsf{OEpi}}(\partial^\ast F, \partial^\ast R_V)$:
% https://q.uiver.app/#q=WzAsNyxbMiwwLCJGKFYpIl0sWzUsMSwiXFxoYXR7UH1fMF5cXG1hdGhzZntmYWtlfUYoVikiXSxbNCwxLCJcXGhhdHtQfV8xXlxcbWF0aHNme2Zha2V9RihWKSJdLFsyLDEsIlxcaGF0e1B9X2leXFxtYXRoc2Z7ZmFrZX1GKFYpIl0sWzAsMSwiXFxoYXR7UH1fXFxpbmZ0eV5cXG1hdGhzZntmYWtlfSBGKFYpIl0sWzEsMSwiXFxjZG90cyJdLFszLDEsIlxcY2RvdHMiXSxbNiwyXSxbMiwxXSxbMyw2XSxbNSwzXSxbNCw1XSxbMCwzXSxbMCwyLCIiLDAseyJjdXJ2ZSI6LTF9XSxbMCwxLCIiLDIseyJjdXJ2ZSI6LTF9XSxbMCw0LCIiLDIseyJjdXJ2ZSI6MX1dXQ==
\[\begin{tikzcd}
	&& {F(V)} \\
	{P^\infty_\mathsf{fake} F(V)} & \cdots & {P^n_\mathsf{fake}F(V)} & \cdots & {P^1_\mathsf{fake}F(V)} & {P^0_\mathsf{fake}F(V)}
	\arrow[curve={height=6pt}, from=1-3, to=2-1]
	\arrow[from=1-3, to=2-3]
	\arrow[curve={height=-6pt}, from=1-3, to=2-5]
	\arrow[curve={height=-6pt}, from=1-3, to=2-6]
	\arrow[from=2-1, to=2-2]
	\arrow[from=2-2, to=2-3]
	\arrow[from=2-3, to=2-4]
	\arrow[from=2-4, to=2-5]
	\arrow[from=2-5, to=2-6]
\end{tikzcd}\]
    
\end{definition}

The notation $P^n_\mathsf{fake}(F)$ is used to distinguish this construction from a later construction $P_n^\mathsf{fake}(F)$ of the fake Weiss approximations of $F$ in Section \ref{subsection:enriched model of fake Weiss tower} which exists in more generality, though these agree up to equivalence for functors with level-finite derivatives. We will now investigate how the fake Weiss tower relates to the Weiss tower. 

\begin{definition}
    Let $F : \mathsf{Vect}_\bb{R} \to \s$ be a functor with level-finite derivatives. The spectrum $D^n_\mathsf{fake}(F)(V)$ is 
    \[D^n_\mathsf{fake}(F)(V)\coloneq\mathsf{hofiber}(P^n_\mathsf{fake}(F)(V) \rightarrow P^{n-1}_\mathsf{fake}(F)(V)).\]
\end{definition}

\begin{prop} \label{prop:fake transformation}
    Let $F : \mathsf{Vect}_\bb{R} \to \s$ be a functor with level-finite derivatives. The natural transformation $F \rightarrow P^n_\mathsf{fake}(F)$ factors up to homotopy through $P_n(F)$.
\end{prop}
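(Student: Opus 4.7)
The plan is to use Corollary~\ref{cor: equivalence of Koszul dual derivatives} applied to the universal natural transformation $F \to P_n F$, combined with the observation that the $n$-stage of the fake Weiss tower only ``sees'' the $n$-truncation of the Koszul dual derivatives.

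First I would show that $P^n_\mathsf{fake}(F)(V)$ depends only on $(\partial^\ast F)^{\leq n}$. By Example~\ref{ex: koszul dual derivative of reps} there is an equivalence $\partial^\ast R_V \simeq S^{\ast V}$, so Definition~\ref{def: fake weiss finite derivative} models $P^n_\mathsf{fake}(F)(V)$ as $\mathsf{RMod}^h_\mathsf{OEpi}(\partial^\ast F, (S^{\ast V})^{\leq n})$. Applying this functor to the fiber sequence $(\partial^\ast F)^{>n} \to \partial^\ast F \to (\partial^\ast F)^{\leq n}$ of Lemma~\ref{lem:triv fiber sequence} and computing directly with the end formula yields $\mathsf{RMod}_\mathsf{OEpi}((\partial^\ast F)^{>n}, (S^{\ast V})^{\leq n}) \simeq \ast$, because the source is trivial in degrees $\leq n$ while the target is trivial in degrees $> n$. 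Hence
\[
P^n_\mathsf{fake}(F)(V) \simeq \mathsf{RMod}^h_\mathsf{OEpi}\bigl((\partial^\ast F)^{\leq n}, (S^{\ast V})^{\leq n}\bigr).
\]

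Next I would apply Corollary~\ref{cor: equivalence of Koszul dual derivatives} to $F \to P_n F$. A standard property of the Weiss tower is that this map induces equivalences on the Weiss derivatives $\partial_i$ for $i \leq n$, and by Lemma~\ref{lem: SW and OG agree} these agree with the Spanier--Whitehead derivatives used in the corollary. Granted that $F$ and $P_n F$ have the correct natural transformation spectra (which one obtains, as in Lemma~\ref{lem: correct natural transformations for rep}, after suitable cofibrant replacement built from representable cells), the corollary yields an equivalence $(\partial^\ast P_n F)^{\leq n} \xrightarrow{\simeq} (\partial^\ast F)^{\leq n}$. Combined with the previous step, functoriality of the derived mapping spectrum in its source shows that the induced map $P^n_\mathsf{fake}(F)(V) \to P^n_\mathsf{fake}(P_n F)(V)$ is an equivalence.

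Finally, naturality of the unit of the $(\partial^\ast, \Theta)$-adjunction provides a commutative square
\[
\begin{tikzcd}
F \ar[r] \ar[d] & P^n_\mathsf{fake}(F) \ar[d, "\simeq"] \\
P_n F \ar[r] & P^n_\mathsf{fake}(P_n F)
\end{tikzcd}
\]
whose right vertical arrow is the equivalence just established. Inverting it in the homotopy category exhibits $F \to P^n_\mathsf{fake}(F)$ as homotopic to the composite $F \to P_n F \to P^n_\mathsf{fake}(P_n F) \simeq P^n_\mathsf{fake}(F)$, giving the required factorization. The main obstacle is verifying the hypotheses of Corollary~\ref{cor: equivalence of Koszul dual derivatives}---in particular that $P_n F$ has the correct natural transformation spectra and that $F \to P_n F$ induces derivative equivalences in the expected range---but both are standard features of the Barnes--Oman model structure underlying Weiss calculus.
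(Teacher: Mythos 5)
Your argument is correct and is essentially the paper's own proof (which is stated in one line as ``follows from Corollary~\ref{cor: equivalence of Koszul dual derivatives} and the definition of the fake approximations in terms of natural transformations''): you invoke the corollary for $F \to P_nF$ and observe that the $n$-th fake stage only sees the $n$-truncation of $\partial^\ast F$, then conclude by naturality. The only point worth noting is that the vanishing of $\mathsf{RMod}^h_{\mathsf{OEpi}}((\partial^\ast F)^{>n},(S^{\ast V})^{\leq n})$ should be justified derivedly via the cell structure of Proposition~\ref{prop: cell complex structure} rather than just the point-set end formula, but this is routine.
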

\begin{proof}
    This follows from Corollary \ref{cor: equivalence of Koszul dual derivatives} and the definition of the fake approximations in terms of natural transformations.
\end{proof}

\begin{prop} \label{prop:fibers real to fake}
    Let $F : \mathsf{Vect}_\bb{R} \to \s$ be a functor with level-finite derivatives. There is a homotopy commuting square

\begin{center}
% https://tikzcd.yichuanshen.de/#N4Igdg9gJgpgziAXAbVABwnAlgFyxMJZABgBpiBdUkANwEMAbAVxiRABUB9LACgDEAlDwCCAkAF9S6TLnyEUARnJVajFmy5YAesAA6ugLZ0cACwBOB4ADM6Aaxjjx-IaIlSQGbHgJEyClfTMrIgcnMBYALQKToIiYpLSXnJESv7UgeohXOFR4jr6RqYW1nYOMS7xKjBQAObwRKBWZhAGSGQgOBBIAEwJIE0tSEodXYgAzH0DrYjtnUOTzdPd1HPj4hTiQA
\begin{tikzcd}
P_i(F)(V) \arrow[d] \arrow[r] & P^i_{\mathsf{fake}}(F)(V) \arrow[d] \\
P_{i-1}(F)(V) \arrow[r]       & P^{i-1}_{\mathsf{fake}}(F)(V)      
\end{tikzcd}
\end{center}
which induces a map on the layers of the form 
\begin{center}
    % https://tikzcd.yichuanshen.de/#N4Igdg9gJgpgziAXAbVABwnAlgFyxMJZABgBpiBdUkANwEMAbAVxiRAAoAdTtOgJzyMA+lgBiAAm4B3GFADmMcQGUAesCwA1AL4BKIcAAWAeXZYdW8SC2l0mXPkIoAjOSq1GLNl04BbOjgM4ADNgABUAQQBFLXZJHn5BBhVuOjgccVEdUx1xZM4aGEVpWQVxABF9EzMtPIKizhl5RWU1TV19Y2ytKxsQDGw8AiIyJzd6ZlZEEAqsdkz2DR0e2wGHIhdR6nHPKe4Df2Aymu4-AODgILoAaxgtETmsxas3EvgiUCC+CB8kACZqHAQJBkdwTNjcbA+GAAR2WIE+32BAKBiBcoJ2IAhWChsOsHy+P1RyKQAGYtBQtEA
\begin{tikzcd}
(\partial_nF \wedge S^{nV})_{hO(n)}  \arrow[r] & (K(\partial^\ast F)(n)  \wedge D^\vee_{O(n)} \wedge  S^{nV})^{hO(n)} \arrow[d,"\simeq"] \\
D_n(F)(V) \arrow[u, "\simeq"]                            & D^n_\mathsf{fake}(F)(V)                                                                  
\end{tikzcd}
\end{center}
up to homotopy.
\end{prop}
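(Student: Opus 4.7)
The plan is to assemble both squares from results already proven. I would construct the commuting square first by invoking naturality. By Proposition \ref{prop:fake transformation}, the unit $F \to P^n_{\mathsf{fake}}(F)$ factors up to homotopy through the universal map $F \to P_n F$. The tower maps $P_n F \to P_{n-1} F$ come from the universal property of polynomial approximation, while the tower maps $P^n_{\mathsf{fake}} F \to P^{n-1}_{\mathsf{fake}} F$ are induced by the truncation tower $(\partial^\ast F)^{\leq n} \to (\partial^\ast F)^{\leq n-1}$ of right $\mathsf{OEpi}$-modules. Since the factorization of Proposition \ref{prop:fake transformation} is natural in the truncation degree, these two tower structures are compatible, giving the square.

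Next, I would identify the fake layer. By Definition \ref{def: fake weiss finite derivative} and Example \ref{ex: koszul dual derivative of reps}, $P^n_{\mathsf{fake}}(F)(V)$ is the $n$-th truncation of $\mathsf{RMod}^h_{\mathsf{OEpi}}(\partial^\ast F, S^{\ast V})$. Applying Proposition \ref{prop: layers of truncation tower} gives the fiber identification
\[
D^n_{\mathsf{fake}}(F)(V) \simeq \mathsf{RMod}^h_{O(n)}(\mathsf{TAQ}(\partial^\ast F)(n), S^{nV}).
\]
Since $\mathsf{OEpi}$ is a dualizable $O(\ast)$-category (Proposition \ref{prop: OEpi dualizable}) and the hypothesis on $F$ guarantees $\partial^\ast F$ is level-finite, Proposition \ref{prop:taq of finite} shows $\mathsf{TAQ}(\partial^\ast F)(n)$ is a finite $O(n)$-spectrum, and Theorem \ref{thm:koszul duality for right modules} yields the equivalence $\mathsf{TAQ}(\partial^\ast F)(n)^\vee \simeq K(\partial^\ast F)(n) \wedge D^\vee_{O(n)}$. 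Moving Spanier--Whitehead duality inside the homotopy fixed points (permitted by the finiteness) rewrites the fiber as
\[
D^n_{\mathsf{fake}}(F)(V) \simeq (K(\partial^\ast F)(n) \wedge D^\vee_{O(n)} \wedge S^{nV})^{hO(n)},
\]
which is the right vertical equivalence. The left vertical equivalence $D_n(F)(V) \simeq (\partial_n F \wedge S^{nV})_{hO(n)}$ is the standard Weiss classification of homogeneous functors.

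Finally, I would identify the horizontal arrow on fibers. Under the two identifications above, the map from Weiss layer to fake-Weiss layer should be the (dual) norm map of Theorem \ref{thm:norm for g spectra}, smashed with $S^{nV}$, and combined with the Koszul duality identification of $\partial_n F$ with $K(\partial^\ast F)(n) \wedge D^\vee_{O(n)}$. The cleanest way to verify this is to reduce to the homogeneous test case: for $F = H_n(X)$ with $X$ a finite $O(n)$-spectrum, Proposition \ref{prop:koszul derivative homogeneous} yields $\partial^\ast H_n(X) \simeq \mathsf{Free}_{\mathsf{OEpi}}(X^\vee \wedge D_{O(n)})$, whose mapping spectrum into $S^{\ast V}$ unwinds, by adjunction, directly to $(X \wedge S^{nV})^{hO(n)}$; on the other side $D_n(H_n(X))(V) \simeq (X \wedge S^{nV})_{hO(n)}$. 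The map constructed above is then a natural transformation $(-\wedge S^{nV})_{hO(n)} \to (- \wedge D^\vee_{O(n)} \wedge S^{nV})^{hO(n)}$ which restricts to an equivalence on the generator $\Sigma^\infty_+ O(n)$, and so by the uniqueness statement of Section 2.4 it must be the dual norm map.

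The main obstacle is the last identification — not the existence of the map on fibers, which is formal, but the verification that it agrees with the norm map. Untangling the various (co)fiber sequences, Koszul identifications, and Spanier--Whitehead dualities to present the map on layers in the explicit form $(\partial_n F \wedge S^{nV})_{hO(n)} \to (K(\partial^\ast F)(n) \wedge D^\vee_{O(n)} \wedge S^{nV})^{hO(n)}$ is where the bookkeeping becomes delicate, and the reduction to $H_n(X)$ combined with the uniqueness of the norm map is what I expect to carry this step.
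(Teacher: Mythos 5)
Your overall strategy matches the paper's: the square comes from the compatibility of the fake tower (a truncation tower) with the factorization through $P_nF$, and the layer identification comes from Proposition \ref{prop: layers of truncation tower} combined with Theorem \ref{thm:koszul duality for right modules}. The extra material at the end identifying the horizontal map with the dual norm map is not actually part of this proposition (it is the content of Theorem \ref{thm: norm map finite derivatives}, which is proved later using this proposition as input), but your reduction to $H_n(\Sigma^\infty_+O(n))$ and appeal to the uniqueness of the norm map is the same argument the paper uses there.

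There is one genuine gap: you assert that ``the hypothesis on $F$ guarantees $\partial^\ast F$ is level-finite,'' but the hypothesis concerns the Weiss derivatives $\partial_nF$, not the Koszul dual derivatives $\partial^\ast F = \nat(F, S^{\ast(-)})$, and the implication is not automatic. Everything downstream --- the applicability of Proposition \ref{prop:taq of finite}, the finiteness of $\mathsf{TAQ}(\partial^\ast F)(n)$, the passage of Spanier--Whitehead duality inside the homotopy fixed points, and the use of Theorem \ref{thm:koszul duality for right modules} --- hinges on this. The paper supplies the missing argument explicitly: for homogeneous $F=H_n(X)$ with $X$ finite, Proposition \ref{prop:koszul derivative homogeneous} gives $\partial^\ast H_n(X)\simeq \mathsf{Free}_{\mathsf{OEpi}}(X^\vee\wedge D_{O(n)})$, which is level-finite; for general $F$ one observes via Lemma \ref{lem: factorization} (equivalently Corollary \ref{cor: equivalence of Koszul dual derivatives}) that $(\partial^\ast F)^{\leq n}\simeq(\partial^\ast P_nF)^{\leq n}$, and then runs an induction up the Weiss tower of $P_nF$ using that $\partial^\ast$ sends fiber sequences of functors to fiber sequences of right modules. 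You should insert this step; without it the layer identification is not justified.
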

\begin{proof}
    The square follows from Corollary \ref{cor: equivalence of Koszul dual derivatives}. The calculation of the layers of the fake tower is the combination of Proposition \ref{prop: layers of truncation tower} and Theorem \ref{thm:koszul duality for right modules}, so long as we know that $\partial^\ast F$ is level-finite. For homogeneous $F$, this is a consequence of Proposition \ref{prop:koszul derivative homogeneous}, and for general $F$ it is achieved by induction after appealing to Lemma \ref{lem: factorization}.
\end{proof}

At this point, it should become more obvious why we have chosen to name $\partial^\ast F$ the Koszul dual derivatives. Ultimately, we wish to show that as orthogonal sequences $K(\partial^\ast F) \simeq \partial_\ast (F)$, and that the map 
\[D_n(F)(V)\rightarrow D^n_\mathsf{fake}(F)(V)\]
is the dual norm map.

\begin{cor}\label{cor: fake for homogeneous}
If $X$ is a Borel $O(n)$-spectrum whose underlying spectrum is finite, then
\[
D^m_\mathsf{fake}(H_n(X))(V) \simeq
\begin{cases}
    (X \wedge  D^\vee_{O(n)} \wedge S^{nV})^{hO(n)} & \text{if $m=n$} \\
    \ast & \text{if $m \neq n$}.
\end{cases}
\]
\end{cor}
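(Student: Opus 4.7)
The plan is to chain together three previously established computations: the Koszul dual derivative of a homogeneous functor, the behavior of Koszul duality on free modules, and the identification of the layers of the fake Weiss tower.

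First, I would apply Proposition \ref{prop:koszul derivative homogeneous} to compute
\[
\partial^\ast(H_n(X)) \simeq \mathsf{Free}_{\mathsf{OEpi}}(X^\vee \wedge D_{O(n)}),
\]
where the argument $X^\vee \wedge D_{O(n)}$ is viewed as an $O(\ast)$-sequence concentrated in degree $n$. This in particular shows that $\partial^\ast(H_n(X))$ is level-finite, so the construction of Definition \ref{def: fake weiss finite derivative} applies and we may invoke Proposition \ref{prop:fibers real to fake}, which identifies
\[
D^m_\mathsf{fake}(H_n(X))(V) \simeq \bigl(K(\partial^\ast H_n(X))(m) \wedge D_{O(m)}^\vee \wedge S^{mV}\bigr)^{hO(m)}.
\]

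Next, I would compute the Koszul dual by applying Proposition \ref{prop: koszul dual of free} (with $\mathcal{C} = \mathsf{OEpi}$, which is a dualizable $O(\ast)$-category by Proposition \ref{prop: OEpi dualizable}) to the free module above. This yields
\[
K(\partial^\ast H_n(X)) \simeq \mathsf{Triv}_{K(\mathsf{OEpi})}\bigl((X^\vee \wedge D_{O(n)})^\vee \wedge D_{O(n)}\bigr) \simeq \mathsf{Triv}_{K(\mathsf{OEpi})}(X),
\]
where the last equivalence uses that $D_{O(n)}$ is invertible (a consequence of dualizability) together with the finiteness of $X$ to identify $(X^\vee \wedge D_{O(n)})^\vee \wedge D_{O(n)} \simeq X \wedge D_{O(n)}^\vee \wedge D_{O(n)} \simeq X$. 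Thus $K(\partial^\ast H_n(X))$ is a trivial module concentrated in degree $n$ with value $X$.

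Substituting this into the layer formula, the case $m \neq n$ gives $K(\partial^\ast H_n(X))(m) \simeq \ast$, and so $D^m_\mathsf{fake}(H_n(X))(V) \simeq \ast$. For $m = n$ we obtain
\[
D^n_\mathsf{fake}(H_n(X))(V) \simeq (X \wedge D_{O(n)}^\vee \wedge S^{nV})^{hO(n)},
\]
as claimed. No step here is a genuine obstacle since everything is a direct bookkeeping application of earlier results; the only mild subtlety is tracking the $D_{O(n)}$ factors and ensuring the finiteness hypotheses on $X$ are used to justify the Spanier--Whitehead duality manipulations invoked implicitly by Proposition \ref{prop: koszul dual of free}.
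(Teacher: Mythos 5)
Your proof is correct and follows exactly the paper's own argument: the paper's proof is a one-line citation of Proposition \ref{prop:fibers real to fake}, Proposition \ref{prop:koszul derivative homogeneous}, and Proposition \ref{prop: koszul dual of free}, which is precisely the chain you spell out. Your explicit tracking of the $D_{O(n)}$ factors in the identification $K(\partial^\ast H_n(X)) \simeq \mathsf{Triv}_{K(\mathsf{OEpi})}(X)$ is a welcome expansion of what the paper leaves implicit.
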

\begin{proof}
     This follows from explicit computation of the layers given in Proposition \ref{prop:fibers real to fake} using  Proposition \ref{prop:koszul derivative homogeneous} and Proposition \ref{prop: koszul dual of free}.
\end{proof}

The following proposition records the formal properties of the fake Weiss tower. Given a functor 
\[
D : \Fun(\mathsf{Vect}_\bb{R}, \s) \longrightarrow  \mc{C},
\]
for some spectral model category $\mc{C}$, we define a natural transformation $F \to G$ to be a $D$-equivalence if $D(F) \to D(G)$ is a weak equivalence in $\mc{C}$. 

\begin{prop}\label{prop: formal properties of fake tower} \label{prop: Di equivalence and fake layers}
For each $n \in \bb{N}$, the $n$-th fake Weiss approximation $P^n_\mathsf{fake}$ and the $n$-th layer $D^n_\mathsf{fake}$ of the fake Weiss tower preserve
\begin{enumerate}
    \item homotopy (co)fiber sequences of functors; and,
    \item $\partial_{\leq n}$-equivalences.
\end{enumerate}
In addition, the $n$-th layer of $D^n_\mathsf{fake}$ of the fake Weiss tower preserves $D_n$-equivalences As a consequence, there is a natural zigzag 
  \[
 D^n_\mathsf{fake}(H_n(\partial_n(F)) \xrightarrow {\ \simeq \ } D^n_\mathsf{fake}(P_n(F))  \xleftarrow{\ \simeq \ } D^n_\mathsf{fake} (F),
 \] 
of weak equivalences.
\end{prop}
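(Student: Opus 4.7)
The plan is to address the two preservation properties in (1) and (2), and then establish the vanishing of $D^n_\mathsf{fake}$ on $(n-1)$-polynomial functors, from which both the preservation of $D_n$-equivalences and the natural zigzag will follow.

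For (1), both $P^n_\mathsf{fake}$ and $D^n_\mathsf{fake}$ are composites of exact functors between stable categories. The Koszul dual derivatives functor $\partial^\ast$ is left Quillen by Proposition \ref{prop: koszul dual derivatives adjunction}, truncation $(-)^{\leq n}$ fits into the fiber sequence of Lemma \ref{lem:triv fiber sequence} and is exact, and the derived mapping spectrum $\mathsf{RMod}^h_\mathsf{OEpi}(-, S^{\ast V})$ is itself exact. Hence $P^n_\mathsf{fake}$ preserves homotopy (co)fiber sequences, and $D^n_\mathsf{fake}$, as the homotopy fiber of the natural transformation $P^n_\mathsf{fake} \to P^{n-1}_\mathsf{fake}$ between such exact functors, inherits the same property. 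For (2), given a $\partial_{\leq n}$-equivalence $F \to G$, Corollary \ref{cor: equivalence of Koszul dual derivatives} yields an equivalence $(\partial^\ast G)^{\leq n} \simeq (\partial^\ast F)^{\leq n}$. Since $P^n_\mathsf{fake}(F)(V) = \mathsf{RMod}^h_\mathsf{OEpi}(\partial^\ast F, (S^{\ast V})^{\leq n})$ factors through the $n$-truncation of $\partial^\ast F$, we deduce $P^n_\mathsf{fake}(F) \simeq P^n_\mathsf{fake}(G)$. The analogous reasoning at level $n-1$, combined with (1), proves the same for $D^n_\mathsf{fake}$.

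The key intermediate step is to show $D^n_\mathsf{fake}(P_{n-1}F) \simeq \ast$ for every $F$ with level-finite derivatives. Corollary \ref{cor: fake for homogeneous} provides the vanishing $D^n_\mathsf{fake}(H_k(X)) \simeq \ast$ for $k \neq n$ and $X$ a finite Borel $O(k)$-spectrum, which applies to each layer $H_k(\partial_k F) = D_k F$ with $k \leq n-1$ by the level-finiteness hypothesis. By (1) and induction along the Weiss fiber sequences $D_k F \to P_k F \to P_{k-1} F$ for $k \leq n-1$, the vanishing propagates up the truncated tower to yield $D^n_\mathsf{fake}(P_{n-1}F) \simeq \ast$. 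Applying $D^n_\mathsf{fake}$ to the Weiss fiber sequence $D_n F \to P_n F \to P_{n-1} F$ now gives an equivalence $D^n_\mathsf{fake}(D_n F) \simeq D^n_\mathsf{fake}(P_n F)$, while (2) applied to the $\partial_{\leq n}$-equivalence $F \to P_n F$ gives $D^n_\mathsf{fake}(F) \simeq D^n_\mathsf{fake}(P_n F)$. Combining, the natural zigzag of the statement is produced, with naturality inherited from the Weiss tower. Preservation of $D_n$-equivalences is then a formal consequence: an equivalence $\partial_n F \simeq \partial_n G$ induces $H_n(\partial_n F) \simeq H_n(\partial_n G)$, hence an equivalence after $D^n_\mathsf{fake}$, and the zigzags for $F$ and $G$ assemble to give $D^n_\mathsf{fake}(F) \simeq D^n_\mathsf{fake}(G)$.

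The most delicate step is the inductive vanishing of $D^n_\mathsf{fake}$ on $P_{n-1}F$, which combines the homogeneous case of Corollary \ref{cor: fake for homogeneous} with the exactness of $D^n_\mathsf{fake}$, and depends crucially on the level-finiteness hypothesis to meet the finiteness condition of that corollary.
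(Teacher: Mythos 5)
Your proposal is correct and follows essentially the same route as the paper: exactness for (1), Corollary \ref{cor: equivalence of Koszul dual derivatives} for (2), and the vanishing of $D^n_\mathsf{fake}$ on lower homogeneous layers (Corollary \ref{cor: fake for homogeneous}) propagated up the polynomial tower by exactness. The only difference is organizational — you establish the zigzag first and deduce preservation of $D_n$-equivalences from it, whereas the paper argues in the opposite order via a map of fiber sequences — but the mathematical content is identical.
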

\begin{proof}
 The first fact boils down to the fact that taking natural transformations preserves (co)fiber sequences. The second fact follows from Proposition \ref{cor: equivalence of Koszul dual derivatives}. The most interesting is the third claim.

    Consider the map of fiber sequences 

    \begin{center}
        % https://tikzcd.yichuanshen.de/#N4Igdg9gJgpgziAXAbVABwnAlgFyxMJZABgBpiBdUkANwEMAbAVxiRABEB9LACgDEAlCAC+pdJlz5CKAIzkqtRizYAFbvyGjx2PASIAmedXrNWiEGuBYAtDOEaRYkBh1SiZGQpPLzXXgHFNJxdJPVlST2MlMwt1QMdtUOlkQ0jFU1VOK1t7eOEFGCgAc3giUAAzACcIAFskMhAcCCQAZiiM8wAdTuwamABHBJAq2qQ5RubEABYtYeq6xEMJpABWWZGFtuXp9fmkKeom1d3RxAajxDsnDbHDyf184SA
\begin{tikzcd}
D_n(F) \arrow[d, "\simeq"] \arrow[r] & P_n(F) \arrow[d] \arrow[r] & P_{n-1}(F) \arrow[d] \\
D_n(G) \arrow[r]                     & P_n(G) \arrow[r]           & P_{n-1}(G)          
\end{tikzcd}
    \end{center}
By $(1)$ applied to the $n$-th layer of the fake Weiss tower we have another map of fiber sequences
\begin{center}
% https://tikzcd.yichuanshen.de/#N4Igdg9gJgpgziAXAbVABwnAlgFyxMJZABgBpiBdUkANwEMAbAVxiRABEB9LAPQB0+AWzo4AFgCdBwAGZ0A1jAC+ACi5ZlAMQCUWkItLpMufIRQBGclVqMWbNfyEiJU2QpUAFbpp16DIDNh4BEQATJbU9MysiBzcDsJikjLySsqewFgAtGYq2rr6hoEmRGRmVpG2MfYCCc7JbqpeAOI+Bf5GQabIFmURNtGxvDVOSa6pnuot+X4BxsEoYb3WUXZxw4kuKR6cGdkqU3pWMFAA5vBEoNLiEIJIZCA4EEgAzH0rMQLYgjAAjr6X11uiAsDyeiAALG0rjckGFQUgAKxQwEvaiPJCQvzQoHgtFgpFYlGIe7o4HImHAvGwxQURRAA
\begin{tikzcd}
D^n_\mathsf{fake}(D_n(F)) \arrow[d, "\simeq"] \arrow[r] & D^n_\mathsf{fake}(P_n(F)) \arrow[d] \arrow[r] & D^n_\mathsf{fake}(P_{n-1}(F)) \arrow[d] \\
D^n_\mathsf{fake}(D_n(G)) \arrow[r]                     & D^n_\mathsf{fake}(P_n(G)) \arrow[r]           & D^n_\mathsf{fake}(P_{n-1}(G))          
\end{tikzcd}
    \end{center}
    By induction over polynomial functors, the right column is contractible as long as \[D^n_\mathsf{fake}(H_{n-1}(X)) \simeq \ast, \] and so we would conclude the result for $n$-polynomial functors. This is the content of Corollary \ref{cor: fake for homogeneous}.
    
    The general case follows from $(2)$ since $F \rightarrow P_i(F)$ is an equivalence on $\partial_j$ for $j \leq i$.
\end{proof}

We now state the precise relationship between the derivatives and Koszul dual derivatives, along with the relationship of $D_n(F)$ and $D^n_\mathsf{fake}$.

\begin{thm}\label{thm: norm map finite derivatives}
Let $F: \mathsf{Vect}_\bb{R} \to \s$ with level-finite derivatives. There is an equivalence
    \[\partial_\ast (F) \simeq K(\partial^\ast (F)),\]
and the map
    \begin{center}
    % https://tikzcd.yichuanshen.de/#N4Igdg9gJgpgziAXAbVABwnAlgFyxMJZABgBpiBdUkANwEMAbAVxiRAAoAdTtOgJzyMA+lgBiAAm4B3GFADmMcQGUAesCwA1AL4BKIcAAWAeXZYdW8SC2l0mXPkIoAjOSq1GLNl04BbOjgM4ADNgABUAQQBFLXZJHn5BBhVuOjgccVEdUx1xZM4aGEVpWQVxABF9EzMtPIKizhl5RWU1TV19Y2ytKxsQDGw8AiIyJzd6ZlZEEAqsdkz2DR0e2wGHIhdR6nHPKe4Df2Aymu4-AODgILoAaxgtETmsxas3EvgiUCC+CB8kACZqHAQJBkdwTNjcbA+GAAR2WIE+32BAKBiBcoJ2IAhWChsOsHy+P1RyKQAGYtBQtEA
\begin{tikzcd}
(\partial_nF \wedge S^{nV})_{hO(n)}  \arrow[r, "\simeq"] & (K(\partial^\ast F)(n)  \wedge D^\vee_{O(n)} \wedge  S^{nV})^{hO(n)} \arrow[d] \\
D_n(F)(V) \arrow[u, "\simeq"]                            & D^n_\mathsf{fake}(F)(V)                                                                  
\end{tikzcd}
\end{center}
    is equivalent to the dual norm
    \[(\partial_n(F) \wedge S^{nV})_{hO(n)} \rightarrow (\partial_n(F) \wedge D_{O(n)}^\vee \wedge S^{nV})^{hO(n)} .\]
  
\end{thm}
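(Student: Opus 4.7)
The plan is to reduce both the identification $\partial_\ast(F) \simeq K(\partial^\ast F)$ and the recognition of the comparison map as the dual norm to the case of homogeneous functors with finite derivative, where explicit calculation is possible via the machinery of Section~\ref{section:koszul duality}. For the Weiss side, the classical identification $\partial_n F \simeq \partial_n(H_n(\partial_n F))$ together with Proposition~\ref{prop: Di equivalence and fake layers} provides a natural zigzag of equivalences relating $D_n F$ and $D^n_{\mathsf{fake}} F$ to $D_n(H_n(\partial_n F))$ and $D^n_{\mathsf{fake}}(H_n(\partial_n F))$. For the Koszul side, Corollary~\ref{cor: equivalence of Koszul dual derivatives} implies that $(\partial^\ast F)^{\leq n}$ depends only on the functor $F$ through its $\partial_{\leq n}$-equivalence class, and the level-finiteness hypothesis combined with Proposition~\ref{prop:koszul derivative homogeneous} guarantees that $\partial^\ast F$ is itself level-finite, so that the Koszul duality equivalence of Theorem~\ref{thm:koszul duality for right modules} applies.

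Second, for $F = H_n(X)$ with $X$ a finite Borel $O(n)$-spectrum, Proposition~\ref{prop:koszul derivative homogeneous} gives an equivalence of right $\mathsf{OEpi}$-modules $\partial^\ast H_n(X) \simeq \mathsf{Free}_{\mathsf{OEpi}}(X^\vee \wedge D_{O(n)})$. Since $\mathsf{OEpi}$ is a dualizable $O(\ast)$-category by Proposition~\ref{prop: OEpi dualizable}, Proposition~\ref{prop: koszul dual of free} applies to give
\[
K(\partial^\ast H_n(X)) \simeq \mathsf{Triv}_{K(\mathsf{OEpi})}\bigl((X^\vee \wedge D_{O(n)})^\vee \wedge D_{O(n)}\bigr) \simeq \mathsf{Triv}_{K(\mathsf{OEpi})}(X),
\]
using the invertibility of $D_{O(n)}$. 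Evaluating at $n$ yields $K(\partial^\ast H_n(X))(n) \simeq X \simeq \partial_n H_n(X)$, and the triviality in the remaining degrees matches the vanishing of the Weiss derivatives of $H_n(X)$ outside degree $n$. Assembling these level-wise identifications and propagating along the zigzag of the first step then produces the claimed equivalence $\partial_\ast F \simeq K(\partial^\ast F)$ of orthogonal sequences.

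Finally, to identify the comparison map with the dual norm, observe that under the above identifications Corollary~\ref{cor: fake for homogeneous} and the standard Weiss classification present the composite
\[
(\partial_n F \wedge S^{nV})_{hO(n)} \simeq D_n F(V) \longrightarrow D^n_{\mathsf{fake}} F(V) \simeq (\partial_n F \wedge D^\vee_{O(n)} \wedge S^{nV})^{hO(n)}
\]
for $F = H_n(X)$ as a colimit-preserving natural transformation in the finite $O(n)$-spectrum $X$. By the uniqueness result for the dual norm discussed after Theorem~\ref{thm:norm for g spectra}, it suffices to verify that this transformation is an equivalence on $X = \Sigma^\infty_+ O(n)$; in that case $H_n(\Sigma^\infty_+ O(n))(V) \simeq S^{nV}$ and both sides reduce to $S^{nV}$ with the map an equivalence, by part (3) of Theorem~\ref{thm:norm for g spectra}. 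The main obstacle is bookkeeping the naturality: one must verify that the level-wise identifications assemble coherently into an equivalence of orthogonal sequences that respects the $O(n)$-equivariance faithfully through the cofibrant replacements $T_n$ underlying the construction of $K(\mathsf{OEpi})$, and that the comparison map is natural enough in $X$ to place it in the target category of the dual norm uniqueness statement.
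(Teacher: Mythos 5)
Your overall architecture matches the paper's: reduce to homogeneous functors via $D_n$-invariance (Proposition~\ref{prop: Di equivalence and fake layers}), compute $K(\partial^\ast H_n(X))\simeq \mathsf{Triv}_{K(\mathsf{OEpi})}(X)$ from Proposition~\ref{prop:koszul derivative homogeneous} and Proposition~\ref{prop: koszul dual of free}, and then invoke the uniqueness of the dual norm by checking the comparison transformation on $X=\Sigma^\infty_+ O(n)$. The first two steps are fine and are essentially what the paper does.

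There is, however, a genuine gap at the final and most substantive step. You claim that for $X=\Sigma^\infty_+O(n)$ ``both sides reduce to $S^{nV}$ with the map an equivalence, by part (3) of Theorem~\ref{thm:norm for g spectra}.'' This is circular: part (3) of Klein's theorem asserts that the \emph{norm map} is an equivalence on free objects, but the whole point of this step is that you do not yet know the comparison map $D_n(H_n(X))(V)\to D^n_{\mathsf{fake}}(H_n(X))(V)$ is the (dual) norm --- that is the conclusion, not a hypothesis. Knowing that source and target are both abstractly equivalent to $S^{nV}$ says nothing about whether the particular map between them, which arises from the unit of the $(\partial^\ast,\Theta)$-adjunction, is an equivalence (it could a priori be null). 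The paper closes exactly this gap with a dedicated diagram chase: it factors $P_n(H_n(\Sigma^\infty_+O(n)))(V)\to P^n_{\mathsf{fake}}(H_n(\Sigma^\infty_+O(n)))(V)$ through natural-transformation and right-module mapping spectra and uses the (derived) Yoneda lemma several times, crucially exploiting that $H_n(\Sigma^\infty_+O(n))\simeq S^{n(-)}$ is itself one of the characters against which $\partial^\ast$ is defined, so that each stage of the factorization is an equivalence by Yoneda. You need to supply this verification (or an equivalent one) rather than citing Klein's theorem; once the map is known to be an equivalence on $\Sigma^\infty_+O(n)$, the uniqueness proposition following Theorem~\ref{thm:norm for g spectra} does the rest, as you say.
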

\begin{proof}
   The first fact follows from the description of the layers of the fake tower in Proposition \ref{prop:fibers real to fake}, the computation for homogeneous functors in Corollary \ref{cor: fake for homogeneous}, and the $D_i$-invariance of Proposition \ref{prop: formal properties of fake tower}.
   
   To identify the map of layers as the norm map, we construct a natural transformation in the homotopy category of functors $\mathsf{RMod}_{O(n)} \rightarrow \mathsf{Spec}$ of the form
    \[(-)_{hO(n)} \rightarrow (- \wedge D_{O(n)}^\vee)^{hO(n)}\]
    by evaluating the map of layers on the $0$-vector space \[D_n(H_n(X))(0) \rightarrow D_\mathsf{fake}^n(H_n(X))(0)\]

    We will prove that this is an equivalence when $X=\Sigma^\infty_+ O(n)$ and use Theorem \ref{thm:norm for g spectra} to deduce that it coincides with the norm. This also solves the question for a general functor (with level-finite derivatives) by Proposition \ref{prop: Di equivalence and fake layers}. For any $V$ we have a diagram 
% https://q.uiver.app/#q=WzAsNyxbMCwwLCJEX24oSF9uKFxcU2lnbWFeXFxpbmZ0eV8rTyhuKSkpKFYpIl0sWzIsMywiXFxtYXRoc2Z7Uk1vZH1eaF9cXG1hdGhzZntPRXBpfShcXG1hdGhzZntGcmVlfV9cXG1hdGhzZntPRXBpfShcXFNpZ21hXlxcaW5mdHlfK08obikpXntcXGxlcSBufSwgKFNee1xcYXN0KFYpfSlee1xcbGVxIG59KSJdLFswLDMsIlxcbWF0aHNme1JNb2R9XmhfXFxtYXRoc2Z7T0VwaX0oXFxtYXRoc2Z7RnJlZX1fXFxtYXRoc2Z7T0VwaX0oXFxTaWdtYV5cXGluZnR5XytPKG4pKSwgU157XFxhc3QoVil9KSJdLFswLDEsIkhfbihcXFNpZ21hXlxcaW5mdHlfK08obikpKFYpIl0sWzIsMSwiUF5uX1xcbWF0aHNme2Zha2V9KEhfbihcXFNpZ21hXlxcaW5mdHlfK08obikpKShWKSJdLFswLDIsIlxcbmF0XmgoUl9WLCBIX24oXFxTaWdtYV5cXGluZnR5XytPKG4pKSkiXSxbMiwwLCJEXm5fXFxtYXRoc2Z7ZmFrZX0oSF9uKFxcU2lnbWFeXFxpbmZ0eV8rTyhuKSkpKFYpIl0sWzAsMywiXFxjaXJjbGVkezJ9IiwyXSxbNSwyLCJcXGNpcmNsZWR7NX0iLDJdLFszLDQsIlxcY2lyY2xlZHszfSJdLFs2LDQsIlxcY2lyY2xlZHsyfSJdLFsyLDEsIlxcY2lyY2xlZHs2fSIsMl0sWzAsNiwiXFxjaXJjbGVkezF9Il0sWzMsNSwiXFxjaXJjbGVkezR9IiwyXSxbMSw0LCJcXGNpcmNsZWR7N30iLDJdXQ==
\[\begin{tikzcd}
	{D_n(H_n(\Sigma^\infty_+O(n)))(V)} && {D^n_\mathsf{fake}(H_n(\Sigma^\infty_+O(n)))(V)} \\
	{P_n(H_n(\Sigma^\infty_+O(n)))(V)} && {P^n_\mathsf{fake}(H_n(\Sigma^\infty_+O(n)))(V)} \\
	{\nat^h(R_V, H_n(\Sigma^\infty_+O(n)))} \\
	{\mathsf{RMod}^h_\mathsf{OEpi}(\mathsf{Free}_\mathsf{OEpi}(\Sigma^\infty_+O(n)), S^{\ast(V)})} && {\mathsf{RMod}^h_\mathsf{OEpi}(\mathsf{Free}_\mathsf{OEpi}(\Sigma^\infty_+O(n))^{\leq n}, (S^{\ast(V)})^{\leq n})}
	\arrow["{\circled{1}}", from=1-1, to=1-3]
	\arrow["{\circled{2}}"', from=1-1, to=2-1]
	\arrow["{\circled{2}}", from=1-3, to=2-3]
	\arrow["{\circled{3}}", from=2-1, to=2-3]
	\arrow["{\circled{4}}"', from=2-1, to=3-1]
	\arrow["{\circled{5}}"', from=3-1, to=4-1]
	\arrow["{\circled{6}}"', from=4-1, to=4-3]
	\arrow["{\circled{7}}"', from=4-3, to=2-3]
\end{tikzcd}\]
in which we want to show that the map labeled \circled{1} is an equivalence. The maps labelled \circled{2} are equivalence since $H_n(\Sigma^\infty_+O(n))$ is homogeneous, hence it suffices to show that the map labelled \circled{3} is an equivalence, which we do by showing that each map (\circled{4}-\circled{7}) in the factorization of the map labelled \circled{3} is an equivalence. The map labelled \circled{4} is an equivalence by the derived Yoneda Lemma, the map labelled \circled{5} is an equivalence by he Yoneda Lemma, since the Koszul dual derivatives are defined by taking natural transformations into a subcategory containing $H_n(\Sigma^\infty_+ O(n))$, the map labelled \circled{6} is an equivalence by the Yoneda lemma for the subcategory of $\mathsf{Vect}_\mathbb{R}$ of vector spaces of dimension $\leq n$, and finally, the map labelled \circled{7} is an equality, by definition. 
\end{proof}

\begin{ex}
The fake Weiss tower of representable functors agrees with the Weiss tower, i.e., for each $n$, the map
\[
P_n(R_V) \longrightarrow P^n_\mathsf{fake}(R_V),
\]
is an equivalence.
\end{ex}
\begin{proof}
By induction, it suffices to show that the result holds for the layers of the respective towers. By Theorem~\ref{thm: norm map finite derivatives}, the induced map on the layers of the towers may be identified with the dual norm map
\[
(\partial_n(R_V) \wedge S^{nV})_{hO(n)} \rightarrow (D_{O(n)}^\vee \wedge \partial_n(R_V) \wedge  S^{nV})^{hO(n)},
\]
hence it suffices to show that $\partial_n(R_V) \wedge S^{nV}$ is a finite Borel $O(n)$-spectrum. To see this, note that by Example~\ref{ex: Weiss derivative of rep}, the spectrum $\partial_n(R_V)$ is equivariantly equivalent to the spectrum
\[
\Omega^{nV}(\mathsf{Vect}_\bb{R}(\bb{R}^n, V) \wedge D_{O(n)}),
\]
which we already observed was free, so we can apply Theorem \ref{thm:norm for g spectra}. 
\end{proof}

\subsection{A Koszul duality model for Weiss derivatives} \label{subsection: Koszul duality model}

In light of the previous section, we seek to produce a functorial model of the Weiss derivatives which admits the structure of a right module over $K(\mathsf{OEpi})$. The issue with this is that our theory of Koszul dual derivatives was only ``correct'' when $\partial_i F$ was a finite spectrum for all $i$. This problem could be solved with ``pro-right modules'' as in \cite{ACOperads}, but there is a more direct approach that can be taken.

Let $c$ be a fixed simplicial cofibrant replacement of $\mathsf{RMod}_{K(\mathsf{OEpi})}$. Let \[\partial^K_\ast R_V\coloneq c(K(\partial^\ast R_V)) 
\cong c(K(S^{\ast V})).\]

\begin{definition}\label{definition: koszul model of derivatives}
    The Koszul model of the Weiss derivatives of a cofibrant $F:\mathsf{Vect}_\bb{R} \rightarrow \mathsf{Spec}$ is the right $K(\mathsf{OEpi})$-module
    \[\partial_\ast^K(F)\coloneq\int^{V \in \mathsf{Vect}_\bb{R}} \partial^K_\ast R_V \wedge F(V)\]
   
\end{definition}

Of course, a version of the Yoneda lemma asserts that in the case of $R_V$, these two definitions coincide. We can define a dual transform
\[
\Theta^K : \mathsf{RMod}_{K(\sf{OEpi})} \longrightarrow \Fun(\mathsf{Vect}_\bb{R}, \s),\]\[~S \longmapsto \mathsf{RMod}_{K(\sf{OEpi})}(\partial_\ast^K(R_{(-)}), S),
\]
which one should compare with Proposition~\ref{prop: Koszul adjunction} and Proposition~\ref{prop: koszul dual derivatives adjunction}.

\begin{prop}\label{prop: koszul derivatives adjunction}
There is a simplicial Quillen adjunction
    \[
\adjunction{\partial^K_\ast}{\mathsf{Fun}(\mathsf{Vect}_\bb{R},\s)}{\mathsf{RMod}_{K(\mathsf{OEpi})}}{\Theta^K}.
\]
\end{prop}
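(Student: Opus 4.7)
The plan is to recognize the pair $(\partial^K_\ast, \Theta^K)$ as the standard adjunction associated to a coend/Kan extension along the contravariant Yoneda embedding, and then invoke cofibrancy of the representing objects to upgrade to a Quillen pair. More precisely, the assignment
\[
\varphi : \mathsf{Vect}_\bb{R}^\op \longrightarrow \mathsf{RMod}_{K(\mathsf{OEpi})}, \qquad V \longmapsto \partial^K_\ast R_V,
\]
is a spectral functor, since $R_{(-)}$ is the contravariant Yoneda embedding, $\partial^\ast$ is spectral by Proposition~\ref{prop: koszul dual derivatives adjunction}, $K$ is spectral by Proposition~\ref{prop: Koszul adjunction}, and $c$ is a simplicial cofibrant replacement functor. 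Definition~\ref{definition: koszul model of derivatives} writes $\partial^K_\ast$ as the left Kan extension of $\varphi$ along the Yoneda embedding, and $\Theta^K$ is, by inspection, the associated restricted representable functor.

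The adjunction isomorphism
\[
\mathsf{RMod}_{K(\mathsf{OEpi})}\bigl(\partial^K_\ast F,\, S\bigr) \cong \nat\bigl(F,\, \Theta^K(S)\bigr)
\]
is then a formal calculus-of-(co)ends computation identical in shape to that performed in Proposition~\ref{prop: koszul dual derivatives adjunction}: one unfolds the coend definition of $\partial^K_\ast F$, uses that spectral mapping objects send coends in the first variable to ends, and recognizes the result as $\nat(F, \mathsf{RMod}_{K(\mathsf{OEpi})}(\varphi(-), S))$. The simplicial enrichment of the adjunction is automatic since both sides are expressed through enriched (co)ends of spectrally enriched functors, and the singular complex functor on topological mapping spaces intertwines these constructions.

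For the Quillen property, it suffices to show that $\Theta^K$ preserves fibrations and acyclic fibrations, since fibrations and weak equivalences in the projective model structure on $\mathsf{Fun}(\mathsf{Vect}_\bb{R},\s)$ are detected objectwise. Thus we must check that for each $V \in \mathsf{Vect}_\bb{R}$, the enriched mapping functor $\mathsf{RMod}_{K(\mathsf{OEpi})}(\partial^K_\ast R_V, -)$ carries (acyclic) fibrations of right $K(\mathsf{OEpi})$-modules to (acyclic) fibrations of spectra. By the SM7 axiom for the spectrally enriched projective model structure on $\mathsf{RMod}_{K(\mathsf{OEpi})}$ (Proposition~\ref{prop: day convolution monoidal model}), this holds precisely because $\partial^K_\ast R_V = c(K(\partial^\ast R_V))$ is cofibrant by construction of $c$ as a simplicial cofibrant replacement.

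I expect no serious obstacle; the main point to be careful about is verifying that the coend defining $\partial^K_\ast F$ is genuinely a left adjoint, which requires only the (co)end yoga, and that the cofibrancy of $\partial^K_\ast R_V$ propagates to a Quillen pair through objectwise fibrations. The use of $c$ is exactly what makes the representing object suitable for the SM7 argument, avoiding any subtlety that would arise if one worked with $K(\partial^\ast R_V)$ directly, which is not a priori cofibrant.
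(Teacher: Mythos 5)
Your proposal is correct and follows essentially the same route as the paper: the adjunction is produced by the standard calculus of (co)ends from the coend/Kan-extension description of $\partial^K_\ast$, and the Quillen property follows because $\Theta^K$ preserves (acyclic) fibrations, which holds precisely because the representing objects $\partial^K_\ast R_V = c(K(\partial^\ast R_V))$ are cofibrant by construction. The paper's proof is just a terser statement of the same two points.
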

\begin{proof}
The adjunction exists by the standard calculus of (co)end arguments. It is Quillen since the right adjoint preserves (acyclic) fibrations by construction. 
\end{proof}

\begin{rem}
Since the categories above are stable, this adjunction lifts to an adjunction between stable $\infty$-categories and hence to spectral $\infty$-categories, as detailed in~Subsection~\ref{subsection: spectral homotopy}.
\end{rem}

\begin{ex}
    The work of the previous section showed that when $X$ is a nonequivariantly finite $O(n)$-spectrum 
    \[\partial^K_\ast (H_n(X)) \simeq K(\mathsf{Free}_{\mathsf{OEpi}}(X^\vee \wedge D_{O(n)}^\vee)) \simeq X.\]

    Every $O(n)$-spectrum is a colimit of finite $O(n)$-spectra, and so we conclude for an arbitrary $O(n)$-spectrum $Z$
    \[\partial^K_\ast (H_n(Z)) \simeq Z.\]
\end{ex}

The theory of Koszul derivatives leads to a satisfying calculation of the Koszul model for the Weiss derivatives of representable functors.

\begin{ex}\label{ex: Koszul derivatives reps}
    By Example \ref{ex: koszul dual derivative of reps}, we know $\partial^\ast R_V \simeq S^{\ast V}$. In particular, we can identify $\partial^i R_V$ with the stabilization of the one point compactification of $\mathbb{R}^i \otimes V$ where the action of the unstable $\mathsf{Vect}_\bb{R}$ is by tensoring in the left variable. We assert that unstably this action defines a cofibrant right module and matrix arithmetic shows that the decomposable elements of this right module are precisely the linear fat diagonal $\sf{DI}_\ast(V)$. This is because if we identify $\mathbb{R}^i \otimes V$ with $i$-tuples of elements in $V$, tensoring by a linear map $\mathbb{R}^i \rightarrow \mathbb{R}^j$ corresponds to replacing the $i$-tuple by a $j$-tuple of linear combinations of the original $i$-tuple. When $i>1$, the result is linearly dependent.

    Elementary model category arguments show that
    \[\mathsf{TAQ}(\Sigma^\infty S^{\ast V}) \simeq \Sigma^\infty S^{\ast V}/\sf{DI}_\ast(V)\]

    Thus, $\partial^K_\ast R_V \simeq  \Sigma^\infty S^{\ast V}/\sf{DI}_\ast(V)^\vee$ which by Atiyah duality recovers Example \ref{ex: Weiss derivative of rep}.
\end{ex}

\begin{thm}\label{thm:koszul derivatives are derivatives}
Let $F: \mathsf{Vect}_\bb{R} \to \s$ be a cofibrant functor. The right $K(\mathsf{OEpi})$-module $\partial^K_\ast F$ is cofibrant and there is a zigzag of equivalences of orthogonal sequences
    \[
    \partial^K_\ast F \simeq \partial_\ast F.
    \]
If $F$ is a finite cell complex, then there is a zigzag of equivalences of right $K(\mathsf{OEpi})$-modules
    \[
    \partial^K_\ast F \simeq K (\partial^\ast F).\]
\end{thm}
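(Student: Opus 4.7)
The plan breaks into three parts matching the three claims of the theorem. First, cofibrancy of $\partial^K_\ast F$ is automatic: by Proposition~\ref{prop: koszul derivatives adjunction}, the functor $\partial^K_\ast$ is a left Quillen functor, and so it sends the cofibrant $F$ to a cofibrant right $K(\mathsf{OEpi})$-module.

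For the orthogonal sequence equivalence $\partial^K_\ast F \simeq \partial_\ast F$, I would compare $\partial^K_\ast$ with the Spanier--Whitehead derivatives of Definition~\ref{definition: spanier whitehead derivative}. Both constructions take the shape of a coend $\int^{V} C_\ast(V) \wedge F(V)$ where $C_\ast(V)$ is a chosen cofibrant model of $\partial_\ast R_V$ in orthogonal sequences. For $\partial^{SW}_\ast$, this is built in by definition; for $\partial^K_\ast$, it is the content of Example~\ref{ex: Koszul derivatives reps}, which identifies $\partial^K_\ast R_V$ with the Spanier--Whitehead dual of $\Sigma^\infty S^{\ast V}/\sf{DI}_\ast(V)$ and, via Atiyah duality, recovers the description of $\partial_\ast R_V$ given in Example~\ref{ex: Weiss derivative of rep}. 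Since both choices of $C_\ast(V)$ are cofibrant models of the same underlying orthogonal sequence and $F$ is cofibrant, the two coends are levelwise equivalent as orthogonal sequences, and Lemma~\ref{lem: SW and OG agree} concludes the identification $\partial^{SW}_\ast F \simeq \partial_\ast F$.

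For the $K(\mathsf{OEpi})$-module equivalence $\partial^K_\ast F \simeq K(\partial^\ast F)$ when $F$ is a finite cell complex, I would begin with the representable case $F = R_V$. Here a derived Yoneda reduction of the defining coend gives $\partial^K_\ast R_V \simeq c(K(\partial^\ast R_V))$, which is the claim by construction. The general case follows by induction on the number of cells. The left side $\partial^K_\ast$ preserves cofiber sequences as a left Quillen functor, so the issue is the right side. The key input is that for finite cell complexes, $\partial^\ast F$ is level-finite: the base case $\partial^\ast R_V \cong S^{\ast V}$ is level-finite by Example~\ref{ex: koszul dual derivative of reps}, and level-finiteness is preserved under the finitely many cofiber sequences used to assemble $\partial^\ast F$ (using that $\partial^\ast$ is left Quillen and hence preserves cofiber sequences). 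Once level-finiteness is secured at every stage, Theorem~\ref{thm:koszul duality for right modules} lets us apply $K$ as an equivalence on the sub-category of level-finite modules, so $K$ carries cofiber sequences of such modules to cofiber sequences of right $K(\mathsf{OEpi})$-modules. Matching these with the corresponding cofiber sequences on the $\partial^K_\ast$ side via the representable base case closes the induction.

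The main obstacle I anticipate is the third step: ensuring that level-finiteness is preserved along the cellular induction is essential for invoking $K$ as an equivalence rather than as a mere left adjoint, and the resulting zigzag of equivalences must be assembled coherently enough to track the right $K(\mathsf{OEpi})$-module structure rather than only the underlying orthogonal sequence structure. In particular, some care is needed to verify that the derived Yoneda reduction on representables respects the right module structure coming from the coend, not just the pointwise structure exposed in the second step.
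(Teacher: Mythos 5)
Your proposal is correct and follows essentially the same route as the paper: cofibrancy from the left Quillen property of Proposition~\ref{prop: koszul derivatives adjunction}, the orthogonal-sequence equivalence by matching $\partial^K_\ast$ against the Spanier--Whitehead model on representables (Examples~\ref{ex: Koszul derivatives reps} and~\ref{ex: derivative of rep}) and then left Kan extending, followed by Lemma~\ref{lem: SW and OG agree}. Your third step is a more explicit version of the paper's terse "similar examination of representables": the cellular induction, with level-finiteness tracked so that Theorem~\ref{thm:koszul duality for right modules} applies, is exactly the intended argument.
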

\begin{proof}
Cofibrancy holds since $\partial_\ast^K$ is a left Quillen functor by Proposition \ref{prop: koszul derivatives adjunction}.

We check that the Koszul dual and Spanier--Whitehead models for the Weiss derivatives agree. This implies the result by Lemma~\ref{lem: SW and OG agree}. By Example~\ref{ex: Koszul derivatives reps} and Example~\ref{ex: derivative of rep} the Koszul dual and Spanier--Whitehead models agree on representable functors. Since $F$ is cofibrant and the Koszul dual and Spanier--Whitehead models for the Weiss derivatives of $F$ are given by (homotopy) left Kan extending, the result follows. 

The second equivalence follows from a similar examination of representables (cf. Theorem \ref{thm: norm map finite derivatives}).
\end{proof}

\subsection{The fake Weiss tower} \label{subsection:enriched model of fake Weiss tower}

We revisit the fake orthogonal tower from the point of view of the Koszul model for the Weiss derivatives. This construction has the advantage that it works without the assumption that our functors have level-finite derivatives. 

 \begin{definition}
    Let $F: \mathsf{Vect}_\bb{R} \to \s$ be cofibrant. The \emph{fake Weiss approximation of $F$}
    \[F \longrightarrow P_\infty^\mathsf{fake}(F)\]
    is the unit of the $(\partial^K_\ast,\Theta^K)$-adjunction.
\end{definition}

Explicitly, the map
\[F(V) \longrightarrow P_\infty^\mathsf{fake}(F)(A)\]
is given by
\[F(V) \cong \nat(R_V,F) \xrightarrow{\partial^K_\ast} \mathsf{RMod}_{K(\mathsf{OEpi})}(\partial^K_\ast R_V,\partial^K_\ast F)\cong \mathsf{RMod}_{K(\mathsf{OEpi})}( K(S^{\ast V}),\partial^K_\ast F).\]
The fact that these mapping spectra are actually derived mapping spectra relies on the cofibrancy claim of Theorem \ref{thm:koszul derivatives are derivatives}.

\begin{definition}\label{def:fake weiss }
The \emph{fake Weiss tower} of a functor $F: \mathsf{Vect}_\bb{R} \to \s$ evaluated at $V$:
% https://q.uiver.app/#q=WzAsNyxbMiwwLCJGKFYpIl0sWzUsMSwiUF8wXlxcbWF0aHNme2Zha2V9RihWKSJdLFs0LDEsIlBfMV5cXG1hdGhzZntmYWtlfUYoVikiXSxbMiwxLCJQX2leXFxtYXRoc2Z7ZmFrZX1GKFYpIl0sWzAsMSwiUF9cXGluZnR5XlxcbWF0aHNme2Zha2V9IEYoVikiXSxbMSwxLCJcXGNkb3RzIl0sWzMsMSwiXFxjZG90cyJdLFs2LDJdLFsyLDFdLFszLDZdLFs1LDNdLFs0LDVdLFswLDNdLFswLDIsIiIsMCx7ImN1cnZlIjotMX1dLFswLDEsIiIsMix7ImN1cnZlIjotMX1dLFswLDQsIiIsMix7ImN1cnZlIjoxfV1d
\[\begin{tikzcd}
	&& {F(V)} \\
	{P_\infty^\mathsf{fake} F(V)} & \cdots & {P_n^\mathsf{fake}F(V)} & \cdots & {P_1^\mathsf{fake}F(V)} & {P_0^\mathsf{fake}F(V)}
	\arrow[curve={height=6pt}, from=1-3, to=2-1]
	\arrow[from=1-3, to=2-3]
	\arrow[curve={height=-6pt}, from=1-3, to=2-5]
	\arrow[curve={height=-6pt}, from=1-3, to=2-6]
	\arrow[from=2-1, to=2-2]
	\arrow[from=2-2, to=2-3]
	\arrow[from=2-3, to=2-4]
	\arrow[from=2-4, to=2-5]
	\arrow[from=2-5, to=2-6]
\end{tikzcd}\]
    is the truncation tower for \[\mathsf{RMod}^h_{K(\mathsf{OEpi})}(\partial^K_\ast R_V,\partial^K_\ast F).\]
\end{definition}

\begin{prop}\label{prop: two fake towers agree}
    Definition \ref{def:fake weiss } and Definition \ref{def: fake weiss finite derivative} agree up to homotopy for cofibrant functors with level-finite derivatives.
\end{prop}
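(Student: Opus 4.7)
The plan is to compare the two towers by invoking Koszul duality for level-finite right modules (Theorem \ref{thm:koszul duality for right modules}). First, I would check that both Koszul-type derivatives of $F$ are level-finite right modules over the appropriate category: $\partial^\ast F$ is level-finite by the inductive argument used in the proof of Proposition \ref{prop:fibers real to fake} (applying Lemma \ref{lem: factorization} along the polynomial filtration and Proposition \ref{prop:koszul derivative homogeneous} on each layer), while $\partial^K_\ast F$ is level-equivalent to $\partial_\ast F$ by Theorem \ref{thm:koszul derivatives are derivatives} and hence level-finite by hypothesis.

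Since $K$ and $K^{-1}$ are inverse equivalences between the full subcategories of level-finite modules, and since both $\partial^\ast F$ and $(\partial^\ast R_V)^{\leq n} = (S^{\ast V})^{\leq n}$ are level-finite, one obtains a natural equivalence
\[
\mathsf{RMod}^h_{\mathsf{OEpi}}(\partial^\ast F, (\partial^\ast R_V)^{\leq n}) \simeq \mathsf{RMod}^h_{K(\mathsf{OEpi})}(K((\partial^\ast R_V)^{\leq n}), K(\partial^\ast F)).
\]
Koszul duality intertwines truncations: the iterated fiber sequences $R^{\leq k-1} \to R^{\leq k} \to \mathsf{Triv}(R(k))$ of Lemma \ref{lem:triv fiber sequence}, together with Propositions \ref{prop: koszul dual of free} and \ref{prop: koszul dual of trivial} interchanging $\mathsf{Free}$ and $\mathsf{Triv}$, give $K(R^{\leq n}) \simeq K(R)^{\leq n}$ for level-finite $R$. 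Combining this with the identification $K(\partial^\ast R_V) \simeq \partial^K_\ast R_V$ from the second half of Theorem \ref{thm:koszul derivatives are derivatives} (applicable since representables are finite cell complexes), the right-hand side is equivalent to $\mathsf{RMod}^h_{K(\mathsf{OEpi})}((\partial^K_\ast R_V)^{\leq n}, K(\partial^\ast F))$.

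It remains to (i) identify $K(\partial^\ast F) \simeq \partial^K_\ast F$ as right $K(\mathsf{OEpi})$-modules, and (ii) interchange the truncation variable so that the second argument is truncated rather than the first. For (i) I would extend Theorem \ref{thm:koszul derivatives are derivatives} beyond finite cell complexes by noting that both modules are level-finite and that each truncation $(-)^{\leq n}$ depends only on $P_n F$ (by the $\partial_{\leq n}$-invariance in Proposition \ref{prop: formal properties of fake tower}), which is controlled layer-by-layer via Corollary \ref{cor: fake for homogeneous}; passing to the inverse limit via Lemma \ref{lem: module limit of truncations} glues these into an equivalence of right modules. For (ii), both \[\mathsf{RMod}^h_{K(\mathsf{OEpi})}((\partial^K_\ast R_V)^{\leq n}, \partial^K_\ast F) \quad \text{and} \quad \mathsf{RMod}^h_{K(\mathsf{OEpi})}(\partial^K_\ast R_V, (\partial^K_\ast F)^{\leq n})\] map to the common term $\mathsf{RMod}^h_{K(\mathsf{OEpi})}((\partial^K_\ast R_V)^{\leq n}, (\partial^K_\ast F)^{\leq n})$, and the fibers of these comparison maps are derived mapping spectra between modules supported in complementary degree ranges, which are contractible by analyzing the truncation tower and Proposition \ref{prop: layers of truncation tower}. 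Naturality in $n$ assembles these equivalences into an equivalence of towers. The main obstacle is step (i), the functorial extension of Koszul--Weiss derivative identification to arbitrary $F$ with level-finite derivatives; the interchange of truncation variable in step (ii) is a secondary delicate point requiring care with cofibrancy of the truncated modules.
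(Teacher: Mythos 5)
Your overall strategy is the same as the paper's: the paper's proof is a two-sentence appeal to Theorem~\ref{thm:koszul duality for right modules} (Koszul duality is an equivalence on derived mapping spectra of level-finite modules) together with Theorem~\ref{thm:koszul derivatives are derivatives} (the two derivative models are Koszul dual), and those are exactly your two main ingredients. Your worry in step (i) is legitimate and well-diagnosed: Theorem~\ref{thm:koszul derivatives are derivatives} only asserts $\partial^K_\ast F \simeq K(\partial^\ast F)$ as right $K(\mathsf{OEpi})$-modules for finite cell complexes, whereas the proposition needs it for all cofibrant $F$ with level-finite derivatives; your proposed fix (work truncation by truncation, using that $(-)^{\leq n}$ only sees $P_nF$, and glue via the inverse limit of truncations) is in the spirit of how the paper handles the analogous issue in Theorem~\ref{thm: norm map finite derivatives}.

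The genuine gap is in your truncation bookkeeping. The claim $K(R^{\leq n}) \simeq K(R)^{\leq n}$ is false: by Theorem~\ref{thm:koszul duality for right modules}, $K(R)(m)$ is $\mathsf{TAQ}(R)(m)^\vee \wedge D_{G_m}$ up to equivalence, and while Lemma~\ref{lem:taq concentrate} gives $\mathsf{TAQ}(R^{\leq n})(m) \simeq \mathsf{TAQ}(R)(m)$ for $m \leq n$, the spectra $\mathsf{TAQ}(R^{\leq n})(m)$ for $m > n$ do not vanish (e.g.\ for $R$ free on degree $1$, the cofiber sequence $R^{>n} \to R \to R^{\leq n}$ shows $\mathsf{TAQ}(R^{\leq n})(n+1) \simeq \Sigma\, \mc{C}(n+1,1) \ne \ast$). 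So $K(R^{\leq n})$ and $K(R)^{\leq n}$ agree only in degrees $\leq n$. The same issue sinks half of step (ii): of your two comparison maps into $\mathsf{RMod}^h\bigl((\partial^K_\ast R_V)^{\leq n}, (\partial^K_\ast F)^{\leq n}\bigr)$, the one from $\mathsf{RMod}^h\bigl(\partial^K_\ast R_V, (\partial^K_\ast F)^{\leq n}\bigr)$ is indeed an equivalence, because its fiber $\mathsf{RMod}^h\bigl((\partial^K_\ast R_V)^{>n}, (\partial^K_\ast F)^{\leq n}\bigr)$ has layers $\mathsf{RMod}^h_{A_m}\bigl(\mathsf{TAQ}((\partial^K_\ast R_V)^{>n})(m), \partial^K_m F\bigr)$ with $m \leq n$, which vanish by Lemma~\ref{lem:taq concentrate}. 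But the fiber of the other map is $\mathsf{RMod}^h\bigl((\partial^K_\ast R_V)^{\leq n}, (\partial^K_\ast F)^{>n}\bigr)$, whose layers by Proposition~\ref{prop: layers of truncation tower} involve $\mathsf{TAQ}\bigl((\partial^K_\ast R_V)^{\leq n}\bigr)(m)$ for $m > n$; these are not supported in ``complementary degree ranges'' and are not contractible for degree reasons. So the interchange of the truncation variable cannot be obtained by the two-sided comparison you propose; one must instead compare the two truncation filtrations directly (e.g.\ by matching their layers, which on both sides are $\mathsf{RMod}^h_{A_k}(\mathsf{TAQ}(\partial^\ast F)(k), S^{kV})$ up to the dualities of Theorem~\ref{thm:koszul duality for right modules}) rather than passing through a common truncated mapping spectrum.
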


\begin{proof}
    Theorem \ref{thm:koszul duality for right modules} implies that Koszul duality yields an equivalence on derived mapping spectra for level-finite right modules. By Theorem \ref{thm:koszul derivatives are derivatives}, the Koszul derivatives and the Koszul dual derivatives are Koszul dual for functors with level-finite derivatives, so the result follows.
\end{proof}

We now reproduce the main result of Section \ref{subsection: fake weiss tower finite} without any restriction to functors with finite derivatives. We omit the supplementary results and proofs, as they consist of repeatedly appealing to Proposition \ref{prop: two fake towers agree} to extend the results of Section \ref{subsection: fake weiss tower finite} by colimits.

\begin{thm}\label{thm: norm map derivatives}
    For cofibrant $F$ the map $D_n(F)(V) \rightarrow D_n^\mathsf{fake}(F)(V)$
    is equivalent to the dual norm
    \[(\partial_n(F) \wedge S^{nV})_{hO(n)} \longrightarrow (\partial_n(F) \wedge D_{O(n)}^\vee \wedge S^{nV})^{hO(n)} .\]
\end{thm}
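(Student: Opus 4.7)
The plan is to bootstrap from Theorem \ref{thm: norm map finite derivatives}, which handles functors with level-finite derivatives, to arbitrary cofibrant $F$ by a homotopy colimit argument along the lines sketched by the authors. First, I would observe that for any cofibrant $F$ with level-finite derivatives, Proposition \ref{prop: two fake towers agree} identifies Definition \ref{def:fake weiss } with Definition \ref{def: fake weiss finite derivative} up to homotopy, and Theorem \ref{thm: norm map finite derivatives} then gives the claimed identification of $D_n(F)(V) \to D_n^\mathsf{fake}(F)(V)$ with the dual norm. In particular, representable functors $R_V$ satisfy the theorem, since by Example \ref{ex: Weiss derivative of rep} their derivatives are finite.

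Next, I would write a general cofibrant $F$ as a homotopy colimit $F \simeq \hocolim_\alpha R_{V_\alpha}$ of representables and argue that every construction appearing in the statement commutes with this colimit. On the source side, $D_n(-)(V)$ and the classifying equivalence $D_n(F)(V) \simeq (\partial_n F \wedge S^{nV})_{hO(n)}$ are both colimit-preserving in $F$. On the target side, by Proposition \ref{prop: layers of truncation tower} combined with the cofibrancy of $\partial^K_\ast F$ from Theorem \ref{thm:koszul derivatives are derivatives}, the $n$-th layer of the fake tower takes the form
\[
D_n^\mathsf{fake}(F)(V) \simeq \mathsf{RMod}^h_{O(n)}\bigl(\mathsf{TAQ}(\partial^K_\ast R_V)(n),\, \partial^K_\ast F(n)\bigr),
\]
and since $R_V$ has finite derivatives the first argument is a finite Borel $O(n)$-spectrum; hence this derived mapping spectrum commutes with homotopy colimits in $\partial^K_\ast F(n) \simeq \partial_n F$. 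Dualizability of $O(n)$ and the representable case then exhibit this as $(\partial_n F \wedge D^\vee_{O(n)} \wedge S^{nV})^{hO(n)}$, with both sides preserving the colimit defining $F$.

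The main obstacle is upgrading this compatibility of objects under colimits to a compatibility of the comparison \emph{arrow} itself. For this, I would adapt the Kuhn-style uniqueness argument used in the proof of Theorem \ref{thm: norm map finite derivatives}: under the classification of homogeneous functors, the composite natural transformation
\[
(\partial_n F \wedge S^{nV})_{hO(n)} \xleftarrow{\ \simeq \ } D_n(F)(V) \longrightarrow D_n^\mathsf{fake}(F)(V) \xrightarrow{\ \simeq \ } (\partial_n F \wedge D^\vee_{O(n)} \wedge S^{nV})^{hO(n)}
\]
restricts, on the subcategory of $n$-homogeneous functors $F = H_n(X)$, to a natural transformation of colimit-preserving endofunctors of $\mathsf{Spec}^{BO(n)}$ that agrees with the dual norm on $F = H_n(\Sigma^\infty_+ O(n))$ by the representable case. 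Since $\Sigma^\infty_+ O(n)$ generates $\mathsf{Spec}^{BO(n)}$ under homotopy colimits, the uniqueness argument following Theorem \ref{thm:norm for g spectra} forces this natural transformation to coincide with the dual norm up to an automorphism of the source, concluding the proof. A minor supporting verification is that the colimit presentation of $\partial^K_\ast F$ inherited from $F \simeq \hocolim_\alpha R_{V_\alpha}$ is compatible with the truncation filtration; this is ensured by Theorem \ref{thm:koszul derivatives are derivatives}, which yields cofibrant Koszul derivatives.
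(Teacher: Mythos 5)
Your overall architecture is the one the paper intends: the published text omits this proof, describing it as an extension of Theorem \ref{thm: norm map finite derivatives} by colimits via Proposition \ref{prop: two fake towers agree}, and your final paragraph --- identifying the comparison of layers with the dual norm via the uniqueness statement following Theorem \ref{thm:norm for g spectra} --- is the mechanism that actually closes the argument. However, there is a genuine error in your middle step. You claim that because $\mathsf{TAQ}(\partial^K_\ast R_V)(n)$ is a finite Borel $O(n)$-spectrum, the layer
\[
D_n^\mathsf{fake}(F)(V) \simeq \mathsf{RMod}^h_{O(n)}\bigl(\mathsf{TAQ}(\partial^K_\ast R_V)(n),\, \partial_n F\bigr) \simeq \bigl(\partial_n F \wedge D^\vee_{O(n)} \wedge S^{nV}\bigr)^{hO(n)}
\]
commutes with homotopy colimits in $F$. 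It does not: finiteness of the underlying spectrum gives dualizability in Borel $O(n)$-spectra (which is what yields the displayed identification), but not compactness as a $\Sigma^\infty_+O(n)$-module. Indeed, double Koszul duality identifies $\mathsf{TAQ}(\partial^K_\ast R_V)(n)$ with the representation sphere $S^{-nV}\wedge D_{O(n)}$, which carries a nontrivial action and is not a finite free cell complex, and $(-)^{hO(n)}$ fails to commute with infinite homotopy colimits --- the failure is measured by exactly the Tate constructions that the paper flags as the obstruction to $P^\mathsf{fake}_\infty \simeq P_\infty$. So the plan ``write $F=\hocolim_\alpha R_{V_\alpha}$ and check that both sides preserve the colimit'' cannot work for the target side, and the same error recurs when you describe the target as a colimit-preserving endofunctor of $\s^{BO(n)}$.

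The proof survives because neither of the two steps that actually matter needs this claim. The object-level identification of $D_n^\mathsf{fake}(F)(V)$ with $(\partial_n F \wedge D^\vee_{O(n)} \wedge S^{nV})^{hO(n)}$ holds for arbitrary cofibrant $F$ directly from Proposition \ref{prop: layers of truncation tower}, the equivalence $\partial^K_\ast F(n)\simeq \partial_n F$ of Theorem \ref{thm:koszul derivatives are derivatives}, and dualizability of $\mathsf{TAQ}(\partial^K_\ast R_V)(n)$ --- no colimit argument is required. The arrow-level identification then goes exactly as in the proof of Theorem \ref{thm: norm map finite derivatives}: reduce to homogeneous functors $H_n(X)$ using the $D_n$-invariance of $D_n^\mathsf{fake}$ (which does need extending past the finite case; here a colimit argument in $X$ is legitimate because $\partial^K_\ast$ is a left adjoint, so $\partial^K_\ast(H_n(X))\simeq X$ for all $X$), observe that only the \emph{source} $(X\wedge S^{nV})_{hO(n)}$ of the resulting natural transformation need preserve colimits for Klein's characterization of the cofiber of the norm to apply, and check the equivalence on $X=\Sigma^\infty_+O(n)$. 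You should restate the middle of your argument in this form rather than as a colimit-extension of both sides.
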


\section{Weiss towers and comonadic descent}
In this section, we show that the derivatives induce a comonad on the category of right modules in such a way that the Weiss tower of a functor $F: \mathsf{Vect}_\bb{R} \to \s$ is completely recovered from the structure of the derivatives $\partial_\ast F$ as a coalgebra over this comonad. In fact, we show that this classification holds both on the level of right $K(\mathsf{OEpi})$-modules and orthogonal sequences, so that the coalgebraic data on both these module categories has the same homotopy theory. 

For the rest of this section, fix once and for all a model for the Weiss derivatives as a functor  
\[
\partial_\ast : \Fun(\mathsf{Vect}_\bb{R}, \s) \longrightarrow \mathsf{RMod}_{\mc{C}},
\]
such that $\partial_\ast(R_{V})$ is a cofibrant right $\mc{C}$-module for each $V \in \mathsf{Vect}_\bb{R}$, where $\mc{C}$ denotes either $K(\mathsf{OEpi})$ or $O(\ast)$, specific constructions are given in Definition \ref{definition: spanier whitehead derivative} and Definition \ref{definition: koszul model of derivatives}. The bulk of this section is written $\infty$-categorically, and these model categorical requirements only exist to construct the $\infty$-categorical adjunction which gives rise to our comonad.

\subsection{The comonad} We begin by constructing the comonad at the heart of our descent story.

\begin{lem}\label{lem: derivative a left adjoint}
There is an adjoint pair
\[
\adjunction{\partial_\ast}{\Fun(\mathsf{Vect}_\bb{R}, \s)}{\sf{RMod}_{\mc{C}}}{\Phi},
\]
which is a simplicial Quillen adjunction.
\end{lem}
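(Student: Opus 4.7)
The plan is to observe that this lemma is essentially a uniform restatement, across the two choices of $\mc{C}$, of results already established in the paper: Lemma \ref{lem: SW derivative a left adjoint} handles the case $\mc{C} = O(\ast)$ and Proposition \ref{prop: koszul derivatives adjunction} handles the case $\mc{C} = K(\mathsf{OEpi})$. Both arguments proceed identically once one notices that in each case the fixed model of $\partial_\ast$ is constructed as a (homotopy) left Kan extension along the contravariant Yoneda embedding of $\mathsf{Vect}_\bb{R}$, with $\partial_\ast(R_V)$ chosen to be cofibrant. This is the only structural input required, and it has been arranged in the statement of the lemma by hypothesis.

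Concretely, I would define the right adjoint by the formula
\[
\Phi(S)(V) \;=\; \mathsf{RMod}_{\mc{C}}\!\bigl(\partial_\ast R_V,\, S\bigr),
\]
with the covariant $V$-functoriality inherited from the contravariant functoriality $V \mapsto \partial_\ast R_V$ (via $V \mapsto R_V$ followed by $\partial_\ast$). The adjunction is then verified by a standard calculus of (co)ends argument: since $\partial_\ast F \cong \int^{V} \partial_\ast R_V \wedge F(V)$ by construction, one computes
\[
\mathsf{RMod}_{\mc{C}}\!\bigl(\partial_\ast F, S\bigr) \cong \int_V \s\!\bigl(F(V),\, \mathsf{RMod}_{\mc{C}}(\partial_\ast R_V, S)\bigr) \cong \nat\!\bigl(F, \Phi(S)\bigr),
\]
using the tensor-hom adjunction in $\s$ and the end description of natural transformations.

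For the Quillen and simplicial properties, the key point is that both model structures in question are projective, so fibrations and weak equivalences are detected levelwise. It follows immediately from the formula for $\Phi$ that it preserves (acyclic) fibrations, using that $\mathsf{RMod}_{\mc{C}}(\partial_\ast R_V, -)$ preserves (acyclic) fibrations in $\s$ when $\partial_\ast R_V$ is cofibrant --- which is precisely the cofibrancy hypothesis on the chosen model. The simplicial enrichment is inherited from the spectral enrichment on both sides by applying $\sing$ to mapping spectra, as in our conventions, and the adjunction respects this enrichment because the coend formula for $\partial_\ast$ and the end formula for $\Phi$ are manifestly compatible with tensoring and cotensoring by simplicial sets.

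The main (and only) obstacle is the cofibrancy of $\partial_\ast R_V$: without it, $\Phi$ would land in something homotopically meaningful only after a derived enhancement. However, this cofibrancy was built into the hypotheses of the lemma and verified earlier for both specific models (Definition~\ref{definition: spanier whitehead derivative} uses a simplicial cofibrant replacement of the orthogonal sequence, while for Definition~\ref{definition: koszul model of derivatives} cofibrancy is part of Theorem~\ref{thm:koszul derivatives are derivatives}), so no additional work is needed.
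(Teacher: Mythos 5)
Your proposal is correct and takes essentially the same route as the paper: the paper's proof is precisely the one-line reduction to Lemma~\ref{lem: SW derivative a left adjoint} for $\mc{C}=O(\ast)$ and Proposition~\ref{prop: koszul derivatives adjunction} for $\mc{C}=K(\mathsf{OEpi})$, and your expanded (co)end and cofibrancy arguments match how those two results are themselves proved.
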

\begin{proof}
This follows from Lemma~\ref{lem: SW derivative a left adjoint} if $\mc{C}=O(\ast)$ and Proposition~\ref{prop: koszul derivatives adjunction} if $\mc{C}=K(\mathsf{OEpi})$.
\end{proof}

The simplicial Quillen adjunction induces an adjunction of spectral $\infty$-categories since the model structures in question are stable, which for ease of notation we continue to denote by
\[
\adjunction{\partial_\ast}{\Fun(\mathsf{Vect}_\bb{R}, \s)}{\sf{RMod}_{\mc{C}}}{\Phi},
\]
rather than cluttering the notation with $(-)_\infty$. In either case, this adjunction defines a comonad $\partial_\ast\Phi$ on the category of right $\mc{C}$-modules. The unit 
\[
\eta: \mathds{1}  \to \Phi\partial_\ast,
\]
provides $\partial_\ast F$ with the structure of a \emph{coalgebra} over the comonad $\partial_\ast\Phi$, i.e., there is a structure map
\[
\partial_\ast F \xrightarrow{\partial_\ast(\eta)} (\partial_\ast\Phi)\partial_\ast  F,
\]
in which the triangle identities provided the required associativity and unitality conditions. 

Recall from Subsection~\ref{subsection: filtration of right modules} that a right $\mc{C}$-module $R$ is said to be \emph{$n$-truncated} if $R_k = \ast$ for $k > n$. We now consider truncations of comonads on the category of right $\mc{C}$-modules. The right adjoint of Lemma~\ref{lem: derivative a left adjoint} plays well with respect to $n$-truncated right $\mc{C}$-module.

\begin{lem}\label{lem: phi of n-truncated}
If $R$ is an $n$-truncated right $\mc{C}$-module, then $\Phi(R)$ is $n$-polynomial.
\end{lem}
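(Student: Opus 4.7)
The plan is to reduce the claim to the case where $R = \mathsf{Triv}_\mc{C}(X)$ is concentrated in a single degree $k \leq n$, and then identify $\Phi$ of such a module with an explicitly $k$-polynomial functor. First, the truncation filtration (Lemma~\ref{lem:triv fiber sequence}) expresses the $n$-truncated module $R = R^{\leq n}$ as an iterated extension of the layers $\mathsf{Triv}_\mc{C}(R(k))$ for $k \leq n$ via the fiber sequences
\[
\mathsf{Triv}_\mc{C}(R(k)) \longrightarrow R^{\leq k} \longrightarrow R^{\leq k-1}.
\]
Since $\Phi$ is a right adjoint between stable $\infty$-categories by Lemma~\ref{lem: derivative a left adjoint}, it preserves fiber sequences; and $\poly{n}(\mathsf{Vect}_\bb{R},\s)$ is closed under homotopy limits, hence under extensions. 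It therefore suffices to show that $\Phi(\mathsf{Triv}_\mc{C}(X))$ is $n$-polynomial whenever $X$ is an $A_k$-spectrum concentrated in a single degree $k \leq n$.

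For the single-degree case, I would compute
\[
\Phi(\mathsf{Triv}_\mc{C}(X))(V) \simeq \mathsf{RMod}^h_\mc{C}(\partial_\ast R_V, \mathsf{Triv}_\mc{C}(X)) \simeq \mathsf{RMod}^h_{A_k}(\mathsf{TAQ}(\partial_\ast R_V)(k), X),
\]
using the derived $(\mathsf{Indecom},\mathsf{Triv})$-adjunction and the cofibrancy of $\partial_\ast R_V$. Combining this with Example~\ref{ex: derivative of rep} when $\mc{C} = O(\ast)$ (where $\mathsf{TAQ}$ is trivial, so the inner term is just $\partial_k R_V \simeq \Omega^{kV}(\mathsf{Vect}_\bb{R}(\bb{R}^k,V) \wedge D_{O(k)})$), or Example~\ref{ex: Koszul derivatives reps} together with Theorem~\ref{thm:koszul duality for right modules} when $\mc{C} = K(\mathsf{OEpi})$, recasts $\Phi(\mathsf{Triv}_\mc{C}(X))(V)$ in the form
\[
V \longmapsto (Z \wedge S^{kV})^{hO(k)}
\]
for a $V$-independent Borel $O(k)$-spectrum $Z$ depending only on $X$ and the dualizing spectrum $D_{O(k)}$. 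This matches the formula for the $k$-th fake homogeneous layer in Corollary~\ref{cor: fake for homogeneous}, and is consistent with the analysis of the fake Weiss tower of Section~\ref{subsection: fake weiss tower finite}.

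Finally, the functor $V \mapsto (Z \wedge S^{kV})^{hO(k)}$ is $k$-polynomial (hence $n$-polynomial, since $k \leq n$): for any fixed spectrum $Z$, the functor $V \mapsto Z \wedge S^{kV}$ is $k$-homogeneous, and taking $O(k)$-homotopy fixed points is a homotopy limit indexed by $BO(k)$ that does not affect the $V$ variable. The result is therefore a homotopy limit of $k$-homogeneous (and in particular $k$-polynomial) functors of $V$, and $\poly{k}(\mathsf{Vect}_\bb{R},\s)$ is closed under homotopy limits. Since $\poly{k}(\mathsf{Vect}_\bb{R},\s) \subseteq \poly{n}(\mathsf{Vect}_\bb{R},\s)$, this completes the argument.

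The step I expect to be the main obstacle is pinning down the identification of $\Phi(\mathsf{Triv}_\mc{C}(X))(V)$ in the explicit form $(Z \wedge S^{kV})^{hO(k)}$ uniformly in the two models. In the model $\mc{C} = O(\ast)$ this follows from the description of $\partial_k R_V$ as a free $O(k)$-spectrum and the dual norm equivalence, but in the Koszul model $\mc{C} = K(\mathsf{OEpi})$ the matching identification requires invoking Koszul self-duality, the dualizability of $\mathsf{OEpi}$ (Proposition~\ref{prop: OEpi dualizable}), and the computation of Koszul duals of trivial modules (Proposition~\ref{prop: koszul dual of trivial}); handling infinite $X$ may require writing $X$ as a sequential colimit of finite $O(k)$-subspectra and verifying that the resulting expression is still $k$-polynomial, which is immediate as $k$-polynomial functors form a localization closed under all homotopy colimits of input spectra $Z$.
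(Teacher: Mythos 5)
Your reduction to modules concentrated in a single degree $k \leq n$ via the truncation fiber sequences, using that $\Phi$ preserves homotopy limits and that $\poly{n}(\mathsf{Vect}_\bb{R},\s)$ is closed under them, is correct and matches the shape of the cited Arone--Ching argument. The gap is in your identification of the single-degree case. Since $\partial_k R_V \simeq \Omega^{kV}(\mathsf{Vect}_\bb{R}(\bb{R}^k,V)\wedge D_{O(k)})$ is the Spanier--Whitehead dual of $D_{O(k)}^\vee \wedge S^{kV}/\mathsf{DI}_k(V)$ (Example~\ref{ex: derivative of rep} plus Atiyah duality), the correct computation is
\[
\Phi(\mathsf{Triv}_\mc{C}(X))(V) \simeq \s^h(\partial_k R_V, X)^{hO(k)} \simeq \bigl(D_{O(k)}^\vee \wedge (S^{kV}/\mathsf{DI}_k(V)) \wedge X\bigr)^{hO(k)},
\]
exactly as in the computation carried out in Corollary~\ref{cor: McCarthy-Kuhn}. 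The Thom spectrum $S^{kV}/\mathsf{DI}_k(V)$ of the complement of the linear fat diagonal appears, not the sphere $S^{kV}$, and this is not cosmetic: your formula $(Z \wedge S^{kV})^{hO(k)}$ would make $\Phi(\mathsf{Triv}_\mc{C}(X))$ $k$-homogeneous, forcing $P_{k-1}\Phi\partial_k F \simeq \ast$ and trivializing the Kuhn--McCarthy square, whereas in fact $P_{k-1}\Phi\partial_kF \simeq (\Sigma \mathsf{DI}_k(V) \wedge \partial_k F)_{hO(k)}$ is generally nonzero. What you have written down is only the top layer $D^k_{\mathsf{fake}}$ of Corollary~\ref{cor: fake for homogeneous}, not $\Phi(\mathsf{Triv}_\mc{C}(X))$ itself.

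The desired conclusion still holds with the corrected formula, but it needs an extra input your argument omits: via the cofiber sequence $S^{kV} \to S^{kV}/\mathsf{DI}_k(V) \to \Sigma\mathsf{DI}_k(V)$ one must show that $V \mapsto (\Sigma\mathsf{DI}_k(V) \wedge Z)$ is $(k-1)$-polynomial (the stratification of the linear fat diagonal invoked at the end of the proof of Corollary~\ref{cor: McCarthy-Kuhn}), and then use closure of $\poly{k}(\mathsf{Vect}_\bb{R},\s)$ under extensions and homotopy limits. As written, your proof of polynomiality applies to the wrong functor, so it does not go through without this repair. (For comparison, the paper disposes of this lemma by importing it verbatim from Arone--Ching.)
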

\begin{proof}
The proof follows verbatim from~\cite[Lemma 3.11]{ACClassification}.
\end{proof}

Note that the left adjoint sends $n$-polynomial functors to $n$-truncated right $\mc{C}$-modules since the $k$-th derivatives of an $n$-polynomial functor are trivial for $k > n$.

\subsection{$\partial_\ast$-completion}
We now introduce the notion of $\partial_\ast$-completion. Informally, this is a way of attempting to recover a functor from its derivatives together with its $\partial_\ast\Phi$-coalgebra structure.   

Given any $\partial_\ast\Phi$-coalgebra $A$, we define the \emph{comonadic cobar construction} $\mathsf{cobar}(\Phi, \partial_\ast\Phi, A)$ of $A$ to the totalization of the cosimplicial object 
% https://q.uiver.app/#q=WzAsNCxbMCwwLCJcXFBoaShBKSJdLFsxLDAsIlxcUGhpKFxccGFydGlhbF9cXGFzdFxcUGhpKShBKSJdLFsyLDAsIlxcUGhpKFxccGFydGlhbF9cXGFzdFxcUGhpKV4yKEEpIl0sWzMsMCwiXFxjZG90cyJdLFswLDEsIiIsMCx7Im9mZnNldCI6Mn1dLFswLDEsIiIsMCx7Im9mZnNldCI6LTJ9XSxbMSwwXSxbMSwyLCIiLDAseyJvZmZzZXQiOjJ9XSxbMSwyXSxbMSwyLCIiLDAseyJvZmZzZXQiOi0yfV0sWzIsMSwiIiwwLHsib2Zmc2V0IjoxfV0sWzIsMywiIiwwLHsib2Zmc2V0IjotMX1dLFsyLDMsIiIsMCx7Im9mZnNldCI6LTN9XSxbMiwzLCIiLDAseyJvZmZzZXQiOjF9XSxbMiwzLCIiLDAseyJvZmZzZXQiOjN9XSxbMywyXSxbMywyLCIiLDAseyJvZmZzZXQiOi0yfV0sWzMsMiwiIiwwLHsib2Zmc2V0IjoyfV0sWzIsMSwiIiwwLHsib2Zmc2V0IjotMX1dXQ==
\[\begin{tikzcd}
	{\Phi(A)} & {\Phi(\partial_\ast\Phi)(A)} & {\Phi(\partial_\ast\Phi)^2(A)} & \cdots
	\arrow[shift right=2, from=1-1, to=1-2]
	\arrow[shift left=2, from=1-1, to=1-2]
	\arrow[from=1-2, to=1-1]
	\arrow[shift right=2, from=1-2, to=1-3]
	\arrow[from=1-2, to=1-3]
	\arrow[shift left=2, from=1-2, to=1-3]
	\arrow[shift right, from=1-3, to=1-2]
	\arrow[shift left, from=1-3, to=1-2]
	\arrow[shift left, from=1-3, to=1-4]
	\arrow[shift left=3, from=1-3, to=1-4]
	\arrow[shift right, from=1-3, to=1-4]
	\arrow[shift right=3, from=1-3, to=1-4]
	\arrow[from=1-4, to=1-3]
	\arrow[shift left=2, from=1-4, to=1-3]
	\arrow[shift right=2, from=1-4, to=1-3]
\end{tikzcd}\]
with coface maps induced by the unit of the $(\partial_\ast, \Phi)$-adjunction and the coalgebra structure of $A$, and with codegeneracies induced by the counit of the $(\partial_\ast, \Phi)$-adjunction. Given a functor $F: \mathsf{Vect}_\bb{R} \to \s$, the unit of the adjunction applied to $F$ provide a map
\[
F \longrightarrow \Phi\partial_\ast(F),
\]
producing an augmentation
\[
F \longrightarrow \mathsf{cobar}(\Phi, \partial_\ast\Phi, \partial_\ast F),
\]
of the comonadic cobar construction applied to the coalgebra $\partial_\ast F$.

\begin{definition}
A functor $F: \mathsf{Vect}_\bb{R} \to \s$ is \emph{$\partial_\ast$-complete} if the map
\[
F \longrightarrow \sf{cobar}(\Phi, \partial_\ast\Phi, \partial_\ast F) \coloneq \sf{Tot}(\Phi(\partial_\ast\Phi)^\bullet (\partial_\ast F)),
\]
is an equivalence. 
\end{definition}

The following result is the Weiss calculus version of a fundamental theorem of Arone and Ching~\cite[Theorem 3.13]{ACClassification} in the setting of Goodwillie calculus, the proof of which is analogous, and we only sketch. Importantly, the following result immediately implies that convergent functors are $\partial_\ast$-complete. 

\begin{prop}\label{prop: completion and tower limit}
For a functor $F: \mathsf{Vect}_\bb{R} \to \s$, the $\partial_\ast$-completion map
\[
\eta: F \longrightarrow \sf{cobar}(\Phi, \partial_\ast\Phi, \partial_\ast F),
\]
is a retract of the map
\[
p_\infty: F \longrightarrow \underset{n}{\holim}~P_nF, 
\]
associated to the Weiss tower of $F$. 
\end{prop}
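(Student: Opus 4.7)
The plan is to interpolate between the completion map and the tower map by filtering the cobar construction via truncations of $\partial_\ast F$. For each $n$, I would set
\[
T_n F \coloneq \sf{cobar}(\Phi, \partial_\ast\Phi, (\partial_\ast F)^{\leq n}).
\]
First I would check this is well-posed: the comonad $\partial_\ast\Phi$ preserves $n$-truncated right $\mc{C}$-modules, since by Lemma~\ref{lem: phi of n-truncated} the functor $\Phi$ sends an $n$-truncated module to an $n$-polynomial functor, and the derivatives of an $n$-polynomial functor vanish above degree $n$. Consequently $(\partial_\ast F)^{\leq n}$ inherits a $\partial_\ast\Phi$-coalgebra structure from $\partial_\ast F$, and each $T_n F$ is $n$-polynomial as a limit of $n$-polynomial functors. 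Combining Lemma~\ref{lem: module limit of truncations} with the fact that $\Phi$ is a right adjoint (and the cobar is itself a limit) yields
\[
\sf{cobar}(\Phi, \partial_\ast\Phi, \partial_\ast F) \simeq \lim_n T_n F.
\]

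By the universal property of the $n$-polynomial approximation, the unit $F \to T_n F$ factors through $P_n F$, giving natural maps $q_n\colon P_n F \to T_n F$. Assembled across $n$, these form a map of towers and, on inverse limits, a natural transformation $r\colon \holim_n P_n F \to \sf{cobar}(\Phi, \partial_\ast\Phi, \partial_\ast F)$ satisfying $r\circ p_\infty = \eta$ by construction. For the reverse direction I would exploit that $\partial_\ast P_n F \simeq (\partial_\ast F)^{\leq n}$, so that $T_n F$ is simply the comonadic completion of the $n$-polynomial functor $P_n F$ itself. Comonadic descent restricted to $n$-polynomial functors then produces a natural section $s_n\colon T_n F \to P_n F$ of $q_n$, and these assemble into $s\colon \sf{cobar}(\Phi,\partial_\ast\Phi, \partial_\ast F) \simeq \lim_n T_n F \to \holim_n P_n F$ satisfying $r\circ s = \id$ and $s\circ \eta = p_\infty$, displaying $\eta$ as a retract of $p_\infty$.

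The main obstacle is the construction of the section $s_n$: one must verify that on $n$-polynomial functors the unit $P_n F \to T_n F$ is split (indeed an equivalence) by the counit of the restricted comonadic adjunction. This is essentially the content of Theorem~\ref{thm: derivative comonadic} at polynomial degree $n$; for the present retract statement, only the existence of a coherent section is needed, which fits naturally into the polynomial-degree bookkeeping established above and can be developed in parallel with the full comonadic equivalence.
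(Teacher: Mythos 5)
There are two genuine gaps here, and together they leave out the actual mathematical content of the statement.

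First, the claimed equivalence $\sf{cobar}(\Phi,\partial_\ast\Phi,\partial_\ast F)\simeq \lim_n T_nF$ is not justified and is almost certainly false in general. Your $T_nF$ truncates the \emph{innermost} coalgebra, so in cosimplicial degree $k\geq 1$ you must commute $\Phi(\partial_\ast\Phi)^k$ past $\lim_n(\partial_\ast F)^{\leq n}$. Lemma~\ref{lem: module limit of truncations} and the right-adjointness of $\Phi$ only handle the outermost $\Phi$; the inner occurrences of $\partial_\ast$ are left adjoints, and the towers $\{\partial_j\Phi(R^{\leq n})\}_n$ do not stabilize (the fiber $\Phi(\mathsf{Triv}(R(n+1)))$ has an $n$-polynomial piece $(\Sigma\sf{DI}_{n+1}(-)\wedge R(n+1))$-type with nonvanishing low derivatives), so $\partial_\ast$ cannot be passed through this inverse limit. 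Note also that your equivalence, combined with your own claim that $P_nF\to T_nF$ is an equivalence, would yield $\eta\simeq p_\infty$ outright — a strictly stronger statement than the retract, and precisely the statement the proposition is careful \emph{not} to make. The fix is to truncate the \emph{outermost} right module instead: the paper compares $\sf{Tot}(\Phi(\partial_\ast\Phi)^\bullet\partial_\ast F)$ with $\holim_n\sf{Tot}(\Phi(((\partial_\ast\Phi)^\bullet\partial_\ast F)^{\leq n}))$, where only the final $\Phi$ needs to commute with the limit, and then uses Lemma~\ref{lem: phi of n-truncated} to identify the latter with $\holim_n\sf{Tot}(P_n(\cdots))$.

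Second, your construction of the section $s_n$ is circular. The assertion that $P_nF\to T_nF$ is an equivalence is exactly Lemma~\ref{lem: polynomials are complete}, whose proof in the paper invokes Corollary~\ref{cor: complete if converge}, which is deduced from the very proposition you are proving; likewise Theorem~\ref{thm: derivative comonadic} sits downstream of it. Saying the section "can be developed in parallel" defers precisely the step that carries all the content. What actually makes the argument go is the computation of the layers: one shows by induction on $k$ that $P_kF\to\sf{Tot}(P_k(\Phi(\partial_\ast\Phi)^\bullet\partial_\ast F))$ is an equivalence, reducing to the cofibers $R_kF$, and then identifies $R_k(\Phi(\partial_\ast\Phi)^\bullet\partial_\ast F)\simeq R_k\Psi_k((\partial_\ast\Phi)^{\bullet+1}\partial_\ast F)$ (with $\Psi_k(A)(V)=(S^{\bb{R}^k\otimes V}\wedge A_k)_{hO(k)}$, so that $D_kF\simeq\Psi_k(\partial_\ast F)$) as a coaugmented cosimplicial object with extra codegeneracies, whence its totalization is the coaugmentation. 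That splitting argument is the heart of the proof and is entirely absent from your proposal.
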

\begin{proof}
The fiber sequence
\[
D_kF \longrightarrow P_kF \longrightarrow P_{k-1}F, 
\]
provides a fiber sequence
\[
P_kF \longrightarrow P_{k-1}F \longrightarrow R_kF, 
\]
as $\s$ is a stable category. This fiber sequence induces a commutative diagram 
% https://q.uiver.app/?q=WzAsNixbMCwwLCJQX2tGIl0sWzAsMSwiUF97ay0xfUYiXSxbMCwyLCJSX2tGIl0sWzEsMCwiXFxzZntUb3R9KFBfayhcXFBoaShcXHBhcnRpYWxfXFxhc3RcXFBoaSleXFxidWxsZXRcXHBhcnRpYWxfXFxhc3QgRikpIl0sWzEsMSwiXFxzZntUb3R9KFBfe2stMX0oXFxQaGkoXFxwYXJ0aWFsX1xcYXN0XFxQaGkpXlxcYnVsbGV0XFxwYXJ0aWFsX1xcYXN0IEYpKSJdLFsxLDIsIlxcc2Z7VG90fShSX2soXFxQaGkoXFxwYXJ0aWFsX1xcYXN0XFxQaGkpXlxcYnVsbGV0XFxwYXJ0aWFsX1xcYXN0IEYpKSJdLFswLDFdLFsxLDJdLFsyLDVdLFsxLDRdLFswLDNdLFszLDRdLFs0LDVdXQ==
\[\begin{tikzcd}
	{P_kF} & {\sf{Tot}(P_k(\Phi(\partial_\ast\Phi)^\bullet\partial_\ast F))} \\
	{P_{k-1}F} & {\sf{Tot}(P_{k-1}(\Phi(\partial_\ast\Phi)^\bullet\partial_\ast F))} \\
	{R_kF} & {\sf{Tot}(R_k(\Phi(\partial_\ast\Phi)^\bullet\partial_\ast F))}
	\arrow[from=1-1, to=2-1]
	\arrow[from=2-1, to=3-1]
	\arrow[from=3-1, to=3-2]
	\arrow[from=2-1, to=2-2]
	\arrow[from=1-1, to=1-2]
	\arrow[from=1-2, to=2-2]
	\arrow[from=2-2, to=3-2]
\end{tikzcd}\]
in which the columns are fiber sequences. We employ induction to show that each horizontal arrow is a weak equivalence, which in turn implies it suffices to show that for eack $k$, the map
\[
R_kF \longrightarrow \sf{Tot}(R_k(\Phi(\partial_\ast\Phi)^\bullet \partial_\ast F)),
\]
is a weak equivalence. This is enough to conclude the proof since there is a commutative diagram
% https://q.uiver.app/?q=WzAsNixbMCwwLCJGIl0sWzAsMSwiXFx1bmRlcnNldHtufXtcXGhvbGltfX5QX25GIl0sWzEsMCwiXFxzZntUb3R9KFxcUGhpKFxccGFydGlhbF9cXGFzdFxcUGhpKV5cXGJ1bGxldCBcXHBhcnRpYWxfXFxhc3QgRikiXSxbMSwxLCJcXHVuZGVyc2V0e259e1xcaG9saW19flxcc2Z7VG90fShQX24oXFxQaGkoXFxwYXJ0aWFsX1xcYXN0XFxQaGkpXlxcYnVsbGV0IFxccGFydGlhbF9cXGFzdCBGKSkiXSxbMiwwLCJcXHVuZGVyc2V0e259e1xcaG9saW19flxcc2Z7VG90fShcXFBoaSgoXFxwYXJ0aWFsX1xcYXN0XFxQaGkpXlxcYnVsbGV0IFxccGFydGlhbF9cXGFzdCBGKV97XFxsZXEgbn0pIl0sWzIsMSwiXFx1bmRlcnNldHtufXtcXGhvbGltfX5cXHNme1RvdH0oUF9uKFxcUGhpKChcXHBhcnRpYWxfXFxhc3RcXFBoaSleXFxidWxsZXQgXFxwYXJ0aWFsX1xcYXN0IEYpX3tcXGxlcSBufSkpIl0sWzAsMSwicF9cXGluZnR5IiwyXSxbMCwyLCJcXGV0YSJdLFsxLDMsIlxcc2ltZXEiLDJdLFsyLDQsIlxcc2ltZXEiXSxbMiwzXSxbMyw1XSxbNCw1LCJcXHNpbWVxIl1d
\[\begin{tikzcd}
	F & {\sf{Tot}(\Phi(\partial_\ast\Phi)^\bullet \partial_\ast F)} & {\underset{n}{\holim}~\sf{Tot}(\Phi((\partial_\ast\Phi)^\bullet \partial_\ast F)^{\leq n})} \\
	{\underset{n}{\holim}~P_nF} & {\underset{n}{\holim}~\sf{Tot}(P_n(\Phi(\partial_\ast\Phi)^\bullet \partial_\ast F))} & {\underset{n}{\holim}~\sf{Tot}(P_n(\Phi((\partial_\ast\Phi)^\bullet \partial_\ast F)^{\leq n})).}
	\arrow["{p_\infty}"', from=1-1, to=2-1]
	\arrow["\eta", from=1-1, to=1-2]
	\arrow["\simeq"', from=2-1, to=2-2]
	\arrow["\simeq", from=1-2, to=1-3]
	\arrow[from=1-2, to=2-2]
	\arrow[from=2-2, to=2-3]
	\arrow["\simeq", from=1-3, to=2-3]
\end{tikzcd}\]
To show that the map 
\[
R_kF \longrightarrow \sf{Tot}(R_k(\Phi(\partial_\ast\Phi)^\bullet \partial_\ast F)),
\]
is a weak equivalence. Define a functor
\[
\Psi_k: \sf{RMod}_{\mc{C}} \longrightarrow \Fun(\mathsf{Vect}_\bb{R}, \s),\ A \longmapsto (V \mapsto (S^{\bb{R}^k \otimes V}  \wedge A_k)_{hO(k)}),
\]
then $D_kF \simeq \Psi_k(\partial_\ast F)$, so there is a natural equivalence $R_kF \simeq R_k\Psi_k\partial_\ast F$, through which the map we are interested in is equivalent to the map 
\[
R_k\Psi_k(\partial_\ast F) \longrightarrow \sf{Tot}(R_k\Psi_k((\partial_\ast\Phi)^{\bullet+1}\partial_\ast F)),
\]
which is the coaugmentation map from a cosimplicial object with extra codegeneracies, hence a weak equivalence in $\Fun(\mathsf{Vect}_\bb{R}, \s)$. 
\end{proof}

\begin{cor}\label{cor: complete if converge}
If $F: \mathsf{Vect}_\bb{R} \to \s$ is a functor whose Weiss tower converges at $V\in \mathsf{Vect}_\bb{R}$, then $F$ is $\partial_\ast$-complete at $V$, i.e., the map
\[
\eta_V: F(V) \longrightarrow \sf{cobar}(\Phi, \partial_\ast\Phi, \partial_\ast F)(V),
\]
is a weak equivalence of spectra.
\end{cor}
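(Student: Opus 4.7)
The plan is to deduce this corollary directly from Proposition~\ref{prop: completion and tower limit}. By definition, convergence of the Weiss tower at $V$ means that the canonical comparison map
\[
p_\infty(V): F(V) \longrightarrow \underset{n}{\holim}~P_n F(V)
\]
is a weak equivalence of spectra. Proposition~\ref{prop: completion and tower limit} asserts the existence of a retract diagram (of natural transformations) exhibiting $\eta$ as a retract of $p_\infty$ in the arrow category of $\Fun(\mathsf{Vect}_\bb{R}, \s)$. Evaluating this retract diagram at the object $V$ produces a retract diagram in the arrow category of $\s$.

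Thus I would finish by invoking the standard two-out-of-three/retract argument in a model category (equivalently, in a stable $\infty$-category): the class of weak equivalences in $\s$ is closed under retracts, so if $p_\infty(V)$ is a weak equivalence then $\eta(V)$ is a weak equivalence as well. Since $\eta(V)$ is precisely the $\partial_\ast$-completion map
\[
\eta_V: F(V) \longrightarrow \sf{cobar}(\Phi, \partial_\ast\Phi, \partial_\ast F)(V),
\]
this is exactly the statement that $F$ is $\partial_\ast$-complete at $V$, completing the proof.

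There is essentially no main obstacle: all the heavy lifting is contained in Proposition~\ref{prop: completion and tower limit}, and the only point to verify is that pointwise evaluation preserves the retract structure and that weak equivalences of spectra are closed under retracts, both of which are formal.
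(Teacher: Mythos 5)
Your proof is correct and takes essentially the same route as the paper: all of the content is in Proposition~\ref{prop: completion and tower limit}, and the corollary is a formal consequence of it. The only cosmetic difference is that the paper's written proof re-displays the commutative diagram from that proposition's proof and argues by two-out-of-three along it, whereas you invoke the retract statement directly together with the closure of weak equivalences under retracts (and the observation that evaluation at $V$, being a functor, preserves retract diagrams) --- both are valid formal deductions.
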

\begin{proof}
Recall the commutative diagram
\[\begin{tikzcd}
	F & {\sf{Tot}(\Phi(\partial_\ast\Phi)^\bullet \partial_\ast F)} & {\underset{n}{\holim}~\sf{Tot}(\Phi((\partial_\ast\Phi)^\bullet \partial_\ast F)^{\leq n})} \\
	{\underset{n}{\holim}~P_nF} & {\underset{n}{\holim}~\sf{Tot}(P_n(\Phi(\partial_\ast\Phi)^\bullet \partial_\ast F))} & {\underset{n}{\holim}~\sf{Tot}(P_n(\Phi((\partial_\ast\Phi)^\bullet \partial_\ast F)^{\leq n})).}
	\arrow["{p_\infty}"', from=1-1, to=2-1]
	\arrow["\eta", from=1-1, to=1-2]
	\arrow["\simeq"', from=2-1, to=2-2]
	\arrow["\simeq", from=1-2, to=1-3]
	\arrow[from=1-2, to=2-2]
	\arrow[from=2-2, to=2-3]
	\arrow["\simeq", from=1-3, to=2-3]
\end{tikzcd}\]
If the left-hand vertical map is an equivalence, then one can argue along the diagram using the two-out-of-three property for equivalences to conclude that the map $\eta$ is an equivalence.
\end{proof}

\subsection{Spectral $\infty$-categories and comonadicity}

In the next subsection, we will prove that the $(\partial_\ast, \Phi)$-adjunction is comonadic on the level of spectral $\infty$-categories. The passage to the associated $\infty$-category is mostly technical: coalgebras over comonads do not interact well with model category theory. This approach is an alternative to using the replacement of adjunctions approach developed by Ching and Riehl~\cite{ChingRiehl}, and used heavily by Arone and Ching~\cite{ACClassification}. In many ways, this approach absorbs the coherences, making it conceptually much easier.

We now state the enriched version of the Barr-Beck-Lurie comonadicity theorem originally due to Heine~\cite[Theorem 1.8]{Heine}. The cited result is about algebras, but by dualizing using that $\mc{V}$-enriched comonads are $\mc{V}$-enriched monads in the opposite $\mc{V}$-category and, that the $\mc{V}$-category of coalgebras for a $\mc{V}$-enriched comonad is the opposite $\mc{V}$-category of the $\mc{V}$-category of algebras of the corresponding V-enriched monad, i.e., $\sf{CoAlg}(\mc{M}) = (\sf{Alg}(\mc{M}^\op))^\op$. We state the theorem only in the generality we require. 

\begin{thm}[{\cite[Theorem 1.8]{Heine}}]\label{thm: comonadicity}
A spectrally enriched functor $F: \mc{M} \to \mc{N}$ between spectral $\infty$-categories is comonadic if and only if
\begin{enumerate}
    \item the spectrally enriched functor $F: \mc{M} \to \mc{N}$ has an enriched right adjoint $G: \mc{N} \to \mc{M}$;
    \item The underlying functor $F: \mc{M} \to \mc{N}$ on the level of $\infty$-categories is conservative; and,
    \item every $F$-split cosimplicial object in $\mc{M}$, admits an enriched limit that is preserved by the underlying functor $F$.
\end{enumerate}
\end{thm}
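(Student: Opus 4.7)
My approach is to reduce this enriched statement to the standard non-enriched Barr–Beck–Lurie theorem for $\infty$-categories and then upgrade the resulting equivalence of underlying $\infty$-categories to a spectrally enriched equivalence, using the uniqueness of spectral enrichments on stable $\infty$-categories recalled in Section \ref{subsection: spectral homotopy}. The key observation is that every stable $\infty$-category admits a canonical spectral enrichment, and any exact functor between stable $\infty$-categories is automatically spectrally enriched with respect to these canonical enrichments.

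First I would forget the spectral enrichment and check that conditions (1)--(3) imply the hypotheses of Lurie's non-enriched Barr--Beck--Lurie theorem for the underlying $\infty$-categorical adjunction $F \dashv G$. Conservativity is hypothesis (2) verbatim. For the condition on $F$-split cosimplicial objects, I would combine hypothesis (3) with the observation that enriched limits of cosimplicial diagrams in a spectral $\infty$-category coincide with the underlying $\infty$-categorical limits, since the latter can be recovered from the former by applying $\Omega^\infty$ to the enriched mapping spectra. Lurie's theorem then yields a comparison functor from the underlying $\infty$-category of $\mc{M}$ to the underlying $\infty$-category of coalgebras over the comonad $FG$, which is an equivalence. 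To promote this to a spectrally enriched equivalence, I would observe that both categories are stable ($\mc{M}$ by assumption, and the category of $FG$-coalgebras because $\mc{N}$ is stable and $FG$ is exact) and that the comparison functor is exact, and hence automatically spectrally enriched with respect to the canonical enrichments.

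The main obstacle is matching the canonical spectral enrichment on the category of coalgebras (coming from stability) with the spectral enrichment arising from viewing coalgebras as an enriched construction on $\mc{N}$. Concretely, one must verify that the mapping spectra in the enriched category of coalgebras, defined via enriched totalizations of a cobar cosimplicial object in mapping spectra, have zeroth space recovering the ordinary $\infty$-categorical mapping space between coalgebras. This identification is essentially the substance of the enriched $\infty$-categorical algebra developed in \cite{HeineEnrichments, HeineBi}, and it is unclear how to avoid relying on this machinery without a significant detour: enriched totalizations are not obviously compatible with their non-enriched counterparts without further input on the interaction between enriched and underlying $\infty$-category theory. Once this matching is established, the reverse implication is formal, since conservativity and descent for $F$-split cosimplicial objects hold for any comonadic adjunction, and the enriched right adjoint is tautologically part of the data.
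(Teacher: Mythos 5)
This statement is not proven in the paper at all: it is quoted from Heine's work (\cite[Theorem 1.8]{Heine}), and the only argument the paper supplies is the formal dualization from enriched monads and algebras to enriched comonads and coalgebras via $\sf{CoAlg}(\mc{M}) = (\sf{Alg}(\mc{M}^\op))^\op$. Your proposal instead tries to \emph{reprove} the enriched Barr--Beck--Lurie theorem by reducing to the non-enriched version and then upgrading the equivalence using the canonical spectral enrichment of stable $\infty$-categories. That is a genuinely different (and more ambitious) route, and the first half of it is fine: conditions (2) and (3) do imply the hypotheses of Lurie's non-enriched comonadicity theorem, since an enriched (conical) limit of a cosimplicial diagram induces a limit in the underlying $\infty$-category.

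The gap is the one you flag yourself, and it is not a removable technicality --- it is the entire content of the enriched statement beyond the non-enriched one. Enriched comonadicity asserts that the comparison functor is an equivalence of \emph{spectrally enriched} $\infty$-categories onto the \emph{enriched} category of coalgebras, whose mapping spectra are defined by an enriched totalization of a cobar-type cosimplicial object of mapping spectra in $\mc{N}$. Your strategy of declaring the comparison functor enriched "because it is exact between stable $\infty$-categories" presupposes that this enriched coalgebra category carries the canonical enrichment of its underlying stable $\infty$-category, i.e., that the enriched cobar mapping spectra agree with the canonical mapping spectra determined by stability. Establishing that identification is precisely what Heine's framework does, so deferring to \cite{HeineEnrichments, HeineBi} at this step makes the argument circular: Theorem 1.8 of \cite{Heine} is the statement being proven. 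Moreover, an exact functor inducing an equivalence of underlying $\infty$-categories is an enriched equivalence only once both sides are known to carry the canonical stable enrichment, so this step cannot be waved through. As written, the proposal does not constitute a proof; the honest options are either to cite Heine (as the paper does) or to carry out the enriched cobar comparison in detail.
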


We now record the following lemma which reduces Theorem~\ref{thm: comonadicity}$(3)$ to the case of a canonical cosimplicial object, the proof of which is due to Heuts~\cite{HeutsGoodwillieApprox} in the non-enriched $\infty$-categorical setting (see also~\cite[Proposition 6.1.4]{Peroux}) and readily extended to the enriched setting.

\begin{lem}\label{lem: FG resolution}
Let $F: \mc{M} \to \mc{N}$ be a spectrally enriched functor left adjoint to $G: \mc{N} \to \mc{M}$. Every $F$-split cosimplicial object in $\mc{M}$ admits an enriched limit that is preserved by the underlying functor $F$ if and only if, for the canonical $GF$-resolution
% https://q.uiver.app/#q=WzAsNCxbMCwwLCJYIl0sWzEsMCwiR0YoWCkiXSxbMiwwLCJHRkdGKFgpIl0sWzMsMCwiXFxjZG90cyJdLFswLDFdLFsxLDIsIiIsMCx7Im9mZnNldCI6Mn1dLFsxLDIsIiIsMCx7Im9mZnNldCI6LTJ9XSxbMiwxXSxbMiwzLCIiLDAseyJvZmZzZXQiOjJ9XSxbMiwzXSxbMiwzLCIiLDAseyJvZmZzZXQiOi0yfV0sWzMsMiwiIiwwLHsib2Zmc2V0IjotMX1dLFszLDIsIiIsMCx7Im9mZnNldCI6MX1dXQ==
\[\begin{tikzcd}
	X & {GF(X)} & {GFGF(X)} & \cdots
	\arrow[from=1-1, to=1-2]
	\arrow[shift right=2, from=1-2, to=1-3]
	\arrow[shift left=2, from=1-2, to=1-3]
	\arrow[from=1-3, to=1-2]
	\arrow[shift right=2, from=1-3, to=1-4]
	\arrow[from=1-3, to=1-4]
	\arrow[shift left=2, from=1-3, to=1-4]
	\arrow[shift left, from=1-4, to=1-3]
	\arrow[shift right, from=1-4, to=1-3]
\end{tikzcd}\]
the induced map
\[
X \longrightarrow \mathsf{Tot}(G(FG)^{\bullet}F(X))=\mathsf{cobar}(G, FG, F)(X),
\]
is an equivalence for all $X \in \mc{M}$.
\end{lem}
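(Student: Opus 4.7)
The plan is to establish both implications by leveraging the observation that the canonical $GF$-resolution is itself a paradigmatic $F$-split cosimplicial object. Specifically, applying $F$ to $(GF)^{\bullet+1}X = G(FG)^\bullet FX$ yields $(FG)^{\bullet+1}FX$, which extends to a split augmented cosimplicial object with augmentation $FX$; the required extra codegeneracies are provided by iterated applications of the counit $\varepsilon: FG \to \mathrm{id}$. This places the canonical $GF$-resolution squarely inside the class of $F$-split cosimplicial objects, and both directions of the lemma are then obtained by comparing general $F$-split objects with this canonical example.

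For the forward direction, I would apply the hypothesis to the canonical resolution. Since $F(\mathsf{Tot}((GF)^{\bullet+1}X)) \simeq \mathsf{Tot}((FG)^{\bullet+1}FX) \simeq FX$ by split-augmentedness, the canonical map $X \to \mathsf{Tot}((GF)^{\bullet+1}X)$ becomes an equivalence after applying $F$. To upgrade to an honest equivalence in $\mc{M}$, I would interpret the cobar construction as computing the right adjoint to the comparison functor $\mc{M} \to \mathsf{CoAlg}_{FG}(\mc{N})$, and then appeal to the spectral enrichment together with the adjunction $F \dashv G$ to reduce the question to a statement about split cosimplicial objects of mapping spectra.

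For the backward direction, given an arbitrary $F$-split cosimplicial object $Y^\bullet$ with splitting data exhibiting an augmentation $A \in \mc{N}$ for $FY^\bullet$, the splitting produces a natural cosimplicial equivalence $FY^\bullet \simeq (FG)^{\bullet+1}A$. Applying $G$ levelwise gives an equivalence $GFY^\bullet \simeq (GF)^{\bullet+1}GA$, and hypothesis $(B)$ applied to $X = GA$ yields $GA \simeq \mathsf{Tot}((GF)^{\bullet+1}GA) \simeq \mathsf{Tot}(GFY^\bullet)$. Combining with the unit $Y^\bullet \to GFY^\bullet$ and the splitting, I would identify $\mathsf{Tot}(Y^\bullet)$ with $GA$ in $\mc{M}$, and verify $F$-preservation from $F(GA) \simeq A \simeq \mathsf{Tot}(FY^\bullet)$.

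The main obstacle is the coherent extraction of the cosimplicial equivalence $FY^\bullet \simeq (FG)^{\bullet+1}A$ from the abstract $F$-split structure: while this straightening is classical at the strict 1-categorical level, its homotopy-coherent incarnation in a spectrally enriched $\infty$-category requires careful handling of higher simplicial coherences, which is ultimately subsumed by the enriched $\infty$-categorical framework developed in Heine's work referenced in the paper.
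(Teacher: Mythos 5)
The paper does not actually prove this lemma; it cites Heuts and P\'eroux and asserts the enriched upgrade is routine, so your attempt must be measured against the standard argument. Your correct observation is that the canonical resolution is itself $F$-split (the counit on the outermost $FG$ supplies the extra codegeneracy of $(FG)^{\bullet+1}FX$ over $FX$), and this is indeed the starting point. But the backward direction contains a genuine error: the claimed ``natural cosimplicial equivalence $FY^\bullet \simeq (FG)^{\bullet+1}A$'' is false. A splitting of $FY^\bullet$ over $A$ only says that the augmented object extends over $\Delta_{-\infty}$, which exhibits $A$ as an \emph{absolute} limit of $FY^\bullet$; it does not identify the individual levels with $(FG)^{n+1}A$ (take $F=G=\mathrm{id}$, where your claim would force every split cosimplicial object to be constant). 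The downstream conclusions then also fail: $\mathsf{Tot}(Y^\bullet)\simeq GA$ and $F(GA)\simeq A$ are both wrong already for the cobar resolution of a coalgebra in a comonadic adjunction, whose totalization is the coalgebra itself and not the cofree coalgebra on its underlying object. The standard proof of this direction instead runs through the bicosimplicial object $W^{p,q}=(GF)^{p+1}Y^q$: for fixed $p$ the limit over $q$ is absolute and equals $G(FG)^pA$ because $(FG)^pFY^\bullet$ is split; for fixed $q$ the limit over $p$ is $Y^q$ by the hypothesis on canonical resolutions; an exchange-of-limits argument then produces $\lim_{\Delta}Y^\bullet \simeq \lim_p G(FG)^pA$ together with its preservation by $F$. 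Since this is the implication the paper actually uses in Theorem~\ref{thm: derivative comonadic}, the break is in the load-bearing direction.

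In the forward direction you correctly reduce to upgrading ``$F$ of the comparison map $X\to \mathsf{Tot}((GF)^{\bullet+1}X)$ is an equivalence'' to ``the comparison map is an equivalence,'' but the appeal to the comparison functor and the spectral enrichment does not close this gap, and in fact cannot: without conservativity of $F$ the implication is false (for $F$ the zero functor between complete stable categories, every cosimplicial object is $F$-split with preserved limit, yet no canonical resolution of a nonzero $X$ converges). So the ``only if'' clause should be read with condition $(2)$ of Theorem~\ref{thm: comonadicity} in force, or flagged as requiring it; rather than gesturing at mapping spectra you should either invoke conservativity explicitly or note that this direction of the lemma is not used.
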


\subsection{Comonadic descent}
Proposition~\ref{prop: completion and tower limit} tells us that $n$-polynomial functors can be recovered from the $n$-truncated derivatives, i.e., from the first $n$-many derivatives. To ease notation, we will denote the $n$-truncated derivatives by $\partial_{\leq n}F$.

\begin{lem}\label{lem: polynomials are complete}
Let $n$ be a non-negative integer. For any functor $F: \mathsf{Vect}_\bb{R} \to \s$, the map 
\[
P_nF \longrightarrow \sf{cobar}(\Phi, \partial_\ast\Phi, \partial_{\leq n}F),
\]
is a levelwise weak equivalence. Moreover, the canonical map $P_nF \to P_{n-1}F$ is equivalent to the map
\[
\sf{cobar}(\Phi, \partial_\ast\Phi, \partial_{\leq n}F) \longrightarrow \sf{cobar}(\Phi, \partial_\ast\Phi, \partial_{\leq n-1}F)
\]
induced by the truncation tower of $\partial_\ast(F)$ as a right $\mc{C}$-module. 
\end{lem}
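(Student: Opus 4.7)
The plan is to deduce both claims from the fact that $P_n F$ is itself $n$-polynomial, whose Weiss tower therefore trivially converges, together with the naturality of the $\partial_\ast$-completion map. First I would observe that since $P_n F$ is $n$-polynomial, we have $P_k P_n F \simeq P_n F$ for all $k \geq n$, so the Weiss tower of $P_n F$ is eventually constant at $P_n F$ and in particular converges to it at every $V$. Hence Corollary~\ref{cor: complete if converge} applied to $P_n F$ yields a levelwise equivalence
\[
P_n F \xrightarrow{\ \simeq \ } \sf{cobar}(\Phi, \partial_\ast \Phi, \partial_\ast P_n F).
\]

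Next I would identify $\partial_\ast P_n F$ with $\partial_{\leq n} F$ as a right $\mc{C}$-module. The universal map $F \to P_n F$ is a $\partial_k$-equivalence for $k \leq n$ (by construction of the Weiss tower), while $\partial_k P_n F \simeq \ast$ for $k > n$ since $P_n F$ is $n$-polynomial. Thus the induced map $\partial_\ast F \to \partial_\ast P_n F$ factors through the truncation $\partial_\ast F \to \partial_{\leq n} F$, and the resulting comparison $\partial_{\leq n} F \to \partial_\ast P_n F$ is an equivalence of right $\mc{C}$-modules since it is an equivalence in every degree. Substituting this identification into the equivalence above gives the first claim.

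For the second claim I would appeal to naturality. The natural transformation $P_n F \to P_{n-1} F$ induces on derivatives a map of right $\mc{C}$-modules $\partial_\ast P_n F \to \partial_\ast P_{n-1} F$; under the identifications of the previous paragraph this is exactly the truncation map $\partial_{\leq n} F \to \partial_{\leq n-1} F$. Since the comonadic cobar construction is a functor of coalgebras and the $\partial_\ast$-completion map is a natural transformation, this produces the desired commutative square identifying $P_n F \to P_{n-1} F$ with the map between cobar constructions induced by truncation.

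The main delicate point will be ensuring that the identification $\partial_\ast P_n F \simeq \partial_{\leq n} F$ holds as right $\mc{C}$-modules and not merely as underlying orthogonal sequences. This uses that the fixed model for $\partial_\ast$ enhances the Weiss derivatives to a spectrally enriched functor with values in $\sf{RMod}_{\mc{C}}$, as constructed in Theorem~\ref{thm:koszul derivatives are derivatives} for $\mc{C} = K(\mathsf{OEpi})$ and analogously for $\mc{C} = O(\ast)$, together with the fact that the module-level truncation $(-)^{\leq n}$ is the correct shadow of Weiss-polynomial truncation, as in Lemma~\ref{lem: phi of n-truncated}. Once this is in hand, both assertions are essentially formal.
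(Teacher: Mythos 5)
Your proposal follows essentially the same route as the paper's proof: apply Corollary~\ref{cor: complete if converge} to the $n$-polynomial functor $P_nF$ (whose Weiss tower trivially converges), then identify $\partial_\ast(P_nF)$ with $\partial_{\leq n}F$ using that the derivatives of $F$ and $P_nF$ agree in degrees $\leq n$ and vanish above $n$. Your additional care about the identification holding as right $\mc{C}$-modules, and your explicit naturality argument for the ``Moreover'' clause, are points the paper leaves implicit but are handled correctly.
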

\begin{proof}
By Corollary~\ref{cor: complete if converge}, we have that the map 
\[
P_nF \longrightarrow \sf{cobar}(\Phi, \partial_\ast\Phi, \partial_{\ast}(P_nF)),
\]
is a levelwise weak equivalence. It hence suffices to prove that the map of cosimplicial objects
\[
\Phi (\partial_\ast\Phi)^\bullet \partial_{\leq n}F \longrightarrow \Phi(\partial_\ast\Phi)^\bullet\partial_\ast(P_nF),
\]
is a weak equivalence. This last follows readily from the fact that the $m$-th derivative of $F$ and the $m$-th derivative of $P_nF$ agree for $m \leq n$. 
\end{proof}

We introduce some terminology.

\begin{definition}
Let $n$ be a non-negative integer and let $T$ be a comonad on $\sf{RMod}_{\mc{C}}$. Naturality of truncation allows us to define a comonad $T^{\leq n}$ on the subcategory of $n$-truncated right $\mc{C}$-modules by
\[
T^{\leq n}(A) = (TA)^{\leq n}.
\]
We say that a coalgebra over the comonad $T^{\leq n}$ is an \emph{$n$-truncated $T$-coalgebra}.
\end{definition}

An important example is given as follows.

\begin{ex}
Let $T$ be a comonad on the category of right $\mc{C}$-modules. If $A$ is a $T$-coalgebra, then the $n$-truncation $A^{\leq n}$ of $A$ is an $n$-truncated $T$-coalgebra.
\end{ex}

We now wish to prove the analogue of \cite[Theorem 3.19]{ACClassification} which exhibits an equivalence between the homotopy theory of $n$-polynomial functors  and $n$-truncated $(\partial_\ast\Phi)$-coalgebras in $\sf{RMod}_\mc{O}$. To do this, we invoke Theorem~\ref{thm: comonadicity}, to show that the $(\partial_\ast, \Phi)$-adjunction is comonadic, and hence our proof differs somewhat from the proof of Arone and Ching.

\begin{thm}\label{thm: derivative comonadic}
Let $n$ be a non-negative integer. The adjoint pair
\[
\adjunction{\partial_\ast}{\poly{n}(\mathsf{Vect}_\bb{R}, \s)}{\sf{RMod}_{\mc{C}}}{\Phi},
\]
is comonadic, inducing an equivalence of $\infty$-categories
\[
\poly{n}(\mathsf{Vect}_\bb{R}, \s) \cong \sf{CoAlg}^{\leq n}_{\partial_\ast\Phi}(\sf{RMod}_\mathcal{C}),
\]
between $n$-polynomial functors and $n$-truncated $\partial_\ast\Phi$-coalgebras in orthogonal sequences.
\end{thm}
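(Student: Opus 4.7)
The plan is to invoke the enriched Barr--Beck--Lurie comonadicity theorem (Theorem~\ref{thm: comonadicity}) for the spectral $\infty$-categorical adjunction obtained by passing Lemma~\ref{lem: derivative a left adjoint} to the underlying $\infty$-categories and restricting the left-hand side to the localization at the $n$-polynomial approximations. The functor $\partial_\ast$ is spectrally enriched and admits the spectrally enriched right adjoint $\Phi$ by the general machinery of Subsection~\ref{subsection: spectral homotopy}. The remaining hypotheses of Theorem~\ref{thm: comonadicity} are conservativity of $\partial_\ast$ on $\poly{n}(\mathsf{Vect}_\bb{R}, \s)$ and preservation of enriched limits of $\partial_\ast$-split cosimplicial objects; having verified these, the equivalence of $\infty$-categories follows, and the only remaining task is to identify the resulting coalgebra category with $\sf{CoAlg}^{\leq n}_{\partial_\ast\Phi}(\sf{RMod}_{\mc{C}})$.

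For conservativity, I would argue as follows. Suppose $f \colon F \to G$ is a morphism in $\poly{n}(\mathsf{Vect}_\bb{R}, \s)$ such that $\partial_\ast(f)$ is an equivalence. Since $F$ and $G$ are $n$-polynomial, their derivatives are already $n$-truncated, so $\partial_{\leq n}(f) = \partial_\ast(f)$ is an equivalence. By Lemma~\ref{lem: polynomials are complete}, the map $f = P_n(f)$ is naturally identified with
\[
\sf{cobar}(\Phi, \partial_\ast\Phi, \partial_{\leq n}F) \longrightarrow \sf{cobar}(\Phi, \partial_\ast\Phi, \partial_{\leq n}G),
\]
which is a levelwise equivalence of cosimplicial objects and hence an equivalence upon totalization. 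Therefore $f$ is an equivalence, as required.

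For the split-cosimplicial condition, I would reduce via Lemma~\ref{lem: FG resolution} to verifying that for every $F \in \poly{n}(\mathsf{Vect}_\bb{R}, \s)$ the canonical coaugmentation
\[
F \longrightarrow \sf{cobar}(\Phi, \partial_\ast\Phi, \partial_\ast F)
\]
is an equivalence. But $F \simeq P_nF$ and $\partial_\ast F = \partial_{\leq n}F$, so Lemma~\ref{lem: polynomials are complete} provides precisely this equivalence. At this point Theorem~\ref{thm: comonadicity} yields an equivalence between $\poly{n}(\mathsf{Vect}_\bb{R}, \s)$ and the $\infty$-category of $\partial_\ast\Phi$-coalgebras in the essential image of $\partial_\ast|_{\poly{n}}$. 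The essential image lies inside $n$-truncated modules, since the derivatives of an $n$-polynomial functor vanish above degree $n$; conversely, for an $n$-truncated coalgebra $A$, the object $\Phi(A)$ is $n$-polynomial by Lemma~\ref{lem: phi of n-truncated}, so the comonad preserves $n$-truncations. Together these observations identify the essential image with the $\infty$-category $\sf{CoAlg}^{\leq n}_{\partial_\ast\Phi}(\sf{RMod}_{\mc{C}})$ of $n$-truncated coalgebras.

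The main obstacle I anticipate is the enrichment bookkeeping in applying Theorem~\ref{thm: comonadicity}: one must ensure that the Quillen adjunction of Lemma~\ref{lem: derivative a left adjoint} descends to a \emph{spectrally enriched} $\infty$-categorical adjunction, and that the split-cosimplicial condition is verified for enriched totalizations. Much of this is absorbed by the framework of Subsection~\ref{subsection: spectral homotopy} and the fact that the relevant localizations of stable model categories inherit canonical spectral enrichments, but verifying compatibility of this enrichment with the $\poly{n}$-localization requires care. A secondary subtlety is making the identification between a coalgebra over $\partial_\ast \Phi$ which happens to be $n$-truncated and a coalgebra over the restricted comonad $(\partial_\ast \Phi)^{\leq n}$; this reduces to naturality of truncation against the coalgebra structure maps.
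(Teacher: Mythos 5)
Your proposal follows essentially the same route as the paper: invoke the enriched Barr--Beck--Lurie theorem (Theorem~\ref{thm: comonadicity}), verify conservativity and the split-cosimplicial condition via Lemma~\ref{lem: FG resolution} and Lemma~\ref{lem: polynomials are complete}, and identify the resulting coalgebras as $n$-truncated ones using Lemma~\ref{lem: phi of n-truncated}. The only difference is cosmetic --- you restrict to $\poly{n}$ from the outset and spell out the identification of the essential image slightly more explicitly than the paper does --- so the argument is correct as written.
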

\begin{proof}
We verify the conditions $(1)$-$(3)$ of Theorem~\ref{thm: comonadicity}, to show that the functor
\[
\partial_\ast : \Fun(\mathsf{Vect}_\bb{R}, \s) \longrightarrow \mathsf{RMod}_{\mc{C}},
\]
is comonadic. By Lemma~\ref{lem: derivative a left adjoint}, there is a spectrally enriched adjunction (of $\infty$-categories)
\[
\adjunction{\partial_\ast}{\Fun(\mathsf{Vect}_\bb{R}, \s)}{\mathsf{RMod}_{\mc{C}}}{\Phi},
\]
coming from the spectral Quillen adjunction, which verifies condition $(1)$ of Theorem~\ref{thm: comonadicity}. For condition $(2)$ of Theorem~\ref{thm: comonadicity} it suffices to show that the derivatives functor is conservative on the level of underlying $\infty$-categories. Let $f: E \to F$ be a map of $n$-polynomial functors such that $\partial_\ast(f) : \partial_\ast E \to \partial_\ast F$ is an equivalence of orthogonal sequences. To show that $f$ is an equivalence of $n$-polynomial functors,  consider the following diagram
% https://q.uiver.app/#q=WzAsNixbMCwwLCJGIl0sWzAsMSwiRyJdLFsxLDAsIlBfbkYiXSxbMSwxLCJQX25HIl0sWzIsMCwiXFxtYXRoc2Z7Y29iYXJ9KFxcUGhpLCBcXHBhcnRpYWxfXFxhc3RcXFBoaSwgXFxwYXJ0aWFsX3tcXGxlcSBufUYpIl0sWzIsMSwiXFxtYXRoc2Z7Y29iYXJ9KFxcUGhpLCBcXHBhcnRpYWxfXFxhc3RcXFBoaSwgXFxwYXJ0aWFsX3tcXGxlcSBufUcpIl0sWzAsMSwiZiJdLFswLDJdLFsyLDNdLFsxLDNdLFsyLDRdLFs0LDVdLFszLDVdXQ==
\[\begin{tikzcd}
	E & {P_nE} & {\mathsf{cobar}(\Phi, \partial_\ast\Phi, \partial_{\leq n}E)} \\
	F & {P_nF} & {\mathsf{cobar}(\Phi, \partial_\ast\Phi, \partial_{\leq n}F)}
	\arrow["f", from=1-1, to=2-1]
	\arrow[from=1-1, to=1-2]
	\arrow[from=1-2, to=2-2]
	\arrow[from=2-1, to=2-2]
	\arrow[from=1-2, to=1-3]
	\arrow[from=1-3, to=2-3]
	\arrow[from=2-2, to=2-3]
\end{tikzcd}\]
in which the left-most horizontal maps are equivalences since $E$ and $F$ are both $n$-polynomial, and the right-most horizontal maps are equivalences by Lemma~\ref{lem: polynomials are complete}. Hence, it suffices to show that the induced map
\[
\sf{cobar}(\Phi, \partial_\ast\Phi, \partial_{\leq n}E) \longrightarrow \sf{cobar}(\Phi, \partial_\ast\Phi, \partial_{\leq n}F),
\]
is an equivalence of orthogonal sequences, but this follows immediately from the equivalence 
\[
\partial_\ast(f) : \partial_\ast E \to \partial_\ast F,
\]
and the construction of the (cosimplicial) cobar complex. 

It is now left to verify $(3)$ of Theorem~\ref{thm: comonadicity}. By Lemma~\ref{lem: FG resolution} it suffices to show that for the canonical $\Phi\partial_\ast$-resolution
% https://q.uiver.app/#q=WzAsNCxbMCwwLCJYIl0sWzEsMCwiXFxQaGlcXHBhcnRpYWxfXFxhc3QoWCkiXSxbMywwLCJcXGNkb3RzIl0sWzIsMCwiXFxQaGlcXHBhcnRpYWxfXFxhc3RcXFBoaVxccGFydGlhbF9cXGFzdChYKSJdLFswLDFdLFszLDIsIiIsMCx7Im9mZnNldCI6Mn1dLFszLDJdLFszLDIsIiIsMCx7Im9mZnNldCI6LTJ9XSxbMSwzLCIiLDAseyJvZmZzZXQiOjF9XSxbMSwzLCIiLDAseyJvZmZzZXQiOi0xfV1d
\[\begin{tikzcd}
	F & {\Phi\partial_\ast(F)} & {\Phi\partial_\ast\Phi\partial_\ast(F)} & \cdots
	\arrow[from=1-1, to=1-2]
	\arrow[shift right=2, from=1-3, to=1-4]
	\arrow[from=1-3, to=1-4]
	\arrow[shift left=2, from=1-3, to=1-4]
	\arrow[shift right, from=1-2, to=1-3]
	\arrow[shift left, from=1-2, to=1-3]
\end{tikzcd}\]
of an $n$-polynomial functor $F$, the induced map
\[
F \longrightarrow \mathsf{Tot}(\Phi(\partial_\ast\Phi)^{\bullet}\partial_\ast F),
\]
is an equivalence in $\poly{n}(\mathsf{Vect}_\bb{R}, \s)$. To see this, consider the commutative diagram
% https://q.uiver.app/#q=WzAsNixbMCwwLCJGIl0sWzAsMSwiUF9uRiJdLFsxLDAsIlxcbWF0aHNme1RvdH0oKFxcUGhpXFxwYXJ0aWFsX1xcYXN0KV57XFxidWxsZXQrMX1GKSJdLFsxLDEsIlxcbWF0aHNme1RvdH0oKFxcUGhpXFxwYXJ0aWFsX1xcYXN0KV57XFxidWxsZXQrMX1QX25GKSJdLFsyLDAsIlxcbWF0aHNme2NvYmFyfShcXFBoaSwgXFxwYXJ0aWFsX1xcYXN0XFxQaGksIFxccGFydGlhbF9cXGFzdCBGKSJdLFsyLDEsIlxcbWF0aHNme2NvYmFyfShcXFBoaSwgXFxwYXJ0aWFsX1xcYXN0XFxQaGksIFxccGFydGlhbF9cXGFzdCBQX25GKSJdLFswLDJdLFsyLDQsIiIsMCx7ImxldmVsIjoyLCJzdHlsZSI6eyJoZWFkIjp7Im5hbWUiOiJub25lIn19fV0sWzMsNSwiIiwwLHsibGV2ZWwiOjIsInN0eWxlIjp7ImhlYWQiOnsibmFtZSI6Im5vbmUifX19XSxbNCw1LCJcXHNpbWVxIl0sWzIsMywiXFxzaW1lcSJdLFswLDEsIlxcc2ltZXEiLDJdLFsxLDNdXQ==
\[\begin{tikzcd}
	F & {\mathsf{Tot}(\Phi(\partial_\ast\Phi)^{\bullet}\partial_\ast F)} & {\mathsf{cobar}(\Phi, \partial_\ast\Phi, \partial_\ast F)} \\
	{P_nF} & {\mathsf{Tot}(\Phi(\partial_\ast\Phi)^{\bullet}\partial_\ast P_nF)} & {\mathsf{cobar}(\Phi, \partial_\ast\Phi, \partial_\ast P_nF)}
	\arrow[from=1-1, to=1-2]
	\arrow[Rightarrow, no head, from=1-2, to=1-3]
	\arrow[Rightarrow, no head, from=2-2, to=2-3]
	\arrow["\simeq", from=1-3, to=2-3]
	\arrow["\simeq", from=1-2, to=2-2]
	\arrow["\simeq"', from=1-1, to=2-1]
	\arrow[from=2-1, to=2-2]
\end{tikzcd}\]
in which all vertical maps are equivalences. It suffices to show that the lower-horizontal map is an equivalence, but this is precisely the content of Lemma~\ref{lem: polynomials are complete}.
\end{proof}

\subsection{Kuhn-McCarthy Classification}
We now prove the Weiss calculus version of the Kuhn-McCarthy Classification Theorem, see e.g.,~\cite{kuhn_2004, McCarthyDual}, which classifies the map $P_nF \to P_{n-1}F$ by a universal fibration using the coalgebra structure. Since the comonadic data associated to the various versions of the derivatives agrees up to homotopy, we assume now that the derivative functor takes values in orthogonal sequences so that the computations become simpler.

\begin{prop}\label{prop: McCarthy1}
Let $n \geq 2$. For every functor $F: \mathsf{Vect}_\bb{R} \to \s$, there is a homotopy pullback square
% https://q.uiver.app/?q=WzAsNCxbMCwwLCJUX25GIl0sWzAsMSwiVF97bi0xfUYiXSxbMSwwLCJcXFBoaVxccGFydGlhbF9uRiJdLFsxLDEsIlRfe24tMX0oXFxQaGlcXHBhcnRpYWxfbkYpIl0sWzAsMV0sWzAsMl0sWzIsM10sWzEsM11d
\[\begin{tikzcd}
	{P_nF} & {\Phi\partial_nF} \\
	{P_{n-1}F} & {P_{n-1}(\Phi\partial_nF)}
	\arrow[from=1-1, to=2-1]
	\arrow[from=1-1, to=1-2]
	\arrow[from=1-2, to=2-2]
	\arrow[from=2-1, to=2-2]
\end{tikzcd}\]
where $\Phi\partial_nF$ denotes the value of $\Phi$ on the orthogonal sequence consisting of the $n$-th derivative concentrated in degree $n$. 
\end{prop}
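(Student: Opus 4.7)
The plan is to first construct the commutative square, and then verify the pullback property by reducing to a statement about $n$-th Weiss derivatives of $n$-homogeneous functors.

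\emph{Construction of the square.} Since $\mathsf{Triv}_\mc{C}(\partial_nF)$ is $n$-truncated, Lemma~\ref{lem: phi of n-truncated} implies that $\Phi\partial_nF$ is $n$-polynomial. Composing the unit $F \to \Phi\partial_\ast F$ of the $(\partial_\ast,\Phi)$-adjunction of Lemma~\ref{lem: derivative a left adjoint} with $\Phi$ applied to the canonical projection of right $\mc{C}$-modules $\partial_\ast F \to \mathsf{Triv}_\mc{C}(\partial_nF)$ coming from the fiber sequence of Lemma~\ref{lem:triv fiber sequence} yields a natural map $F \to \Phi\partial_nF$. Because the target is $n$-polynomial, the universal property of the $n$-polynomial approximation produces an essentially unique factorization $P_nF \to \Phi\partial_nF$. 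Applying $P_{n-1}(-)$ to this map functorially supplies the bottom horizontal arrow and the commutativity of the claimed square.

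\emph{Reduction to $n$-th derivatives.} A commutative square of spectra is a homotopy pullback if and only if the induced map on vertical homotopy fibers is an equivalence. The left vertical fiber is $D_nF$ by definition. Since $\Phi\partial_nF$ is $n$-polynomial, the right vertical fiber is $D_n(\Phi\partial_nF)$. Both are $n$-homogeneous, so by the classification of $n$-homogeneous functors in terms of their $n$-th Weiss derivatives as Borel $O(n)$-spectra, the question reduces to showing that the induced map
\[\partial_nF \longrightarrow \partial_n(\Phi\partial_nF)\]
is an equivalence of Borel $O(n)$-spectra.

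\emph{Key computation.} The content of the proof lies in showing $\partial_n(\Phi\partial_nF) \simeq \partial_nF$ with the induced map the identity. Unfolding the formula for $\Phi$, one has $\Phi(\mathsf{Triv}_\mc{C}(\partial_n F))(V) \simeq \mathsf{RMod}^h_\mc{C}(\partial_\ast R_V,\mathsf{Triv}_\mc{C}(\partial_nF))$; by applying Proposition~\ref{prop: layers of truncation tower} to the truncation filtration with target concentrated in degree $n$, this simplifies to $\mathsf{RMod}^h_{O(n)}(\partial_n R_V, \partial_n F)$, matching the ``fake'' $n$-th layer from Section~\ref{subsection:enriched model of fake Weiss tower}. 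Using the description of $\partial_n R_V$ from Example~\ref{ex: derivative of rep} and the classification of $n$-homogeneous functors, this functor is $n$-polynomial with $n$-th Weiss derivative $\partial_nF$. The map from $F$ to this functor factors through the unit of the $(\partial_\ast,\Phi)$-adjunction, so its $n$-th derivative is the identity by the triangle identity; equivalently, under the comonadicity equivalence of Theorem~\ref{thm: derivative comonadic}, the resulting map on $n$-truncated $\partial_\ast\Phi$-coalgebras is the identity on $\mathsf{Triv}_\mc{C}(\partial_nF)$ in degree $n$.

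The main obstacle is the explicit identification $\partial_n(\Phi\partial_nF)\simeq\partial_nF$ compatible with the map induced by the unit, which requires tracing through the Spanier--Whitehead model; once this is established, the rest is a formal consequence of the classification of $n$-homogeneous functors.
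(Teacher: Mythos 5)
Your proposal follows essentially the same route as the paper: both reduce the pullback statement to showing that $P_nF \to \Phi\partial_nF$ is a $D_n$-equivalence (i.e., an equivalence on vertical fibers), and both establish this by computing $\Phi$ of a module concentrated in degree $n$ as the homotopy-fixed-point (``fake'') layer and then invoking the norm map and the classification of homogeneous functors --- which is exactly what the paper packages via Theorem~\ref{thm: norm map derivatives} and the analogue of Arone--Ching's Proposition~3.20. The one soft spot is the appeal to the triangle identity: by itself it only shows that $\partial_nF \to \partial_n(\Phi\partial_nF)$ is split (the counit supplies a retraction), so to conclude it is an equivalence you should additionally check that the counit $\partial_n\Phi(\mathsf{Triv}_{\mc{C}}(\partial_nF)) \to \partial_nF$ is itself an equivalence --- but this is exactly what your ``key computation'' $\Phi(\mathsf{Triv}_{\mc{C}}(\partial_nF))(V) \simeq (S^{nV}/\mathsf{DI}_n(V) \wedge \partial_nF)_{hO(n)}$ together with the cofiber sequence against the fat diagonal provides, so the gap closes with ingredients you already have.
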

\begin{proof}
This is analogous to the content of \cite[Proposition 4.14]{ACClassification}, which we sketch here. By Lemma~\ref{lem: phi of n-truncated}, $\Phi\partial_nF$ is $n$-polynomial, hence it suffices to show that the map $P_nF \to \Phi\partial_nF$ is a $D_n$-equivalence, i.e., that the map on fibers is an equivalence. The map in question factors as
\[
D_n(P_nF) \longrightarrow D_n(\Phi\partial_\ast(P_nF)) \simeq D_n(\Phi\partial_{\leq n}F) \longrightarrow D_n(\Phi\partial_nF),
\]
in which the first map is an equivalence by the Weiss calculus version of ~\cite[Proposition 3.20]{ACClassification}, ultimately following from our computations of the fake tower in Theorem \ref{thm: norm map derivatives}, and the second map is an equivalence by Lemma~\ref{lem: phi of n-truncated}.
\end{proof}

In the following result, we calculate the right-hand side of this pullback square. Note that for orthogonal sequences, the right adjoint to $\partial_\ast$ may be described as
\[
\Phi(R)(V) = \prod_{n \in {O(\ast)}}\s(\partial_n(\Sigma^\infty R_V), R(n))^{O(n)},
\]
since there are no nontrivial non-automorphisms in $O(\ast)$. The statement we give below differs from Theorem~\ref{thm: intro pullback} from the introduction, only by applying the norm map to the top right corner of the diagram.

\begin{cor}\label{cor: McCarthy-Kuhn}
Let $n \geq 2$. For every functor $F: \mathsf{Vect}_\bb{R} \to \s$, there is a homotopy pullback square
\[\begin{tikzcd}
	{P_nF(V)} & {(S^{nV}/\sf{DI}_n(V) \wedge \partial_n F)_{hO(n)}} \\
	{P_{n-1}F(V)} & {(\Sigma \sf{DI}_n(V) \wedge \partial_n F)_{hO(n)}}
	\arrow[from=1-1, to=2-1]
	\arrow[from=1-1, to=1-2]
	\arrow[from=1-2, to=2-2]
	\arrow[from=2-1, to=2-2]
\end{tikzcd}\]	
natural in $V \in \mathsf{Vect}_\bb{R}$.
\end{cor}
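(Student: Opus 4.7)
The plan is to start from Proposition~\ref{prop: McCarthy1}, which already gives the homotopy pullback square
\[\begin{tikzcd}
	{P_nF} & {\Phi\partial_nF} \\
	{P_{n-1}F} & {P_{n-1}(\Phi\partial_nF),}
	\arrow[from=1-1, to=2-1]
	\arrow[from=1-1, to=1-2]
	\arrow[from=1-2, to=2-2]
	\arrow[from=2-1, to=2-2]
\end{tikzcd}\]
so the content of the corollary is to identify the right-hand column, evaluated at a given $V \in \mathsf{Vect}_\bb{R}$, with the claimed spectra. This reduces the problem to two explicit computations.

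For the upper-right corner, I would combine the end formula $\Phi(R)(V) \simeq \s(\partial_n R_V, R(n))^{hO(n)}$ with the Spanier--Whitehead model for the derivatives of representables. Example~\ref{ex: derivative of rep}, together with Atiyah duality applied to the free $O(n)$-space $\mathsf{Vect}_\bb{R}(\bb{R}^n,V)$, identifies $\partial_n R_V$ with $D_{O(n)} \wedge (S^{nV}/\sf{DI}_n(V))^\vee$. Plugging this in and using the finiteness of $S^{nV}/\sf{DI}_n(V)$ to commute duals past the mapping spectrum yields
\[
\Phi\partial_nF(V) \simeq (D_{O(n)}^\vee \wedge S^{nV}/\sf{DI}_n(V) \wedge \partial_nF)^{hO(n)}.
\]
To convert this into the homotopy orbits form of the corollary I would invoke Theorem~\ref{thm:norm for g spectra}: since $\mathsf{Vect}_\bb{R}(\bb{R}^n,V)$ is a finite $O(n)$-CW complex with free action, its Atiyah dual $S^{nV}/\sf{DI}_n(V)$ is built stably from free $O(n)$-cells, and the dual norm map gives an equivalence with $(S^{nV}/\sf{DI}_n(V) \wedge \partial_nF)_{hO(n)}$.

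For the lower-right corner, $\Phi\partial_nF$ is $n$-polynomial by Lemma~\ref{lem: phi of n-truncated}, and the unit of the $(\partial_\ast,\Phi)$-adjunction identifies $\partial_n(\Phi\partial_nF) \simeq \partial_nF$, so the homogeneous classification gives
\[
D_n(\Phi\partial_nF)(V) \simeq (\partial_nF \wedge S^{nV})_{hO(n)}.
\]
Since $P_{n-1}(\Phi\partial_nF)$ is the cofibre of $D_n(\Phi\partial_nF) \hookrightarrow \Phi\partial_nF$, it remains to identify this inclusion, under the equivalences above, with the map on homotopy orbits induced by the quotient $S^{nV} \to S^{nV}/\sf{DI}_n(V)$. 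Granting that identification, rotating the cofibre sequence $\sf{DI}_n(V) \to S^{nV} \to S^{nV}/\sf{DI}_n(V)$ and smashing with $\partial_nF$ before taking $O(n)$-orbits produces the desired $(\Sigma \sf{DI}_n(V) \wedge \partial_n F)_{hO(n)}$.

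The main obstacle is precisely this last identification: verifying that $D_n(\Phi\partial_nF)(V) \hookrightarrow \Phi\partial_nF(V)$ is, up to the various norm-equivalences, induced by $S^{nV} \to S^{nV}/\sf{DI}_n(V)$. I expect to extract this from naturality of the Spanier--Whitehead model, together with the observation that the map $H_n(\partial_nF) \to \Phi\partial_nF$ classifying the top layer factors through the dual-norm equivalence on the free piece $S^{nV}/\sf{DI}_n(V)$; concretely, the comparison of the Weiss derivatives with the fake tower (Theorem~\ref{thm: norm map derivatives}) asserts that this inclusion is the dual norm map, and then the cofibre sequence of the fat diagonal encodes the difference between the homogeneous model and the fixed-point formula for $\Phi\partial_nF$.
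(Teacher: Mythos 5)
Your treatment of the upper-right corner is the paper's argument almost verbatim: the product/end formula for $\Phi$, the Spanier--Whitehead model $\partial_n R_V \simeq D_{O(n)} \wedge (S^{nV}/\mathsf{DI}_n(V))^\vee$ via Atiyah duality, the resulting identification $\Phi\partial_nF(V) \simeq (D_{O(n)}^\vee \wedge S^{nV}/\mathsf{DI}_n(V) \wedge \partial_nF)^{hO(n)}$, and the passage to homotopy orbits via Klein's norm theorem using that $S^{nV}/\mathsf{DI}_n(V)$ is a finite free $O(n)$-cell spectrum. That part is correct and complete.

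The lower-right corner is where you diverge, and where the gap sits. You compute $P_{n-1}(\Phi\partial_nF)$ as the cofibre of the layer inclusion $D_n(\Phi\partial_nF) \hookrightarrow \Phi\partial_nF$ and then need to identify that inclusion, under all the norm equivalences, with the map induced by $S^{nV} \to S^{nV}/\mathsf{DI}_n(V)$. You correctly flag this as the main obstacle, but the justification you sketch does not close it: Theorem~\ref{thm: norm map derivatives} identifies the map $D_n(F) \to D_n^{\mathsf{fake}}(F)$ with the dual norm, which is a different comparison from the one you need, and "naturality of the Spanier--Whitehead model" is not by itself an argument that the composite of your equivalences carries the layer inclusion to the quotient map. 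The paper avoids this identification entirely: having computed $\Phi\partial_nF(V) \simeq (S^{nV}/\mathsf{DI}_n(V) \wedge \partial_nF)_{hO(n)}$, it applies $P_{n-1}$ directly to the cofibre sequence of functors
\[
(S^{nV} \wedge \partial_nF)_{hO(n)} \longrightarrow (S^{nV}/\mathsf{DI}_n(V) \wedge \partial_nF)_{hO(n)} \longrightarrow (\Sigma\,\mathsf{DI}_n(V) \wedge \partial_nF)_{hO(n)},
\]
observing that the first term is $n$-homogeneous (so its $P_{n-1}$ vanishes) and that the third term is already $(n-1)$-polynomial (being a homotopy colimit of $(n-1)$-polynomial functors, as for the fat diagonal in Goodwillie calculus). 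This pins down $P_{n-1}(\Phi\partial_nF)$ and the map to it by the universal property of $P_{n-1}$, with no need to recognize the layer inclusion explicitly. I recommend replacing your final step with this argument; alternatively, if you want to keep your route, you must actually prove the identification of the layer inclusion, and you would then need a genuinely new argument rather than a citation of Theorem~\ref{thm: norm map derivatives}.
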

\begin{proof}
The proof is a calculation of the right-hand side of the pullback square of Proposition~\ref{prop: McCarthy1}. First note, for a orthogonal sequence $A$, 
\begin{align*}
\Phi (A)(V) 
&= \prod_{n} \s(\partial_n(\Sigma^\infty R_V), A_n)^{O(n)} \\
&\simeq \prod_n \s((\s(D_{O(n)} \wedge \mathsf{Vect}_\bb{R}(\bb{R}^n, V), S^{nV})^\vee, A_n)^{O(n)} \\
&\simeq \prod_n \left(\s(D_{O(n)} \wedge \mathsf{Vect}_\bb{R}(\bb{R}^n, V), S^{nV}) \wedge A_n\right)^{O(n)} \\
&\simeq \prod_n \left(D_{O(n)}^\vee \wedge (S^{nV}/\sf{DI}_n(V)) \wedge A_n\right )^{O(n)} \\
&\simeq \prod_n \left(D_{O(n)}^\vee \wedge (S^{nV}/\sf{DI}_n(V)) \wedge A_n\right )^{hO(n)}.
\end{align*}

Hence, the upper-right term in the pullback square of Proposition~\ref{prop: McCarthy1} is precisely
\[
(D_{O(n)}^\vee \wedge(S^{nV}/\sf{DI}_n(V)) \wedge \partial_n F)^{hO(n)},
\]
when viewing the spectrum $\partial_nF$ as an orthogonal sequence concentrated in degree $n$. Since the spectrum $D_{O(n)}^\vee \wedge(S^{nV}/\sf{DI}_n(V)) \wedge \partial_n F$ is an $O(n)$-spectrum of the form $X \wedge Y$, where $X$ is an arbitrary $O(n)$-spectrum and $Y$ is a finite $O(n)$-cell spectrum, we know that the associated norm map
\[
\sf{Nm}_{O(n)} : \left( D_{O(n)} \wedge D_{O(n)}^\vee \wedge(S^{nV}/\sf{DI}_n(V)) \wedge \partial_n F \right)_{hO(n)} \xlongrightarrow{\simeq} \left(D_{O(n)}^\vee \wedge(S^{nV}/\sf{DI}_n(V)) \wedge \partial_n F \right)^{hO(n)}
\]
is an equivalence by induction over the cells of $Y$ and applying Klein's Theorem \ref{thm:norm for g spectra}. It follows that
\[
\Phi\partial_nF(V) \simeq (S^{nV}/\sf{DI}_n(V) \wedge \partial_n F)_{hO(n)}.
\]

It is left to calculate $P_{n-1}(\Phi \partial_nF)$. By the above reasoning it suffices to calculate the $(n-1)$-polynomial approximation of the source of the norm map, or equivalently of the functor 
\[
V \longmapsto (S^{nV}/\sf{DI}_n(V) \wedge \partial_n F)_{hO(n)}. 
\]
The existence of the (co)fiber sequence 
\[
(S^{nV} \wedge \partial_n F)_{hO(n)} \longrightarrow (S^{nV}/\sf{DI}_n(V) \wedge \partial_nF)_{hO(n)} \longrightarrow (\Sigma \sf{DI}_n(V) \wedge \partial_n F)_{hO(n)},
\]
implies that it suffices to show that the functor
\[
V \longmapsto (\Sigma \sf{DI}_n(V) \wedge \partial_n F)_{hO(n)},
\]
is $(n-1)$-polynomial, since the first term of the fiber sequence is $n$-homogeneous. This last follows as it is a homotopy colimit of $(n-1)$-polynomial functors, similar to the fat diagonal in Goodwillie calculus.
\end{proof}

\bibliography{references}
\bibliographystyle{alpha}
\end{document}